\numberwithin{equation}{section}
\theoremstyle{plain}
\newtheorem{theorem}[equation]{Theorem}
\newtheorem{maintheorem}[equation]{Main Theorem}
\newtheorem{lemma}[equation]{Lemma}
\newtheorem{corollary}[equation]{Corollary}
\newtheorem{proposition}[equation]{Proposition}
\theoremstyle{definition}
\newtheorem{definition}[equation]{Definition}
\newtheorem{construction}[equation]{Construction}
\newtheorem{remark}[equation]{Remark}
\DeclareMathOperator{\fil}{fil}
\def\GradQCoh{\mathop{\tt GrQCoh}\nolimits}
\def\FilQCoh{\mathop{\tt FilQCoh}\nolimits}
\def\FilLF{\mathop{\tt FilLF}\nolimits}
\def\GradLF{\mathop{\tt GrLF}\nolimits}
\def\QCoh{\mathop{\tt QCoh}\nolimits}
\def\GFF{\mathop{\tt GFF}\nolimits}
\DeclareMathOperator{\forget}{forg}
\DeclareMathOperator{\End}{End}
\DeclareMathOperator{\Id}{Id}
\DeclareMathOperator{\GL}{GL}
\DeclareMathOperator{\gr}{gr}
\DeclareMathOperator{\res}{res}
\DeclareMathOperator{\ind}{ind}
\DeclareMathOperator{\F}{F}
\DeclareMathOperator{\G}{G}
\DeclareMathOperator{\Mor}{Mor}
\def\Sch{{\bf Sch}}
\DeclareMathOperator{\Spec}{Spec}
\DeclareMathOperator{\Cent}{Cent}
\DeclareMathOperator{\Hom}{Hom}
\DeclareMathOperator{\pr}{pr}
\DeclareMathOperator{\Tors}{Tors}
\DeclareMathOperator{\Aut}{Aut}
\DeclareMathOperator{\Desc}{Desc}
\DeclareMathOperator{\Norm}{Norm}
\DeclareMathOperator{\rk}{rk}
\DeclareMathOperator{\Lie}{Lie}
\DeclareMathOperator{\Gal}{Gal}
\let\into\hookrightarrow
\newcommand{\defeq}{\colonequals}
\newcommand{\rdefeq}{\equalscolon}
\newcommand{\Gm}[1][\empty]{
  \ifthenelse{\equal{#1}{\empty}}
    {\mathbb{G}_m}
    {\mathbb{G}_{m,#1}}}
 \newcommand{\Gred}[1][\empty]{
  \ifthenelse{\equal{#1}{\empty}}
    {G^{\text{red}}}
    {G^{\text{red},#1}}}
 \newcommand{\Rep}[1][\empty]{
  \ifthenelse{\equal{#1}{\empty}}
    {\mathop{\text{\tt Rep}}\nolimits}
    {\mathop{\text{$#1$-{\tt Rep}}}\nolimits}}
 \newcommand{\REP}[1][\empty]{
  \ifthenelse{\equal{#1}{\empty}}
    {\mathop{\text{\tt REP}}\nolimits}
    {\mathop{\text{$#1$-{\tt REP}}}\nolimits}}
\newcommand\lto{\longrightarrow}
\newcommand\ltoover[1]{\mathrel{\smash{\overset{#1}{\lto}}}}
\newcommand\varto[1]{\mathrel{\hbox to #1pt{\rightarrowfill}}}
\let\longto\longrightarrow
\let\onto\twoheadrightarrow
\def\isoto{\stackrel{\sim}{\longto}}
\newcommand{\BZ}{{\mathbb{Z}}}
\newcommand{\Bone}{{\mathbb{1}}}
\newcommand{\Fz}{{\mathfrak{z}}}
\newcommand{\CA}{{\mathcal A}}
\newcommand{\CC}{{\mathcal C}}
\newcommand{\CF}{{\mathcal F}}
\newcommand{\CG}{{\mathcal G}}
\newcommand{\CL}{{\mathcal L}}
\newcommand{\CM}{{\mathcal M}}
\newcommand{\CN}{{\mathcal N}}
\newcommand{\CO}{{\mathcal O}}
\newcommand{\CT}{{\mathcal T}}
\newcommand{\CX}{{\mathcal X}}
\def\UHom{\mathop{\underline{\rm Hom}}\nolimits}
\def\UIsom{\mathop{\underline{\rm Isom}}\nolimits}
\def\UAut{\mathop{\underline{\rm Aut}}\nolimits}
\def\UTransp{\mathop{\underline{\rm Transp}}\nolimits}
\def\USpl{\mathop{\underline{\rm Spl}}\nolimits}
\newcommand{\Hfl}{\operatorname{H}^1_\text{fl}}
\newcommand{\leftexp}[2]{{\vphantom{#2}}^{#1}{#2}}
\let\phi\varphi
\begin{document}
\title{Graded and Filtered Fiber Functors on Tannakian Categories}
\author{Paul Ziegler\footnote{Dept. of Mathematics,
 ETH Z\"urich
 CH-8092 Z\"urich,
 Switzerland,
 {\tt paul.ziegler@math.ethz.ch}
\newline
Supported by the Swiss National Science Foundation}
}

\maketitle

\abstract{We study fiber functors on Tannakian categories which are equipped with a grading or a filtration. Our goal is to give a comprehensive set of foundational results about such functors. A main result is that each filtration on a fiber functor can be split by a grading fpqc-locally on the base scheme.

\section{Introduction}
\label{Introduction}
Let $k$ be a field and $\CT$ a Tannakian category over $k$, for example the category finite-dimensional representations of an affine group scheme $G$ over $k$, which we denote by $\Rep[G]$. In the theory of such categories, the notion of a fiber functor plays a central role. A fiber functor on $\CT$ over a scheme $S$ over $k$ is a $k$-linear exact tensor functor from $\CT$ to the category of locally free sheaves of finite rank on $S$. This article is concerned with fiber functors equipped with a grading or a filtration in the following sense: a \emph{graded (resp. filtered) fiber functor} on $\CT$ is a fiber functor $\phi$ on $\CT$ over some scheme $S$ over $k$ together with a grading (resp. descending filtration) of the locally free sheaf $\phi(X)$ for each object $X$ of $\CT$ such that these gradings (resp. filtrations) are functorial in $X$, compatible with tensor products and exact in a suitable sense. Our goal is to give a comprehensive set of foundational results about such functors. A number of results in this direction have already been obtained by Saavedra Rivano in \cite[IV]{Saavedra}.

\bigskip
Now we give an overview of our results on graded fiber functors. Since a grading on a quasi-coherent sheaf $\CM$ over a scheme $S$ is the same as an action of $\Gm[S]$ on $\CM$, a graded fiber functor on $\CT$ over a scheme $S$ over $k$ can be described by giving the underlying fiber functor $\omega$ and a cocharacter of $\UAut^\otimes_S(\omega)$ (see Theorem \ref{GFFClassification}). Thus proving a statement about graded fiber functors usually boils down to proving the corresponding statement about cocharacters of a group scheme over $S$.

 Much of our work on graded fiber functors goes into defining and describing the notion of the type of a graded fiber functor. The type of a graded locally free sheaf of finite rank on a connected scheme $S$ over $k$ is the tuple consisting of the ranks of its graded pieces. Two such sheaves have the same type if and only if they are isomorphic Zariski-locally on $S$. This motivates the following definition of the type of a graded fiber functor:

Let $\CC_\CT\colon (\Sch/k)\to (\text{Sets})$ be the fpqc sheaf of isomorphism classes of graded fiber functors, i.e., the sheafification of the presheaf which sends a scheme $S$ over $k$ to the set of isomorphism classes of graded fiber functors over $S$ and which acts on morphisms by pullbacks. For a graded fiber functor $\gamma$ on $\CT$ over some scheme $S$ over $k$ we define the \emph{type of $\gamma$} to be the section $t(\gamma)\in \CC_\CT(S)$ given by the isomorphism class of $\gamma$. Thus two graded fiber functors on $\CT$ over a scheme $S$ over $k$ have the same type if and only they are isomorphic fpqc-locally on $S$. If $\CT=\Rep[G]$ for an affine group scheme $G$ over $k$, then $\CC_\CT$ is the sheaf of conjugacy classes of cocharacters of $G$ (see Lemma \ref{FomegaEquiv}).

In Theorem \ref{ClRepr} we prove:
\begin{theorem} 
  The functor $\CC_\CT$ is representable by a scheme which is \'etale over $k$.
\end{theorem}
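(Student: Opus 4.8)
The plan is to reduce the statement in two steps to the classical fact that the conjugacy classes of cocharacters of a reductive group form an \'etale scheme. First I would reduce to the neutral case. Choose an extension field $k_1/k$ over which $\CT$ becomes neutral, say $\CT_{k_1}\cong\Rep[G_1]$ for an affine group scheme $G_1$ over $k_1$. Since $\CC_\CT$ is by construction an fpqc sheaf on $\Sch/k$ and $\Spec k_1\to\Spec k$ is an fpqc covering, the restriction of $\CC_\CT$ to $\Sch/k_1$ is $\CC_{\Rep[G_1]}$, and $\CC_\CT$ is recovered from it by fpqc descent, the descent datum being canonical since $\CC_\CT$ was already defined over $k$. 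Hence if I can show that $\CC_{\Rep[G_1]}$ is representable by a scheme \'etale over $k_1$ --- equivalently, a disjoint union of spectra of finite separable field extensions of $k_1$, whose connected components are in particular finite over $k_1$ --- then this descent datum is effective (it permutes the components in finite blocks) and the resulting $k$-scheme represents $\CC_\CT$ and is \'etale over $k$, being \'etale a property stable under fpqc descent. So we may assume $\CT=\Rep[G]$ with $G$ affine over $k$, and by Lemma~\ref{FomegaEquiv} the task becomes to show that the fpqc sheafification of $S\mapsto\{\text{cocharacters of }G_S\}/G(S)\text{-conjugacy}$ is representable by an \'etale $k$-scheme.

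Secondly I would reduce to the case that $G$ is reductive, or rather pro-reductive. The projection $G\to\Gred=G/R_u(G)$ should induce an isomorphism $\CC_{\Rep[G]}\iso\CC_{\Rep[\Gred]}$. For surjectivity on conjugacy classes, a cocharacter of $\Gred_S$ lifts fpqc-locally to $G_S$: filtering $R_u(G)$ by $G$-stable subgroups with vector group quotients, each successive obstruction to lifting lies in a higher cohomology group of $\Gm$ with coefficients in a rational representation, which vanishes because $\Gm$ is linearly reductive. For injectivity, two cocharacters of $G_S$ with $\Gred$-conjugate images become, after conjugating one of them, two lifts of a single cocharacter of $\Gred_S$, and hence differ fpqc-locally by conjugation by an element of $R_u(G)(S)$, by the same vanishing. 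So we may assume $G$ is (pro-)reductive.

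Finally, for $G$ (pro-)reductive I would fix a maximal torus $T\subseteq G$ defined over a finite separable extension of $k$, with Weyl group $W=N_G(T)/T$, a (pro-)finite \'etale group scheme, and compare the conjugacy-class presheaf with the \'etale $k$-scheme $Y$ attached to the set $X_*(T)/W$ with its canonical continuous Galois action. The comparison map is fpqc-locally surjective, a locally constant orbit being hit by a constant cocharacter of $T$. For fpqc-local injectivity I would argue, working locally on $S$: every cocharacter of $G_S$ has subtorus image and is therefore fppf-locally conjugate into $T_S$, so I may assume $\lambda,\lambda'$ are cocharacters of $T_S$; if $g\lambda g^{-1}=\lambda'$, then $T_S$ and $gT_Sg^{-1}$ are maximal tori of the reductive group $Z_{G_S}(\lambda')$, hence fppf-locally conjugate in it by some $z$ centralizing $\lambda'$, so that $zg\in N_G(T)(S)$ conjugates $\lambda$ to $\lambda'$ and the two cocharacters define the same point of $X_*(T)/W$; conversely any $W$-conjugacy lifts fppf-locally to $N_G(T)$. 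Thus the sheafification of the conjugacy-class presheaf is $Y$, which is \'etale over $k$ since it is a disjoint union of spectra of finite separable extensions of $k$ --- a union that may be infinite when $G$ is only pro-reductive, which is why the theorem asserts \'etaleness rather than finiteness over $k$. (For pro-reductive $G$, the inputs just used follow from the finite-type reductive case by presenting $G$ as a cofiltered limit of reductive groups of finite type.) Compatibility of all these identifications with the descent datum of the first step is routine.

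The hard part is this last step: showing that the fpqc sheafification of the naive conjugacy presheaf is the \emph{discrete} object $X_*(T)/W$ and nothing larger. This rests on the two geometric facts that every cocharacter is fppf-locally conjugate into a fixed maximal torus and that fppf-local $G$-conjugacy of two cocharacters of that torus coincides with $W$-conjugacy, both of which come down, via the smoothness --- hence fppf-local triviality --- of the relevant transporter and torus-normalizer schemes, to the structure theory of maximal tori and their centralizers in reductive groups. The descent of the first step additionally relies on the routine observation that a scheme \'etale over $k_1$ carries only essentially permutation-type descent data along $k_1/k$, which are automatically effective.
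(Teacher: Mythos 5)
Your proposal takes a genuinely different route from the paper's, and while it has the right general shape it contains a gap that the paper deliberately works around.

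The paper's proof has two independent parts. First, representability as an algebraic space is obtained abstractly: by Theorem~\ref{GFFIsomRepr} the automorphism sheaves $\UAut^\otimes_S(\gamma)$ are flat and of finite type over $S$, so \cite[10.8]{ChampsAlgebriques} applies directly to the stack $\UHom^\otimes(\CT,\GradQCoh)$ and produces the coarse fppf (hence fpqc) sheaf as an algebraic space; once \'etaleness is known, \cite[03KX]{StacksProject} upgrades it to a scheme. Because $\CC_\CT$ already exists over $k$ as an algebraic space, \'etaleness can then be checked after the free base change to $\bar k$ --- no descent is required. You, instead, try to \emph{construct} the representing $k$-scheme by descending an explicit model from a neutralizing extension $k_1/k$. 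That forces $k_1/k$ to be finite for the descent to go through. But your second reduction --- passing from $G$ to $G/R_u(G)$ --- needs $G$ to be smooth and needs the unipotent radical to be a smooth normal $k_1$-subgroup with reductive quotient, which requires a perfect base field; when $k$ is imperfect (and in particular when $G$ fails to be smooth, which the theorem explicitly allows), no finite extension $k_1/k$ will do, so the two halves of your argument cannot both be satisfied simultaneously. Note also that the notation $\Gred$ in the paper means the underlying \emph{reduced subgroup scheme} of $G$, not $G/R_u(G)$, and the relevant Lemma~\ref{ConjClassIso} only passes from $G$ to its smooth subgroup $\Gred$ (over $\bar k$); the paper never needs reductivity, because once $G$ is smooth, \cite[XI.5.3]{SGA3II} already shows that the locus of cocharacters fpqc-locally conjugate to a fixed $\chi$ is open and closed in $\UHom_k(\Gm[k],G)$, which makes the quotient sheaf a disjoint union of copies of $\Spec(k)$ without any appeal to maximal tori or Weyl groups in the proof of \'etaleness.

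In short: your concluding description of $\CC_\CT(\bar k)$ as $X_*(T)/W$ with Galois action is correct (it is essentially Theorem~\ref{ConcreteCL} of the paper), but using it to \emph{prove} representability requires justifying the passage to a reductive quotient over the field where you run the descent, and that fails for imperfect $k$ and non-smooth $G$. The paper avoids this entirely by decoupling representability (abstract, \cite[10.8]{ChampsAlgebriques}) from \'etaleness (checked over $\bar k$, using only smoothness of $\Gred$ and \cite[XI.5.3]{SGA3II}). Finally, the remark about ``pro-reductive'' groups is off target: in the statement being proved $\CT$ has a tensor generator, so $G$ is already of finite type; the scheme of types can still be an infinite disjoint union, but not because $G$ is pro-anything.
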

Thus types of graded fiber functors are points of this scheme $\CC_\CT$, which we call the \emph{scheme of types} of $\CT$. Let $\bar k$ be an algebraic closure of $k$. In Subsection \ref{ExplicitST} we give a more explicit description of the scheme $\CC_\CT$ by describing the set $\CC_\CT(\bar k)$ together with its $\Aut(\bar k/k)$-action.

In Subsection \ref{GFFConsequences}, we use this notion of the type of a graded fiber functor to establish several results about graded fiber functors. In Theorem \ref{GFFIsom}, given two graded fiber functors $\gamma_i$ over schemes $S_i$ over $k$, we describe the subfunctor of $S_1\times_k S_2$ over which the pullbacks of the $S_i$ are isomorphic. In Theorem \ref{GFFClassification2}, we give a description of graded fiber functors of a given type in terms of certain torsors. In Theorem \ref{TypeRealizable}, we prove that under certain conditions each morphism $S\to \CC_\CT$ of schemes arises as the type of a graded fiber functor.

 In Subsection \ref{StGFF} we consider the moduli space of graded fiber functors and show that it forms an algebraic stack.

\bigskip
Now we come to filtered fiber functors. To every filtered locally free sheaf of finite rank $\CM$ one can associate the graded sheaf $\gr\CM$ whose graded pieces are the quotients of successive steps of the filtration on $\CM$. This allows us to associate to a filtered fiber functor $\phi$ on $\CT$ a graded fiber functor $\gr\circ\phi$. Inversely, every graded locally free sheaf of finite rank $\CM=\oplus_{i\in\BZ}\CM^i$ can be equipped with a descending filtration by taking the $i$-th step of the filtration to be $\oplus_{j\geq i}\CM^j$. We denote the resulting filtered locally free sheaf of finite rank by $\fil\CM$. This allows us to associate to every graded fiber functor $\gamma$ on $\CT$ the filtered fiber functor $\fil\circ\gamma$ on $\CT$. Filtered fiber functors of this form are called \emph{splittable}. 

Our main result on filtered fiber functors is the following (see Theorem \ref{MainTheorem}). Its proof is given in Section \ref{Proof}.
\begin{maintheorem}
  Every filtered fiber functor on $\CT$ is fpqc-locally splittable.
\end{maintheorem}
For any filtered fiber functor $\phi$, we denote the underlying fiber functor by $\forget\circ\phi$. Under additional assumptions, the main theorem can be strengthened (see Theorem \ref{FFFZLocSplittable}):
\begin{theorem}  \label{FFFZLocSplittableIntro}
 Let $\phi$ be a filtered fiber functor on $\CT$ over an affine scheme $S$. Assume that the group scheme $\UAut^\otimes_S(\forget\circ\phi)$ is pro-smooth over $S$. Then $\phi$ is splittable.
\end{theorem}
Here by a pro-smooth group scheme over $S$ we mean a group scheme which can be written as the inverse limit of a projective system of group schemes which are smooth over $S$.

The result of the main theorem was stated as an open problem by Saavedra Rivano in \cite[IV.2.2.1]{Saavedra}. In \cite[IV.2.4]{Saavedra} he gives a proof of this fact, due to Deligne, in case $\CT$ is neutral and $k$ has characteristic zero and in case $\CT=\Rep[G]$ for a reductive algebraic group $G$ over $k$. In \cite[IV.2.2.2]{Saavedra} and \cite[IV.2.2.5 3)]{Saavedra} Saavedra Rivano also gives proofs, again due to Deligne, of special cases of Theorem \ref{FFFZLocSplittable}. Our proof of the main theorem is based on a part of these arguments.

The main theorem allows us to deduce statements about filtered fiber functors from statements about graded fiber functors. For example, we define the type of a filtered fiber functor $\phi$ to be the type of the associated graded fiber functor $\gr\circ\phi$, and then the results on graded fiber functors described above also hold for filtered fiber functors, see Subsection \ref{FFFConsequences}.

In Subsection \ref{StFFF} we consider the moduli space of filtered fiber functors and show that it forms an algebraic stack.

\bigskip
This work was motivated by and needed for an ongoing collaboration with Richard Pink and Torsten Wedhorn which is concerned with (generalizations of) the Ekedahl-Oort stratifications of certain Shimura varieties in positive characteristic, see \cite{PWZ1}. The central notion there is that of an $F$-zip, which is a locally free sheaf of finite rank on a scheme of positive characteristic together with two filtrations which are tied together by Frobenius in a certain way. In the article \cite{PWZ2} Pink, Wedhorn and the author consider for certain algebraic groups $G$ functors $\Fz$ from $\Rep[G]$ to the category of $F$-zips. Each such $\Fz$ naturally gives rise to two filtered fiber functors on $\Rep[G]$ and in \cite{PWZ2} results from the present article are used to study $\Fz$. Since graded and filtered quasi-coherent sheaves appear in many other areas of mathematics, we hope that our results will find more applications elsewhere. 
\paragraph{Acknowledgement} I am deeply grateful to Richard Pink for his guidance. I thank Brian Conrad, Christophe Cornut and Pierre Deligne for pointing out mistakes in earlier versions of this article.

\section{Preliminaries}
\label{Preliminaries}

In the section we collect some terminology and facts regarding tensor categories and Tannakian categories. 

\subsection{Categories with a tensor product}

A \emph{symmetric monoidal category} is a category $\CC$ equipped with a functor $\otimes\colon \CC\times \CC\to \CC$ together with certain associativity and commutativity constraints as well as a identity object $\Bone$ (c.f. \cite[I]{Saavedra}, where such an object is called a ``$\otimes$-cat\'egorie ACU'').

Let $\CC$ and $\CC'$ be two symmetric monoidal categories. We call \emph{tensor functor} $F\colon \CC\to \CC'$ a functor $F\colon \CC\to \CC'$ together with certain functorial isomorphisms $F(X)\otimes F(Y)\cong F(X\otimes Y)$ for $X$ and $Y$ in $\CC$ as well as a suitable isomorphism $F(\Bone)\cong \Bone$ (c.f. \cite[I.4]{Saavedra}, where such functors are called ``$\otimes$-foncteur ACU''). 

Let $\CC$ and $\CC'$ be as above and let $F, F'\colon \CC\to\CC'$ be two tensor functors. We call \emph{tensor morphism} $F \to F'$ a natural transformation $F\to F'$ such that the induced morphism $F(\Bone)\to G(\Bone)$ is an isomorphism and such that for all $X,Y\in \CC$ the following square
\begin{equation*}
  \xymatrix{ F(X)\otimes F(Y) \ar[r] \ar[d] & F(X\otimes Y) \ar[d] \\
             G(X)\otimes G(Y) \ar[r]  & G(X\otimes Y)  \\
},
\end{equation*}
in which the vertical maps are those given by the natural transformation and the horizontal maps those part of the tensor functors is commutative (c.f. \cite[I.4]{Saavedra}, where such natural transformations are called ``$\otimes$-morphisme unif\`ere''). We call \emph{tensor isomorphism} $F\to F'$ a tensor morphism $F\to F'$ which is an isomorphism of functors.

 A \emph{rigid object} in a symmetric monoidal category is what is called an object admitting a dual in \cite[2.2]{DeligneGroth}. Any tensor functor maps rigid objects to rigid objects. A symmetric monoidal category is rigid if all its objects are rigid. If $\CC$ is a rigid symmetric monoidal category and $F,F'$ are two tensor functors from $\CC$ to a symmetric monoidal category $\CC'$, any tensor morphism $F\to F'$ is a tensor isomorphism (c.f. \cite[I.5.2.3]{Saavedra}).

\subsection{Tannakian Categories}

For a scheme $S$, we denote by $\QCoh(S)$ the category of quasi-coherent sheaves on $S$. 

Let $k$ be a field. A \emph{Tannakian category} over $k$ is an essentially small symmetric monoidal category $\CT$ which is abelian, $k$-linear and rigid, for which the natural morphism $k\to \End(\Bone)$ induced by the $k$-linear structure of $\CT$ is an isomorphism and for which there exists a non-empty scheme $S$ over $k$ and an exact $k$-linear tensor functor $\omega$ from $\CT$ to $\QCoh(S)$.  C.f. \cite[2.8]{DeligneGroth}
\footnote{The condition that $\CT$ is essentially small does not appear in \cite{DeligneGroth}. However, without this condition the author does not understand the argument there, since in \cite[4.7]{DeligneGroth} a certain construction is made which appears to work only for an essentially small category, which is later applied to Tannakian categories. At any rate, once one knows the result \cite[1.12]{DeligneGroth} that any Tannakian category is the category of representations of a gerbe, it follows from this result that $\CT$ is essentially small. Alternatively the issue could also be avoided by working within a suitable universe, as in \cite{Saavedra}.}
, but note this is not the same definition as in \cite[III.3.2.1]{Saavedra}, since there the condition $k\isoto\End(\Bone)$ is missing. Exact $k$-linear tensor functors $\omega\colon \CT\to \QCoh(S)$ are called \emph{fiber functors} on $\CT$ over $S$. 

Let $\CT$ be a Tannakian category over $k$ and let $S$ be a scheme over $k$. For a fiber functor $\omega$ on $\CT$ over $S$ and a scheme $S'$ over $S$, we denote by $\omega|_{S'}$ the fiber functor over $S'$ obtained by composing $\omega$ with the natural base change functor $\QCoh(S)\to \QCoh(S')$.

For fiber functors $\omega_1$ and $\omega_2$ on $\CT$ over $S$, we denote by $\UIsom^\otimes_S(\omega_1,\omega_2)$ the functor on the category of schemes over $S$ which associates to any scheme $T$ over $S$ the set of tensor isomorphisms $\omega_1|_T\to \omega_2|_T$ and which sends morphisms of schemes over $S$ to pullbacks. In case $\omega_1=\omega_2$, we denote this functor by $\UAut^\otimes_S(\omega_1)$. Composition of tensor morphisms makes $\UAut^\otimes_S(\omega_1)$ into a group-valued functor.

\begin{theorem}[{Deligne, see \cite[1.11-1.13]{DeligneGroth}}]\label{FFLocalIso}
  The functor $\UIsom^\otimes_S(\omega_1,\omega_2)$ is representable by a scheme which is affine and faithfully flat over $S$.
\end{theorem}

The Tannakian category $\CT$ is called \emph{neutral} if there exists a fiber functor $\omega$ on $\CT$ over $k$. By Theorem \ref{FFLocalIso}, associated to such a fiber functor one has the affine group scheme $G\defeq\UAut^\otimes_k(\omega)$ over $k$. For any $X\in \CT$, the group scheme $G$ acts naturally on the vector space $\omega(X)$ and in this way $\omega$ induces a tensor functor $\CT\to \Rep[G]$. The structure of a neutral Tannakian category is described by the following result:
\begin{theorem}[{Deligne-Milne, \cite[Theorem II.2.11]{LNM900}}]
  Let $k$ be a field and $\CT$ a neutral Tannakian category over $k$. The functor $\CT\to\Rep[G]$ induced by a fiber functor $\omega$ on $\CT$ over $k$ is an equivalence of categories.
\end{theorem}

To extend this result to all Tannakian categories, one needs the notion of a gerbe:
\subsection{Gerbes} \label{GerbesSection}

Recall that a stack $\CG$ over a scheme $S$ is a fibered category over $S$ which satisfies effective descent for both objects and morphisms. A gerbe $\CG$ over a scheme $S$ is a stack for the fpqc topology fibered in groupoids over $(\Sch/S)$ satisfying the following two requirements (c.f. \cite[Section III.2.1]{GiraudCNA}):
\begin{itemize}
\item[(i)] There exists a fpqc covering $S'$ of $S$ for which $\CG(S')$ is non-empty.
\item[(ii)] Any two objects of $\CG$ are fpqc-locally isomorphic.
\end{itemize}

The prototypical example of a gerbe is the stack $\Tors_G$ of left $G$-torsors for an fpqc sheaf of groups $G$ over $S$. It is a gerbe over $S$ since $\Tors_G(S)$ contains the trivial torsor and any $G$-torsor is fpqc-locally isomorphic to the trivial torsor. Any gerbe is fpqc-locally isomorphic to such a gerbe because of the following fact:
\begin{theorem}[{\cite[Th\'eor\`eme III.2.5.1]{GiraudCNA}}] \label{GerbeClassification}
  Let $\CG$ be a gerbe over $S$ and $x\in \CG(S')$ for some $S$-scheme $S'$. Then there is an equivalence of gerbes $\CG_{S'}\to \Tors_{\UAut_{S'}(x)}$ which for every $S'$-scheme $S''$ sends objects $y\in \CG(S'')$ to $\UIsom_{S''}(y,x)$ and morphisms $y\to y'$ in $G(S'')$ to the induced morphism $\UIsom_{S''}(y,x)\to \UIsom_{S''}(y',x)$.
\end{theorem}

In particular every gerbe $\CG$ is locally completely determined by the automorphism group of any object. Given two objects $x,x'\in \CG(S')$ for some fpqc cover $S'$ of $S$, there exists, locally on $S'$, an isomorphism $x\cong x'$. Such an isomorphism induces an isomorphism $\UAut(x)\cong \UAut(x')$. If one chooses two different isomorphisms $x\cong x'$ the resulting isomorphisms $\UAut(x)\cong \UAut(x')$ differ by an inner isomorphism. This data of fpqc-locally defined isomorphisms $\UAut(x)\cong\UAut(x')$ which are well-defined up to inner isomorphism can be encapsulated in the so-called $\emph{band}$ of $\CG$, c.f. \cite[Section IV.2.2]{GiraudCNA}. The role that the band of $\CG$ plays for $\CG$ is analogous to the role that a group scheme $G$ plays for $\Tors_G$. We will however not use the notion of a band, we only use it to motivate the following language:

\begin{definition}\label{BandPropertyDef}
  Let $\CG$ be a gerbe over $S$. Let $P$ be a property of a group scheme $G$ over a base scheme $S'$ which is local for the fpqc topology on $S'$. Then we say that the band of $\CG$ has property $P$ over $S$ if and only if $\UAut_{S'}(x)$ has property $P$ over $S'$ for any (equivalently every) object $x\in \CG(S')$ over some fpqc cover $S'$ of $S$.
\end{definition}

Let as above be $k$ a field and $\CT$ a Tannakian category over $k$. For any scheme $S$ over $k$, let $\UHom^\otimes(\CT,\QCoh)(S)$ be the category whose objects are fiber functors on $\CT$ over $S$ and whose morphisms are isomorphisms of tensor functors. With the pullback functors defined above these categories form a category fibered in groupoids over $(\Sch/S)$ which we denote $\UHom^\otimes(\CT,\QCoh)$. By \cite[III.3.2.1.2]{Saavedra} this is a stack and Theorem \ref{FFLocalIso} implies that it is in fact a gerbe over $S$. The affineness statement of Theorem \ref{FFLocalIso} implies that the band of this gerbe is affine over $k$.

Tannakian categories over $k$ form a 2-category, with 1-morphisms given by exact $k$-linear tensor functors and 2-morphisms by tensor isomorphisms. Also, gerbes form a 2-category with 1-morphisms given by morphisms of stacks and 2-morphisms given by equivalences between 1-morphisms. The above construction which associates to a Tannakian category its gerbe of fiber functors can naturally be extended to a 2-functor between these 2-categories. Then Tannaka duality can be formulated as follows:
\begin{theorem}[{Deligne, \cite[1.12]{DeligneGroth}}]
  The above 2-functor which associates to a Tannakian category its gerbe of fiber functors is a 2-equivalence between the 2-category of Tannakian categories over $k$ and the 2-category of gerbes with affine band.
\end{theorem}
 There is also a natural construction which associates to a gerbe its category of representations which gives an inverse to the above 2-functor (c.f. \cite[Section 3]{DeligneGroth}).

Thus a Tannakian category is completely determined by the gerbe $\UHom^\otimes(\CT,\QCoh)$. In case $\CT=\Rep[G]$, the canonical fiber functor of $\Rep[T]$ identifies this gerbe with $\Tors_G$ as in Theorem \ref{GerbeClassification}. Thus for a property $P$ as in Definition \ref{BandPropertyDef}, the statement ``the band of $\UHom^\otimes(\CT,\QCoh)$ has property $P$ over $k$'' about a Tannakian category $\CT$ is the correct generalization of the statement ``$G$ has property $P$ over $k$'' about the category $\Rep[G]$. A statement of this type appears for example in Theorem \ref{FFFZLocSplittableIntro}.

\subsection{Tannakian categories possessing a tensor generator}
\label{TensorGenSection}
Let $\CT$ be a Tannakian category over a field $k$. An object $X$ of $\CT$ is a \emph{tensor generator} of $\CT$ if each object of $\CT$ is a subquotient of a direct sum of objects of the form $X^{\otimes n}$ for $n\geq 0$ (c.f. \cite[II.4.3.1]{Saavedra}). By \cite[II.4.3.2]{Saavedra} a Tannakian category $\CT$ has a tensor generator if and only if the band of the associated gerbe is of finite type over $k$. 

\begin{definition}
  Let $I_\CT$ be the class of strictly full Tannakian subcategories of $\CT$ possessing a tensor generator. This is a set because $\CT$ is essentially small. We endow $I_\CT$ with the partial order induced by inclusion.
\end{definition}
 Then for any fiber functor $\phi$ on $\CT$ over some scheme $S$ over $k$ the group scheme $\UAut^\otimes_S(\phi)$ becomes naturally the inverse limit of the group schemes $\{\UAut^\otimes_S(\phi|_{\CT'})\mid \CT'\in I_\CT\}$ which are of finite type over $S$. It also follows from \cite[III.3.3.3]{Saavedra} that the homomorphisms $\UAut^\otimes_S(\phi)\to \UAut^\otimes_S(\phi|_{\CT'})$ and $\UAut^\otimes_S(\phi|_{\CT'})\to \UAut^\otimes_S(\phi|_{\CT''})$ for $\CT''\subset \CT'$ in $I_\CT$ are faithfully flat. 

  For us a smooth morphism of schemes need not be of finite type, but only locally of finite type as in \cite[Tag 01V5]{StacksProject}. By a pro-smooth group scheme over $S$ we mean a group scheme which is the inverse limit of group schemes which are smooth over $S$. From the above it follows that $\UAut^\otimes_S(\phi)$ is pro-smooth over $S$ if and only if for any full Tannakian subcategory $\CT'$ of $\CT$ possessing a tensor generator the group scheme $\UAut^\otimes_S(\phi_{\CT'})$ is smooth over $S$. 

In the remainder of this subsection, we prove some results which will be needed in the proof of Theorem \ref{UTorsorTrivial} below.
  \begin{lemma} \label{BlaLemma}
    If there exists a fiber functor $\omega$ on $\CT$ over a scheme $S$ over $k$ such that $\UAut^\otimes_S(\omega)$ is smooth over $S$ then $\CT$ has a tensor generator.
  \end{lemma}
  \begin{proof}
    Since $\UAut^\otimes_S(\omega)$ is smooth over $S$ it is locally of finite type over $S$. By Theorem \ref{FFLocalIso} this scheme is affine over $S$. Together these two facts imply that it is of finite type over $S$ which implies the claim.
  \end{proof}

  \begin{definition}
    A \emph{lattice} is a partially ordered set $I$ such that for any two elements $i,j$ of $I$ there exists a least upper bound $i \vee j$ und a greatest lower bound $i\wedge j$ of $i$ and $j$. We will later need:
  \end{definition}

\begin{lemma} \label{ITLattice}
  The partially ordered set $I_\CT$ is a lattice.
\end{lemma}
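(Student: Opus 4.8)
Given $\CT_1,\CT_2\in I_\CT$ with tensor generators $X_1$ and $X_2$, the plan is to exhibit a least upper bound and a greatest lower bound inside $I_\CT$. For the least upper bound I would take $\CT_1\vee\CT_2$ to be the strictly full Tannakian subcategory of $\CT$ generated by the single object $X_1\oplus X_2$, i.e.\ the full subcategory of subquotients of finite direct sums of objects $(X_1\oplus X_2)^{\otimes a}\otimes((X_1\oplus X_2)^\vee)^{\otimes b}$. By construction it lies in $I_\CT$, with tensor generator $X_1\oplus X_2$. Since any strictly full Tannakian subcategory is closed under subobjects, $X_1\oplus X_2\in\CT_1\vee\CT_2$ forces $X_1,X_2\in\CT_1\vee\CT_2$, so $\CT_1\subseteq\CT_1\vee\CT_2$ and $\CT_2\subseteq\CT_1\vee\CT_2$; and any strictly full Tannakian subcategory containing $\CT_1$ and $\CT_2$ contains $X_1\oplus X_2$ and hence all of $\CT_1\vee\CT_2$. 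Thus $\CT_1\vee\CT_2$ is the least upper bound; this part is routine.

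For the greatest lower bound I would take $\CT_1\wedge\CT_2$ to be the full subcategory $\CT_1\cap\CT_2$ of objects of $\CT$ lying in both $\CT_1$ and $\CT_2$. It is immediate that this is a strictly full Tannakian subcategory of $\CT$ and that it is the greatest lower bound of $\CT_1$ and $\CT_2$ in the poset of \emph{all} strictly full Tannakian subcategories of $\CT$; the only point requiring work is that $\CT_1\cap\CT_2$ again has a tensor generator. For this I would reduce to the neutral case: by \cite[6.20]{DeligneGroth} there is a finite extension $k'/k$ and a fiber functor $\omega$ on $\CT$ over $k'$. Set $G\defeq\UAut^\otimes_{k'}(\omega)$ and $G_i\defeq\UAut^\otimes_{k'}(\omega|_{\CT_i})$; since $\CT_i$ has a tensor generator, each $G_i$ is of finite type over $k'$, and restriction gives faithfully flat morphisms $G\to G_i$ with normal closed kernels $N_i$. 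Under the correspondence between strictly full Tannakian subcategories of $\CT$ and normal closed subgroup schemes of $G$, the subcategory $\CT_1\cap\CT_2$ corresponds to the normal closed subgroup scheme $N_1N_2$ generated by $N_1$ and $N_2$, so that $\UAut^\otimes_{k'}(\omega|_{\CT_1\cap\CT_2})\cong G/(N_1N_2)$. Since $N_1N_2\supseteq N_1$, this is a quotient of $G/N_1\cong G_1$, hence, being a quotient of an affine group scheme of finite type over a field, of finite type; it then follows (cf.\ the remark preceding Lemma \ref{FFFBaseChange2} and \cite[6.20]{DeligneGroth}) that $\CT_1\cap\CT_2$ has a tensor generator, so $\CT_1\cap\CT_2\in I_\CT$.

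The step I expect to be the main obstacle is this reduction to the neutral case, and in particular the precise form of the correspondence between strictly full Tannakian subcategories and normal closed subgroup schemes used above: because the base-change functor $\CT\to\CT\otimes_k k'$ is faithful but not full, one has to verify that membership of an object $Y$ in $\CT_i$ — and hence in $\CT_1\cap\CT_2$ — is faithfully detected by triviality of the $N_i$-action on $\omega(Y)$. (Alternatively one can carry out the same computation $G/(N_1N_2)$ intrinsically over $k$ via the Tannakian fundamental gerbe of $\CT$.) Once that is settled, the remaining points — that $X_1\oplus X_2$ generates the join, that $N_1N_2$ is a normal closed subgroup scheme because $N_1$ and $N_2$ are, and that a quotient of a finite-type affine group scheme over a field is again of finite type — are standard.
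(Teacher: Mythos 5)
Your proof is correct and the core idea is the same as the paper's: reduce to a fiber functor over a finite extension $k'$, then observe that the automorphism group of the restricted fiber functor on $\CT_1\cap\CT_2$ is of finite type because it is a quotient of a finite-type group. Your treatment of $\vee$ is identical to the paper's.

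Where you diverge is in the $\wedge$ case, and you take a more circuitous path than necessary. You route the argument through the normal subgroup scheme $N_1N_2$ and the full Tannakian correspondence between strictly full subcategories and normal closed subgroup schemes, which is precisely the step you flag as "the main obstacle" because of the base-change subtleties when $k'\neq k$. The paper sidesteps this entirely: since $\CT_1\cap\CT_2\subseteq\CT_1$, restriction of the fiber functor already gives a faithfully flat morphism $\UAut^\otimes_{k'}(\omega|_{\CT_1})\to\UAut^\otimes_{k'}(\omega|_{\CT_1\cap\CT_2})$ (a standard feature of the Tannakian formalism, not requiring the bijective correspondence with normal subgroups), so the target is a quotient of a finite-type affine group scheme and therefore of finite type. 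This yields the same conclusion without ever invoking $N_1N_2$, and so the correspondence issue you worry about never arises. Your argument does go through once that correspondence is verified (and the paper itself verifies a closely related compatibility $(i\wedge j)_{k'}=i_{k'}\wedge j_{k'}$ in the proof of Lemma \ref{FFCartesian}), but for the present lemma it is extra machinery that the inclusion $\CT_1\cap\CT_2\subseteq\CT_1$ already makes unnecessary.
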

\begin{proof}
  Let $i,j\in I_\CT$. We need to show that there exist elements $i\vee j$ and $i\wedge j$ in $I_\CT$. 

 For $i\vee j$ we take the strictly full Tannakian subcategory of $\CT$ generated by all elements of $i$ and $j$. If $X_i,X_j$ are tensor generators of $i,j$, then $X_i\oplus X_j$ is a tensor generator of $i\vee j$, so that $i\vee j\in I_\CT$. It follows directly from the construction that $i\vee j$ is a least upper bound for $i$ and $j$ in $I_\CT$.

 Let $i\wedge j\defeq i\cap j$. This is a strictly full Tannakian subcategory of $\CT$. To see that it has a tensor generator, pick a fiber functor $\omega\colon \CT\to\QCoh(k')$ over some overfield $k'$ of $k$. The fact that $i$ has a tensor generator implies that $\UAut^\otimes_{k'}(\omega|_i)$ is of finite type over $k'$. Hence $\UAut^\otimes_{k'}(\omega|_{i\wedge j})$, being a quotient of $\UAut^\otimes_{k'}(\omega|_i)$, is also of finite type over $k'$. This proves that $i\wedge j$ has a tensor generator. Thus $i\wedge j\in I_\CT$ is a greatest lower bound for $i$ and $j$.
\end{proof}

\begin{lemma} \label{FFCartesian}
  Let $\omega\colon \CT\to \QCoh(S)$ be a fiber functor. For all $i,j \in I_\CT$, the square
  \begin{equation*}
    \xymatrix{
      \UAut^\otimes_S(\omega|_i) \ar[d] & \UAut^\otimes_S(\omega|_{i\vee j}) \ar[l]\ar[d] \\
      \UAut^\otimes_S(\omega|_{i\wedge j}) & \UAut^\otimes_S(\omega|_j) \ar[l] \\
      }
  \end{equation*}
  whose arrows are the morphisms induced by restriction is Cartesian.
\end{lemma}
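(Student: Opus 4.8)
The plan is to reduce the Cartesian property to a bijectivity statement on sections, and then verify that statement by hand using the Tannakian description of $\UAut^\otimes$ in terms of a tensor generator, exploiting that $i$, $j$, $i\vee j$, $i\wedge j$ all lie in $I_\CT$. A square of $S$-schemes is Cartesian as soon as it induces a bijection on $T$-valued points for every $S$-scheme $T$; since $\UAut^\otimes_S(\omega|_\bullet)(T)=\UAut^\otimes_T(\omega_T|_\bullet)$ and the formation of fibre products commutes with base change, after replacing $(S,\omega)$ by $(T,\omega_T)$ it suffices to show, for an arbitrary fibre functor $\omega$ on $\CT$ over a scheme $S$ and with $G_\bullet\defeq\UAut^\otimes_S(\omega|_\bullet)$, that the restriction map
\[
   G_{i\vee j}(S)\lto G_i(S)\times_{G_{i\wedge j}(S)}G_j(S)
\]
is bijective. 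Fix tensor generators $X_i$ of $i$ and $X_j$ of $j$; then $X_i\oplus X_j$ is a tensor generator of $i\vee j$.

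Injectivity would be immediate: a tensor automorphism of $\omega|_{i\vee j}$ is determined by its value at the tensor generator $X_i\oplus X_j$, and that value is recovered from the restrictions to $i$ and to $j$ (through the values at $X_i$ and $X_j$). For surjectivity I would invoke the standard description of the automorphism group of a fibre functor on a Tannakian category with tensor generator $X$: it is the subgroup of $\GL(\omega X)$ of those $h$ which fix, under the action built from $h$ by tensor powers and duals, the vector $\omega(u)$ for every morphism $u\colon\Bone\to X^{\otimes a}\otimes(X^\vee)^{\otimes b}$ of the category. Given $(g_1,g_2)$ with $g_1|_{i\wedge j}=g_2|_{i\wedge j}$, I would put $h\defeq(g_1)_{X_i}\oplus(g_2)_{X_j}\in\GL(\omega(X_i\oplus X_j))(S)$; once one checks that $h$ fixes all such tensors for $X=X_i\oplus X_j$, it defines an element $g\in G_{i\vee j}(S)$, and then $g|_i$ and $g_1$ are tensor automorphisms of $\omega|_i$ agreeing at the tensor generator $X_i$, hence equal, and likewise $g|_j=g_2$, so $g$ is the desired preimage.

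Decomposing $(X_i\oplus X_j)^{\otimes a}\otimes((X_i\oplus X_j)^\vee)^{\otimes b}$ and reordering tensor factors, each such object is a finite direct sum of objects $A\otimes B$ with $A\in i$ and $B\in j$, on which $h$ acts as $(g_1)_A\otimes(g_2)_B$. So everything comes down to the following claim, which I expect to be the heart of the matter and which is precisely where the hypothesis $i\wedge j=i\cap j$ enters: for all $A\in i$, $B\in j$ and every morphism $u\colon\Bone\to A\otimes B$ in $\CT$, the vector $\omega(u)$ is fixed by $(g_1)_A\otimes(g_2)_B$. Under rigidity $u$ corresponds to a morphism $\psi\colon A^\vee\to B$ in $\CT$; its image $N$ is a quotient of the object $A^\vee$ of $i$ and a subobject of the object $B$ of $j$, hence $N\in i\cap j=i\wedge j$. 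Factoring $\psi=\iota\circ p$ with $p\colon A^\vee\twoheadrightarrow N$ and $\iota\colon N\hookrightarrow B$, naturality of $g_1$ along the $i$-morphism $p$, naturality of $g_2$ along the $j$-morphism $\iota$, and the equality $(g_1)_N=(g_2)_N$ together yield $(g_2)_B\circ\omega(\psi)=\omega(\psi)\circ(g_1)_{A^\vee}$. Writing $u$ as the composite $\Bone\to A\otimes A^\vee\xrightarrow{\mathrm{id}_A\otimes\psi}A\otimes B$ with the first arrow the coevaluation (a morphism of $i$), and combining this intertwining relation with tensor-compatibility and naturality of $g_1$, one computes $((g_1)_A\otimes(g_2)_B)\circ\omega(u)=\omega(u)$, as required. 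Since everything happens inside $\CT$ and is transported by the exact tensor functor $\omega$, the argument is uniform in $S$ and needs no reduction to a neutral category or to a field.
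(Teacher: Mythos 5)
Your argument is correct, and it takes a genuinely different route from the paper's. The paper's proof reduces fpqc-locally to the case where $S$ is the spectrum of a finite overfield of $k$, passes to the neutral case $\CT=\Rep[G]$ via base change of Tannakian categories (checking along the way that $\wedge$ and $\vee$ in $I_\CT$ are compatible with this base change), translates the lattice $I_\CT$ into the lattice of normal subgroup schemes of $G$ (so that $i\vee j$ corresponds to $K_i\cap K_j$ and $i\wedge j$ to $K_iK_j$), and finally reduces the Cartesian property to a purely group-theoretic lemma about the quotients $G/K$, $G/K'$, $G/(K\cap K')$, $G/KK'$. Your argument instead stays inside $\CT$, using the Tannakian characterization of $\UAut^\otimes$ relative to a tensor generator as the stabilizer of the invariant tensors $\omega(u)$, $u\colon\Bone\to T_{a,b}$; the role of $i\wedge j=i\cap j$ then enters through the observation that the image of $\psi\colon A^\vee\to B$ is a subquotient lying in both $i$ and $j$, which is exactly what lets you splice the intertwining relations for $g_1$ and $g_2$ through their common value on $i\wedge j$. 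This yields a proof that never leaves the tensor-categorical setting and makes the appearance of $i\wedge j$ very transparent. One caveat: the ``standard description'' you invoke is usually stated for a fiber functor over a field, and its extension to an arbitrary base $S$ is true but is itself established by fpqc descent to the field case (any two fiber functors being fpqc-locally isomorphic); so the closing remark that the argument ``needs no reduction to a neutral category or to a field'' somewhat overstates the economy of the approach, since that reduction is hidden inside the input you cite rather than being eliminated.
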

\begin{proof}[Proof of Lemma \ref{FFCartesian}]
  It suffices to prove the claim fpqc-locally on $S$. After replacing $\CT$ by $i\vee j$, we may assume that $\CT$ has a tensor generator. Then there exists a finite field extension $k'$ of $k$ and a fiber functor $\omega'\colon \CT\to \QCoh(k')$. Since the fiber functors $\omega_{S_{k'}}$ and $\omega'_{S_{k'}}$ are isomorphic fpqc-locally on $S_{k'}$ by Theorem \ref{FFLocalIso}, it suffices to prove the claim for $\omega'$ instead of $\omega$. Thus we have reduced to the case that $S=\Spec(k')$ for a finite field extension $k'$ of $k$.

Construction \ref{TannakaBaseChange} implies $(i\wedge j)_{k'}=i_{k'}\wedge j_{k'}$ and \cite[III.3.2.4.1]{Saavedra} implies $(i\vee j)_{k'}=i_{k'}\vee j_{k'}$. Thus after replacing $i,j, i\vee j$ and $i\wedge j$ by their base change to $k'$, which does not change the square in question, we may assume that $k=k'$.

Set $G\defeq \UAut^\otimes_k(\omega)$. For $h\in I_\CT$ let $G_h\defeq \UAut^\otimes_k(\omega|_h)$ and $K_h$ the kernel of the homomorphism $G\to G_h$ induced by restriction. Since giving a strictly full Tannakian subcategory of $\Rep[G]$ is the same as giving the corresponding quotient of $G$, the assignment $h\mapsto K_h$ gives a order-reversing bijection between $I_\CT$ and the lattice of normal subgroup schemes of $G$. This implies $K_{i\vee j}=K_i\cap K_j$ and $K_{i\wedge j}=K_i K_j$. Thus we have reduced to the following claim, which follows from a direct verification.
\end{proof}
\begin{lemma}
  Let $G$ be an affine group scheme of finite type over $k$. Let $K,K'$ be two normal subgroup schemes of $G$. Then the square
  \begin{equation*}
    \xymatrix{
      G/K \ar[d] & G/K\cap K' \ar[d]\ar[l] \\
      G/ KK' & G/ K' \ar[l]\\
    }
  \end{equation*}
  is Cartesian.
\end{lemma}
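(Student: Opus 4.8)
The plan is to argue in the category of fpqc sheaves of sets on $(\Sch/k)$, reading $G/K$, $G/K'$, $G/(K\cap K')$ and $G/KK'$ as the fpqc quotient sheaves; each is in fact representable by an affine group scheme of finite type over $k$, but I would only use this at the very end. Since $K\cap K'$ lies in both $K$ and $K'$, the quotient maps assemble into a canonical morphism $\pi\colon G/(K\cap K')\to P$ into the fiber-product sheaf $P\defeq (G/K)\times_{G/KK'}(G/K')$, and $\pi$ is compatible with the displayed square; so the whole problem reduces to showing that $\pi$ is an isomorphism of fpqc sheaves, which I would do by checking separately that $\pi$ is a monomorphism and an epimorphism (a bimorphism of sheaves being an isomorphism).

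For the monomorphism, I would take $x,y\in (G/(K\cap K'))(S)$ with $\pi(x)=\pi(y)$, pass to an fpqc covering $S'\to S$ over which $x,y$ lift to $g,h\in G(S')$, and observe that $\pi(x)=\pi(y)$ makes $g$ and $h$ have the same image in $G/K$ and in $G/K'$; since $G\to G/K$ and $G\to G/K'$ are torsors this gives $g^{-1}h\in K(S')\cap K'(S')=(K\cap K')(S')$, so $x$ and $y$ coincide over $S'$ and hence, by the sheaf property, over $S$.

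For the epimorphism, I would take $q=(\bar g_1,\bar g_2)\in P(S)$, pass to an fpqc covering $S'\to S$ over which $\bar g_1,\bar g_2$ lift to $g_1,g_2\in G(S')$, note that their common image in $G/KK'$ forces $g_1^{-1}g_2\in (KK')(S')$ ($G\to G/KK'$ being a $KK'$-torsor), and then --- after one more fpqc refinement, using that the normality of $K$ and $K'$ makes $KK'$ the image sheaf of the multiplication $K\times_k K'\to G$ --- write $g_1^{-1}g_2=kk'$ with $k\in K(S')$, $k'\in K'(S')$. Setting $g\defeq g_1k=g_2(k')^{-1}$, the element $g$ has image $\bar g_1$ in $G/K$ and $\bar g_2$ in $G/K'$, so its image in $G/(K\cap K')$ maps to $q|_{S'}$ under $\pi$; this exhibits $\pi$ as locally surjective. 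Combining the two steps, $\pi$ is an isomorphism, and since all four sheaves involved are representable by schemes the square is Cartesian in the category of schemes as well.

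There is no real obstacle here: the one place deserving attention is the fpqc-local analysis in the epimorphism step --- namely that a section of $KK'$ is, fpqc-locally, an honest product of a section of $K$ with a section of $K'$ --- and this is immediate from the fact that, $K$ and $K'$ being normal, the naive product presheaf is already a subgroup presheaf whose sheafification is $KK'$. Everything else is formal manipulation with the sheaf axiom.
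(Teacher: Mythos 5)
Your argument is correct, and it fills in exactly the ``direct verification'' that the paper leaves unstated: passing to fpqc sheaves, lifting sections along faithfully flat covers, and using the sheaf-exactness of $1\to H\to G\to G/H\to 1$ to identify $K\cap K'$ with the pointwise intersection and $KK'$ with the (sheaf) image of $K\times K'\to G$. The one point worth spelling out a bit more is the epimorphism step's local factorization $g_1^{-1}g_2=kk'$: the cleanest justification is that, using normality of $K$, the multiplication $K\times K'\to KK'$ can be identified with the surjection $K\rtimes K'\to (K\rtimes K')/(K\cap K')\cong KK'$ from the semidirect product by its antidiagonally embedded kernel, which is faithfully flat and hence an fpqc epimorphism; your ``naive product presheaf sheafifies to $KK'$'' remark is precisely this fact, and with it the proof is complete.
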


\subsection{Base change of a Tannakian category}

We recall the notion of the base change of a Tannakian category $\CT$ over to a finite field extension:
\begin{construction} \label{TannakaBaseChange}
Let $\CT$ be a Tannakian category over $k$ and let $k'$ be a finite field extension of $k$. The base change of $\CT$ to $k'$ is the category $\CT_{k'}$ defined as follows: Its objects are pairs $(V,\alpha)$ where $V\in\CT$ and $\alpha\colon k'\to \End(V)$ is a homomorphism of $k$-algebras. A morphism $(V,\alpha)\to(V',\alpha')$ in $\CT_{k'}$ is a morphism $f\colon V\to V'$ such that $f\circ \alpha(\lambda)=\alpha'(\lambda)\circ f$ for all $\lambda\in k'$. The tensor product of two elements $(V,\alpha),(V',\alpha')\in \CT_{k'}$ is defined to be $(V\otimes_{k'} V',\tilde \alpha)$, where $V\otimes_{k'}V$ is the largest quotient of $V\otimes V'$ on which $\alpha(\lambda)\otimes \Id_{V'}$ and $\Id_V\otimes\alpha'(\lambda)$ agree for all $\lambda\in k$ and where $\tilde \alpha$ is induced by $\alpha\otimes \Id_{V'}$ or, equivalently, by $\Id_V\otimes\alpha'$. This defines a functor $\otimes\colon \CT_{k'}\times\CT_{k'}\to \CT_{k'}$. As is shown in \cite{LNM900} between Remark II.3.10 and Proposition II.3.11, this makes $\CT_{k'}$ into a Tannakian category over $k'$.

 There is a natural tensor morphism $\CT\to\CT_{k'}$ which sends $V\in \CT$ to $(V\otimes k',\alpha_V)$ where $\alpha_V$ sends $\lambda\in k'$ to the endomorphism $\Id_V\otimes \lambda$ of $V\otimes k'$. By [loc. cit.] any fiber functor $\omega\colon \CT\to \QCoh(k')$ extends to a fiber functor $\omega'\colon \CT_{k'}\to \QCoh(k')$ such that the functor $\CT\to \CT_{k'}$ induces an isomorphism $\UAut^\otimes_{k'}(\omega')\to \UAut^\otimes_{k'}(\omega)$. In particular $\CT_{k'}$ is neutral if $\CT$ possesses a fiber functor over $k'$.
\end{construction}

\subsection{An exactness criterion for inverse limits}
In this subsection we prove Theorem \ref{InvLimitProjModules} below, which will be needed in the proof of Theorem \ref{UTorsorTrivial}.
\begin{construction}
  Let $(G_i)_{i\in I}$ be an inverse system of groups indexed by a partially ordered set $I$ with transition maps $\pi_j^i\colon G_i\to G_j$ for all $j\leq i$ in $I$.  Let $Z$ be the set of tuples $(g_j^i)_{j\leq i\in I}$ such that $g_j^i\in G_j$ for all $j\leq i$ and $g_k^i=\pi_k^j(g_j^i)g_k^j$ for all $k\leq j\leq i$ in $I$. Let $B\defeq \prod_{i\in I}G_i$. This group acts on $Z$ from the left by $(g_i)_{i\in I}\cdot (g_j^i)_{j\leq i\in I}\defeq (\pi_j^i(g_i) g_j^i g_j^{-1} )_{i\leq j\in I}$. Let $\varprojlim_{i\in I}^1G_i\defeq B\backslash Z$. This is a set which is pointed by the class of $(1_{G_j})_{j\leq i\in I}\in Z$. It is functorial in $(G_i)_{i\in I}$ in a natural way.
\end{construction}
In case the $G_i$ are abelian, the set $\varprojlim_{i\in I}^1G_i$ inherits a group structure. In this case it follows from \cite[Th\'eor\`eme 4.1]{JensenInverseLimits} that $\varprojlim_{i\in I}^1$ is the first right derived functor of the inverse limit functor.
\begin{definition}
 Let $(G_i)_{i\in I}$ be an inverse system of groups indexed by a partially ordered set $I$. A (left) \emph{$(G_i)_{i\in I}$-torsor} is an inverse system of sets $(X_i)_{i\in I}$ such that each $X_i$ is a left $G_i$-torsor and for all $j\leq i$ the transition morphism $X_i\to X_j$ is equivariant with respect to the transition morphism $G_i\to G_j$.

A \emph{morphism} between $(G_i)_{i\in I}$-torsors $(X_i)_{i\in I}$ and $(X_i')_{i\in I}$ is a morphism $(X_i)_{i\in I}\to (X_i')_{i\in I}$ of inverse systems of sets such that each morphism $X_i\to X_i'$ is $G_i$-equivariant.

We call $(G_i)_{i\in I}$ considered as a torsor under itself by left multiplication the \emph{trivial $(G_i)_{i\in I}$-torsor}. A $(G_i)_{i\in I}$-torsor $(X_i)_{i\in I}$ is isomorphic to the trivial torsor if and only if $\varprojlim_{i\in I}X_i\not=\emptyset$.
\end{definition}
\begin{lemma} \label{Lim1Torsor}
  Let $(G_i)_{i\in I}$ be an inverse system of groups indexed by a partially ordered set $I$. There is a natural bijection between the set of isomorphism classes of $(G_i)_{i\in I}$-torsors and $\varprojlim^1_{i\in I}G_i$ under which the distinguished point of $\varprojlim^1_{i\in I}G_i$ corresponds to the class of the trivial torsor $(G_i)_{i\in I}$. 
\end{lemma}
\begin{proof}
  Let $(X_i)_{i\in I}$ be a $(G_i)_{i\in I}$-torsor. For each $i\in I$ we pick an element $x_i\in X_i$. Then each transition map $f_j^i\colon X_i\to X_j$ for $j\leq i$ in $I$ is uniquely determined by the unique element $g_j^i\in G_j$ such that $f_j^i(x_i)=g_j^i\cdot x_j$. The fact that $(X_i)_{i\in I}$ is an inverse system implies $(g_j^i)_{j\leq i\in I}\in Z$. A different choice of $x_i$ would be of the form $(g_j\cdot x_i)_{i\in I}$ for certain $g_i\in G_i$. The element of $Z$ associated to such a choice of $x_i$ would be $( \pi_j^i(g_i) g_j^ig^{-1}_j)_{j\leq i\in I}$. Thus this construction associates to $(X_i)_{i\in I}$ a well-defined element of $\varprojlim_{i\in I}^1G_i$. By a direct verification this element only depends on the isomorphism class of $(X_i)_{i \in I}$. Thus we have a map as claimed and it follows from a direct verification that it is a bijection. The last claim follows directly from the construction.
\end{proof}
\begin{lemma}\label{Lim1ExactSeq}
  Let $0\to (G_i')_{i\in I}\to (G_i)_{i\in I}\to (G_i'')_{i\in I}\to 0$ be an exact sequence of inverse systems of groups indexed by a partially ordered set $I$. Then there is a natural sequence of pointed sets
  \begin{equation*}
    \xymatrix@C-9pt{ 0 \ar[r] & \varprojlim_{i\in I}G'_i \ar[r] & \varprojlim_{i\in I}G_i\ar[r] & \varprojlim_{i\in I}G''_i \ar[r] &\varprojlim_{i\in I}^1G'_i\ar[r] & \varprojlim_{i\in I}^1G_i\ar[r] & \varprojlim_{i\in I}^1G_i''
}
  \end{equation*}
which is exact in the following sense: The image of each map is the fiber over the distinguished point of the following map, and in addition there is a natural left action of $\varprojlim_{i\in I}G''_i$ on $\varprojlim_{i\in I}^1G'_i$ the orbits of which are the fibers of the map $\varprojlim_{i\in I}^1G'_i\to \varprojlim_{i\in I}^1G_i$. 
\end{lemma}
\begin{proof}
The action of $\varprojlim_{i\in I}G''_i$ on $\varprojlim_{i\in I}^1G'_i$ is defined as follows: Let $g''=(g''_i)_{i\in I}\in \varprojlim_{i\in I}G''_i$ and let $(\tilde{ g}^i_j)_{j\leq i\in I}$ with $\tilde{g}^i_j\in G'_j$ be a representative of an element $\tilde{g}\in \varprojlim_{i\in I}^1G'_i$. Pick elements $g_i$ in $G_i$ which lift the elements $g''_i\in \G''_i$. Then $g''$ sends $\tilde g$ to the class of $(\pi^i_j(g_i)\tilde {g}^i_j g_j^{-1})_{j\leq i\in I}$. It follows by a direct verification that this is a well-defined action. The map $\varprojlim_{i\in I}^1G'_i\to \varprojlim_{i\in I}^1G_i$ is the map which sends $g\in \varprojlim_{i\in I}^1G'_i$ to the image of the distinguished point of $\varprojlim_{i\in I}^1G_i$. The remaining maps are the natural functoriality maps. That the sequence is exact follows by direct verification. 
\end{proof}

\begin{theorem} \label{InvLimitProjModules}
  Let $R$ be a ring and $(M_i)_{i\in I}$ an inverse system of $R$-modules indexed by a partially ordered set $I$ such that:
  \begin{enumerate}
  \item [(i)] The set $I$ is a lattice.
  \item [(ii)] Each $M_i$ is a finitely generated projective $R$-module.
  \item [(iii)] For all $j\leq i$ in $I$, the transition morphism $\mu_j^i\colon M_i\to M_j$ is surjective.
  \item [(iv)] For all $i,j$ in $I$, the following square is Cartesian:
    \begin{equation*}
      \xymatrix{
        M_i \ar[d] & M_{i\vee j} \ar[l] \ar[d] \\
        M_{i\wedge j} & M_j \ar[l]
        }
    \end{equation*}
  \end{enumerate}
Then $\varprojlim_{i\in I}^1M_i=0$. 
\end{theorem}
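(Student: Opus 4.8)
The plan is to reduce, via Lemma \ref{Lim1Torsor}, to a statement about torsors and then to build a global section by a Zorn argument along the lattice, conditions (ii)--(iv) being exactly what makes the inductive step go through. Concretely: since each $M_i$ is an abelian group, $\varprojlim^1_{i\in I}M_i=0$ is by Lemma \ref{Lim1Torsor} equivalent to the assertion that every inverse system $(X_i)_{i\in I}$ of $M_i$-torsors satisfies $\varprojlim_{i\in I}X_i\neq\emptyset$; so I fix such a system, write $\phi^i_j\colon X_i\to X_j$ for its transition maps, and look for a compatible family $(x_i)_{i\in I}$. By (ii) and (iii) each $\mu^i_j$ is a split surjection of finitely generated projective modules, so every $\phi^i_j$ is surjective; and (iii) together with (iv) makes the sequence $0\to M_{i\vee j}\to M_i\oplus M_j\to M_{i\wedge j}\to 0$ short exact, so the Cartesian square of (iv) is also coCartesian. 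Transporting this to the torsors, the natural map $X_{i\vee j}\to X_i\times_{X_{i\wedge j}}X_j$ is a morphism of torsors under $M_{i\vee j}=M_i\times_{M_{i\wedge j}}M_j$ whose target is non-empty (lift along $\phi^j_{i\wedge j}$), hence an isomorphism; so the torsor system ``glues'' along the binary covers $i\le i\vee j\ge j$.

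Next I would apply Zorn's lemma to the set of pairs $(A,s)$ with $A\subseteq I$ a sublattice and $s\in\varprojlim_{i\in A}X_i$, ordered by extension, and reduce to showing that a maximal such $A$ is all of $I$. Supposing $i_0\in I\setminus A$: the sublattice property of $A$ makes the value $\phi^{i_0}_e(x_{i_0})$ forced and independent of choices for every $e$ of the form $i_0\wedge j$ with $j\in A$, say equal to $y_e$, and the $y_e$ form a compatible family over the sublattice $E:=\{\,i_0\wedge j\mid j\in A\,\}\subseteq\{\,e\in I\mid e\le i_0\,\}$. Conversely, any $x_{i_0}\in X_{i_0}$ with $\phi^{i_0}_e(x_{i_0})=y_e$ for all $e\in E$ yields, together with the Cartesian isomorphisms of the first paragraph and the surjectivity of the transition maps, a compatible extension of $s$ to $A\cup\{i_0\}$ and then, by a routine induction over lattice words, to the sublattice generated by $A\cup\{i_0\}$ --- contradicting maximality. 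So the whole proof comes down to one lifting problem: lift the compatible family $(y_e)_{e\in E}$ along $X_{i_0}\to\varprojlim_{e\in E}X_e$.

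This lifting step is the point I expect to be the main obstacle, and it is exactly where finite generation and projectivity (rather than mere surjectivity, which would fail for $E$ of uncountable cofinality) are indispensable. The transition maps of the subsystem $(M_e)_{e\in E}$ are split surjections of finitely generated projective modules, so their ranks are bounded by $\rk M_{i_0}$; working over a connected ring --- the general case reducing to this via the rank decomposition of $\Spec R$ --- the elements of $E$ of maximal rank form a cofinal subset $T$ on which all transition maps are isomorphisms, so $(y_e)_{e\in E}$ is determined by its restriction to $T$, and since $e\le i_0$ for $e\in T$ the surjection $X_{i_0}\to X_e$ provides a point $x_{i_0}$ lifting $y_e$; one then checks, using cofinality of $T$ and coherence of $(y_e)$ over $E$, that this $x_{i_0}$ satisfies all the constraints. (Alternatively one may use the coCartesian squares to identify $\ker(M_{i_0}\to M_e)$ with a sublattice of direct summands of $M_{i_0}$ and argue with those; and in the situation of the intended application every principal down-set of $I$ is finite, so $E$ has a greatest element and the lift is immediate from (iii).) Granting the lift, the Zorn argument gives $\varprojlim_{i\in I}X_i\neq\emptyset$ and hence $\varprojlim^1_{i\in I}M_i=0$; I would organise the write-up around isolating this lifting lemma --- about inverse systems of finitely generated projective modules with split surjective transition maps indexed by a bounded-above sublattice --- as the technical core.
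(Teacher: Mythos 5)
Your proposal uses the same reduction via Lemma \ref{Lim1Torsor} but then follows a genuinely different route from the paper's: the paper verifies the hypotheses of the Bourbaki--Mittag-Leffler criterion (Theorem \ref{InvLimitCrit}), taking the $S_i$ to be the families of cosets $x+\ker(\mu^i_j)$, whereas you run a Zorn argument on pairs (sublattice, compatible section). Both strategies hinge on the same rank argument for the key step, so they are comparable in difficulty; your version is more hands-on, the paper's invokes a black box.

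Two points in your write-up need attention. First, $E=\{i_0\wedge j\mid j\in A\}$ is closed under $\wedge$ (since $(i_0\wedge j)\wedge(i_0\wedge j')=i_0\wedge(j\wedge j')$) and is directed (a common upper bound of $i_0\wedge j$ and $i_0\wedge j'$ is $i_0\wedge(j\vee j')$), but it need not be closed under $\vee$, so it is not a sublattice; fortunately directedness is all your cofinality argument actually uses. Second, the ``routine induction over lattice words'' for extending from $A\cup\{i_0\}$ to $B:=\langle A,i_0\rangle$ is not routine: in a non-distributive lattice $B$ has no normal form, so an induction on word length runs into well-definedness and coherence problems. The step can nevertheless be made precise, and I would encourage you to spell it out rather than wave at it. A clean way is: for $b\in B$, choose a word representing $b$ and let $j\in A$ be the join of the $A$-variables appearing in it, so $b\leq j\vee i_0$; set $x_{j\vee i_0}:=(x_j,x_{i_0})$ via the Cartesian isomorphism $X_{j\vee i_0}\cong X_j\times_{X_{j\wedge i_0}}X_{i_0}$ (this makes sense because $\phi^j_{j\wedge i_0}(x_j)=y_{j\wedge i_0}=\phi^{i_0}_{j\wedge i_0}(x_{i_0})$), and define $x_b:=\phi^{j\vee i_0}_b(x_{j\vee i_0})$. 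One must then check independence of $j$ and compatibility; the key computation is that $\phi^{j''\vee i_0}_{j\vee i_0}(x_{j''},x_{i_0})=(\phi^{j''}_j(x_{j''}),x_{i_0})=(x_j,x_{i_0})$ for $j\leq j''$ in $A$, which follows from the identification of the transition map with the map of fiber products. Finally, your reduction ``to connected $R$ via the rank decomposition of $\Spec R$'' has the same subtlety as the paper's appeal to $\Spec R$ having finitely many connected components, and the remark that in the intended application every principal down-set of $I$ is finite is not correct, since down-sets in $I_\CT$ correspond to sublattices of normal subgroup schemes.
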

To prove this, we will use the following criterion:
\begin{theorem}[{\cite[Th\'eor\`eme III.7.4.1]{BourbakiTDE}}] \label{InvLimitCrit}
  Let $I$ be filtered partially ordered set and $(X_i)_{i\in I}$ an inverse system of non-empty sets with transition morphisms $f_j^i$ for $i\leq j$ in $I$. Assume that for each $i\in I$ there is a family $S_i$ of subsets of $X_i$ such that:
  \begin{enumerate}
  \item [(i)] Each $S_i$ is closed under arbitrary intersections.
  \item [(ii)] For each $F\subset S_i$ such that each finite intersection of elements of $F$ is not empty, the intersection of all elements of $F$ is not empty.
  \item [(iii)] For all $j\leq i$ in $I$ and $x\in X_j$, the set $(f_j^i)^{-1}(x)$ is in $S_i$.
  \item [(iv)] For all $j\leq i$ in $I$ and $Y\in S_i$, the set $f_j^i(Y)$ is in $S_j$.
  \end{enumerate}
Then the set $\varprojlim_{i\in I}X_i$ is not empty.
\end{theorem}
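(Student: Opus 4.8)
The plan is to prove the criterion by a Zorn's lemma argument --- the classical proof that a ``compact-like'' inverse system over a directed set has non-empty limit. Let $\mathcal{F}$ be the set of families $(Y_i)_{i\in I}$ with $Y_i\in S_i$, $Y_i\neq\emptyset$ for all $i$, and $f^i_j(Y_i)\subseteq Y_j$ for all $j\leq i$ in $I$; order $\mathcal{F}$ by componentwise inclusion. Then $\mathcal{F}\neq\emptyset$, since the empty intersection $X_i$ lies in $S_i$ by (i), so $(X_i)_{i\in I}\in\mathcal{F}$. Every non-empty chain in $\mathcal{F}$ has a lower bound, namely the componentwise intersection: it lies in each $S_i$ by (i), it is non-empty because a chain of non-empty sets has the finite intersection property so that (ii) applies, and it satisfies the compatibility condition since $f^i_j(\bigcap_\lambda Y^\lambda_i)\subseteq\bigcap_\lambda f^i_j(Y^\lambda_i)\subseteq\bigcap_\lambda Y^\lambda_j$. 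By Zorn's lemma $\mathcal{F}$ has a minimal element $(Z_i)_{i\in I}$.

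The heart of the argument is two claims about $(Z_i)$, each proved by producing a smaller member of $\mathcal{F}$ and invoking minimality. \emph{First claim:} $f^m_k(Z_m)=Z_k$ for all $k\leq m$. Here I would set $Z''_k:=\bigcap_{m\geq k}f^m_k(Z_m)$; each term lies in $S_k$ by (iv), hence $Z''_k\in S_k$ by (i); it is non-empty because, using that $I$ is filtered, the family $\{f^m_k(Z_m):m\geq k\}$ has the finite intersection property (a common upper bound of finitely many indices gives a set contained in all of them), so (ii) applies; and $(Z''_k)_k\in\mathcal{F}$ with $Z''_k\subseteq Z_k$ by a short chase using directedness and functoriality of the transition maps. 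Minimality forces $Z''_k=Z_k$, giving $Z_k\subseteq f^m_k(Z_m)$ for all $m\geq k$, while the reverse inclusion holds since $(Z_i)\in\mathcal{F}$.

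\emph{Second claim:} each $Z_{i_0}$ is a singleton. Fix $i_0$ and $a\in Z_{i_0}$ and set $Z'_k:=\bigcap_{m\geq k,\,m\geq i_0}f^m_k\bigl(Z_m\cap(f^m_{i_0})^{-1}(a)\bigr)$, the index set being non-empty since $I$ is filtered. Each $Z_m\cap(f^m_{i_0})^{-1}(a)$ lies in $S_m$ by (iii) and (i), its image lies in $S_k$ by (iv), and $Z'_k\in S_k$ by (i). Non-emptiness of $Z'_k$ follows from (ii) after checking the finite intersection property of the defining family, and this is precisely where the first claim is used: it guarantees $a\in f^m_{i_0}(Z_m)$, so each defining set is non-empty. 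One checks $(Z'_k)_k\in\mathcal{F}$ and $Z'_k\subseteq Z_k$ as before, so minimality gives $Z'_k=Z_k$; but the term $m=i_0$ in the definition of $Z'_{i_0}$ is $Z_{i_0}\cap\{a\}=\{a\}$, whence $Z_{i_0}=\{a\}$. Since $a$ was an arbitrary element of $Z_{i_0}$, this shows $Z_{i_0}$ is a singleton $\{x_{i_0}\}$. Then $f^i_j(Z_i)\subseteq Z_j$ reads $f^i_j(x_i)=x_j$ for $j\leq i$, so $(x_i)_{i\in I}\in\varprojlim_{i\in I}X_i$, which is therefore non-empty.

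I expect the main obstacle to be the bookkeeping in the two minimality arguments: in each case one must verify that the newly built family is a genuine inverse subsystem. These checks rest entirely on using that $I$ is filtered to route two given indices through a common upper bound, since the individual terms compare only in the ``easy'' direction and it is the passage to a common refinement that makes the intersections behave well under the transition maps. It is also essential to establish the first claim before attempting the singleton argument, since otherwise the sets $Z_m\cap(f^m_{i_0})^{-1}(a)$ cannot be shown to be non-empty.
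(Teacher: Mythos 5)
Your proof is correct. Note that the paper does not prove this statement at all: it is quoted verbatim from Bourbaki (Th\'eor\`eme III.7.4.1 of \emph{Th\'eorie des ensembles}) and used as a black box, so there is no in-paper argument to compare against; your Zorn's-lemma argument --- minimal coherent family $(Z_i)$, then $f^m_k(Z_m)=Z_k$, then each $Z_{i_0}$ is a singleton --- is essentially Bourbaki's own proof, and you correctly identify where each of (i)--(iv) and the directedness of $I$ enter, as well as the logical dependence of the singleton step on the surjectivity step. The only point worth making explicit is your very first one: $(X_i)_{i\in I}\in\mathcal{F}$ requires $X_i\in S_i$, which you extract from (i) via the convention that the empty intersection of subsets of $X_i$ equals $X_i$; this is the standard (and Bourbaki's) reading, and it is consistent with the paper's application, where $X_i$ is adjoined to $S_i$ by hand, but since conditions (i) and (iv) are later applied to the components $Z_m$ of the minimal family, the argument genuinely needs $X_i\in S_i$ rather than merely allowing $Y_i=X_i$ as an extra option, so you should state the convention rather than leave it implicit.
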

\begin{proof}[Proof of Theorem \ref{InvLimitProjModules}]
By Lemma \ref{Lim1Torsor} it is enough to show that for any $(M_i)_{i\in I}$-torsor $(X_i)_{i\in I}$, the set $\varprojlim_{i\in I}X_i$ is not empty.
  \begin{lemma} \label{SomeLemma2}
    For all $j,j'\leq i$ in $I$:
    \begin{enumerate}
    \item [(i)] $\ker(\mu_j^i)\cap \ker(\mu_{j'}^i)=\ker{\mu_{j\vee j'}^i}$.
    \item [(ii)] $\mu_{j'}^i(\ker(\mu_j^i))=\ker(\mu_{j\wedge j'}^{j'})$.
    \end{enumerate}
  \end{lemma}
  \begin{proof}
    (i) This follows by a direct verification using the fact that the square
    \begin{equation*}
      \xymatrix{
        M_j \ar[d] & M_{j\vee j'} \ar[l] \ar[d] \\
        M_{j \wedge j'} & M_{j'} \ar[l]
        }
    \end{equation*}
    is Cartesian. 

    (ii) Using the surjectivity of $\mu_{j\vee j'}^i$ we can reduce to the case $i=j\vee j'$. Then again the claim follows by a direct verification from the fact that the above square is Cartesian.
  \end{proof}
For $i\in I$ let $S_i\defeq \{x+\ker(\mu_j^i)\mid x\in X_i, j\leq i\}\cup \{\emptyset, X_i\}$. The assumption that the $\mu_j^i$ are surjective implies that their kernels are direct summands of $M_i$. Thus these kernels are finitely generated projective $R$-modules and it makes sense to speak of their rank as a locally constant function $\Spec(R)\to \BZ_{\geq 0}$. The set of all such functions is partially ordered by the relation $\leq $ for which $f\leq g$ if and only if $f(x)\leq g(x)$ for all $x\in \Spec(R)$. The fact that $\Spec(R)$ has only finitely many connected components implies that each family of such functions contains a minimal element.
\begin{lemma}\label{SomeLemma1}
  If $\ker(\mu_j^i)\subset \ker(\mu_{j'}^i)$ and $\rk(\ker(\mu_j^i))=\rk(\ker(\mu_{j'}^i))$ for certain $j,j'\leq i$ in $I$, then $\ker(\mu_j^i)= \ker(\mu_{j'}^i)$.
\end{lemma}
\begin{proof}
  Since $\ker(\mu_j^i)$ and $\ker(\mu_{j'}^i)$ are direct summands of $M_i$, the module $\ker(\mu_j^i)$ is also a direct summand of $\ker(\mu_{j'}^i)$. This together with the equality of the ranks implies that the two modules are equal.
\end{proof}

We need to verify that these $S_i$ satisfy the conditions (i) to (iv) of Theorem \ref{InvLimitCrit}.

(i) Consider a family $F\subset S_i$. If $\cap_{X \in F}X=\emptyset$, then $\cap_{X\in F}X\in S_i$. Else we can write $F=\{x+\ker(\mu_{j_\alpha}^i)\}$ for a certain family of $j_\alpha\leq i$ and an $x\in \cap_{X\in F}X$. Then Lemma \ref{SomeLemma2} (i) implies that each finite intersection of elements of $F$ is of the form $x+\ker(\mu_j^i)$ for some $j\leq i$. Now pick such a finite intersection with $\rk(\ker(\mu_j^i))$ minimal among all such finite intersections. Then Lemma \ref{SomeLemma1} implies that $\cap_{X\in F}X=x+\ker(\mu_j^i)\in S_i$.

(ii) Let $F\subset S_i$ such that each finite intersection of elements of $F$ is non-empty. Then as in (i) we can write each such finite intersection as $x+\ker(\mu_j^i)$ for certain $x\in X_i$ and $j\leq i$. Pick such a finite intersection with $\rk(\ker(\mu_j^i))$ minimal. Then Lemma \ref{SomeLemma1} implies that this finite intersection is equal to $\cap_{X\in F}X$. Thus $\cap_{X\in F}X$ is not empty.

(iii) This follows directly from the definition of the $S_i$.

(iv) This follows directly from Lemma \ref{SomeLemma2} (ii).
\end{proof}

\section{Graded Fiber Functors} \label{GrFF}
Throughout, we denote by $k$ a field and by $S,S_1,S_2$ arbitrary non-empty schemes over $k$. The pullbacks under a morphism $S'\to S$ of a scheme $X$ over $S$ and a morphism $f$ of schemes over $S$ are denoted $X_{S'}$ and $f_{S'}$ respectively.

\subsection{Graded Quasi-Coherent Sheaves}
 We denote the category of \emph{graded quasi-coherent sheaves} of $\CO_S$-modules by $\GradQCoh(S)$. Its objects are quasi-coherent sheaves $\CM$ together with a decomposition $\CM=\oplus_{n\in \BZ}\CM^n$ into quasi-coherent subsheaves $\CM^n$. It is a $k$-linear abelian category.

The tensor product of two graded quasi-coherent sheaves $\CM$ and $\CN$ is given by the usual tensor product $\CM\otimes\CN$ together with the decomposition into the subsheaves $(\CM\otimes\CN)^n=\oplus_{i+j=n}\CM^i\otimes\CN^j$. This makes $\GradQCoh(S)$ into a symmetric monoidal category.  An object $\CM\in\GradQCoh(S)$ is rigid if and only if its underlying quasi-coherent sheaf is locally free of finite rank.

There is a natural equivalence between $\GradQCoh(S)$ and the category of quasi-coherent sheaves on $S$ with a representation of $\Gm[S]$. It sends $\CM=\oplus_{n\in\BZ}\CM^n$ to the quasi-coherent sheaf $\CM$ with the representation of $\Gm[S]$ for which $\Gm[S]$ acts on $\CM^n$ through the character $t\mapsto t^n$. Under this equivalence the above tensor product corresponds to the usual tensor product of representations.

 The forgetful functor $\forget\colon \GradQCoh(S)\to \QCoh(S)$ is a faithful exact tensor functor.

With the natural notion of pullbacks, the categories $\GradQCoh(S)$ form a fibered category over $(\Sch/k)$ which we denote by $\GradQCoh$. Since quasi-coherent modules satisfy effective descent, the fibered category $\GradQCoh$ is a stack for the fpqc topology.

\subsection{Graded Fiber Functors}

Let $\CT$ be a Tannakian category over $k$. 
\begin{definition} \label{GFFDef} 
  \begin{itemize}
  \item[(i)] A \emph{graded fiber functor} on $\CT$ over $S$ is an exact $k$-linear tensor functor $\gamma\colon \CT\to\GradQCoh(S)$.
  \item[(ii)] A \emph{morphism} between two graded fiber functors on $\CT$ over $S$ is a tensor morphism.
  \item[(iii)] We denote the resulting \emph{category of graded fiber functors} on $\CT$ over the scheme $S$ by $\UHom^\otimes(\CT,\GradQCoh)(S)$.
  \item[(iv)] For a morphism $S'\to S$ over $k$ composition with the base change functor $\GradQCoh(S)\to\GradQCoh(S')$ gives a pullback functor $\UHom^\otimes(\CT,\GradQCoh)(S)\to \UHom^\otimes(\CT,\GradQCoh)(S')$. With these pullback functors the categories $\UHom^\otimes(\CT,\GradQCoh)(S)$ for varying $S$ form a fibered category over $(\Sch/k)$ which we denote by $\UHom^\otimes(\CT,\GradQCoh)$. 
  \end{itemize}
\end{definition}
\begin{remark}
  Let $\gamma\colon \CT\to\GradQCoh(S)$ be a graded fiber functor. Since $\CT$ is rigid, the tensor functor $\gamma$ factors through the full subcategory of rigid objects of $\GradQCoh(S)$ (see \cite[2.7]{DeligneGroth}). Thus for every $X\in \CT$ the underlying quasi-coherent sheaf of $\gamma(X)$ is locally free of finite rank.
\end{remark}
\begin{definition}
  \begin{itemize}
  \item [(i)]For any two graded fiber functors $\gamma_1,\gamma_2$ on $\CT$ over $S$, we let $\UIsom^\otimes_S(\gamma_1,\gamma_2)$ be the sheaf $(\Sch/S)\to (\text{Sets})$ which sends $S'\to S$ to the set of tensor isomorphisms $(\gamma_1)_{S'}\isoto (\gamma_2)_{S'}$ and morphisms to pullback maps.
  \item [(ii)] If $\gamma_1=\gamma_2$, we denote $\UIsom^\otimes_S(\gamma_1,\gamma_2)$ by $\UAut^\otimes_S(\gamma_1)$.
  \item [(iii)] For graded fiber functors $\gamma_1,\gamma_2$ over $k$-schemes $S_1,S_2$ we let
    \begin{equation*}
       \UIsom^\otimes_k(\gamma_1,\gamma_2)\defeq \UIsom^\otimes_{S_1\times S_2}(\pr_1^*\gamma_1,\pr_2^*\gamma_2)
    \end{equation*}
 where $\pr_i\colon S_1\times S_2\to S_i$ are the projections.
  \end{itemize}
\end{definition}
Given graded fiber functors $\gamma_1,\gamma_2,\gamma_3$ on $\CT$ over $S$, composition of functors gives a morphism $\UIsom^\otimes_S(\gamma_1,\gamma_2)\times_S \UIsom^\otimes_S(\gamma_2,\gamma_3)\to \UIsom^\otimes_S(\gamma_1,\gamma_3)$. In this way $\UAut^\otimes_S(\gamma_1)$ and $\UAut^\otimes_S(\gamma_2)$ become sheaves of groups that act on $\UIsom^\otimes_S(\gamma_1,\gamma_2)$ from the right (resp. from the left).
\begin{construction} \label{chiomega}
  Since $\GradQCoh(S)$ can be identified with the category of representations of $\Gm[S]$, by \cite[II.3.1.1]{Saavedra} the action of $\Gm[S]$ gives an isomorphism
  \begin{equation*}
    \Gm[S]\isoto \UAut^\otimes_S(\forget \colon \GradQCoh(S)\to\QCoh(S)).
  \end{equation*}
 Thus to any graded fiber functor $\gamma$ on $\CT$ over $S$ we can associate the cocharacter 
 \begin{equation*}
   \chi(\gamma)\colon \Gm[S]\cong \UAut^\otimes_S(\forget\colon \GradQCoh(S)\to\QCoh(S)) \to \UAut^\otimes_S(\forget\circ\gamma)
 \end{equation*}
 induced by $\gamma$.
\end{construction}

\begin{definition}
  Let $\GFF(S)$ be the following category: 

Its objects are pairs $(\omega,\chi)$ where $\omega$ is a fiber functor on $\CT$ over $S$ and $\chi\colon \Gm[S]\to \UAut^\otimes_S(\omega)$ a cocharacter. 

A morphism $(\omega,\chi)\to (\omega',\chi')$ in $\GFF(S)$ is a tensor morphism $\lambda\colon \omega\to\omega'$ such that the following diagram, in which the vertical morphism is induced by $\lambda$, commutes:
  \begin{equation*}
    \xymatrix@R=1pt{
      & \UAut^\otimes_S(\omega) \ar[dd] \\
      \Gm[S] \ar[ur]^{\chi} \ar[dr]_{\chi'} & \\
      & \UAut^\otimes_S(\omega') \\
}.
  \end{equation*}

\end{definition}
The following result, due to  Saavedra Rivano, describes graded fiber functors in terms of cocharacters:
\begin{theorem}[\cite{Saavedra} IV.1.3] \label{GFFClassification}
  The functor $\UHom^\otimes(\CT,\GradQCoh)(S)\to \GFF(S)$ which sends a graded fiber functor $\gamma$ to the pair $(\forget\circ\gamma,\chi(\gamma))$ and a morphism of graded fiber functors to the induced morphism of the underlying fiber functors is an equivalence.
\end{theorem}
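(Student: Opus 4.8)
The plan is to produce an explicit quasi-inverse to the functor of the statement, which I will call $\Phi$. Write $u\colon\GradQCoh(S)\to\QCoh(S)$ for the forgetful functor. The two inputs I will use are, first, the equivalence recalled above between $\GradQCoh(S)$ and the category of quasi-coherent $\CO_S$-modules equipped with a representation of $\Gm[S]$, and second, the isomorphism $\Gm[S]\isoto\UAut^\otimes_S(u)$ of \cite[II.3.1.1]{Saavedra} used already in Construction \ref{chiomega}. Conceptually the point is that a grading on a sheaf is the same datum as a $\Gm[S]$-action, and that a $\Gm[S]$-action on a fiber functor $\omega$ which is compatible with the tensor structure is exactly a cocharacter of $\UAut^\otimes_S(\omega)$; the theorem should reduce to transporting this dictionary through $\omega$.

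Concretely, I would first define the candidate quasi-inverse $\Psi\colon\GFF(S)\to\UHom^\otimes(\CT,\GradQCoh)(S)$. Given $(\omega,\chi)$, for each $T\in(\Sch/S)$ and each $t\in\Gm[S](T)$ the cocharacter $\chi$ provides a tensor automorphism $\chi_T(t)$ of $\omega_T$, hence for every $X\in\CT$ an $\CO_T$-linear automorphism of $\omega(X)_T$; letting $T$ and $t$ vary, this is a morphism of $S$-group schemes $\Gm[S]\to\GL(\omega(X))$, i.e.\ a representation of $\Gm[S]$ on the locally free sheaf $\omega(X)$, i.e.\ a grading $\omega(X)=\bigoplus_n\omega(X)^n$ with locally free graded pieces. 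Since $t\mapsto\chi_T(t)$ is natural in $X$ and takes values in tensor automorphisms, these gradings are functorial in $X$ and compatible with tensor products, and exactness of the resulting functor $\gamma\defeq\Psi(\omega,\chi)$ is inherited from $\omega$; so $\gamma$ is a graded fiber functor with $u\circ\gamma=\omega$. On morphisms, the commuting-triangle condition defining a morphism in $\GFF(S)$ unwinds exactly to $\Gm[S]$-equivariance of the components, i.e.\ to a morphism of graded fiber functors, so $\Psi$ is well defined on morphisms as well.

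Next I would verify that $\Psi$ is quasi-inverse to $\Phi$. In both directions this amounts to unwinding Construction \ref{chiomega}: the cocharacter $\chi(\gamma)$ attached to a graded fiber functor $\gamma$ sends $t\in\Gm[S](T)$ to the automorphism of $u\circ\gamma_T$ acting on the $n$-th graded piece of each $\gamma(X)_T$ by multiplication by $t^n$ --- this is precisely the content of the identification $\Gm[S]\cong\UAut^\otimes_S(u)$. Consequently $\Psi(u\circ\gamma,\chi(\gamma))$ reconstructs the original grading of $\gamma$, giving a natural isomorphism $\Psi\circ\Phi\cong\Id$, while the same computation shows $\chi(\Psi(\omega,\chi))=\chi$, giving $\Phi\circ\Psi\cong\Id$. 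The morphism level is then immediate, using faithfulness of $u$ for one inclusion and the morphism part of $\Psi$ for the other, and this yields that $\Phi$ is an equivalence.

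The routine verifications (that $\Psi(\omega,\chi)$ is genuinely functorial in $X$, tensor-compatible, and exact) are bookkeeping. The step I expect to need the most care --- the main obstacle --- is keeping the identification $\Gm[S]\cong\UAut^\otimes_S(u)$ of \cite[II.3.1.1]{Saavedra} consistent throughout, so that ``read off the grading from the $\Gm[S]$-action'' and ``build the $\Gm[S]$-action from the grading'' are literally mutually inverse at the level of sheaves of automorphisms; otherwise the composites $\Psi\circ\Phi$ and $\Phi\circ\Psi$ would only be identified up to some cocycle that one then has to track. Because $\Gm[S]$ is diagonalizable, once one has pinned this down to the elementary statement that a comodule over $\CO_S[\BZ]$ is the same as a $\BZ$-graded $\CO_S$-module, applied uniformly in $X$, the rest follows formally.
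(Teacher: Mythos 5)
The paper does not prove Theorem \ref{GFFClassification} at all: it is stated with the citation \cite[IV.1.3]{Saavedra} and the preceding sentence attributes it to Saavedra Rivano. So there is no internal proof for me to compare your proposal against.

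On its own merits your argument looks correct and is the expected one given how the paper sets things up (the equivalence $\GradQCoh(S)\simeq\Rep[\Gm[S]]$ and the identification $\Gm[S]\cong\UAut^\otimes_S(\forget)$ of Construction~\ref{chiomega}). Building the quasi-inverse $\Psi$ by turning the cocharacter $\chi$ into a $\Gm[S]$-action on each $\omega(X)$ and hence into a grading with locally free graded pieces, checking functoriality and tensor compatibility from naturality and tensoriality of the automorphisms $\chi_T(t)$, and noting that exactness in $\GradQCoh(S)$ is detected after applying the exact faithful forgetful functor so $\gamma$ is exact iff $\omega$ is --- all of this goes through. One point worth stating explicitly, which you leave implicit, is that the ``induced morphism'' $\UAut^\otimes_S(\omega)\to\UAut^\otimes_S(\omega')$ in the definition of a morphism in $\GFF(S)$ is conjugation by $\lambda$, which is a group homomorphism only because $\lambda$ is invertible; this is automatic here since $\CT$ is rigid and hence every tensor morphism of fiber functors is an isomorphism (\cite[I.5.2.3]{Saavedra}), and with that in hand the commuting triangle is literally $\lambda\circ\chi(t)=\chi'(t)\circ\lambda$, i.e.\ $\Gm[S]$-equivariance of $\lambda$, as you say. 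With that caveat spelled out, your $\Psi$ is well defined on morphisms and the two composites unwind to the identity exactly as you describe; I do not see a gap.
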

\begin{corollary} \label{GFFClassificationCor}
  Let $\gamma_,\gamma_2$ be graded fiber functors on $\CT$ over $S$ such that $\forget\circ\gamma_1=\forget\circ\gamma_2$ and set $G\defeq \UAut^\otimes_S(\forget\circ\gamma)$. The functor from Theorem \ref{GFFClassification} induces a natural isomorphism $\UIsom^\otimes_S(\gamma_1,\gamma_2)\cong \UTransp_G(\chi(\gamma_1),\chi(\gamma_2))$.
\end{corollary}
\begin{theorem} \label{CocharLocConstant}
  Let $G$ be a group scheme of finite type over $k$ and $\chi\colon \Gm[S]\to G_S$ a cocharacter.
  \begin{itemize}
  \item [(i)] Assume that $\Gred$ is a smooth subgroup scheme of $G$. Then fpqc-locally on $S$, the cocharacter $\chi$ is conjugate to a cocharacter which factors through $(\Gred)_S$.
  \item [(ii)]  If $G$ is smooth over $k$, the field $k$ is algebraically closed and $S$ is connected and of finite type over $k$, there exist an fpqc covering $S'$ of $S$ and a cocharacter $\chi_0\colon \Gm[k]\to G$ such that $\chi_{S'}$ is conjugate to $(\chi_0)_{S'}$.
  \end{itemize}
\end{theorem}
The condition that $\Gred$ is a smooth subgroup scheme of $G$ is always satisfied if $k$ is perfect, but not in general (see \cite[Example A.8.3]{PseudoReductiveGroups}). However, for any group scheme $G$ of finite type over $k$, there exists a finite field extension $k'$ of $k$ such that $(G_{k'})^\text{red}$ is a smooth subgroup scheme of $G_{k'}$.
\begin{proof}
  (i) By \cite[Th\'eor\`eme XI.6.2]{SGA3II}, the functor $\UTransp_S(\chi,(\Gred)_{S})$, whose $S'$-valued points for any scheme $S'$ over $S$ are those $g\in G(S')$ for which $\leftexp{g}{\chi}_S$ factors through $(\Gred)_S$, is representable by a closed subscheme of $G_S$. We need to prove that $\UTransp_S(\chi,(\Gred)_{S})$ is faithfully flat over $S$. In case $S$ is the spectrum of a field, the cocharacter $\chi$ must factor through $(\Gred)_S$ since $\Gm[S]$ is reduced. Thus the claim is true in this case, which implies that in general it is enough to show that $\UTransp_S(\chi,(\Gred)_{S})$ is flat over $S$. For this we may assume that $S$ is the spectrum of a local ring. After base change from $k$ to the residue field of the closed point $s$ of $S$, we may also assume that $s$ is $k$-rational. 

Let $\chi_0\colon \Gm[k]\to G$ be the fiber of $\chi$ in $s$. Since $\chi_0$ factors through $\Gred$, it suffices to show that $(\chi_0)_S$ and $\chi$ are conjugate. For any $V\in \Rep[G]$ and $i\in \BZ$, let $\gr_\chi^i V_S$ (resp. $\gr_{\chi_0}^i V_S$) be the $i$-th step of the decomposition of $V_S$ defined by $\chi$ (resp $(\chi_0)_S$). By Corollary \ref{GFFClassificationCor} it suffices to construct isomorphisms $\gr_\chi^i V_S\cong \gr_{\chi_0}^i V_S$ for all $i$ and $V$ which are natural in $V$ and compatible with tensor products. For $V\in \Rep[G]$ and $i\in \BZ$, consider be the morphism
\begin{equation*}
  \psi^i_V\colon \gr^i_\chi V_S \into V_S\cong \oplus_{j\in \BZ}\gr^j_{\chi_0}V_S \onto \gr^i_{\chi_0}V_S.
\end{equation*}
Since $\chi$ and $(\chi_0)_S$ agree on the fiber over $s$, on this fiber $\psi_V^i$ is an isomorphism. Since $\gr^i_\chi V_S$ and $\gr^i_{\chi_0} V_S$, being direct summands of $V_S$, are locally free $\CO_S$-modules of finite rank, it thus follows from Nakayama's Lemma that $\psi_V^i$ is an isomorphism. Since it follows directly from the construction of the $\psi^i_V$ that they are natural in $V$ and compatible with tensor products we are done.

(ii) Let $s\in S$ be a closed point. Since $S$ is of finite type over $k$ and $k$ is algebraically closed, the residue field of $s$ is $k$. Let $\chi_0\colon \Gm[k]\to G$ be the fiber of $\chi$ in $s$. Since $G$ is smooth, by \cite[Corollaire XI.5.2]{SGA3II} the transporter $\UTransp_S(\chi,(\chi_0)_S)$ is representable by a closed subscheme of $G_S$ which is smooth over $S$. Thus the image $U$ of $\UTransp_S(\chi,(\chi_0)_S)$ in $S$ is an open subscheme of $S$ and if $U=S$ then $\UTransp_S(\chi,(\chi_0)_S)$ is an fpqc covering of $S$ over which $\chi_0$ and $\chi$ become conjugate. Hence the claim will follow from:
\begin{lemma}
  $U=S$.
\end{lemma}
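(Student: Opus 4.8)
The plan is to show that $U$, which is open and contains the closed point $s$, in fact contains \emph{every} closed point of $S$; since $S$ is of finite type over $k$ it is a Jacobson scheme, so a closed subscheme with no closed point is empty, and hence $S\setminus U$ will be empty. Because $k$ is algebraically closed, every closed point $s'\in S$ has residue field $k$, and since transporters commute with base change the fibre of $\UTransp_S(\chi,(\chi_0)_S)$ over $s'$ is $\UTransp_k(\chi_{s'},\chi_0)$, where $\chi_{s'}$ denotes the fibre of $\chi$ at $s'$. Thus $s'\in U$ if and only if $\chi_{s'}$ is conjugate to $\chi_0$ under $G(k)$. So it suffices to prove that $\chi_{s'}$ is $G(k)$-conjugate to $\chi_0$ for every closed point $s'$ of $S$.

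The key local step is a variant of the argument already used in part~(i). Let $S'\subseteq S$ be an irreducible closed subscheme (for instance an irreducible component, with its reduced structure), let $\eta$ be its generic point, and let $x\in S'$ be a closed point. I claim that over $\CO_{S',x}$ the cocharacter $\chi|_{S'}$ is conjugate to the constant cocharacter coming from its fibre $\chi_x$. Indeed, by Corollary~\ref{GFFClassificationCor} it is enough to produce isomorphisms $\gr^i_\chi V\cong\gr^i_{\chi_x}V$ over $\Spec\CO_{S',x}$, natural in $V\in\Rep[G]$ and compatible with tensor products; one takes the composite $\gr^i_\chi V\into V\cong\bigoplus_{j}\gr^j_{\chi_x}V\onto\gr^i_{\chi_x}V$ (all sheaves over $\Spec\CO_{S',x}$), which is the identity on the fibre over $x$ since there the two gradings of $V$ coincide, hence an isomorphism by Nakayama's Lemma because both sides are locally free of finite rank. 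Restricting to $\eta$, it follows that $\chi_\eta$ and $(\chi_x)_{k(S')}$ are conjugate over the function field $k(S')$.

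Applying this to two closed points $x,y$ lying on one irreducible component $S'$ of $S$, we deduce that $(\chi_x)_{k(S')}$ and $(\chi_y)_{k(S')}$ are conjugate over $k(S')$; equivalently, the closed subscheme $\UTransp_k(\chi_x,\chi_y)$ of $G$ has a $k(S')$-valued point, hence is non-empty, and being of finite type over the algebraically closed field $k$ it then has a $k$-point. So $\chi_x$ and $\chi_y$ are $G(k)$-conjugate. Finally, since $S$ is connected and has finitely many irreducible components, any closed point $s'$ can be joined to $s$ by a chain of irreducible components in which consecutive members meet; picking a closed point in each of the (non-empty, closed) pairwise intersections and applying the previous sentence along the chain shows that $\chi_{s'}$ is $G(k)$-conjugate to $\chi_s=\chi_0$. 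This gives $s'\in U$ for all closed $s'$, and hence $U=S$.

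I expect the main obstacle to be the passage to the function field and back: one must use the local argument of part~(i) to obtain an actual conjugacy over $\CO_{S',x}$ (not merely an equality of numerical types), and then it is precisely the hypothesis that $k$ is algebraically closed that lets one descend the resulting conjugacy over $k(S')$ to a conjugacy over $k$. A secondary point needing care is the reduction ``$U$ contains all closed points $\Rightarrow$ $U=S$'', which combines the openness of $U$ with the fact that $S$, being of finite type over a field, is Jacobson.
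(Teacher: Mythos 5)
Your argument is correct and reaches the conclusion by a somewhat different route than the paper's. The paper argues by contradiction: for a closed point $s'\notin U$ it builds the analogous open set $U'$ and asserts that $U\cap U'\neq\emptyset$ ``since $S$ is connected''; read literally that inference is not immediate (two nonempty opens in a connected scheme need not meet), and what is really at play is that the open sets $U_{s'}$, indexed by closed points, are pairwise equal or disjoint and cover $S$ by the Jacobson property, so they give an open partition of $S$ and connectedness forces $U=S$. You avoid this by showing directly that every closed point of $S$ lies in $U$: you re-run the Nakayama argument of part (i) over the local ring $\CO_{S',x}$ of an irreducible component $S'$ at a closed point $x$ to conjugate $\chi$ there to the constant cocharacter $\chi_x$, compare two such constants over the function field $k(S')$, descend the resulting conjugacy from $k(S')$ to $k$ using that the transporter is a nonempty scheme of finite type over the algebraically closed field $k$, and then traverse a chain of irreducible components meeting in closed points; the concluding Jacobson observation gives $U=S$. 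The two decisive inputs --- conjugacy over a local ring via Nakayama, and descent of conjugacy to $k$ from a finite-type transporter over an algebraically closed field --- are shared with the paper's proof, but your version makes the use of connectedness and of the hypothesis that $k$ is algebraically closed entirely explicit. A minor notational point: the objects $\gr^i_\chi V$ over $\Spec\CO_{S',x}$ should of course be read as graded pieces of $V\otimes_k\CO_{S',x}$, with $\chi_x$ pulled back to $\CO_{S',x}$, as you clearly intend.
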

\begin{proof}
 Otherwise there exists a closed point $s'\in S\setminus U$. Repeating the above construction with $s'$ instead of $s$ yields a cocharacter $\chi_0'\colon \Gm[k]\to G$ and an open subset $U'$ of $S$ containing $s'$ over which $\chi$ and $(\chi'_0)_S$ are fpqc-locally conjugate. Since $S$ is connected, the intersection $U\cap U'$ is not empty and over this intersection all three cocharacters $(\chi_0)_S$, $(\chi'_0)_S$ and $\chi$ are fpqc-locally conjugate. Hence $\chi_0$ and $\chi_0'$ are conjugate over some field extension of $k$. This implies $s'\in U$ which is a contradiction.
\end{proof}
\end{proof}

In the following Lemma, by a statement $P(\gamma_1,\ldots,\gamma_r)$ we simply mean a function from the class of tuples $(\gamma_1,\ldots,\gamma_r)$ as below to the set $\{\text{true},\text{false}\}$.
\begin{lemma}\label{GFFPPrinciple}
  Assume that $\CT$ has a tensor generator. Let $P(\gamma_1,\ldots,\gamma_r)$ be a statement involving finitely many graded fiber functors $\gamma_1,\ldots,\gamma_r$ on $\CT$ over the same scheme which satifies the following conditions:
  \begin{itemize}
  \item [(i)] The statement $P$ is invariant under pullback, i.e: If $P(\gamma_1,\ldots,\gamma_r)$ holds, then so does $P(\gamma_1|_{S'},\ldots,\gamma_r|_{S'})$ for any morphism $S'\to S$ of schemes.
  \item[(ii)] The statement $P$ is local for the fpqc topology, i.e: If $\{S_i\to S\mid i\in I\}$ is an fpqc covering of $S$, then $P(\gamma_1,\ldots,\gamma_r)$ is true if and only if for all $i\in I$ the statement $P(\gamma_1|_{S_i},\ldots,\gamma_r|_{S_i})$ is true.
  \item [(iii)] The statement $P$ is invariant under isomorphisms, i.e: If $\gamma'_1,\ldots,\gamma'_r$ are graded fiber functors on $\CT$ over $S$ such that $\gamma_i$ is isomorphic to $\gamma'_i$ for all $i\in I$, then $P(\gamma_1,\ldots,\gamma_r)$ is true if and only if $P(\gamma'_1,\ldots,\gamma'_r)$ is true.
  \item [(iv)] There exists an algebraically closed overfield $k'$ of $k$ such that $P(\gamma_1,\ldots,\gamma_r)$ is true for all graded fiber functors $\gamma_1,\ldots,\gamma_r$ on $\CT$ over $k'$.
  \end{itemize}
Then $P(\gamma_1,\ldots,\gamma_r)$ is true in general.
\end{lemma}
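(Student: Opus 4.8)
The plan is to translate everything into a statement about cocharacters via Theorem \ref{GFFClassification} and then reduce, through a cascade of fpqc-local base changes controlled by conditions (i)--(iii), to Theorem \ref{CocharLocConstant} and hypothesis (iv). I first dispose of the degenerate cases: if $r=0$ then $P$ is a truth value supplied by (iv), and if the common base is empty then $P$ holds by (ii) applied to the empty covering. So assume $r\geq 1$, and let $\gamma_1,\dots,\gamma_r$ be graded fiber functors on $\CT$ over a nonempty scheme $S$. Covering $S$ by affine opens and using (i),(ii), I may assume $S=\Spec A$. Since $\CT$ has a tensor generator, a graded fiber functor is finitely presented data, so writing $A$ as a filtered colimit of finitely generated $k$-subalgebras $A_\lambda$ the tuple $(\gamma_i)$ is defined over $\Spec A_{\lambda_0}$ for some $\lambda_0$; by (i) I may thus assume $S$ is of finite type over $k$, in particular Noetherian. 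Finally $\Spec k'\to\Spec k$ is faithfully flat and quasi-compact, hence so is $S_{k'}\to S$, and by (i),(ii) I may replace $S$ by $S_{k'}$ and so assume $S$ is a nonempty scheme of finite type over the algebraically closed field $k'$.

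Next I neutralize over $k'$. As $S$ is nonempty and of finite type over the algebraically closed $k'$, it has a $k'$-point $s$, and $\omega_0\defeq\forget\circ(\gamma_1)_s$ is a fiber functor on $\CT$ over $\Spec k'$; put $G\defeq\UAut^\otimes_{k'}(\omega_0)$. The tensor generator of $\CT$ has faithful image under $\omega_0$, so $G$ is an affine group scheme of finite type over $k'$, and since $k'$ is perfect $\Gred$ is a smooth subgroup scheme of $G$. Each $\forget\circ\gamma_i$ is a fiber functor on $\CT$ over $S$, and $\UIsom^\otimes_S(\forget\circ\gamma_i,(\omega_0)_S)$ is a torsor under $G_S$, hence faithfully flat and quasi-compact over $S$; passing to the fiber product of these torsors over $S$ and using (i),(ii) together with (iii) (transporting the gradings along the tautological isomorphisms), I may assume $\forget\circ\gamma_i=(\omega_0)_S$ for every $i$. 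Then Theorem \ref{GFFClassification} presents $\gamma_i$ by a cocharacter $\chi_i\colon\Gm[S]\to\UAut^\otimes_S((\omega_0)_S)=G_S$.

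It remains to rigidify the $\chi_i$. By Theorem \ref{CocharLocConstant}(i) (applicable because $\Gred$ is smooth in $G$) each $\chi_i$ is, fpqc-locally on $S$, conjugate to a cocharacter factoring through $(\Gred)_S$; the relevant fpqc cover is $\UTransp_S(\chi_i,(\Gred)_S)$, which is of finite type over $k'$, and conjugate cocharacters give isomorphic graded fiber functors by Corollary \ref{GFFClassificationCor}, so by (i),(ii),(iii) I may assume each $\chi_i$ factors through $H_S$ with $H\defeq\Gred$ smooth of finite type over $k'$. Restricting to a connected component of $S$ (a Zariski, hence fpqc, covering) I apply Theorem \ref{CocharLocConstant}(ii): for each $i$ there are an fpqc covering $S_i\to S$ and a cocharacter $\chi_{0,i}\colon\Gm[k']\to H$ with $\chi_i|_{S_i}$ conjugate, by an element of $H(S_i)$, to $(\chi_{0,i})_{S_i}$. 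Let $\gamma_{0,i}$ be the graded fiber functor on $\CT$ over $\Spec k'$ corresponding to $(\omega_0,\chi_{0,i})$, and let $S'\to S$ be the fiber product of the $S_i$ over $S$. Over $S'$ the conjugating elements lie in $H(S')\subseteq G(S')=\UAut^\otimes_{S'}((\omega_0)_{S'})(S')$ and hence, by Corollary \ref{GFFClassificationCor}, exhibit an isomorphism $\gamma_i|_{S'}\cong\gamma_{0,i}|_{S'}$ of graded fiber functors for every $i$. By (iv) $P(\gamma_{0,1},\dots,\gamma_{0,r})$ holds, by (i) so does $P(\gamma_{0,1}|_{S'},\dots,\gamma_{0,r}|_{S'})$, by (iii) so does $P(\gamma_1|_{S'},\dots,\gamma_r|_{S'})$, and by (ii) so does $P(\gamma_1,\dots,\gamma_r)$; unwinding the chain of reductions proves the lemma.

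The crux is the passage to a situation in which Theorem \ref{CocharLocConstant} literally applies. Three things must be arranged simultaneously: the base must be made of finite type over $k$ (here the hypothesis that $\CT$ has a tensor generator is indispensable, as it forces fiber functors, and hence graded fiber functors, to be finitely presented); the base must be carried over $k'$ so that $\CT$ becomes neutral with Tannakian group of finite type over the algebraically closed $k'$; and all $r$ functors must be brought onto one underlying fiber functor, so that one deals with $r$ cocharacters of a single group scheme. Once this is set up, the two parts of Theorem \ref{CocharLocConstant} do the real work --- first pushing the cocharacters into the smooth reduced subgroup, then rigidifying them fpqc-locally to cocharacters defined over $k'$ --- and what remains is the bookkeeping of checking at each stage that the reduction is compatible with $P$ via (i)--(iii). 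I do not expect a serious difficulty beyond assembling these pieces, since the genuine content has been isolated in Theorem \ref{CocharLocConstant}.
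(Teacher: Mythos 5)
Your proposal follows the same core strategy as the paper: pass to $k'$, neutralize so that all the $\gamma_i$ share a common underlying fiber functor $\omega_S$ with $\omega$ over $k'$, translate to cocharacters $\chi_i\colon\Gm[S]\to G_S$ via Theorem~\ref{GFFClassification}, conjugate into $(\Gred)_S$ by Theorem~\ref{CocharLocConstant}(i), rigidify fpqc-locally to constant cocharacters by Theorem~\ref{CocharLocConstant}(ii), and read the conclusion back through conditions (i)--(iv). The one substantive difference is the placement of the finite-type reduction. You do it \emph{first}, directly for the tuple of graded fiber functors, by asserting that ``a graded fiber functor is finitely presented data'' so that the $\gamma_i$ descend to some $\Spec A_{\lambda_0}$; you need this early so that, after base change to $k'$, $S$ has a $k'$-point from which to extract $\omega_0\defeq\forget\circ(\gamma_1)_s$. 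The paper instead fixes an abstract fiber functor $\omega$ over $k'$ (which exists because $\CT$ has a tensor generator, by \cite[6.20]{DeligneGroth}), neutralizes first, and only \emph{then} does the limit argument --- at which point the data to descend are just $r$ cocharacters $\Gm[S]\to(\Gred)_S$, i.e.\ morphisms of $S$ into the locally-finite-type scheme $\UHom_{k'}(\Gm[k'],\Gred)$, where the descent to a finite-type base is elementary. Once the cocharacters descend, the graded fiber functors do too, because at that stage they are determined by their cocharacters via Theorem~\ref{GFFClassification}.

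The point to flag is that your asserted step --- graded fiber functors over $\Spec A$ with $A=\varinjlim A_\lambda$ descend to some $A_{\lambda_0}$ --- is precisely the statement that $\UHom^\otimes(\CT,\GradQCoh)$ is limit-preserving, which is not trivially available at this point in the text: it is, in substance, the ``locally of finite type'' part of Theorem~\ref{GFFStackAlgebraic}, proved later. The claim is true, but asserting it here requires either importing that later result or giving a direct argument; as written it is a gap. Two clean ways to repair your proof without changing its spirit: either (a) postpone the finite-type reduction until after you have reduced to cocharacters into $\Gred$ (as the paper does), obtaining $\omega_0$ abstractly from Deligne's theorem rather than from a $k'$-point of $S$; or (b) keep your ordering but justify the limit step by noting that, after the torsor trivialization you perform anyway, a graded fiber functor over $\Spec A$ is just a cocharacter of a group scheme of finite presentation, and those descend. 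Everything else in your argument --- the disposal of $r=0$ and $S=\emptyset$, the affine reduction, the simultaneous fpqc covers via fiber products of torsors and transporters, the passage to connected components before applying Theorem~\ref{CocharLocConstant}(ii) --- is correct and consistent with the paper's reasoning.
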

\begin{proof}
  Let $\gamma_1,\ldots,\gamma_r$ be graded fiber functors on $\CT$ over a $k$-scheme $S$. We need to show that $P(\gamma_1,\ldots,\gamma_r)$ is true. Let $\omega$ be a fiber functor on $\CT$ over $k'$ and let $G\defeq \UAut^\otimes_{k'}(\omega)$. Since $\CT$ has a tensor generator, this is a group scheme of finite type over $k'$. Using condition (ii), after replacing $S$ by $S_{k'}$ we may assume that there is a morphism $S\to\Spec(k')$. Since by Theorem \ref{FFLocalIso} the fiber functors $\forget\circ\gamma_i$ and $\omega_S$ are fpqc-locally isomorphic, after replacing $S$ by a suitable covering we may assume that $\forget\circ\gamma\cong \omega_S$. Thus, using condition (iii), we can replace the $\gamma_i$ by isomorphic graded fiber functors such that $\forget\circ\gamma_i=\omega_S$. By Theorem \ref{GFFClassification}, the $\gamma_i$ are determined by the cocharacters $\chi(\gamma_i)\colon \Gm[S]\to G_S$. By Theorem \ref{CocharLocConstant} (i), after replacing $S$ by a suitable fqpc covering we may assume that these cocharacters factor through $(\Gred)_S$. 

By a standard limit argument there exists a scheme $S'$ of finite type over $k'$ together with a $k'$-morphism $S\to S'$ such that the $\chi(\gamma_i)\colon \Gm[S]\to(\Gred)_S$, and hence the $\gamma_i$, are defined over $S'$. Using condition $(i)$ we may thus assume that $S$ is of finite type over $k'$. Using condition (ii) we may assume that $S$ is connected. Then by Theorem \ref{CocharLocConstant} there exist cocharacters $\chi_1,\ldots,\chi_r\colon \Gm[k']\to \Gred$ and an fpqc covering $S'$ of $S$ such that $\chi(\gamma_i)_{S'}$ is conjugate to $(\chi_i)_{S'}$ for all $i$. By Theorem \ref{GFFClassification}, there exist unique graded fiber functors $\gamma'_i$ on $\CT$ over $k'$ such that $\forget\circ\gamma'_i=\omega$ and $\chi(\forget\circ\gamma'_i)=\chi_i$. Again by Theorem \ref{GFFClassification}, the conjugacy of $\chi(\gamma_i)_{S'}$ and $(\chi_i)_{S'}$ implies that $(\gamma_i)_{S'}$ is isomorphic to $(\gamma'_i)_{S'}$ for all $i$.

By condition (iv) the statement $P(\gamma'_1,\ldots,\gamma'_r)$ is true. By conditions (i) and (iii) this implies that $P((\gamma_1)_{S'},\ldots,(\gamma_1)_{S'})$ is true. Thus a final application of condition (ii) shows that $P(\gamma_1,\ldots,\gamma_r)$ is true.
\end{proof}

\begin{theorem} \label{GFFIsomRepr}
  For any graded fiber functors $\gamma_1,\gamma_2$ over $S$ (resp. $\gamma_1$ over $S_1$ and $\gamma_2$ over $S_2$) the functor $\UIsom^\otimes_S(\gamma_1,\gamma_2)$ (resp. $\UIsom^\otimes_k(\gamma_1,\gamma_2)$) is representable by a scheme which is affine and flat over $S$ (resp. over $S_1\times S_2$). If $\CT$ has a tensor generator, these schemes are of finite presentation over $S$ (resp. over $S_1\times S_2$). 

In particular, for any graded fiber functor $\gamma$ over $S$, the functor $\UAut^\otimes_S(\gamma)$ is representable by a group scheme which is affine and flat over $S$.
\end{theorem}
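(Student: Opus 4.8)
The plan is to treat first the case where $\CT$ has a tensor generator, using the $P$-principle of Lemma~\ref{GFFPPrinciple} to reduce to an assertion about transporters of cocharacters over an algebraically closed field, and then to dispose of the general case by a filtered inverse limit. Throughout, representability is obtained via Corollary~\ref{GFFClassificationCor}, which identifies the relevant $\UIsom$-functor with a transporter, while the only serious issue is flatness.

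\emph{The tensor-generator case.} I would apply Lemma~\ref{GFFPPrinciple} to the statement $P(\gamma_1,\gamma_2)$: ``$\UIsom^\otimes_S(\gamma_1,\gamma_2)$ is representable by a scheme that is affine, flat and of finite presentation over $S$''. Conditions (i) and (iii) of that lemma are immediate, since $\UIsom^\otimes_{S'}((\gamma_1)_{S'},(\gamma_2)_{S'})=\UIsom^\otimes_S(\gamma_1,\gamma_2)\times_S S'$, since affineness, flatness and finite presentation are stable under base change, and since an isomorphism $\gamma_i\cong\gamma_i'$ induces an isomorphism of the corresponding $\UIsom$-functors. Condition (ii) holds because $\UIsom^\otimes_S(\gamma_1,\gamma_2)$ is an fpqc sheaf, because representability by an affine scheme descends effectively along fpqc coverings, and because flatness and finite presentation are fpqc-local on the base. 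For condition (iv) I would choose an algebraically closed overfield $k'$ of $k$ over which $\CT$ admits a fiber functor (take the algebraic closure of an extension of $k$ over which $\CT$ has a fiber functor, which exists because $\CT$ is Tannakian). Given graded fiber functors $\gamma_1,\gamma_2$ on $\CT$ over $\Spec k'$, standard Tannakian theory (see \cite{Saavedra}) shows that $\UIsom^\otimes_{k'}(\forget\circ\gamma_1,\forget\circ\gamma_2)$ is representable by an affine scheme which is faithfully flat over $\Spec k'$, and of finite type over $\Spec k'$ since $\CT$ has a tensor generator; being nonempty and of finite type over the algebraically closed field $k'$, it has a $k'$-point, so $\forget\circ\gamma_1\cong\forget\circ\gamma_2$. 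As $\UIsom^\otimes_{k'}(\gamma_1,\gamma_2)$ depends on its arguments only up to isomorphism, I may assume $\forget\circ\gamma_1=\forget\circ\gamma_2=:\omega$. Then $G\defeq\UAut^\otimes_{k'}(\omega)$ is an affine group scheme of finite type over $k'$, and by Corollary~\ref{GFFClassificationCor} one has $\UIsom^\otimes_{k'}(\gamma_1,\gamma_2)\cong\UTransp_G(\chi(\gamma_1),\chi(\gamma_2))$. This transporter is representable by a closed subscheme of $G$ (the condition $\operatorname{int}(g)\circ\chi(\gamma_1)=\chi(\gamma_2)$ on $g\in G$ is closed; cf.~\cite[XI]{SGA3II}), hence is affine and of finite type over the field $k'$. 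Thus $P(\gamma_1,\gamma_2)$ holds, and Lemma~\ref{GFFPPrinciple} gives the theorem whenever $\CT$ has a tensor generator.

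\emph{The general case.} I would write $\CT$ as the filtered union of its Tannakian subcategories $\CT'$ possessing a tensor generator; this family is directed, since given two such subcategories with tensor generators $X_1$ and $X_2$, the Tannakian subcategory generated by $X_1\oplus X_2$ contains both and has $X_1\oplus X_2$ as a tensor generator. Since a tensor isomorphism over $\CT$ is exactly a compatible system of tensor isomorphisms over the $\CT'$, one gets $\UIsom^\otimes_S(\gamma_1,\gamma_2)=\varprojlim_{\CT'}\UIsom^\otimes_S(\gamma_1|_{\CT'},\gamma_2|_{\CT'})$, a filtered inverse limit of affine $S$-schemes along (automatically) affine transition morphisms. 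Such a limit is representable by an affine $S$-scheme, and it is flat over $S$ because its structure algebra is the filtered colimit of the flat $\CO_S$-algebras representing the terms. This proves the first assertion in general; finite presentation is not preserved in the limit, in accordance with the statement. The variant for $\gamma_1$ over $S_1$ and $\gamma_2$ over $S_2$ is the case $S=S_1\times S_2$ applied to $\pr_1^*\gamma_1$ and $\pr_2^*\gamma_2$, by the definition of $\UIsom^\otimes_k$, and $\UAut^\otimes_S(\gamma)=\UIsom^\otimes_S(\gamma,\gamma)$ carries a group-scheme structure from composition of tensor automorphisms.

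\emph{Main obstacle.} The substantive point is flatness over $S$. Representability by a closed subscheme of $G$ (hence affineness), and finite presentation in the tensor-generator case, are essentially formal; but a transporter $\UTransp_G(\chi_1,\chi_2)$ over a general base need not be flat without an input such as the smoothness of $\Gred$. This is precisely why the argument must route through Lemma~\ref{GFFPPrinciple}, and hence through Theorem~\ref{CocharLocConstant}, in order to land over an algebraically closed field, where flatness is automatic.
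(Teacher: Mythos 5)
Your proof follows essentially the same route as the paper's: reduce $\UIsom^\otimes_k$ to $\UIsom^\otimes_{S_1\times S_2}$, use Lemma~\ref{GFFPPrinciple} to reduce the tensor-generator case to an algebraically closed field, identify the functor with a transporter of cocharacters via Corollary~\ref{GFFClassificationCor}, and handle the general case by the filtered inverse-limit argument (EGA IV 8.2.3/8.3.8). You spell out the verification of conditions (i)–(iv) of the $P$-principle and the argument that $\forget\circ\gamma_1\cong\forget\circ\gamma_2$ over $k'$ more explicitly than the paper, but the mathematics is the same.
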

\begin{proof}
  Since $\UIsom^\otimes_k(\gamma_1,\gamma_2)=\UIsom^\otimes_{S_1\times S_2}(\pr_1^*\gamma_1,\pr_2^*\gamma_2)$, it suffices to prove the claim about $\UIsom^\otimes_S(\gamma_1,\gamma_2)$. First we treat the case that $\CT$ has a tensor generator. By Lemma \ref{GFFPPrinciple} we may assume that $S$ is the spectrum of an algebraically closed field $k'$. Since all fiber functors on $\CT$ over $k'$ are isomorphic we may also assume that $\forget\circ\gamma_1=\forget\circ\gamma_2$. Let $G\defeq \UAut^\otimes_{k'}(\forget\circ\omega_1)$ which is a group scheme of finite type over $k'$. Then $\UIsom^\otimes_{k'}(\gamma_1,\gamma_2)\cong \UTransp_{k'}(\chi(\gamma_1),\chi(\gamma_2))$ by Corollary \ref{GFFClassificationCor}. This functor is representable by a closed subscheme of $G$ which is in particular of finite type over $k'$ and trivially flat over $k'$. Thus the claim is true in this case.

  Now let $\CT$ be an arbitrary Tannakian category. The category $\CT$ is the filtered colimit of the set $I_\CT$ of Tannakian subcategories from in Subsection \ref{TensorGenSection} which possess a tensor generator. Then $\UIsom^\otimes_S(\gamma_1,\gamma_2)$ is in a natural way the filtered limit of the functors $\UIsom^\otimes_S(\gamma_1|_{\CT'},\gamma_2|_{\CT'})$ over $\CT'\in I_\CT$. By the previous case all these functors are representable by schemes which are affine and flat over $S$. By \cite[Proposition 8.2.3]{EGA4III} this implies that $\UIsom^\otimes_S(\gamma_1,\gamma_2)$ is representable by a scheme which is affine over $S$ and by \cite[Proposition 8.3.8]{EGA4III} this scheme is flat over $S$.
\end{proof}

\subsection{The Stack of Graded Fiber Functors} \label{StGFF}
Let $\CT$ be a Tannakian category over $k$ possessing a tensor generator. In this subsection we show that $\UHom^\otimes(\CT,\GradQCoh)$ is an algebraic stack. By an algebraic stack we mean the same as in \cite[D\'efinition 4.1]{ChampsAlgebriques}.

\begin{lemma} \label{GFFStack}
  The fibered category $\UHom^\otimes(\CT,\GradQCoh)$ is a stack for the fpqc topology.
\end{lemma}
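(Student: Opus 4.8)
The plan is to reduce the entire assertion to the fact, recorded above, that the fibered category $\GradQCoh$ is a stack for the fpqc topology, applied ``object by object'' and ``morphism by morphism'' to the objects $X$ and morphisms $f$ of $\CT$. By definition of a stack two things must be checked: first, that for any two graded fiber functors $\gamma_1,\gamma_2$ on $\CT$ over $S$ the presheaf on $(\Sch/S)$ sending $S'\to S$ to the set of tensor morphisms $(\gamma_1)_{S'}\to(\gamma_2)_{S'}$ is an fpqc sheaf; and second, that every fpqc descent datum for objects of $\UHom^\otimes(\CT,\GradQCoh)$ is effective.

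For the first point, fix an fpqc covering $\{S_i\to S\}$ and tensor morphisms $\lambda_i\colon(\gamma_1)_{S_i}\to(\gamma_2)_{S_i}$ agreeing on the $S_i\times_S S_j$. For each $X\in\CT$ the presheaf $\UHom_{\GradQCoh}(\gamma_1(X),\gamma_2(X))$ on $(\Sch/S)$ is an fpqc sheaf, since $\GradQCoh$ is a stack; hence the morphisms $\lambda_i(X)\colon\gamma_1(X)_{S_i}\to\gamma_2(X)_{S_i}$ glue to a unique morphism $\lambda(X)\colon\gamma_1(X)\to\gamma_2(X)$ in $\GradQCoh(S)$. Naturality of $X\mapsto\lambda(X)$ in $X$ and its compatibility with the tensor and unit constraints are equalities of morphisms in $\GradQCoh(S)$; as morphisms in $\GradQCoh$ form a separated presheaf, these equalities may be checked after restriction to the $S_i$, where they hold by assumption. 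So $\lambda$ is the unique tensor morphism restricting to the $\lambda_i$, which is exactly the sheaf condition.

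For the second point, let $\{S_i\to S\}$ be an fpqc covering, $\gamma_i$ a graded fiber functor on $\CT$ over $S_i$, and $\varphi_{ij}\colon(\gamma_i)_{S_{ij}}\iso(\gamma_j)_{S_{ij}}$ a descent datum. Evaluating at an object $X\in\CT$ yields an fpqc descent datum $(\gamma_i(X),\varphi_{ij}(X))$ in $\GradQCoh$, which, $\GradQCoh$ being a stack, descends to $\gamma(X)\in\GradQCoh(S)$ together with isomorphisms $\gamma(X)_{S_i}\cong\gamma_i(X)$. For a morphism $f\colon X\to Y$ in $\CT$ the $\gamma_i(f)$ are compatible with the $\varphi_{ij}$ (which are natural transformations), hence descend to $\gamma(f)\colon\gamma(X)\to\gamma(Y)$; functoriality, additivity and $k$-linearity of $X\mapsto\gamma(X)$ are equalities of morphisms in $\GradQCoh(S)$ and thus hold because they hold over each $S_i$. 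Likewise the isomorphisms $\gamma_i(X\otimes Y)\cong\gamma_i(X)\otimes\gamma_i(Y)$ and $\gamma_i(\Bone)\cong\Bone$ are compatible with the $\varphi_{ij}$ and descend to a tensor structure on $\gamma$, whose coherence axioms, being identities among morphisms of graded quasi-coherent sheaves, are fpqc-local and inherited from the $\gamma_i$. Finally $\gamma$ is exact: it sends a short exact sequence in $\CT$ to a sequence in $\GradQCoh(S)$ whose pullback to each $S_i$ is $\gamma_i$ applied to that sequence, hence exact, and exactness of a sequence of (graded) quasi-coherent sheaves is fpqc-local. Therefore $\gamma$ is a graded fiber functor on $\CT$ over $S$ restricting compatibly to the $\gamma_i$, so the descent datum is effective.

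The only genuine work is the bookkeeping in the last paragraph: one must confirm that each ingredient of a $k$-linear exact tensor functor --- functoriality, additivity, the associativity and unit constraints, exactness --- is either an equality of morphisms of graded quasi-coherent sheaves or the exactness of a complex of such sheaves, and hence is fpqc-local. Since $\GradQCoh$ is a stack and faithfully flat descent holds for quasi-coherent sheaves and for exactness, every such condition descends, and no further obstruction arises. I expect this verification, rather than any conceptual difficulty, to be the main (and entirely routine) part of the argument.
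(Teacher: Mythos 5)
Your argument is correct, but it follows a genuinely different route from the paper's proof of this particular lemma. The paper dispatches Lemma~\ref{GFFStack} in one sentence: invoke Theorem~\ref{GFFClassification} to replace a graded fiber functor over $S$ by a pair $(\omega,\chi)$ consisting of a fiber functor $\omega$ and a cocharacter $\chi$ of $\UAut^\otimes_S(\omega)$, and then cite \cite[III.3.2.1.2]{Saavedra} for effective fpqc descent of fiber functors (descent of the cocharacter datum being automatic since group-scheme homomorphisms form an fpqc sheaf). Your proposal, by contrast, unwinds the definition of a stack and descends the functor $\gamma$ object-by-object and morphism-by-morphism using the stack property of $\GradQCoh$; this works because every ingredient of the definition of a $k$-linear exact tensor functor is either an equality of morphisms of graded quasi-coherent sheaves (naturality, $k$-linearity, the tensor-constraint coherence diagrams), and morphisms in a stack form separated presheaves, or is the exactness of a short sequence of $\CO_S$-modules, which is local for the fpqc topology. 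Interestingly, your route is essentially the one the paper uses for the filtered analogue, Lemma~\ref{FFFStack} (phrased there slightly more slickly via the tensor equivalence $\FilQCoh(S)\simeq\Desc(S'/S)$, with Lemma~\ref{FilShortExact} taking care of the fpqc-locality of exactness). The paper's citation-based shortcut for the graded case is shorter but leans on external references and the classification theorem; your direct verification is self-contained and makes visible exactly which local-to-global principles are being used.
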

\begin{proof}
  Since by \cite[III.3.2.1.2]{Saavedra} fiber functors satisfy effective descent for the fpqc topology, this follows from Theorem \ref{GFFClassification}.
\end{proof}
Given two group schemes $G$ and $H$ over a scheme $S$, we denote by $\UHom_S(G,H)$ the fpqc sheaf $(\Sch/S)^\text{op}\to (\text{Sets})$ which associates to a scheme $S'$ over $S$ the set of group homomorphisms $G_{S'}\to H_{S'}$ and acts on morphisms in the natural way.

\begin{construction}
  Let $k'$ be an overfield of $k$ and $\omega$ a fiber functor on $\CT$ over $k'$. Let $G\defeq \UAut^\otimes_{k'}(\omega)$. Since $\CT$ has a tensor generator, this is a group scheme of finite type over $k'$. By \cite[Remarque XI.4.3]{SGA3II} the functor $\UHom_{k'}(\Gm[k'],G)$ is representable by a scheme which is separated and locally of finite type over $k'$. The group scheme $G$ acts on this scheme from the left by conjugation. We denote the associated quotient stack by $[G\backslash \UHom_{k'}(\Gm[k'],G)]$. We construct a morphism of stacks
  \begin{equation*}
    F_\omega\colon \UHom^\otimes(\CT,\GradQCoh)_{k'} \to [G\backslash \UHom_{k'}(\Gm[k'],G)]
  \end{equation*}
as follows:

Recall that for a scheme $S$ over $k'$, an object of $[G\backslash \UHom_{k'}(\Gm[k'],G)](S)$ is a pair $(X,f)$ consisting of a left $G_S$-torsor $X$ over $S$ and a $G_S$-equivariant morphism $f\colon X\to \UHom_{k'}(\Gm[k'],G)_S$ over $S$. A morphism $(X,f)\to (X',f')$ in $[G\backslash \UHom_{k'}(\Gm[k'],G)](S)$ is a $G_S$-equivariant morphism $g\colon X\to X'$ such that $f=f'\circ g$.

Let $\gamma\in \UHom^\otimes(\CT,\GradQCoh)(S)$. Let $X_{\gamma}\defeq \UIsom^\otimes_S(\forget\circ\gamma,\omega)$, which is a left $G_S$-torsor by composition of isomorphisms. Any section $\lambda \in X_{\gamma}(S')$ for some scheme $S'$ over $S$ induces an isomorphism $\UAut^\otimes_{S'}(\forget\circ\gamma)\to G_{S'}$, and we let $f_{\gamma}(\lambda)\in \UHom_S(\Gm[S],G_S)(S')$ be the composition of the cocharacter $\chi(\gamma)_{S'}$ with this isomorphism. This defines a $G_S$-equivariant morphism $f_{\gamma}\colon X_{\gamma}\to \UHom_S(\Gm[S],G_S)$. Altogether we obtain an object $F_\omega(\gamma)\defeq (X_{\gamma},f_{\gamma})\in [G\backslash \UHom_{k'}(\Gm[k'],G)](S)$.

To define $F_\omega$ on morphisms, consider a morphism $\lambda'\colon \gamma\to \gamma'$ in $\UHom^\otimes(\CT,\GradQCoh)(S')$. Such a morphism defines a $G_S$-equivariant isomorphism $F_\omega(\lambda') \colon X_{\gamma}\to X_{\gamma'}$ by composition of isomorphisms. The cocharacter $\chi(\gamma')\colon \Gm[S']\to \UAut^\otimes_{S'}(\gamma')$ is the composition of $\chi(\gamma)\colon \Gm[S']\to \UAut^\otimes_{S'}(\gamma)$ and the isomorphism $\UAut^\otimes_{S'}(\gamma)\to\UAut^\otimes_{S'}(\gamma')$ induced by $\lambda'$. This implies $f_\gamma=f_{\gamma'}\circ F_\omega(\lambda')$. Thus $F_\omega(\lambda')$ is a morphism $F_\omega(\gamma)\to F_\omega(\gamma')$ in $[G\backslash \UHom_{k'}(\Gm[k'],G)](S)$. 

By a direct verification, this construction is functorial and compatible with pullbacks, so that we obtain a morphism $F_\omega$ of stacks, as desired.
\end{construction}
\begin{lemma} \label{FomegaEquiv}
  For each $\omega$ as above, the associated morphism of stacks $F_\omega$ is an equivalence.
\end{lemma}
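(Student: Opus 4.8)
The plan is to verify that $F_\omega$ is an equivalence of stacks by checking that it is fully faithful and essentially surjective, fpqc-locally on the base. Since both sides are stacks for the fpqc topology (for the left-hand side this is Lemma \ref{GFFStack}; for a quotient stack it is standard), it suffices to prove these properties after passing to an fpqc covering of any given base scheme $S$ over $k'$. Concretely, given $\gamma,\gamma'\in \UHom^\otimes(\CT,\GradQCoh)(S)$, I want a natural bijection between morphisms $\gamma\to\gamma'$ and morphisms $F_\omega(\gamma)\to F_\omega(\gamma')$, and I want every object $(X,f)$ of $[G\backslash\UHom_{k'}(\Gm[k'],G)](S)$ to be in the essential image fpqc-locally.

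For \emph{full faithfulness}, I would work fpqc-locally to reduce to the case where the torsors $X_\gamma=\UIsom^\otimes_S(\forget\circ\gamma,\omega)$ and $X_{\gamma'}$ are trivial, i.e.\ $\forget\circ\gamma\cong\forget\circ\gamma'\cong\omega_S$; after replacing $\gamma,\gamma'$ by isomorphic graded fiber functors I may assume $\forget\circ\gamma=\forget\circ\gamma'=\omega_S$, so that both cocharacters $\chi(\gamma),\chi(\gamma')$ are honest cocharacters of $G_S$. On the graded side, $\Hom(\gamma,\gamma')$ is then, by Theorem \ref{GFFClassification} and Corollary \ref{GFFClassificationCor}, the set of sections of $\UTransp_{G_S}(\chi(\gamma),\chi(\gamma'))$. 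On the quotient-stack side, a morphism $F_\omega(\gamma)\to F_\omega(\gamma')$ is a $G_S$-equivariant morphism $g\colon X_\gamma\to X_{\gamma'}$ of trivial torsors compatible with $f_\gamma,f_{\gamma'}$; such a $g$ is given by an element of $G(S)$ conjugating $\chi(\gamma)$ to $\chi(\gamma')$, i.e.\ again a section of $\UTransp_{G_S}(\chi(\gamma),\chi(\gamma'))$. Unwinding the definitions of $f_\gamma$ and $F_\omega$ on morphisms shows the two descriptions coincide, and the identification descends back down. Essential surjectivity is easier: given $(X,f)$, work fpqc-locally so that $X$ is the trivial torsor $G_S$; then $f$ is determined by its value on the identity section, a cocharacter $\chi\colon\Gm[S]\to G_S$. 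By Theorem \ref{GFFClassification} there is a graded fiber functor $\gamma$ with $\forget\circ\gamma=\omega_S$ and $\chi(\gamma)=\chi$ (transport the cocharacter through the canonical isomorphism $G_S\cong\UAut^\otimes_S(\omega_S)$), and by construction $F_\omega(\gamma)\cong(X,f)$.

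The main obstacle is purely bookkeeping rather than conceptual: one must carefully match the $G_S$-equivariance conventions and the direction of the torsor action used to define $X_\gamma=\UIsom^\otimes_S(\forget\circ\gamma,\omega)$ with the left-action convention in $[G\backslash\UHom_{k'}(\Gm[k'],G)]$, and check that the two occurrences of the transporter scheme (one from Corollary \ref{GFFClassificationCor}, one from trivializing a torsor of $G$-equivariant maps) are identified compatibly with pullback. Once the fpqc-local reductions are in place this amounts to chasing the diagram defining a morphism in $\GFF(S)$ through the construction of $F_\omega$, using Theorem \ref{GFFClassification} at each step; no new geometric input beyond the representability and the effective descent for fiber functors already cited is needed.
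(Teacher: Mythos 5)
Your proof is correct, and the essential surjectivity half is exactly the paper's argument: pass to an fpqc cover trivializing $X$, read off a cocharacter, build $\gamma$ with Theorem~\ref{GFFClassification}, and check $F_\omega(\gamma)\cong(X,f)$.

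For full faithfulness you take a genuinely different route. The paper invokes Theorem~\ref{GerbeClassification} directly: since fiber functors on $\CT$ form a gerbe, the assignment $\omega'\mapsto \UIsom^\otimes_S(\cdot,\omega)$ is a global equivalence between fiber functors over $S$ and left $G_S$-torsors, so tensor morphisms $\forget\circ\gamma\to\forget\circ\gamma'$ correspond bijectively to $G_S$-equivariant morphisms $X_\gamma\to X_{\gamma'}$ without any descent step; one then only has to check (via $\GFF$ and Theorem~\ref{GFFClassification}) that the cocharacter-compatibility on one side matches the $f$-compatibility on the other. You instead reduce fpqc-locally to the split-torsor case, identify both Hom-sets with sections of the transporter $\UTransp_{G_S}(\chi(\gamma),\chi(\gamma'))$ via Corollary~\ref{GFFClassificationCor}, and then descend. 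Both are sound: your argument is more explicit and does not need the gerbe classification as a black box, at the cost of one extra descent step and having to carefully note that the Hom-presheaves on both sides are fpqc sheaves (which you do, via Lemma~\ref{GFFStack} and affineness of $G$ for the quotient stack). The paper's argument is shorter because the global torsor equivalence does the descent implicitly. Your remark that the remaining work is bookkeeping of conventions matches what the paper leaves as ``direct verification.''
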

\begin{proof}
  First we prove that $F_\omega$ is fully faithful. Let $\gamma,\gamma' \in \UHom^\otimes(\CT,\GradQCoh)_{k'}(S)$ for some $k'$-scheme $S$. Since fiber functors on $\CT$ satisfy effective descent and are pairwise fpqc-locally isomorphic, they form a gerbe. Thus Theorem \ref{GerbeClassification} gives an equivalence between fiber functors on $\CT$ over $S$ and left $G_S$-torsors which sends a fiber functor $\omega'$ to $\UIsom^\otimes_S(\omega_S,\omega')$. In particular giving a tensor morphism $\lambda'\colon \forget\circ\gamma\to\forget\circ\gamma'$ is the same as giving the $G$-equivariant morphism $g_{\lambda'}\colon X_{\gamma}\to X_{\gamma'}$ induced by $\lambda$. By a direct verification $f_\gamma=f_{\gamma'}\circ g_{\lambda'}$ if and only if $\lambda$ is a morphism $(\forget\circ\gamma,\chi(\gamma))\to (\forget\circ\gamma',\chi(\gamma'))$ in $\GFF(S)$. Thus it follows from Theorem \ref{GFFClassification} that $F_\omega$ is fully faithful.

It remains to prove that $F_\omega$ is an epimorphism. Let $(X,f)\in [G\backslash \UHom_{k'}(\Gm[k'],G)](S)$ for some $k$-scheme $S$. After passing to a suitable fpqc covering, we may assume that $X$ has a section over $S$. By fixing such a section we can identify $X$ with $G_S$. Then $f$ is determined by the cocharacter $\chi\in \UHom_{k'}(\Gm[k'],G)(S)$ which is the image of this section under $f$. By Theorem \ref{GFFClassification} there is a graded fiber functor $\gamma$ over $S$ such that $\forget\circ\gamma=\omega_S$ and $\chi(\gamma)=\chi$. By chasing through the definition of $F_\omega$ one can directly verify that $F_\omega(\gamma)=(X,f)$. Thus $F_\omega$ is an equivalence.
\end{proof}
The following lemma is probably well-known, but we could not find a reference.
\begin{lemma} \label{AlgebraicStackLocal}
  Let $\CX$ be a fibered category over $(\Sch/S)$ for some scheme $S$ which is a stack for the fppf topology. If there exists an fppf covering $S'$ of $S$ such that $\CX_{S'}$ is an algebraic stack, then $\CX$ is an algebraic stack.
\end{lemma}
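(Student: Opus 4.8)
The plan is to verify, by fppf descent along the covering $p\colon S'\to S$, the two defining properties of an algebraic stack in the sense of \cite{ChampsAlgebriques}: that the diagonal $\Delta_\CX$ is representable, quasi-compact and separated, and that there exists a smooth surjection onto $\CX$ from a scheme. Throughout I would use that $\CX_{S'}=\CX\times_S S'$ and that the projection $q\colon\CX_{S'}\to\CX$ is representable, being a base change of the morphism of schemes $p$, and is faithfully flat and locally of finite presentation, since $p$ is.

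\emph{The diagonal.} Let $T$ be a scheme over $S$ and let $x,y\in\CX(T)$. Because $\CX$ is a stack for the fppf topology, $\CI\defeq\UIsom_T(x,y)$ is an fppf sheaf over $T$, and representability of $\Delta_\CX$ at the point $(x,y)$ of $\CX\times_S\CX$ amounts exactly to $\CI$ being an algebraic space, quasi-compact and separated over $T$. Set $T'\defeq T\times_S S'$, which is an fppf covering of $T$. Then $\CI\times_T T'\cong\UIsom_{T'}(x_{T'},y_{T'})$ is the fibre product of $(x_{T'},y_{T'})\colon T'\to\CX_{S'}\times_{S'}\CX_{S'}$ with the diagonal of $\CX_{S'}$; since $\CX_{S'}$ is algebraic, this is an algebraic space, quasi-compact and separated over $T'$. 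By fppf descent for algebraic spaces, and since quasi-compactness and separatedness are fppf-local on the target, $\CI$ is an algebraic space, quasi-compact and separated over $T$. Hence $\Delta_\CX$ has the required properties.

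\emph{A presentation.} Since $\CX_{S'}$ is algebraic, choose a scheme $U$ and a smooth surjective morphism $u\colon U\to\CX_{S'}$; it is in particular representable, flat, surjective, and locally of finite presentation. Composing with $q$ yields a morphism $U\to\CX$ from a scheme which is again representable, flat, surjective, and locally of finite presentation. Thus $\CX$ is a stack for the fppf topology whose diagonal is representable (quasi-compact and separated) and which admits a faithfully flat, locally finitely presented morphism from a scheme; by the standard criterion that such a stack is algebraic — equivalently, that a flat presentation of a stack can be refined to a smooth one — it follows that $\CX$ is an algebraic stack (see \cite{ChampsAlgebriques}).

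The only non-formal ingredient is this last step, upgrading an fppf presentation to a smooth one; everything else is a routine descent argument. The one bookkeeping point to check is that the conventions of \cite{ChampsAlgebriques} for ``algebraic stack'' match the quasi-compactness and separatedness of the diagonal established above, which they do, these being fppf-local properties and hence descending along $p$.
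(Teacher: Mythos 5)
Your proposal is correct and takes essentially the same route as the paper: both verify the two conditions of \cite[10.1]{ChampsAlgebriques} — representability, quasi-compactness and separatedness of the diagonal, checked fppf-locally, and existence of a representable, faithfully flat, locally finitely presented morphism to $\CX$, obtained by composing a presentation of $\CX_{S'}$ with the projection $\CX_{S'}\to\CX$. The only cosmetic difference is that you spell out the diagonal step via Isom-sheaves and pass through a smooth presentation before relaxing to flat, whereas the paper invokes the flat criterion directly; the underlying argument is identical.
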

\begin{proof}
  By \cite[Th\'eor\`eme 10.1]{ChampsAlgebriques} it is sufficient to show that the diagonal morphism $\CX\to \CX\times_S\CX$ is representable, separated and quasi-compact and that there exists an algebraic space $Y$ and a morphism $Y\to\CX$ which is representable, faithfully flat and locally of finite presentation.

The first condition can be checked fppf-locally on $S$ and thus holds by assumption. Since $\CX_{S'}$ is an algebraic stack, there exists an algebraic space $Y$ together with a morphism $Y\to\CX_{S'}$ which is representable, faithfully flat and of finite presentation. Composing this morphism with the natural morphism $\CX_{S'}\to \CX$ which is also representable, faithfully flat and of finite presentation because it is a pullback of the fppf covering $S'\to S$ yields the required morphism $Y\to\CX$.
\end{proof}
\begin{theorem} \label{GFFStackAlgebraic}
  The stack $\UHom^\otimes(\CT,\GradQCoh)$ is an algebraic stack locally of finite type over $k$.
\end{theorem}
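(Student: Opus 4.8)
The plan is to obtain algebraicity from the quotient-stack description of Lemma~\ref{FomegaEquiv} over a finite extension of $k$, and then to descend using Lemma~\ref{AlgebraicStackLocal}. The first step is to fix a \emph{finite} field extension $k'$ of $k$ admitting a fiber functor $\omega$ on $\CT$. Such a $k'$ exists: by Tannakian theory (and as is already used in the proof of Lemma~\ref{GFFPPrinciple}) $\CT$ admits a fiber functor over some field extension of $k$, the stack of fiber functors on $\CT$ is an affine gerbe of finite type over $k$ --- hence locally of finite presentation --- so any fiber functor spreads out to one over a finitely generated $k$-algebra, and restricting to a closed point, whose residue field is finite over $k$ by the Nullstellensatz, produces a fiber functor over a finite extension.

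Fix such $k'$ and $\omega$ and set $G\defeq\UAut^\otimes_{k'}(\omega)$; since $\CT$ has a tensor generator this is an affine group scheme of finite type over $k'$, hence flat over $k'$. As recalled in the construction of $F_\omega$, \cite[XI.4.3]{SGA3II} gives that $H\defeq\UHom_{k'}(\Gm[k'],G)$ is representable by a scheme which is separated and locally of finite type over $k'$, on which $G$ acts by conjugation. The quotient stack $[G\backslash H]$ is therefore an algebraic stack locally of finite type over $k'$: the atlas $H\to[G\backslash H]$ is a $G$-torsor, so it is a representable, faithfully flat, locally finitely presented covering by a scheme locally of finite type over $k'$, while the diagonal of $[G\backslash H]$ is represented by the affine finite-type transporter scheme attached to the action groupoid $G\times_{k'}H\rightrightarrows H$, hence is representable, separated and quasi-compact. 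By Lemma~\ref{FomegaEquiv}, $F_\omega$ is an equivalence $\UHom^\otimes(\CT,\GradQCoh)_{k'}\isoto [G\backslash H]$, so $\UHom^\otimes(\CT,\GradQCoh)_{k'}$ is an algebraic stack, locally of finite type over $k'$ and, since $k'/k$ is finite, over $k$. By Lemma~\ref{GFFStack} the fibered category $\UHom^\otimes(\CT,\GradQCoh)$ is a stack for the fpqc topology, a fortiori for the fppf topology, and $\Spec k'\to\Spec k$ is an fppf covering; so Lemma~\ref{AlgebraicStackLocal} shows that $\UHom^\otimes(\CT,\GradQCoh)$ is an algebraic stack. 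That it is locally of finite type over $k$ follows by descending this property along the fppf covering $\Spec k'\to\Spec k$, or directly by observing that $H$, viewed via $F_\omega$ and the representable fppf morphism $\UHom^\otimes(\CT,\GradQCoh)_{k'}\to\UHom^\otimes(\CT,\GradQCoh)$, furnishes an atlas of $\UHom^\otimes(\CT,\GradQCoh)$ by a scheme locally of finite type over $k$.

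I expect the delicate point to be the first step, namely producing a fiber functor over a \emph{finite} extension rather than over an arbitrary one: finiteness is exactly what upgrades $\Spec k'\to\Spec k$ from an fpqc to an fppf covering, which is what Lemma~\ref{AlgebraicStackLocal} requires. If one preferred, this could be sidestepped by proving Lemma~\ref{AlgebraicStackLocal} for fpqc coverings of the special shape $\Spec k'\to\Spec k$ with $k'/k$ any field extension via noetherian approximation, or by noting that Theorem~\ref{GFFIsomRepr} already handles the diagonal of $\UHom^\otimes(\CT,\GradQCoh)$ directly; but working over a finite extension keeps everything within the tools already developed. One further point meriting a remark is that $[G\backslash H]$ is algebraic even though $G=\UAut^\otimes_{k'}(\omega)$ need not be smooth in positive characteristic: flatness and affineness of $G$ suffice, via the passage from a flat, locally finitely presented atlas to a smooth one.
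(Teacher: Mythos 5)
Your proof is correct and takes essentially the same route as the paper: find a finite extension $k'$ of $k$ carrying a fiber functor, identify $\UHom^\otimes(\CT,\GradQCoh)_{k'}$ with the quotient stack $[G\backslash H]$ via $F_\omega$, and descend along $\Spec k'\to\Spec k$ using Lemma~\ref{AlgebraicStackLocal}; the paper simply cites \cite[6.20]{DeligneGroth} where you spell out the spreading-out argument, and it records up front, citing \cite[I.5.2.3]{Saavedra}, that the stack is fibered in groupoids --- a hypothesis needed for \cite[10.1]{ChampsAlgebriques} that you leave implicit, but which follows either directly from rigidity or from the groupoid property over $k'$ plus fppf descent of isomorphisms. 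Your explicit remark that smoothness of $G$ is not required (flat, affine, and of finite type suffice, since a flat locally finitely presented atlas can be upgraded to a smooth one) is exactly the point the paper is tacitly relying on and is worth having on record.
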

\begin{proof}
  Since $\CT$ is rigid, all morphisms between graded fiber functors are isomorphisms by \cite[I.5.2.3]{Saavedra}. Thus $\UHom^\otimes(\CT,\GradQCoh)$ is fibered in groupoids. By \cite[6.20]{DeligneGroth} there exists a finite field extension $k'$ of $k$ such that there exists a fiber functor $\omega$ on $\CT$ over $k'$. By Lemma \ref{FomegaEquiv} applied to this $\omega$, the stack $\UHom^\otimes(\CT,\GradQCoh)_{k'}$ is algebraic and locally of finite type over $k$. By Lemma \ref{AlgebraicStackLocal} this implies the claim.
\end{proof}

\subsection{The Scheme of Types of $\CT$}
Let $\CT$ be a Tannakian category over $k$ possessing a tensor generator. In this subsection we develop the notion of the type of a graded fiber functor, in analogy with the notion of the type of a graded module.
\begin{definition} \label{SchemeOfTypesDef}
  We denote by $\CC_\CT$ the coarse fpqc sheaf on $k$ associated to the stack $\UHom^\otimes(\CT,\GradQCoh)$, i.e. the sheafification of the presheaf which associates to a $k$-scheme $S$ the set of isomorphism classes of $\UHom^\otimes(\CT,\GradQCoh)(S)$ and acts on morphisms in the natural way. We denote the natural morphism $\UHom^\otimes(\CT,\GradQCoh)\to \CC_\CT$ by $t$.
\end{definition}

\begin{lemma} \label{ConjClassIso}
  Let $G$ be an affine group scheme of finite type over $k$ such that $\Gred$ is a smooth subgroup scheme. Then the morphism of fpqc quotient sheaves
  \begin{equation*}
    \Gred\backslash \UHom_k(\Gm[k],\Gred)\to G\backslash \UHom_k(\Gm[k],G)
  \end{equation*}
 induced by the inclusion $\Gred\into G$ is an isomorphism.
\end{lemma}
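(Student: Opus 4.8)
The plan is to prove that the map is both an epimorphism and a monomorphism of fpqc sheaves, using Theorem \ref{CocharLocConstant}(i) for surjectivity and a direct argument for injectivity. For the epimorphism statement: given a scheme $S$ over $k$ and a section of $G\backslash\UHom_k(\Gm[k],G)$ over $S$, after passing to an fpqc covering we may lift it to a cocharacter $\chi\colon\Gm[S]\to G_S$. By Theorem \ref{CocharLocConstant}(i), applied using the hypothesis that $\Gred$ is a smooth subgroup scheme of $G$, after a further fpqc covering of $S$ the cocharacter $\chi$ is conjugate under $G(S)$ to one factoring through $(\Gred)_S$. Hence the original section lies in the image of $\Gred\backslash\UHom_k(\Gm[k],\Gred)$, so the map is an epimorphism.

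For the monomorphism statement: it suffices to show that if $\chi_1,\chi_2\colon\Gm[S]\to(\Gred)_S$ are two cocharacters which become conjugate under $G(S')$ for some fpqc covering $S'\to S$, then they are already conjugate under $\Gred(S')$, possibly after refining $S'$. Since $\Gred$ is smooth over $k$ and $\chi_1,\chi_2$ factor through $(\Gred)_S$, the transporter $\UTransp_{(\Gred)_{S'}}(\chi_1,\chi_2)$ is representable by a closed subscheme of $(\Gred)_{S'}$ which, by \cite[XI.5.2]{SGA3II}, is smooth over $S'$; in particular its image in $S'$ is open. On the other hand, the $G(S')$-transporter $\UTransp_{G_{S'}}(\chi_1,\chi_2)$ is non-empty, i.e.\ has a section after refining $S'$. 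The key point is that conjugation by an element of $G(S')$ carrying $\chi_1$ to $\chi_2$ automatically normalizes the smooth subgroup scheme $\Gred$ (as $\Gred$ is characteristic in $G$, being the maximal smooth closed subgroup scheme over $k$ — here one uses that $\Gred$ being a subgroup scheme is preserved under all automorphisms of $G$), so such an element conjugates the $\Gred$-cocharacters into each other while preserving $\Gred$; combined with smoothness of the $\Gred$-transporter, fpqc-surjectivity of that transporter onto its image lets us lift to a section over a further fpqc covering. Therefore $\chi_1$ and $\chi_2$ are conjugate under $\Gred$ fpqc-locally, proving injectivity.

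The main obstacle I expect is the monomorphism part, specifically verifying cleanly that an element of $G(S')$ conjugating $\chi_1$ to $\chi_2$ can be modified into an element of $\Gred(S')$ doing the same. The natural route is: such a $g\in G(S')$ conjugates $(\Gred)_{S'}$ to itself since $\Gred$ is characteristic, hence $g$ defines a section of the transporter $\UTransp_{G_{S'}}(\chi_1,\chi_2)$ which, being smooth over $S'$ by \cite[XI.5.2]{SGA3II} (as $\chi_1,\chi_2$ land in the smooth group $\Gred$), is fpqc-locally in the image of the smooth $\Gred_{S'}$-transporter; here one must check that the $\Gred$-transporter surjects onto the $G$-transporter, which follows because on geometric fibers over $S'$ any element conjugating $\chi_1$ to $\chi_2$ can be multiplied by an element of the centralizer of $\chi_2$ to land in $\Gred$ — but in fact the cleanest formulation is that over a field the two transporters have the same $\bar k$-points modulo the action of $\Cent(\chi_2)$, and one reduces to this case by the usual smoothness-plus-Nakayama argument as in the proof of Theorem \ref{CocharLocConstant}(i). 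Once this local lifting is in place, both sheaf maps being iso follows formally.
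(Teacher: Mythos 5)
Your treatment of the epimorphism direction is exactly the paper's: cite Theorem \ref{CocharLocConstant}(i) to conjugate an arbitrary cocharacter of $G$ into $\Gred$ after an fpqc cover. The monomorphism direction, however, is where your argument becomes muddled and contains a couple of genuine missteps.

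The clean observation the paper uses, and which you circle around without stating, is simply this: over any field $k(s)$ the scheme $\Spec k(s)$ is reduced, so $G(k(s))=\Gred(k(s))$ on the nose. Hence if $\chi_1,\chi_2\colon \Gm[S]\to(\Gred)_S$ are conjugate under a $G$-point over some cover, then on every fiber they are conjugate under a point of $\Gred$. Since $\Gred$ is smooth, \cite[XI.5.4]{SGA3II} (not XI.5.2) upgrades this fiberwise conjugacy in $\Gred$ to fpqc-local conjugacy in $\Gred$. That is the entire monomorphism argument. Two specific problems with your version: (1) your assertion that the two transporters ``have the same $\bar k$-points modulo the action of $\Cent(\chi_2)$'' is off --- they have literally the same $\bar k$-points, with no centralizer quotient needed, precisely because of the reducedness observation above; introducing the centralizer adds nothing and suggests a residual ambiguity that isn't there. (2) The appeal to $\Gred$ being ``characteristic'' in $G$ is both unnecessary and not quite correct as stated: $\Gred$ being preserved by $k$-automorphisms of $G$ does not directly give that conjugation by $g\in G(S')$ preserves $(\Gred)_{S'}$, since $(\Gred)_{S'}$ need not be the reduced subscheme of $G_{S'}$ when $S'$ is non-reduced; and in any case knowing that $g$ normalizes $(\Gred)_{S'}$ does not help you replace $g$ by a point of $\Gred$. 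Dropping both of these and replacing them by the field-points observation plus \cite[XI.5.4]{SGA3II} closes the gap.
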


\begin{proof}
  That it is an epimorphism is a consequence of Theorem \ref{CocharLocConstant}. To prove that it is a monomorphism we need to show that any two cocharacters $\chi_1,\chi_2$ of $\Gred$ over some scheme $S$ which are conjugate by an element of $G(S)$ are fpqc-locally on $S$ conjugate by an element of $\Gred(S)$. For any point $s\in S$, the fibers of $\chi_1$ and $\chi_2$ in $s$ are conjugate by an element of $\Gred(k(s))=G(k(s))$. Since $\Gred$ is smooth, by \cite[Corollaire XI.5.4]{SGA3II} this implies that $\chi_1$ and $\chi_2$ are conjugate by an element of $\Gred(S)$ fpqc-locally on $S$.
\end{proof}

For us, an \'etale morphism of schemes need not be of finite type, just locally of finite type as in \cite[Tag 02GI]{StacksProject}.
\begin{theorem} \label{ClRepr}
  \begin{itemize}
  \item [(i)] The sheaf $\CC_\CT$ is representable by a scheme which is \'etale over $k$ and the morphism $t\colon\UHom^\otimes(\CT,\GradQCoh)\to \CC_\CT$ is faithfully flat and locally of finite presentation.
  \item [(ii)] The scheme $\CC_\CT$ is the coarse moduli space of $\UHom^\otimes(\CT,\GradQCoh)$, i.e., the morphism $\CC_\CT\to\UHom^\otimes(\CT,\GradQCoh)$ is universal among all morphisms from $\UHom^\otimes(\CT,\GradQCoh)$ to a scheme and induces a bijection between the set of isomorphism classes in $\UHom^\otimes(\CT,\GradQCoh)(k')$ and the set $\CC_\CT(k')$ for any algebraically closed overfield $k'$ of $k$.
  \item [(iii)] The stack $\UHom^\otimes(\CT,\GradQCoh)$ is a gerbe over $\CC_\CT$.
  \end{itemize}
\end{theorem}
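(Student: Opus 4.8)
The plan is to reduce the whole statement, via Lemmas~\ref{FomegaEquiv} and~\ref{ConjClassIso}, to a statement about the conjugation action of a smooth affine group scheme on its scheme of cocharacters, to settle that using the ingredients already assembled for Theorem~\ref{CocharLocConstant}, and then to descend. First I would choose a finite extension $k'/k$ carrying a fiber functor $\omega$ on $\CT$ (which exists by \cite[6.20]{DeligneGroth}) and, after enlarging $k'$ using the remark following Theorem~\ref{CocharLocConstant}, such that $\Gred$ is a smooth subgroup scheme of the affine group scheme $G\defeq\UAut^\otimes_{k'}(\omega)$ of finite type over $k'$. Since pullback to $(\Sch/k')$ commutes with fpqc sheafification, the pullback of $\CC_\CT$ is the coarse fpqc sheaf of $\UHom^\otimes(\CT,\GradQCoh)_{k'}$; moreover representability by an algebraic space, and the properties ``\'etale'', ``faithfully flat'', ``locally of finite presentation'' and ``separated'', all descend along the finite faithfully flat morphism $\Spec k'\to\Spec k$, and being a gerbe is fpqc-local. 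So it suffices to prove (i)--(iii) after replacing $k$ by $k'$. By Lemma~\ref{FomegaEquiv} the stack $\UHom^\otimes(\CT,\GradQCoh)_{k'}$ is equivalent to $[G\backslash\UHom_{k'}(\Gm[k'],G)]$, whose coarse sheaf is the quotient sheaf $G\backslash\UHom_{k'}(\Gm[k'],G)$, and by Lemma~\ref{ConjClassIso} this coincides with $\Gred\backslash\UHom_{k'}(\Gm[k'],\Gred)$. Writing $H\defeq\Gred$ and $\CH\defeq\UHom_{k'}(\Gm[k'],H)$, everything now comes down to the conjugation action of $H$ on $\CH$.

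For part~(i) I would argue as follows. By \cite[XI.4.3]{SGA3II}, $\CH$ is representable by a scheme separated and locally of finite type over $k'$; since $H$ is smooth, $\CH$ is in fact smooth over $k'$ (the deformation theory of a cocharacter is unobstructed because $\Gm$ is linearly reductive). Fix a cocharacter $\lambda$ of $H$ over some extension of $k'$. By \cite[XI.5.2]{SGA3II}, the transporter over $\CH$ carrying the universal cocharacter over $\CH$ into the constant cocharacter $\lambda$ is representable by a subscheme of $H_\CH$ that is smooth over $\CH$; hence its image $O_\lambda\subseteq\CH$, the locus of cocharacters conjugate to $\lambda$, is open -- this is precisely the argument used for Theorem~\ref{CocharLocConstant}(ii). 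Since the complement of $O_\lambda$ in $\CH$ is the union of the other such loci, $O_\lambda$ is also closed. Thus $\CH$ is the disjoint union of the open and closed orbits $O_\lambda$; each is a homogeneous space under $H$, isomorphic fpqc-locally on $k'$ to $H/\Cent_H(\lambda)$ (the centralizer $\Cent_H(\lambda)=\UTransp_H(\lambda,\lambda)$ being smooth by \cite[XI.5.2]{SGA3II}), so $H\backslash O_\lambda\cong\Spec k'$ and $H\backslash\CH\cong\coprod_{[\lambda]}\Spec k'$ is \'etale over $k'$, while the projection $\CH\to H\backslash\CH$, being orbit by orbit of the form $H/\Cent_H(\lambda)\to\Spec k'$, is faithfully flat and locally of finite presentation. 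Descending back to $k$ then makes $\CC_\CT$ an algebraic space separated and \'etale over $k$, hence a scheme, with $t$ faithfully flat and locally of finite presentation.

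Parts~(ii) and~(iii) should then be essentially formal consequences of the coarse-sheaf formalism. Universality of $t$ among morphisms to schemes is immediate from Definition~\ref{SchemeOfTypesDef}: as $\UHom^\otimes(\CT,\GradQCoh)$ is fibered in groupoids and a scheme is an fpqc sheaf, any morphism from the former to the latter factors uniquely through the sheafification of the presheaf of isomorphism classes, i.e.\ through $t$. For the stated bijection over an algebraically closed overfield $k'$ of $k$, I would note that over $k'$ all fiber functors on $\CT$ are isomorphic, so Theorem~\ref{GFFClassification}, Corollary~\ref{GFFClassificationCor}, Lemma~\ref{ConjClassIso} and the vanishing of $H^1$ over an algebraically closed field (applied to torsors, and to the finite-type transporter schemes to produce rational points) identify the isomorphism classes of graded fiber functors over $k'$ with the conjugacy classes of cocharacters of $\Gred$, which are exactly the $k'$-points of the pullback of $\CC_\CT$; one checks this identification is induced by $t$. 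For the gerbe statement: $\UHom^\otimes(\CT,\GradQCoh)$ is an fpqc stack in groupoids (Lemma~\ref{GFFStack}), hence so is its restriction to $(\Sch/\CC_\CT)$ via $t$; a scheme atlas $Y\to\UHom^\otimes(\CT,\GradQCoh)$ (which exists by Theorem~\ref{GFFStackAlgebraic}) composed with $t$ is faithfully flat and locally of finite presentation, hence an fpqc covering of $\CC_\CT$ over which $\UHom^\otimes(\CT,\GradQCoh)$ carries its tautological object; and if $\gamma,\gamma'$ are graded fiber functors over a $\CC_\CT$-scheme $U$ with $t(\gamma)=t(\gamma')$, then, $\CC_\CT$ being the fpqc sheafification of the presheaf of isomorphism classes and two sections of a presheaf agreeing in its sheafification precisely when they agree fpqc-locally, $\gamma$ and $\gamma'$ are fpqc-locally isomorphic. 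Both gerbe axioms follow.

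The hard part is the step in part~(i): showing that the conjugacy loci $O_\lambda$ are open -- equivalently, that $\Gred\backslash\UHom_{k'}(\Gm[k'],\Gred)$ is \'etale -- when $\Gred$ is only assumed to be a smooth subgroup scheme, not a reductive one. This rests on the representability and smoothness of transporters and centralizers from \cite{SGA3II}, deployed exactly as in the proof of Theorem~\ref{CocharLocConstant}(ii). Everything else is bookkeeping about passing between $\CT$, its neutralization over $k'$, and back down to $k$, chiefly the compatibility of sheafification with pullback and the descent of the representing object.
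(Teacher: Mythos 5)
Your strategic skeleton matches the paper's: reduce via Lemmas~\ref{FomegaEquiv} and~\ref{ConjClassIso} to the conjugation action of the smooth group $H=\Gred$ on its cocharacter scheme $\CH$, and use SGA~3 Exp.~XI to decompose $\CH$ into open and closed conjugacy loci. (The paper invokes \cite[XI.5.3]{SGA3II} directly for the open-closed decomposition; your derivation from the smoothness of transporters in \cite[XI.5.2]{SGA3II} is an equivalent route.) However, there is a genuine gap in how you obtain \emph{representability} of $\CC_\CT$. You assert that each orbit quotient $H\backslash O_\lambda$ is $\Spec k'$ because $O_\lambda$ is ``fpqc-locally $H/\Cent_H(\lambda)$.'' That does not by itself show the fpqc quotient \emph{sheaf} is a scheme (or even an algebraic space); representability of such quotients is exactly the non-formal input, and you have only exhibited the quotient as a sheaf that is fpqc-locally a point. (Also, when $\lambda$ is only defined over an extension of $k'$ the quotient is $\Spec$ of a finite separable extension of $k'$, not $\Spec k'$ itself --- harmless for \'etaleness, but it signals the bookkeeping is not tight.) The paper avoids this entirely by first invoking \cite[10.8]{ChampsAlgebriques}: since the inertia $\UAut^\otimes_S(\gamma)$ is flat and of finite type over $S$ by Theorem~\ref{GFFIsomRepr}, the coarse fppf sheaf is representable by an algebraic space and $t$ is automatically faithfully flat and locally of finite presentation. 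Only then does it pass to $\bar k$, and it identifies each open-closed piece with $\Spec\bar k$ by producing it as an algebraic space (via \cite[10.9]{ChampsAlgebriques}) whose $S$-points are a singleton for every $S$. Your approach needs this same representability input somewhere, and ``fpqc-locally a point'' is not a substitute for it.

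Secondary points: you descend from a finite extension $k'$ (needing fppf descent of algebraic spaces), while the paper descends only the property ``\'etale'' from $\bar k$ after already having representability over $k$ --- this is cleaner and sidesteps the field-of-definition issues in your orbit argument. You also do not really address the faithful flatness and local finite presentation of $t$ itself, only of the auxiliary map $\CH\to H\backslash\CH$. Your treatment of (ii) and (iii) is correct and fills in what the paper leaves as ``(ii) and (iii) follow from (i)'': universality is definitional for the coarse fpqc sheaf, the bijection over algebraically closed $k'$ follows because $\UIsom^\otimes$ is flat of finite type and hence has a rational point when nonempty, and the gerbe property is the sheaf-theoretic rephrasing of ``same type iff fpqc-locally isomorphic.''
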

\begin{proof}
 (i) By Theorem \ref{GFFIsomRepr}, for any $\gamma\in\UHom^\otimes(\CT,\GradQCoh)(S)$ the sheaf $\UAut^\otimes_S(\gamma)$ is flat and of finite type over $S$. By \cite[Corollaire 10.8]{ChampsAlgebriques} this implies that the coarse fppf sheaf associated to $\UHom^\otimes(\CT,\GradQCoh)$ is representable by an algebraic space over $k$ and that the natural morphism $\UHom^\otimes(\CT,\GradQCoh)\to \CC_\CT$ is faithfully flat and locally of finite presentation. Then, as an algebraic space, this fppf sheaf is an fpqc sheaf by \cite[Th\'eor\`eme A.4]{ChampsAlgebriques} which implies that it coincides with $\CC_\CT$. Since by \cite[Tag 03KX]{StacksProject} any algebraic space which has an \'etale morphism to the spectrum of a field is a scheme, it suffices to prove that $\CC_\CT$ is \'etale over $k$. For this we may replace $k$ by an algebraic closure. 

Then using Lemma \ref{FomegaEquiv} we can identify $\UHom^\otimes(\CT,\GradQCoh)$ with $[G\backslash \UHom_k(\Gm[k],G)]$ where $G=\UAut^\otimes_k(\omega)$ for some fiber functor $\omega$ on $\CT$ over $k$ so that $\CC_\CT$ is identified with the fpqc quotient sheaf $G\backslash \UHom_k(\Gm[k],G)$. Using Lemma \ref{ConjClassIso} we may assume that $G$ is smooth. Let $\chi\in \UHom_k(\Gm[k],G)$. Since $G$ is smooth, by \cite[Corollaire XI.5.3]{SGA3II}, the subsheaf of $\UHom_k(\Gm[k],G)$ consisting of those cocharacters which are fpqc-locally conjugate to $\chi$ is representable by an open and closed subscheme of $\UHom_k(\Gm[k],G)$ which we denote by $U$. This subscheme is preserved by the action of $G$ on $\UHom_k(\Gm[k],G)$. Thus we obtain an open and closed substack $[G\backslash U]$ of $[G\backslash \UHom_k(\Gm[k],G)]$. Let $X$ be the coarse fpqc sheaf associated to $[G\backslash U]$. By an argument analogous to the one used above to show that $\CC_\CT$ is representable, it follows from \cite[Remarque 10.9]{ChampsAlgebriques} that $X$ is representable by an algebraic space. There is a natural inclusion $X \into \CC_\CT$ whose pullback along the fpqc covering $[G\backslash \UHom_k(\Gm[k],G)] \to \CC_\CT$ is the open and closed immersion $[G\backslash U]\into [G\backslash \UHom_k(\Gm[k],G)]$. Thus $X$ is an open and closed subspace of $\CC_\CT$. It follows from the definition of $U$ that for any scheme $S$ over $k$ the set $X(S)$ consists of exactly one point. Thus $X$ is $\Spec(k)$.

 By varying the cocharacter $\chi$ we can cover $\UHom_k(\Gm[k],G)$ with open and closed subsets $U$ as above, and it follows that $\CC_\CT$ is a disjoint union of copies of $\Spec(k)$. This means that $\CC_\CT$ is \'etale over $k$.

(ii) and (iii) follow from (i).
\end{proof}
\begin{definition}
  \begin{itemize}
  \item[(i)] The scheme $\CC_\CT$ is called the \emph{scheme of types} of $\CT$.
  \item[(ii)] For any graded fiber functor $\gamma\in \UHom^\otimes(\CT,\GradQCoh)(S)$, the \emph{type of $\gamma$} is the image $t(\gamma)$ of $\gamma$ in $\CC_\CT(S)$.
  \end{itemize}
\end{definition}

\begin{proposition} \label{SchemeOfTypeIso}
  Let $\CT=\Rep[G]$ for some group scheme $G$ of finite type over $k$ such that $\Gred$ is a smooth subgroup scheme of $G$. Let $\CT'\defeq \Rep[\Gred]$. Then the restriction functor $\CT\to\CT'$ induces a morphism of stacks $\UHom^\otimes(\CT',\GradQCoh)\to \UHom^\otimes(\CT,\GradQCoh)$ by composition of functors, and this in turn induces a morphism of schemes $\CC_{\CT'}\to\CC_\CT$. This morphism $\CC_{\CT'}\to\CC_\CT$ is an isomorphism. 
\end{proposition}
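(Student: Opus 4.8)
The plan is to identify the morphism $\CC_{\CT'}\to\CC_\CT$ with the morphism of fpqc quotient sheaves of Lemma~\ref{ConjClassIso} and then invoke that lemma. Since $\Rep[G]$ is a Tannakian category, $G$ is affine of finite type, hence its closed subgroup scheme $\Gred$ is affine of finite type as well, so Lemma~\ref{ConjClassIso} applies to $G$ and $\Rep[\Gred]$ is a Tannakian category possessing a tensor generator, so that the results of this subsection apply to it.

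First I would fix the forgetful fiber functors $\omega$ on $\CT=\Rep[G]$ and $\omega'$ on $\CT'=\Rep[\Gred]$ over $k$; note $\omega'\circ\res=\omega$. Tannakian reconstruction gives canonical identifications $\UAut^\otimes_k(\omega)\cong G$ and $\UAut^\otimes_k(\omega')\cong\Gred$ under which the homomorphism $\UAut^\otimes_k(\omega')\to\UAut^\otimes_k(\omega)$ obtained by whiskering tensor automorphisms of $\omega'$ with $\res$ is the inclusion $\Gred\into G$. Applying Lemma~\ref{FomegaEquiv} to $\omega$ and to $\omega'$ yields equivalences $F_\omega\colon\UHom^\otimes(\CT,\GradQCoh)\isoto[G\backslash\UHom_k(\Gm[k],G)]$ and $F_{\omega'}\colon\UHom^\otimes(\CT',\GradQCoh)\isoto[\Gred\backslash\UHom_k(\Gm[k],\Gred)]$.

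The crucial step is to verify that $F_\omega$ and $F_{\omega'}$ intertwine the restriction morphism $\gamma'\mapsto\gamma'\circ\res$ with the morphism of quotient stacks induced by $\Gred\into G$. This is a diagram chase through the construction preceding Lemma~\ref{FomegaEquiv}: for a graded fiber functor $\gamma'$ on $\CT'$ over a $k$-scheme $S$, whiskering tensor isomorphisms with $\res$ gives a $(\Gred)_S\into G_S$-equivariant morphism from $X_{\gamma'}=\UIsom^\otimes_S(\forget\circ\gamma',\omega'_S)$ to $X_{\gamma'\circ\res}=\UIsom^\otimes_S(\forget\circ(\gamma'\circ\res),\omega_S)$, which exhibits $X_{\gamma'\circ\res}$ as the contracted product $X_{\gamma'}\wedge^{(\Gred)_S}G_S$; and by the functoriality of $\chi(-)$ from Construction~\ref{chiomega}, $\chi(\gamma'\circ\res)$ is the image of $\chi(\gamma')$ under the whiskering map $\UAut^\otimes_S(\forget\circ\gamma')\to\UAut^\otimes_S(\forget\circ(\gamma'\circ\res))$, so the two associated equivariant maps into $\UHom_S(\Gm[S],-)$ correspond. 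The same whiskering handles morphisms $\gamma'\to\gamma''$, so the square of stacks with top arrow $(-)\circ\res$, bottom arrow the morphism induced by $\Gred\into G$, and vertical arrows $F_{\omega'}$ and $F_\omega$ commutes up to canonical $2$-isomorphism.

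Finally I would pass to coarse fpqc sheaves. This operation is $2$-functorial; it sends $\UHom^\otimes(\CT,\GradQCoh)$ to $\CC_\CT$ and $\UHom^\otimes(\CT',\GradQCoh)$ to $\CC_{\CT'}$, and, as in the proof of Theorem~\ref{ClRepr}, it sends $[G\backslash\UHom_k(\Gm[k],G)]$ and $[\Gred\backslash\UHom_k(\Gm[k],\Gred)]$ to the fpqc quotient sheaves $G\backslash\UHom_k(\Gm[k],G)$ and $\Gred\backslash\UHom_k(\Gm[k],\Gred)$. Hence the morphism $\CC_{\CT'}\to\CC_\CT$ is identified with the morphism $\Gred\backslash\UHom_k(\Gm[k],\Gred)\to G\backslash\UHom_k(\Gm[k],G)$, which is an isomorphism by Lemma~\ref{ConjClassIso}. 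I expect the main obstacle to be the compatibility verification of the third paragraph: one must track carefully how $X_{\gamma'}$ and $\chi(\gamma')$ transform under $\res$ --- in particular the contracted product and the conjugation-twisted inclusion $(\Gred)_S\into G_S$ --- in order to see that $F_\omega\circ((-)\circ\res)$ really is the morphism induced by $\Gred\into G$; the remaining steps are formal. Alternatively one could avoid quotient stacks and argue directly that $\CC_{\CT'}\to\CC_\CT$ is both an epimorphism and a monomorphism of fpqc sheaves, using Theorem~\ref{CocharLocConstant}(i) for the former and the monomorphism part of the proof of Lemma~\ref{ConjClassIso} for the latter, but this essentially re-proves that lemma.
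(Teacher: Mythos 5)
Your proposal is correct and follows essentially the same route as the paper's proof: identify the two stacks with $[G\backslash\UHom_k(\Gm[k],G)]$ and $[\Gred\backslash\UHom_k(\Gm[k],\Gred)]$ via Lemma~\ref{FomegaEquiv}, pass to coarse fpqc sheaves, and invoke Lemma~\ref{ConjClassIso}. The paper simply asserts the compatibility you verify in your third paragraph without spelling it out, so your write-up is a more careful version of the same argument.
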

\begin{proof}
  By Lemma \ref{FomegaEquiv}, the stacks $\UHom^\otimes(\CT,\GradQCoh)$ (resp. $\UHom^\otimes(\CT',\GradQCoh)$) can be identified with $[G\backslash \UHom_k(\Gm[k],G)]$ (resp. $[\Gred\backslash \UHom_k(\Gm[k],\Gred)]$). Then $\CC_{\CT}$ (resp. $\CC_{\CT'}$) are the fpqc quotient sheaves $G\backslash \UHom_k(\Gm[k],G)$ (resp. $\Gred\backslash \UHom_k(\Gm[k],\Gred)$) and the morphism $\CC_{\CT'}\to \CC_\CT$ in question is the morphism considered in Lemma \ref{ConjClassIso}. Thus the Theorem follows from Lemma \ref{ConjClassIso}.
\end{proof}
\subsection{Explicit Description of $\CC_\CT$} \label{ExplicitST}
Let $\CT$ be a Tannakian category over $k$ possessing a tensor generator and let $\bar k$ be an algebraic closure of $k$. In this subsection, we will describe $\CC_\CT$ by giving the set $\CC_\CT(\bar k)$ together with its $\Aut(\bar k/ k)$-action. Since $\CC_\CT$ is \'etale over $k$, these data determine $\CC_\CT$ uniquely. In this way we will obtain a description of $\CC_\CT$ not involving the language of stacks.

 The following lemma is a generalization of \cite[Lemma C.3.5]{PseudoReductiveGroups}. The proof we give is essentially the same as the one given in [loc.cit.].
\begin{lemma} \label{CocharT}
  Let $G$ be a group scheme which is locally of finite type over a field $k'$ and let $T$ be a maximal split subtorus of $G$. Then the inclusion $T\into G$ induces a bijection 
  \begin{equation*}
    \Norm_G(T)(k')\backslash \UHom_{k'}(\Gm[k'],T)(k') \isoto G(k')\backslash \UHom_{k'}(\Gm[k'],G)(k')
  \end{equation*}
\end{lemma}
\begin{proof}
  By \cite[Proposition C.4.5]{PseudoReductiveGroups} any two maximal split tori in $G$ are conjugate under $G(k')$. Hence for $\chi\in \UHom_{k'}(\Gm[k'],G)$ there exists $g\in G(k')$ such that $\leftexp{g}{\chi}$ factors through $T$ which shows surjectivity.

  To prove injectivity, consider $\chi\in \UHom_{k'}(\Gm[k'],T)$ and $g\in G(k')$ such that $\leftexp{g}{\chi}$ factors through $T$. Then $\Cent_G(\chi)$ contains the maximal split tori $T$ and $\leftexp{g^{-1}}{T}$. Thus by [loc. cit.] there exists $z\in \Cent_G(\chi)(k')$ such that $\leftexp{z^{-1}g^{-1}}{T}=T$. The element $gz$ lies in $\Norm_G(T)(k')$ and satisfies $\leftexp{gz}{\chi}=\leftexp{g}{\chi}$.
\end{proof}

 For any group scheme $G$ over $\bar k$ we denote by $X_*(G)\defeq \UHom_{\bar k}(\Gm[\bar k],G)(\bar k)$ its set of cocharacters over $\bar k$. Let $\omega$ be a fiber functor on $\CT$ over $\bar k$ and let $G\defeq \UAut^\otimes_{\bar k}(\omega)$, which is an affine group scheme of finite type over $\bar k$. Let $T\subset G$ be a maximal torus. The action of $\Norm_G(T)(\bar k)$ on $X_*(T)$ from the left by conjugation induces an action of $W_G(T)\defeq \Norm_G(T)(\bar k)/\Cent_G(T)(\bar k)$ on $X_*(G)$.

Consider another fiber functor $\omega'$ on $\CT$ over $\bar k$ and let $G'\defeq \UAut^\otimes_{\bar k}(\omega')$. Since we are over an algebraically closed field, the fiber functors $\omega$ and $\omega'$ are isomorphic by Theorem \ref{FFLocalIso}. Any isomorphism $\lambda\colon \omega\to\omega'$ induces an isomorphism $G\to G'$ which is independent of $\lambda$ up to an inner automorphism. In particular the induced bijection $G(\bar k)\backslash X_*(G) \isoto G'(\bar k)\backslash X_*(G')$ is independent of $\lambda$. If $T'\subset G'$ is a maximal torus, using the conjugacy of maximal tori we may choose $\lambda$ such that the induced isomorphism $G\isoto G'$ maps $T$ to $T'$. This yields a canonical bijection $W_{G}(T)\backslash X_*(T) \isoto W_{G'}(T')\backslash  X_*(T')$.

We shall apply this to group schemes $G'$ and $T'$ obtained as follows: For $\sigma \in \Aut(\bar k/ k)$, the automorphism group of the fiber functor $\sigma^*\omega$ is $\UAut^\otimes_{\bar k}(\sigma^*\omega)=\sigma^*G$ and $\sigma^*T$ is a maximal torus of $\sigma^*G$.

\begin{theorem} \label{ConcreteCL}
  There is a bijection $\CC_\CT(\bar k) \cong W_G(T)\backslash X_*(T)$ under which the Galois action on $\CC_\CT(\bar k)$ is given as follows: Let $\sigma\in \Aut(\bar k/k)$ and $\chi\in X_*(T)$. The image under $\sigma$ of the class of $\chi$ is the image of the class of  $\sigma^*\chi\in X_*(\sigma^*T)$ under the canonical bijection $W_{\sigma^*G}(\sigma^*T)\backslash X_*(\sigma^*T)\cong W_G(T)\backslash X_*(T)$ constructed above.
\end{theorem}
\begin{proof}
  By Lemma \ref{FomegaEquiv} the functor $F_\omega$ induces a bijection $\CC_\CT(\bar k)\cong G(\bar k)\backslash X_*(G)$. By composing this bijection with the bijection $W_G(T)\backslash X_*(T) \cong \Norm_G(T)(\bar k)\backslash X_*(T) \cong G(\bar k)\backslash X_*(G)$ given by Lemma \ref{CocharT} we obtain the desired map. The claim about the Galois action follows from a direct verification using the definition of $F_\omega$.
\end{proof}

In case $\CT$ is neutral, the description of $\CC_\CT$ given by the preceding theorem can be simplified:
\begin{theorem}\label{ConcreteCL2}
  Let $\CT=\Rep[G]$ for an affine group scheme $G$ of finite type over $k$. Let $T\subset G$ be a maximal torus. Then there is a bijection $\CC_\CT(\bar k)\cong W_G(T)\backslash X_*(T)$ under which the action of $\Aut(\bar k/k)$ on $\CC_\CT(\bar k)$ corresponds to the natural action of $\Aut(\bar k/k)$ on $W_G(T)\backslash X_*(T)$.
\end{theorem}
\begin{proof}
  This follows from Theorem \ref{ConcreteCL} by taking $\omega$ to be the forgetful functor $\Rep[G]\to\QCoh(\Spec(k))$, since in the present situation the $\Aut(\bar k/k)$-action on $W_G(T)\backslash X_*(T)$ described there coincides with the natural action of $\Aut(\bar k/k)$.
\end{proof}
\subsection{Consequences}\label{GFFConsequences}
Let $\CT$ be a Tannakian category over $k$ possessing a tensor generator. The above results on the scheme of types have the following consequences:
\begin{theorem} \label{GFFIsom}
  Let $\gamma_1,\gamma_2$ be graded fiber functors on $\CT$ over connected schemes $S_1,S_2$. Then exactly one of the following is true:
  \begin{itemize}
  \item[(i)] $\UIsom^\otimes_k(\gamma_1,\gamma_2)$ is the empty scheme.
  \item[(ii)] The types $t(\gamma_1)$ and $t(\gamma_2)$ both have the same same set-theoretic image in $\CC_\CT$.
 \end{itemize}
 In the second case, the image of $t(\gamma_1)$ and $t(\gamma_2)$ consists of one point $x$ whose residue field $k(x)$ is a finite separable field extension of $k$. Then the morphisms $t(\gamma_i)\colon S_i\to \CC_\CT$ factor through $\Spec(k(x))\subset \CC_\CT$ and the morphism $\UIsom^\otimes_k(\gamma_1,\gamma_2) \to S_1\times S_2$ factors through a faithfully flat morphism $\UIsom^\otimes_k(\gamma_1,\gamma_2)\to S_1\times_{k(x)} S_2$.
\end{theorem}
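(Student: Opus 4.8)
\section*{Proof proposal}

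The plan is to exploit that $\CC_\CT$ is \'etale over $k$ by Theorem \ref{ClRepr}, so that the underlying topological space of $\CC_\CT$ is discrete, each of its points $x$ is open, and each residue field $k(x)$ is finite separable over $k$. Since $S_1$ is connected and $\CC_\CT$ is discrete, the morphism $t(\gamma_1)\colon S_1\to\CC_\CT$ has set-theoretic image a single point $x_1$; as $\{x_1\}=\Spec k(x_1)$ is an open subscheme of $\CC_\CT$, the morphism $t(\gamma_1)$ factors through $\Spec k(x_1)$, and likewise $t(\gamma_2)$ has image a single point $x_2=\Spec k(x_2)$ and factors through it. To obtain the dichotomy, suppose $\UIsom^\otimes_k(\gamma_1,\gamma_2)$ is non-empty and pick a point $\xi$ of it, with residue field $\kappa$. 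Unwinding the definition, $\xi$ amounts to $k$-morphisms $f_i\colon\Spec\kappa\to S_i$ together with a tensor isomorphism $f_1^*\gamma_1\cong f_2^*\gamma_2$. Since $t$ is compatible with pullback, $t(\gamma_1)\circ f_1=t(f_1^*\gamma_1)=t(f_2^*\gamma_2)=t(\gamma_2)\circ f_2$ in $\CC_\CT(\kappa)$, so in particular $x_1=x_2$. Thus if (i) fails then (ii) holds; exclusivity is deferred to the end.

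Now assume (ii) and put $x:=x_1=x_2$, so that $S_1$ and $S_2$ become $k(x)$-schemes via $t(\gamma_1)$ and $t(\gamma_2)$. As $k(x)/k$ is finite separable, $\Spec k(x)\to\Spec k$ is unramified, so its diagonal is an open immersion; base-changing, the canonical morphism $S_1\times_{k(x)} S_2\to S_1\times_k S_2$ is an open and closed immersion. Next I would verify that $\UIsom^\otimes_k(\gamma_1,\gamma_2)\to S_1\times_k S_2$ factors through this open subscheme: since $\Spec k(x)\to\CC_\CT$ is a monomorphism, applying the computation of the previous paragraph to an arbitrary point $\xi$ of $\UIsom^\otimes_k(\gamma_1,\gamma_2)$ shows that the two composites $\Spec k(\xi)\to S_i\to\Spec k(x)$ agree, which is exactly the condition for the image of $\xi$ to lie in $S_1\times_{k(x)} S_2$; as this holds at every point, the morphism factors through the open subscheme $S_1\times_{k(x)} S_2$. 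That factored morphism is flat: $\UIsom^\otimes_k(\gamma_1,\gamma_2)$ is flat over $S_1\times_k S_2$ by Theorem \ref{GFFIsomRepr}, hence flat over the open subscheme $S_1\times_{k(x)} S_2$ through which it factors.

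It remains to show that $\UIsom^\otimes_k(\gamma_1,\gamma_2)\to S_1\times_{k(x)} S_2$ is surjective. I would argue pointwise: given $z\in S_1\times_{k(x)} S_2$ with residue field $\kappa$, let $g_i\colon\Spec\kappa\to S_i$ be induced by the projections. Because $z$ lies in the fibre product over $k(x)$, the two composites $\Spec\kappa\to S_i\to\Spec k(x)\hookrightarrow\CC_\CT$ coincide, hence $t(g_1^*\gamma_1)=t(g_2^*\gamma_2)$ in $\CC_\CT(\kappa)$. By the very definition of $\CC_\CT$ as the fpqc sheafification of the presheaf of isomorphism classes of graded fiber functors, this means $g_1^*\gamma_1$ and $g_2^*\gamma_2$ are isomorphic fpqc-locally on $\Spec\kappa$; refining to a suitable field extension $\kappa'/\kappa$, we obtain a tensor isomorphism $(g_1^*\gamma_1)_{\kappa'}\cong(g_2^*\gamma_2)_{\kappa'}$, i.e.\ a $\kappa'$-point of $\UIsom^\otimes_k(\gamma_1,\gamma_2)$ lying over $z$. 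Hence the morphism is surjective, and therefore faithfully flat. Finally, $S_1\times_{k(x)} S_2$ is non-empty, being a fibre product of non-empty schemes over a field, so a faithfully flat scheme over it is non-empty; thus case (ii) forces $\UIsom^\otimes_k(\gamma_1,\gamma_2)$ to be non-empty, which together with the first paragraph gives that exactly one of (i) and (ii) holds. I expect the surjectivity step to be the point requiring the most care, since it rests on unwinding the sheafification that defines $\CC_\CT$; everything else is bookkeeping with the discreteness of $\CC_\CT$ and the open immersion $S_1\times_{k(x)} S_2\hookrightarrow S_1\times_k S_2$.
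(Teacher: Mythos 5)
Your proposal is correct and takes essentially the same approach as the paper: both arguments rest on the \'etaleness of $\CC_\CT$ from Theorem \ref{ClRepr}, the characterization of fpqc-local isomorphism of graded fiber functors by equality of types, and the flatness of $\UIsom^\otimes_k(\gamma_1,\gamma_2)$ over $S_1\times_k S_2$ from Theorem \ref{GFFIsomRepr}. The only stylistic difference is that the paper packages the factoring and surjectivity together by identifying the set-theoretic image with $S_1\times_{\CC_\CT} S_2 = S_1\times_{k(x)} S_2$ via points over algebraically closed fields, whereas you handle the two steps separately.
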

\begin{proof}
Let $S$ be a scheme over $S_1\times S_2$. By the definition of $\CC_\CT$, the pullbacks of $\gamma_1$ and $\gamma_2$ to $S$ are isomorphic fpqc-locally on $S$ if and only if the two morphisms $t(\gamma_i|_S)\colon S\to S_i\ltoover{\gamma_i} \UHom^\otimes(\CT,\GradQCoh) \to \CC_\CT$ are equal, i.e., if and only if the morphism $S\to S_1\times S_2$ factors through $S_1\times_{\CC_\CT} S_2$. Since the $S_i$ are connected and $\CC_\CT$ is \'etale, the set-theoretic images of the two morphisms $t(\gamma_i)$ both consist of a single point $x_i$. If $x_1\not= x_2$, the two morphisms $t(\gamma_i|_S)$ cannot be equal, which implies that $\UIsom^\otimes_k(\gamma_1,\gamma_2)$ is empty. Otherwise we are in the second case. Let $x\defeq x_1=x_2$. Then the morphisms $t(\gamma_i)$ factor through $\Spec(k(x))$ and thus $S_1\times_{\CC_\CT} S_2=S_1\times_{k(x)} S_2$. If we let $S$ be the spectrum of an algebraically closed field, the above implies that the set-theoretic image of $\UIsom^\otimes_k(\gamma_1,\gamma_2)$ in $S_1\times S_2$ is $S_1\times_{k(x)} S_2$. Since $\UIsom^\otimes_k(\gamma_1,\gamma_2)$ is flat over $S_1\times S_2$ by Theorem \ref{GFFIsomRepr} this implies the last statement.
\end{proof}
\begin{theorem} \label{GFFLocIso}
  Two graded fiber functors on $\CT$ over some scheme $S$ have the same type if and only if they are fpqc-locally isomorphic.
\end{theorem}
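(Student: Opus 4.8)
The plan is to deduce both implications directly from Theorem~\ref{ClRepr}, which tells us that $\CC_\CT$ is an fpqc sheaf and that the structure morphism $t\colon\UHom^\otimes(\CT,\GradQCoh)\to\CC_\CT$ exhibits $\UHom^\otimes(\CT,\GradQCoh)$ as a gerbe over $\CC_\CT$.

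For the ``if'' direction I would argue as follows. Suppose $\gamma_1$ and $\gamma_2$ are graded fiber functors on $\CT$ over $S$ that become isomorphic after pullback along some fpqc covering $\{S_i\to S\}$. By construction $t$ factors through the presheaf of isomorphism classes (Definition~\ref{SchemeOfTypesDef}), so isomorphic graded fiber functors have equal image in $\CC_\CT$; hence $t(\gamma_1)|_{S_i}=t(\gamma_2)|_{S_i}$ in $\CC_\CT(S_i)$ for each $i$. Since $\CC_\CT$ is an fpqc sheaf, this already forces $t(\gamma_1)=t(\gamma_2)$, i.e.\ the two functors have the same type.

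For the ``only if'' direction, assume $t(\gamma_1)=t(\gamma_2)$ and call the common section $x\in\CC_\CT(S)$, equivalently a morphism $x\colon S\to\CC_\CT$. Pulling back the gerbe $t$ along $x$ yields a gerbe over $S$ whose objects over an $S$-scheme $S'$ are exactly the graded fiber functors $\delta$ on $\CT$ over $S'$ with $t(\delta)$ equal to $x|_{S'}$; this identification is immediate from the construction of $\CC_\CT$ as the coarse fpqc sheaf of $\UHom^\otimes(\CT,\GradQCoh)$, since $\CC_\CT$ is a sheaf and carries no extra data. By hypothesis $\gamma_1$ and $\gamma_2$ are both objects of this gerbe over $S$ itself, so by the defining property~(ii) of a gerbe they are fpqc-locally isomorphic, which is exactly the claim.

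I do not expect a genuine obstacle here: all the substance is already contained in Theorem~\ref{ClRepr}. The one point that needs a little care is the identification, in the previous paragraph, of the fibre of the gerbe $t$ over the section $x$ with the stack of graded fiber functors of type $x$. If one prefers to sidestep the language of gerbes, the same conclusion can be reached directly from the two-step construction of the sheafification $\CC_\CT$: two sections of the presheaf of isomorphism classes have the same image in $\CC_\CT(S)$ if and only if they already have the same image in the associated \emph{separated} presheaf, and by the definition of that presheaf this means precisely that $\gamma_1$ and $\gamma_2$ agree fpqc-locally, i.e.\ are fpqc-locally isomorphic.
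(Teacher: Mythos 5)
Your proof is correct and takes essentially the same approach as the paper: the paper disposes of this theorem in one line by invoking Theorem~\ref{ClRepr}(iii) (that $\UHom^\otimes(\CT,\GradQCoh)$ is a gerbe over $\CC_\CT$), which is precisely the argument you spell out in detail, with the additional observation that the sheaf property of $\CC_\CT$ gives the ``if'' direction.
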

\begin{proof}
  This is merely a rephrasing of the fact that $\UHom^\otimes(\CT,\GradQCoh)$ is a gerbe over $\CC_\CT$.
\end{proof}

\begin{theorem} \label{GFFClassification2}
  Let $\gamma$ be a graded fiber functor on $\CT$ over some scheme $S$ and $S'$ a scheme over $S$. Then the functor which sends a graded fiber functor $\gamma'$ over $S'$ to $\UIsom^\otimes_{S'}(\gamma',\gamma_{S'})$ and acts on morphisms in the natural way gives an equivalence between graded fiber functors over $S'$ having the same type as $\gamma$ and left $\UAut^\otimes_{S}(\gamma)_{S'}$-torsors.
\end{theorem}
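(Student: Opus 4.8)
The plan is to deduce the statement directly from Theorem~\ref{ClRepr}~(iii), which says that $\CG\defeq\UHom^\otimes(\CT,\GradQCoh)$ is a gerbe over the scheme of types $\CC_\CT$, together with Giraud's structure theorem for gerbes, Theorem~\ref{GerbeClassification}. Write $t\colon\CG\to\CC_\CT$ for the type morphism, and let $c\colon S'\to S\to\CC_\CT$ be the pullback to $S'$ of the type $t(\gamma)$. Since $t(\gamma_{S'})=t(\gamma)|_{S'}=c$, the graded fiber functor $\gamma_{S'}$ is an object of the gerbe $\CG$ lying over $c$, i.e.\ $\gamma_{S'}\in\CG(S')$ once $S'$ is regarded as a $\CC_\CT$-scheme via $c$.

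First I would identify the restricted gerbe $\CG_{S'}=\CG\times_{\CC_\CT}S'$. Because $\CC_\CT$ is a scheme, being of a prescribed type is a \emph{property} of a graded fiber functor and carries no extra data: for an $S'$-scheme $T$ the $2$-fiber product $\CG_{S'}(T)$ is canonically the full subcategory of $\CG(T)$ consisting of those graded fiber functors $\gamma''$ on $\CT$ over $T$ with $t(\gamma'')=c|_T$, and it acquires no extra morphisms because $\CC_\CT(T)$ is discrete. In particular $\CG_{S'}(S')$ is exactly the category of graded fiber functors over $S'$ having the same type as $\gamma$, morphisms being tensor isomorphisms (all morphisms of graded fiber functors are tensor isomorphisms since $\CT$ is rigid, cf.\ \cite[I.5.2.3]{Saavedra}), and $\UIsom$, $\UAut$ computed in the gerbe $\CG_{S'}$ coincide with $\UIsom^\otimes$, $\UAut^\otimes$.

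Next I would apply Theorem~\ref{GerbeClassification} to the gerbe $\CG$ over $\CC_\CT$ and the object $x=\gamma_{S'}\in\CG(S')$: it produces an equivalence of gerbes $\CG_{S'}\iso\Tors_{\UAut_{S'}(\gamma_{S'})}$ sending an object $\gamma''$ over an $S'$-scheme $T$ to the left $\UAut_{S'}(\gamma_{S'})$-torsor $\UIsom_T(\gamma'',\gamma_T)$, on which $\UAut_{S'}(\gamma_{S'})$ acts by post-composition. By the previous paragraph this reads $\CG_{S'}\iso\Tors_{\UAut^\otimes_{S'}(\gamma_{S'})}$, $\gamma''\mapsto\UIsom^\otimes_T(\gamma'',\gamma_T)$, and since $\UIsom^\otimes$ and $\UAut^\otimes$ are compatible with base change by their very definitions, $\UAut^\otimes_{S'}(\gamma_{S'})=\UAut^\otimes_S(\gamma)_{S'}$. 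Taking sections over $S'$ itself (i.e.\ specializing $T=S'$) yields precisely the asserted equivalence between graded fiber functors over $S'$ of the same type as $\gamma$ and left $\UAut^\otimes_S(\gamma)_{S'}$-torsors, implemented by $\gamma'\mapsto\UIsom^\otimes_{S'}(\gamma',\gamma_{S'})$.

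Essentially everything here is formal once Theorem~\ref{ClRepr}~(iii) and Theorem~\ref{GerbeClassification} are available; I expect the only point requiring genuine, if routine, care to be the identification of $\CG\times_{\CC_\CT}S'$ with the stack of graded fiber functors of fixed type. This rests on $\CC_\CT$ being a sheaf, indeed a scheme, so that the $2$-fiber product acquires neither superfluous objects nor superfluous morphisms, together with the trivial verification that $\UIsom^\otimes$ and $\UAut^\otimes$ commute with base change. One could instead build the quasi-inverse by hand, descending the trivial torsor (equivalently, twisting $\gamma_{S'}$ by a torsor), but the gerbe formalism packages exactly this construction and avoids redoing the descent.
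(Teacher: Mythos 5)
Your proposal is correct and takes essentially the same approach as the paper: both identify the category of graded fiber functors of the given type with sections of the gerbe $\UHom^\otimes(\CT,\GradQCoh)$ pulled back along $t(\gamma)$ and then invoke Theorem~\ref{GerbeClassification} together with Theorem~\ref{ClRepr}~(iii). The only cosmetic difference is that you base-change the gerbe all the way to $S'$ before applying Giraud's theorem, whereas the paper forms the fiber over $S$ and then takes $S'$-valued points.
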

\begin{proof}
  The fiber of the gerbe $\UHom^\otimes(\CT,\GradQCoh)$ over $\CC_\CT$ in the point $t(\gamma)$ is a neutral gerbe over $S$. Its $S'$-valued points are exactly the $\gamma'$ of the theorem. Thus the theorem follows from Theorem \ref{GerbeClassification}.
\end{proof}

\begin{definition}
  Assume that $\CT=\Rep[G]$ for an affine group scheme $G$ over $k$. Let $\chi\colon \Gm[S]\to G_S$ be a cocharacter. By Theorem \ref{GFFClassification} there exists a unique graded fiber functor $\gamma$ on $\CT$ over $S$ such that $\forget\circ\gamma$ is the base change of the forgetful functor $\Rep[G]\to \QCoh(k)$ to $S$ and such that $\chi(\gamma)=\chi$. We call $\gamma$ the \emph{graded fiber functor associated to $\chi$}.
\end{definition}

  Recall that a connected reductive algebraic group $G$ over $k$ is called \emph{quasi-split} if there exists a Borel subgroup of $G$ defined over $k$. We extend the same definition to arbitrary smooth connected affine group schemes over $k$. 

\begin{theorem} \label{TypeRealizable}
  Let $\CT=\Rep[G]$ where $G$ is a smooth connected quasi-split affine group scheme over $k$. For any point $x\in \CC_\CT$ there exists a cocharacter $\chi\colon \Gm[k(x)]\to G_{k(x)}$ such that the type of the associated graded fiber functor is the natural morphism $\Spec(k(x))\to \CC_\CT$ with set-theoretic image $\{x\}$.
\end{theorem}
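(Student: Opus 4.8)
The plan is to deduce the statement from the explicit description of $\CC_\CT(\bar k)$ in Theorem~\ref{ConcreteCL2} together with the structure theory of quasi-split groups. The guiding observation is that for a quasi-split $G$ one can choose a maximal torus so that the Galois action on its cocharacter lattice preserves a Weyl chamber; the dominant representative of a Galois-stable conjugacy class of cocharacters is then automatically Galois-fixed, hence rational, and it will supply the desired $\chi$.

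First I would use that $G$ is quasi-split to fix a Borel subgroup $B\subseteq G$ and a maximal torus $T\subseteq B$, both defined over $k$. Let $\Phi$ denote the root system of $(G_{\bar k},T_{\bar k})$, let $\Delta\subseteq\Phi$ be the basis determined by $B$, and let $W=\Norm_G(T)(\bar k)/T(\bar k)$ be the Weyl group. The conjugation action of $\Norm_G(T)(\bar k)$ on $X_*(T_{\bar k})$ factors through $W$, since $T(\bar k)$ acts trivially on its own cocharacters. By Theorem~\ref{ConcreteCL2} there is a bijection $\CC_\CT(\bar k)\cong W\backslash X_*(T_{\bar k})$ under which the Galois action on the left corresponds to the natural $\Aut(\bar k/k)$-action on the right. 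As $B$ and $T$ are defined over $k$, the group $\Aut(\bar k/k)$ fixes $B_{\bar k}$ and $T_{\bar k}$, hence permutes $\Delta$ among itself; consequently the closed dominant cone $\overline{C}=\{v\in X_*(T_{\bar k})\otimes_{\BZ}\BR:\langle\alpha,v\rangle\geq 0\text{ for all }\alpha\in\Delta\}$ is stable under $\Aut(\bar k/k)$.

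Next, fix an embedding $k(x)\into\bar k$ and set $\Gamma_x\defeq\Gal(\bar k/k(x))$. Let $\bar x\in\CC_\CT(\bar k)$ be the image of the canonical morphism $\Spec k(x)\to\CC_\CT$ with set-theoretic image $\{x\}$ (which exists since $\CC_\CT$ is étale over $k$) under the point $\Spec\bar k\to\Spec k(x)$ given by the chosen embedding. Then $\bar x$ is $\Gamma_x$-fixed, so under the above bijection it corresponds to a $\Gamma_x$-stable $W$-orbit $\Omega\subseteq X_*(T_{\bar k})$. Because $\overline{C}$ is a fundamental domain for the $W$-action on $X_*(T_{\bar k})\otimes_{\BZ}\BR$, the intersection $\Omega\cap\overline{C}$ consists of a single cocharacter $\chi_0$; since $\Omega$ and $\overline{C}$ are both $\Gamma_x$-stable, $\chi_0$ is fixed by $\Gamma_x$. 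By Galois descent for the torus $T$ — the cocharacters of $T_{k(x)}$ over $k(x)$ being precisely the $\Gamma_x$-fixed elements of $X_*(T_{\bar k})$ — the element $\chi_0$ descends to a cocharacter $\chi\colon\Gm[k(x)]\to T_{k(x)}\into G_{k(x)}$.

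Finally I would let $\gamma$ be the graded fiber functor on $\CT$ over $\Spec k(x)$ associated to $\chi$ and check that $t(\gamma)$ is the asserted morphism. Base-changing $\gamma$ along $k(x)\into\bar k$ gives the graded fiber functor over $\bar k$ associated to $\chi_{\bar k}$, whose type, by (the proof of) Theorem~\ref{ConcreteCL2}, is the class of $\chi_{\bar k}=\chi_0$ in $W\backslash X_*(T_{\bar k})$, namely $\Omega=\bar x$. Thus $t(\gamma)\in\CC_\CT(k(x))$ maps to $\bar x\in\CC_\CT(\bar k)$ under the chosen embedding; since $\CC_\CT(k(x))\into\CC_\CT(\bar k)$ is injective and $\bar x$ was defined as the image of the canonical morphism $\Spec k(x)\to\CC_\CT$ with image $\{x\}$, we conclude that $t(\gamma)$ equals this canonical morphism. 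The one genuinely delicate input, and the only place quasi-splitness is used, is the existence of a Borel $B$ and a maximal torus $T\subseteq B$ defined over $k$, which is exactly what makes the dominant chamber $\overline{C}$ Galois-stable; without this hypothesis a Galois-stable conjugacy class of cocharacters need not contain a rational one. When $G$ is smooth and connected but not reductive, $\Phi$, $\Delta$ and $W$ are those of the reductive quotient and the directions of $X_*(T_{\bar k})\otimes\BR$ outside the span of the coroots are $W$-fixed, so the same argument goes through unchanged.
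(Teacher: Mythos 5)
Your proposal is correct and follows essentially the same route as the paper's proof: fix a Borel $B$ and maximal torus $T\subseteq B$ over $k$ (using quasi-splitness), identify $\CC_\CT(\bar k)$ with conjugacy classes of cocharacters of $T$ via Theorem \ref{ConcreteCL2}, take the unique dominant representative in the Galois-stable class, and deduce Galois-fixedness from the Galois-stability of the dominant cone (the paper cites Bourbaki V.3.3 for the fundamental-domain fact you use). The only differences are minor expository ones — you pass explicitly through $W=\Norm_G(T)(\bar k)/T(\bar k)$, spell out the verification of $t(\gamma)$ via injectivity of $\CC_\CT(k(x))\into\CC_\CT(\bar k)$, and add a remark on the non-reductive case — none of which changes the argument.
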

\begin{proof}
  Let $B\subset G$ be a Borel subgroup and $T\subset B$ a maximal torus. Under the bijection of Theorem \ref{ConcreteCL2}, the point $x$ corresponds to a $W_G(T)$-conjugacy class in $X_*(T)$ which is $\Aut(\bar k/ k(x))$-invariant. The Borel subgroup $B$ defines a set of simple coroots in $X_*(T)$ and thus a set of dominant cocharacters in $X_*(T)$. It follows from \cite[Theorem V.3.3.2]{BourbakiGAL4} that the latter is a system of representatives for the action of $W_G(T)$ on $X_*(T)$. Let $\chi$ be the unique dominant cocharacter in the conjugacy class corresponding to $x$. Then for any $\sigma\in\Aut(\bar k/k(x))$, the image $\sigma^*\chi$ of $\chi$ under $\sigma$ lies in the same conjugacy class. Since $B$ is defined over $k(x)$, the cocharacter $\sigma^*\chi$ is also dominant. Thus $\sigma^*\chi=\chi$ which shows that $\chi$ is defined over $k(x)$. By construction the graded fiber functor associated to $\chi$ has the required type.
\end{proof}
\begin{corollary}
  Let $\CT=\Rep[G]$ where $G$ is a smooth connected quasi-split affine group scheme over $k$. Let $S$ be a connected $k$-scheme and $\gamma$ a graded fiber functor on $\CT$ over $S$. Then the image of $t(\gamma)$ consists of a single point $x$ whose residue field $k(x)$ is a finite separable extension of $k$ and there exists a cocharacter $\chi\colon \Gm[k(x)]\to G_{k(x)}$ such that the associated graded fiber functor has the same type as $\gamma$.
\end{corollary}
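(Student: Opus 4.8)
The plan is to assemble the corollary from Theorem \ref{ClRepr} and Theorem \ref{TypeRealizable}. First, by Theorem \ref{ClRepr}(i) the scheme $\CC_\CT$ is \'etale over $k$, hence a disjoint union of spectra of finite separable field extensions of $k$. Since $S$ is connected, the type morphism $t(\gamma)\colon S\to\CC_\CT$ lands in a single connected component; this component is of the form $\Spec(k(x))$ for a point $x\in\CC_\CT$ with $k(x)$ finite separable over $k$. Consequently the set-theoretic image of $t(\gamma)$ is $\{x\}$, and $t(\gamma)$ factors uniquely as a morphism $f\colon S\to\Spec(k(x))$ followed by the canonical open and closed immersion $\iota\colon\Spec(k(x))\into\CC_\CT$.

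Next I would apply Theorem \ref{TypeRealizable} to the point $x$. Since $G$ is smooth, connected and quasi-split, that theorem produces a cocharacter $\chi\colon\Gm[k(x)]\to G_{k(x)}$ whose associated graded fiber functor, which I will call $\gamma_\chi$ (a graded fiber functor on $\CT$ over $\Spec(k(x))$), has type equal to the natural morphism $\Spec(k(x))\to\CC_\CT$ with set-theoretic image $\{x\}$ --- in other words $t(\gamma_\chi)=\iota$, the very immersion occurring in the factorization above.

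Finally I would pull $\gamma_\chi$ back along $f$ and compare it with $\gamma$. Using the compatibility of $t$ with pullbacks one gets $t\big((\gamma_\chi)_S\big)=t(\gamma_\chi)\circ f=\iota\circ f=t(\gamma)$, so $(\gamma_\chi)_S$ and $\gamma$ have the same type; by Theorem \ref{GFFLocIso} they are then fpqc-locally isomorphic, which is the meaning of the assertion. The only step requiring a little care is this last equality of type morphisms, which rests on the uniqueness of the factorization $f$ through the open and closed subscheme $\Spec(k(x))\subseteq\CC_\CT$. Apart from that the corollary is a direct consequence of the quoted results, and I do not anticipate a genuine obstacle.
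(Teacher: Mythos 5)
Your argument is correct and is essentially the paper's own proof: both obtain the single point $x$ with finite separable residue field from Theorem \ref{ClRepr}, and both apply Theorem \ref{TypeRealizable} to that point to produce the cocharacter. The extra bookkeeping you add about the factorization $t(\gamma)=\iota\circ f$ and compatibility of types with pullback is fine but unpacks what the paper leaves implicit.
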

\begin{proof}
  That the image of $t(\gamma)$ consists of one point $x$ for which $k(x)$ is finite separable over $k$ follows from the fact that $\CC_\CT$ is \'etale over $k$ by Theorem \ref{ClRepr}. The cocharacter obtained by applying Theorem \ref{TypeRealizable} to $x$ has the required properties.
\end{proof}

In case $k$ is finite, we can also prove the following variant of the preceding result in which the automorphism group of the fiber functor is not required to be connected.
\begin{theorem} \label{FFTypeRealizable}
  Assume that $k$ is finite, that $\CT$ is neutral and that $\UAut^\otimes_k(\omega)$ is smooth for some (equivalently any) fiber functor $\omega$ on $\CT$ over $k$. Let $S$ be a connected scheme over $k$ and $\gamma$ a graded fiber functor on $\CT$ over $S$. Then the image of $t(\gamma)$ consists of a single point $x$ whose residue field $k(x)$ is a finite extension of $k$ and there exists a graded fiber functor $\gamma'\colon \CT\to \GradQCoh(k(x))$ which has the same type as $\gamma$.
\end{theorem}
First we prove the following result, which is a more concrete version of Theorem \ref{FFTypeRealizable}:
\begin{theorem} \label{FFTypeRealizable2}
  Assume that $k$ is finite, and let $G$ be a smooth affine group scheme over $k$. Let $C$ be a conjugacy class of cocharacters of $G$ which is defined over a finite field extension $k'$ of $k$. Then there exists an inner form $(G', \phi\colon G'_{\bar k}\isoto G_{\bar k})$ defined over $k'$ and a cocharacter $\chi'\colon \Gm[k']\to G'$ such that $\phi\circ \chi'_{\bar k}$ lies in $C$.
\end{theorem}
\begin{proof}
  Let $\chi\colon \Gm[\bar k]\to G_{\bar k}$ be a cocharacter in $C(\bar k)$. Let $\sigma\in \Gal(\bar k/k')$ be the Frobenius. Since $C$ is defined over $k'$ there exists $g\in G(\bar k)$ such that $g \cdot \leftexp{\sigma}{\chi} \cdot g^{-1}=\chi$. There exists a finite field extension $k''$ of $k'$ over which $\chi$ and $g$ are defined. Let $n\defeq [k''\colon k']$. Then $\leftexp{\sigma^n}{g}=g$, and $g\cdot \leftexp{\sigma}{g}\cdots \leftexp{\sigma^{n-1}}{g} \in G(k'')$ has finite order since $G(k'')$ is finite. By \cite[Exercise I.5.1.2]{SerreGalCoh}, this implies that there exists a cocycle $(a_\tau)_{\tau\in \Gal(\bar k/ k')}$ with values in $G(\bar k)$ such that $a_\sigma=g$. By twisting $G$ with this cocycle, we obtain an inner form $G'$ of $G$ defined over $k'$ together with an isomorphism $\phi\colon G'_{\bar k}\to G_{\bar k}$ such that $\phi(\leftexp{\sigma}{g'})=a_\sigma\cdot \leftexp{\sigma}{\phi(g')} \cdot a_\sigma^{-1}$ for all $g'\in G'(\bar k)$. Let $\chi'\defeq \phi^{-1}\circ \chi_{\bar k}$. This cocharacter of $G'$ is invariant under $\sigma$ and hence is defined over $k'$. Thus it has the required properties.
\end{proof}
\begin{proof}[Proof of Theorem \ref{FFTypeRealizable}]
  That the image of $t(\gamma)$ consists of one point $x$ whose residue field $k(x)$ is finite over $k$ follows from the fact that $\CC_\CT$ is \'etale over $k$ by Theorem \ref{ClRepr}. Pick a fiber functor $\omega$ on $\CT$ over $k$ and let $G\defeq \UAut^\otimes_k(\omega)$. By Lemma \ref{FomegaEquiv}, the point $x\in \CC_\CT$ corresponds to a conjugacy class of cocharacters of $G$ which is defined over $k(x)$. Let $(G', \phi\colon G'_{\bar k}\isoto G_{\bar k})$ and $\chi'\colon \Gm[k(x)]\to G'$ be the inner form of $G$ defined over $k(x)$ and the cocharacter of $G'$ obtained by applying Theorem \ref{FFTypeRealizable2} to $G$ and $C$. The fact that $G'$ is an inner form of $G$ yields in a natural way a fiber functor $\omega'$ on $\CT$ over $k(x)$ for which $\UAut^\otimes_{k(x)}(\omega')=G'$. By Theorem \ref{GFFClassification} there exists a unique graded fiber functor $\gamma'$ on $\CT$ over $k(x)$ such that $\forget\circ\gamma'=\omega'$ and $\chi(\gamma')=\chi'$. The fact that $\phi\circ \chi'_{\bar k}$ lies in $C$ implies that $\gamma'$ has the same type as $\gamma$. Thus $\gamma'$ has the required properties.
\end{proof}
\section{Filtered Fiber Functors} \label{FilFF}

\subsection{Filtered Quasi-Coherent Sheaves}
 By a \emph{decreasing filtration} on a quasi-coherent sheaf $\CM$ on $S$ we mean a family of quasi-coherent subsheaves $(\F^n\CM)_{n\in\BZ}$ satisfying $\F^n \CM\supset \F^{n+1} \CM$ for all $n\in \BZ$ as well as $\cap_{n\in\BZ}\F^i\CM=0$ and $\cup_{n\in\BZ}\F^n\CM=\CM$. A \emph{morphism between two quasi-coherent sheaves $\CM$ and $\CN$ equipped with a decreasing filtration} is a morphism $f\colon \CM\to\CN$ such that $f(\F^n\CM)\subset \F^n\CN$ for all $n\in\BZ$. We denote the resulting category of \emph{filtered quasi-coherent sheaves on $S$} by $\FilQCoh(S)$. 

The category $\FilQCoh(S)$ is $k$-linear but in general not abelian. It has arbitrary kernels, cokernels and direct sums. The failure of $\FilQCoh(S)$ to be abelian can be compensated by giving it the structure of an exact category in the sense of Quillen. Such an exact category is an additive category $\CA$ together with a class of sequences $0\to A'\to A\to A''\to 0$ in $\CA$ satisfying certain axioms. These sequences are called short exact. Any abelian category is given the exact structure for which the short exact sequences are the short exact sequences in the usual sense. An exact functor between exact categories is an additive functor which sends short exact sequences to short exact sequences. For an overview of exact categories, see \cite{Buehler}. 

The exact structure on $\FilQCoh(S)$ is defined as follows: A morphism $f\colon \CM\to \CN$ in $\FilQCoh(S)$ is \emph{admissible} if $f(\F^n\CM)=\F^n \CN\cap\operatorname{Im} f$ for all $n\in \BZ$, and a sequence in $\FilQCoh(S)$ is short exact if its morphisms are admissible and the underlying sequence of quasi-coherent sheaves is short exact. 

For any two quasi-coherent sheaves equipped with a filtration, we equip their tensor product with the filtration given by $\F^n(\CM\otimes\CN)=\sum_{i+j=n}\F^i \CM\otimes\F^j\CN$. This makes $\FilQCoh(S)$ into a symmetric monoidal category.

There is a natural forgetful functor $\forget\colon \FilQCoh(S)\to \QCoh(S)$, which is a faithful exact tensor functor.

To any $\CM\in\FilQCoh(S)$ one can associate the graded quasi-coherent sheaf $\gr\CM\defeq \oplus_{n\in \BZ}\F^n\CM/\F^{n+1}\CM\in\GradQCoh(S)$. A morphism $f\colon \CM\to \CN$ in $\FilQCoh(S)$ naturally induces a morphism $\gr\CM\to \gr\CN$ and this gives a functor $\gr\colon \FilQCoh(S)\to \GradQCoh(S)$. It is an exact tensor functor. In fact, using the Five Lemma, the short exact sequences in $\FilQCoh(S)$ can be described in the following way:
\begin{lemma} \label{FilShortExact}
  For a sequence $0\to\CL\to\CM\to\CN\to 0$ in $\FilQCoh(S)$ the following are equivalent:
  \begin{itemize}
  \item[(i)] The sequence $0\to \CL\to\CM\to\CN\to 0$ is short exact in $\FilQCoh(S)$.
  \item[(ii)] For all $n\in \BZ$ the sequence $0\to \F^n\CL\to\F^n\CM\to\F^n\CN\to 0$ is exact in $\QCoh(S)$.
  \item[(iii)] The sequence $0\to\gr\CL\to\gr\CM\to\gr\CN\to 0$ is exact in $\GradQCoh(S)$.
  \end{itemize}
\end{lemma}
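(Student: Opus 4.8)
The plan is to prove the equivalence (i) $\iff$ (ii) $\iff$ (iii) by a cyclic argument, with the main work being the implication (iii) $\implies$ (ii); the remaining steps are fairly direct unwindings of the definitions and an application of the Five Lemma as announced.

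First I would dispose of (i) $\implies$ (iii): the functor $\gr$ was already noted to be an exact tensor functor, so it sends short exact sequences to short exact sequences, giving (iii) immediately. Next, for (ii) $\implies$ (i): assuming each $0\to \F^n\CL\to\F^n\CM\to\F^n\CN\to 0$ is exact in $\QCoh(S)$, I need to check that the maps $\CL\to\CM$ and $\CM\to\CN$ are admissible and that the underlying sequence of quasi-coherent sheaves is short exact. Taking the colimit over $n$ (using $\cup_n\F^n = $ the whole sheaf) of the exact sequences in (ii) yields exactness of $0\to\CL\to\CM\to\CN\to 0$ in $\QCoh(S)$. For admissibility of $\CL\to\CM$: injectivity on each $\F^n\CL$ together with the fact that $\F^n\CL$ maps into $\F^n\CM$ means the image of $\F^n\CL$ is contained in $\F^n\CM\cap\operatorname{Im}$; conversely an element of $\F^n\CM$ in the image of $\CL\to\CM$ comes from a unique $\ell\in\CL$, and the exact sequence $0\to\F^n\CL\to\F^n\CM\to\F^n\CN\to 0$ forces its image in $\F^n\CN$ to vanish, hence (again by the corresponding exactness) $\ell\in\F^n\CL$. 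The argument for $\CM\to\CN$ is analogous, using surjectivity of $\F^n\CM\to\F^n\CN$.

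The substantive step is (iii) $\implies$ (ii). Fix $n$. We have the commutative diagram with exact rows the columns $0\to\F^{n+1}\to\F^n\to\gr^n\to 0$ (these are exact by definition of $\gr$), arranged so that the three terms $\CL,\CM,\CN$ give three such columns, and the maps between them come from the sequence $0\to\CL\to\CM\to\CN\to 0$. By hypothesis (iii) the sequence of quotients $0\to\gr^n\CL\to\gr^n\CM\to\gr^n\CN\to 0$ is exact. I would run an induction — or better, a direct limit/inverse limit argument — on the filtration. Concretely: for $n\gg 0$ (where everything is $0$, using $\cap_n\F^n = 0$ is not quite enough, but after restricting to a quasi-compact open or working sheaf-theoretically one argues directly) the statement is trivial, and descending from $\F^{n+1}$ to $\F^n$: assuming $0\to\F^{n+1}\CL\to\F^{n+1}\CM\to\F^{n+1}\CN\to 0$ is exact and knowing $0\to\gr^n\CL\to\gr^n\CM\to\gr^n\CN\to 0$ is exact, the Five Lemma applied to the ladder
\begin{equation*}
  \xymatrix{
    0 \ar[r] & \F^{n+1}\CL \ar[r]\ar[d] & \F^{n}\CL \ar[r]\ar[d] & \gr^n\CL \ar[r]\ar[d] & 0 \\
    0 \ar[r] & \F^{n+1}\CM \ar[r]\ar[d] & \F^{n}\CM \ar[r]\ar[d] & \gr^n\CM \ar[r]\ar[d] & 0 \\
    0 \ar[r] & \F^{n+1}\CN \ar[r] & \F^{n}\CN \ar[r] & \gr^n\CN \ar[r] & 0
  }
\end{equation*}
gives injectivity of $\F^n\CL\to\F^n\CM$ and surjectivity of $\F^n\CM\to\F^n\CN$, and exactness in the middle follows by a diagram chase; one then checks the exact sequence at level $n$ globally, not just after passing to $n+1$. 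Since $\cup_n\F^n\CM=\CM$ and $\cap_n\F^n\CM = 0$, running this from $n=+\infty$ downward (as an inverse limit of the exact sequences, using that the transition maps $\F^{n+1}\to\F^n$ are injective so no $\varprojlim^1$ obstruction arises and that sections are detected at finite stages because the filtration is exhaustive) yields (ii) for all $n$.

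The expected main obstacle is handling the bookkeeping at the ``ends'' of the filtration cleanly: the conditions $\cap_n\F^n\CM = 0$ and $\cup_n\F^n\CM = \CM$ are what make the inductive descent start and terminate, but because $\FilQCoh(S)$ filtrations need not be finite (no assumption that $\F^n\CM = 0$ for $n\gg 0$ or $\F^n\CM=\CM$ for $n\ll 0$ is in force here), I cannot literally induct from a finite base case and must instead argue via direct and inverse limits of quasi-coherent sheaves, checking that exactness is preserved. This is routine for filtered colimits and, since the relevant transition maps are injections, unproblematic for the inverse limits as well; once that is set up, the Five Lemma does the rest exactly as the paper indicates.
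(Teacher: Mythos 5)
Your treatment of (ii)$\implies$(i) is fine, and (i)$\implies$(iii) would be cleaner as (i)$\implies$(ii)$\implies$(iii): appealing to ``$\gr$ is an exact functor'' is circular, since that claim is exactly what this lemma is meant to justify, whereas (i)$\implies$(ii) is an immediate unwinding of admissibility and (ii)$\implies$(iii) is the nine lemma. The real problem is (iii)$\implies$(ii). You correctly diagnose that the downward nine-lemma induction has no base case when the filtration is unbounded above, but the inverse-limit patch you propose is not an argument: $\varprojlim_n$ of the sequences $0\to\F^n\CL\to\F^n\CM\to\F^n\CN\to 0$ is $0\to 0\to 0\to 0\to 0$ (by separatedness), which carries no information about any finite level, and injectivity of the transition maps, hence vanishing of $\varprojlim^1$, is not the obstruction to anything being proved here. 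In fact (iii)$\implies$(ii) is genuinely false without a local boundedness hypothesis: over $S=\Spec k$, take $\CL=0$, $\CM=k[t]$, $\CN=k[[t]]$, each with $\F^n = t^n(\cdot)$ for $n\geq 0$ and $\F^n = (\cdot)$ for $n\leq 0$. These filtrations are separated and exhaustive, the inclusion $k[t]\hookrightarrow k[[t]]$ is a filtered morphism, and it induces an isomorphism on $\gr$ in every degree, so (iii) holds for $0\to 0\to k[t]\to k[[t]]\to 0$; yet $\F^0\CM\to\F^0\CN$ is not surjective, so (ii) fails.

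The lemma as stated therefore tacitly requires the filtrations to be locally bounded, which is the only regime in which the paper actually uses it: every object in the image of a filtered fiber functor is rigid, and by Lemma \ref{FilRigid} a rigid object of $\FilQCoh(S)$ has, locally on $S$, a finite filtration. Under that hypothesis your inductive step --- given exactness of the $\F^{n+1}$-column, feed it together with the exact $\gr^n$-column from (iii) and the defining exact rows $0\to\F^{n+1}\to\F^n\to\gr^n\to 0$ into the three-by-three lemma to conclude exactness of the $\F^n$-column --- is exactly right, and the induction starts at $n\gg 0$ locally, where $\F^n=0$. You should either impose local boundedness explicitly or record that the lemma is only invoked in that regime; no limit-taking can rescue the unbounded case, because the implication itself is false there.
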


\begin{lemma} \label{FilRigid}
   An object $\CM$ of $\FilQCoh(S)$ is rigid if and only if $\CM$ is a locally free $\CO_S$-module of finite rank and all $\F^n\CM$ are direct summands of $\CM$ Zariski-locally on $S$.
\end{lemma}
\begin{proof}
  Let $\CM\in\FilQCoh(S)$ be rigid. Since $\forget$ and $\gr$ are tensor functors, by \cite[2.7]{DeligneGroth} the objects $\forget(\CM)\in\QCoh(S)$ and $\gr(\CM)\in\GradQCoh(S)$ are rigid. Thus the underlying module $\CM$ is locally free of finite rank, for each $n\in\BZ$ the sheaf $\F^n\CM/\F^{n+1}\CM$ is locally free of finite rank and for $n$ small enough $\F^n\CM=\CM$. Using the exact sequence
  \begin{equation*}
    0\to \F^n \CM/ \F^{n+1}\CM  \to \CM/ \F^{n+1} \CM \to \CM/\F^n\CM \to 0
  \end{equation*}
it follows by induction on $n$ that $\CM/\F^n\CM$ is locally free of finite rank. This implies that all $\F^n\CM$ are direct summands of $\CM$ Zariski-locally on $S$.

Now let $\CM\in\FilQCoh(S)$ satisfy the requirements of the lemma. Then we endow the dual $\CM^\vee$ of the underlying locally free sheaf $\CM$ with the filtration given by $\F^n\CM^\vee=(\F^{1-n}\CM)^\perp$. To prove that is in fact the dual of $\CM$ in $\FilQCoh(S)$, it suffices to prove that the morphisms $\delta\colon \Bone \to \CM^\vee\otimes \CM$ and $\operatorname{ev}\colon \CM\otimes \CM^\vee\to \Bone$ of sheaves respect the gradings on these sheaves. This can be done locally on $S$ where it follows from a direct calculation using a basis of $\CM$ adapted to the filtration on $\CM$.
\end{proof}

Any graded quasi-coherent sheaf $\CM=\oplus_{n\in\BZ}\CM^i$ can be naturally equipped with a decreasing filtration by $\F^n\CM=\oplus_{i\geq n}\CM^i$. A morphism of graded quasi-coherent sheaves is then also a morphism of filtered quasi-coherent sheaves for these filtrations; thus we obtain a functor $\fil\colon\GradQCoh(S)\to \FilQCoh(S)$. It is a faithful exact tensor functor.

We denote the full subcategory of $\FilQCoh(S)$ (resp. $\GradQCoh(S)$) whose objects are the rigid objects of $\FilQCoh(S)$ (resp. $\GradQCoh(S)$) by $\FilLF(S)$ (resp. $\GradLF(S)$). These are again exact tensor categories and the functors $\fil$ and $\gr$ restrict to functors $\fil\colon \GradLF(S)\to \FilLF(S)$ and $\gr \colon \FilLF(S)\to \GradLF(S)$.

For any morphism of schemes $S'\to S$ the pullback functor $\QCoh(S)\to\QCoh(S')$ induces a pullback functor $\FilLF(S)\to\FilLF(S')$. Thus for varying $S$ the categories $\FilLF(S)$ form a fibered category $\FilLF$ over $(\Sch/k)$. The functors $\gr$, $\fil$ and $\forget$ are compatible with these pullbacks.

Analogously to the above one can define the notion of an \emph{increasing filtration} on a quasi-coherent sheaf $\CM$ on $S$.

Let $\CM$ be as above and let $(\F^n\CM)_{n\in\BZ}$ (resp. $(\G_n\CM)_{n\in\BZ}$) be a decreasing (resp. an increasing) filtration on $\CM$. Two such filtrations are called \emph{opposite} if there exists a grading $\CM=\oplus_{n\in\BZ}\CM^n$ of $\CM$ such that $\F^n\CM=\oplus_{i\geq n}\CM^i$ and $\G_n\CM=\oplus_{i\leq n}\CM^i$ for all $n\in\BZ$.
\begin{lemma} \label{OppositeFiltrations}
  The following are equivalent:
  \begin{enumerate}
  \item [(i)] The filtrations $(\F^n\CM)_{n\in\BZ}$ and $(\G_n\CM)_{n\in\BZ}$ are opposite.
  \item [(ii)] The addition morphism $\F^n\CM\oplus \G_{n-1}\CM\to \CM$ is an isomorphism for all $n\in \BZ$.
  \end{enumerate}

\end{lemma}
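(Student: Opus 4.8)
The plan is to treat the two implications separately. The implication (i)$\Rightarrow$(ii) is immediate: if $\CM=\bigoplus_{n\in\BZ}\CM^n$ is a grading with $\F^n\CM=\bigoplus_{i\geq n}\CM^i$ and $\G_n\CM=\bigoplus_{i\leq n}\CM^i$, then the index sets $\{i\geq n\}$ and $\{i\leq n-1\}$ partition $\BZ$, so $\F^n\CM$ and $\G_{n-1}\CM$ are complementary direct summands of $\CM$ and the addition morphism $\F^n\CM\oplus\G_{n-1}\CM\to\CM$ is an isomorphism.

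For (ii)$\Rightarrow$(i) the candidate grading is essentially forced: one sets $\CM^n\defeq\F^n\CM\cap\G_n\CM$, and the task is to show that $\CM=\bigoplus_n\CM^n$ with $\F$ and $\G$ as the associated descending and ascending filtrations. First I would record two ``one-step'' splittings. Intersecting the decomposition $\CM=\F^{n+1}\CM\oplus\G_n\CM$ supplied by (ii) with the subsheaf $\F^n\CM\supseteq\F^{n+1}\CM$ and invoking the modular law for the lattice of subobjects in an abelian category gives $\F^n\CM=\F^{n+1}\CM\oplus(\F^n\CM\cap\G_n\CM)=\F^{n+1}\CM\oplus\CM^n$, where directness is immediate from $\F^{n+1}\CM\cap\G_n\CM=0$ (again by (ii)); symmetrically $\G_n\CM=\G_{n-1}\CM\oplus\CM^n$. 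Iterating the first splitting yields $\F^n\CM=\F^{m+1}\CM\oplus\bigoplus_{i=n}^{m}\CM^i$ for all $m\geq n$, so that every finite consecutive subfamily of $\{\CM^i\}_{i\in\BZ}$, and hence the whole family, is in direct-sum position.

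The heart of the argument is the identity
\[
  \F^n\CM\cap\G_m\CM=\bigoplus_{i=n}^{m}\CM^i\qquad(n\leq m),
\]
obtained by intersecting $\F^n\CM=\F^{m+1}\CM\oplus\bigoplus_{i=n}^{m}\CM^i$ with $\G_m\CM$: since $\CM^i\subseteq\G_i\CM\subseteq\G_m\CM$ for every $i\leq m$ while $\F^{m+1}\CM\cap\G_m\CM=0$, the modular law again yields exactly the displayed formula. Passing to colimits of subsheaves then finishes the proof. Using that the formation of intersections of subsheaves commutes with increasing unions of subsheaves, and that $\G$ is exhaustive, one gets $\F^n\CM=\bigcup_{m\geq n}(\F^n\CM\cap\G_m\CM)=\bigoplus_{i\geq n}\CM^i$; symmetrically, exhaustiveness of $\F$ gives $\G_m\CM=\bigoplus_{i\leq m}\CM^i$; and $\CM=\bigcup_{n}\F^{-n}\CM=\bigoplus_{i\in\BZ}\CM^i$. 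Thus $\CM=\bigoplus_n\CM^n$ is a grading whose associated descending and ascending filtrations are $\F$ and $\G$ respectively, i.e. the two filtrations are opposite.

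The only genuinely delicate point I anticipate is this handling of infinite families of quasi-coherent subsheaves, above all the commutation of intersection with the increasing unions $\bigcup_m\G_m\CM$ and $\bigcup_n\F^{-n}\CM$. This is harmless because intersections and increasing unions of subsheaves can be computed on stalks; alternatively, since the formation of every sheaf occurring in the argument commutes with restriction to open subsets and since gradings glue, it suffices to verify (ii) and to produce the decomposition $\CM=\bigoplus_n\CM^n$ over an affine open cover of $S$, where the entire argument reduces to an elementary computation with elements of modules over a ring. Everything else is routine diagram chasing with the modular law.
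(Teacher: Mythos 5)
Your proof is correct and follows essentially the same route as the paper: both define $\CM^n\defeq\F^n\CM\cap\G_n\CM$ and reduce everything to the identity $\F^a\CM\cap\G_b\CM=\bigoplus_{a\leq i\leq b}\CM^i$, established by a one-step induction from condition (ii). You are a bit more explicit than the paper in two places — spelling out the modular-law manipulations behind the inductive step, and carrying out the colimit argument that turns the finite identity plus exhaustiveness into $\F^n\CM=\bigoplus_{i\geq n}\CM^i$ and $\G_n\CM=\bigoplus_{i\leq n}\CM^i$, which the paper leaves implicit after proving only that $\bigoplus_n\CM^n\to\CM$ is an isomorphism — but these are matters of exposition, not a different approach.
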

\begin{proof}
  That (i) implies (ii) follows directly from the definition.

Now assume that (ii) holds. For $n\in\BZ$ let $\CM^n\defeq \F^n\CM\cap \G_n\CM$. It follows directly from (ii) that the addition morphism $\psi\colon \oplus_{n\in\BZ}\CM^n\to \CM$ is a monomorphism. Since both filtrations are exhaustive, every section of $\CM$ is contained in $\F^a\CM\cap \G_b\CM$ for some integers $a\leq b$. Thus to prove that $\psi$ is an epimorphism, it is enough to show that $\F^a\CM\cap \G_b\CM=\oplus_{a\leq n\leq b}\CM^n$ for all $a\leq b$. For fixed $a$ we prove this by induction on $b$. For $a=b$ the claim is trivial. For general $b$, it follows from (ii) that $\F^a\CM\cap \G_b\CM=(\F^a\CM\cap \G_{b-1}\CM)\oplus (\F^b\CM\cap \G_b\CM)$. This identity allows us to reduce the claim for $b$ to the claim for $b-1$, which finishes the induction.
\end{proof}
\subsection{Filtered Fiber Functors}

Let $\CT$ be a Tannakian category over $k$.
\begin{definition}
  \begin{itemize}
  \item[(i)] A \emph{filtered fiber functor} on $\CT$ over $S$ is an exact $k$-linear tensor functor $\CT\to \FilLF(S)$.
  \item[(ii)] A \emph{morphism of filtered fiber functors} is a tensor morphism.
  \item[(iii)] We denote the resulting \emph{category of filtered fiber functors} on $\CT$ over the scheme $S$ by $\UHom^\otimes(\CT,\FilLF)(S)$.
  \item[(iv)] For a morphism $S'\to S$ over $k$ composition with the pullback functor $\FilLF(S)\to\FilLF(S')$ gives a functor $\UHom^\otimes(\CT,\FilLF)(S)\to \UHom^\otimes(\CT,\FilLF)(S')$. With these pullback functors the categories $\UHom^\otimes(\CT,\FilLF)(S)$ form a fibered category over $(\Sch/k)$ which we denote by $\UHom^\otimes(\CT,\FilLF)$. 
  \end{itemize}
\end{definition}


\begin{definition}
  \begin{itemize}
  \item [(i)]For any two filtered fiber functors $\phi_1,\phi_2$ on $\CT$ over $S$, we let $\UIsom^\otimes_S(\phi_1,\phi_2)$ be the functor $(\Sch/S)\to (\text{Sets})$ which sends $S'\to S$ to the set of tensor isomorphisms $(\phi_1)_{S'}\isoto (\phi_2)_{S'}$ and morphisms to pullback maps.
  \item [(ii)] If $\phi_1=\phi_2$, we denote $\UIsom^\otimes_S(\phi_1,\phi_2)$ by $\UAut^\otimes_S(\phi_1)$.
  \item [(iii)] For filtered fiber functors $\phi_1,\phi_2$ over $k$-schemes $S_1,S_2$ we set 
    \begin{equation*}
      \UIsom^\otimes_k(\phi_1,\phi_2)\defeq \UIsom^\otimes_{S_1\times S_2}(\pr_1^*\phi_1,\pr_2^*\phi_2)
    \end{equation*}
 where $\pr_i\colon S_1\times S_2\to S_i$ are the projections.
  \end{itemize}

\end{definition}
Given filtered fiber functors $\phi_1,\phi_2,\phi_3$ on $\CT$ over $S$, composition of functors gives a morphism $\UIsom^\otimes_S(\phi_1,\phi_2)\times_S \UIsom^\otimes_S(\phi_2,\phi_3)\to \UIsom^\otimes_S(\phi_1,\phi_3)$. In this way  $\UAut^\otimes_S(\phi_1)$ and $\UAut^\otimes_S(\phi_2)$ become sheaves of groups which act $\UIsom^\otimes_S(\phi_1,\phi_2)$ from the right (resp. from the left).

\begin{theorem} \label{FFFIsomRepr}
  For any filtered fiber functors $\phi_1,\phi_2$ over $S$ (resp. $\phi_1$ over $S_1$ and $\phi_2$ over $S_2$) the functor $\UIsom^\otimes_S(\phi_1,\phi_2)$ (resp. $\UIsom^\otimes_k(\phi_1,\phi_2)$) is representable by a scheme which is affine over $S$ (resp. over $S_1\times S_2$). If $\CT$ has a tensor generator, these schemes are of finite presentation over $S$ (resp. $S_1\times S_2$).

In particular, for any filtered fiber functor $\phi$ over $S$, the functor $\UAut^\otimes_S(\phi)$ is representable by a group scheme which is affine over $S$.
\end{theorem}
\begin{proof}
It suffices to prove the claim about $\UIsom^\otimes_S(\phi_1,\phi_2)$. The functor $\forget\colon \FilLF(S)\to\QCoh(S)$ induces a monomorphism $\UIsom^\otimes_S(\phi_1,\phi_2)\into \UIsom^\otimes_S(\forget\circ\phi_1,\forget\circ\phi_2)$ and the latter is affine over $S$ and of finite presentation over $S$ if $\CT$ has a tensor generator. Thus it suffices to show that this monomorphism is a closed immersion. 

Given any morphism $\psi\colon \CF\to\CG$ of quasi-coherent sheaves of finite type on some scheme $S'$ over $S$ and quasi-coherent subscheaves $\CF'$ of $\CF$ (resp. $\CG'$ of $\CG$), the subfunctor of $S'$ consisting of those points on which $\psi$ maps $\CF'$ into $\CG'$ is representable by a closed subscheme of $X$. This follows from the fact that this subfunctor consists of those points on which $(\psi(\CF')+\CG')/\CG'$ is zero, and the latter is representable by a closed subscheme by \cite[Theorem 11.17]{GoertzWedhorn}.

 We take $S'\defeq \UIsom^\otimes_S(\forget\circ\phi_1,\forget\circ\phi_2)$. There is a universal tensor isomorphism $(\phi_1)_{S'}\to(\phi_2)_{S'}$ which for $X\in \CT$ gives an isomorphism $\psi_X\colon \phi_1(X)_{S'}\to\phi_2(X)_{S'}$. Let $n\in \BZ$. It follows from the preceding paragraph that the subfunctor of $S'$ consisting of those points on which $\psi_X(\F^n(\phi_1(X)))=\F^n(\phi_2(X))$ is represented by a closed subscheme. Since $\UIsom^\otimes_S(\phi_1,\phi_2)\subset \UIsom^\otimes_S(\forget\circ\phi_1,\forget\circ\phi_2)$ is the intersection of these subfunctors over all $X\in \CT$ and $n\in \BZ$ this implies the claim.
\end{proof}

\begin{definition} \label{ULPDef}
  Using Theorems \ref{GFFIsomRepr} and \ref{FFFIsomRepr} we can associate to a filtered fiber functor $\phi$ on $\CT$ over $S$ the following group schemes which are affine over $S$:
  \begin{itemize}
  \item[(i)] $P(\phi)\defeq \UAut^\otimes_S(\phi)$
  \item[(ii)] $L(\phi)\defeq \UAut^\otimes_S(\gr\circ\phi)$.
  \item[(iii)] $U(\phi)\defeq \ker(P(\phi)\ltoover{\gr} L(\phi))$
  \end{itemize}
\end{definition}

The definition of $U(\phi)$ can be generalized as follows, c.f. \cite[IV.2.1.4]{Saavedra}.
\begin{definition}
  For any filtered fiber functor $\phi$ on $\CT$ over $S$ and any integer $\alpha\geq 0$ let $U_\alpha(\phi)$ be the subgroup functor of $P(\phi)$ such that for any scheme $S'$ over $S$ the group $U_\alpha(S')$ consists of those elements $g\in P(S')$ which act as the identity on the sheaf $\F^i(\phi(X))/\F^{i+\alpha}(\phi(X))$ for all $X\in \CT$ and $i \in \BZ$.
\end{definition}
In particular $U_0(\phi)=P(\phi)$ and $U_1(\phi)=U(\phi)$.

\subsection{Splittings}
Let $\CT$ be a Tannakian category over $k$ and let $\phi\colon\CT \to \FilLF(S)$ be a filtered fiber functor.

\begin{definition}
  \label{SplittingDef}
  \begin{itemize}
  \item[(i)]   A \emph{splitting} of $\phi$ is a graded fiber functor $\gamma\colon\CT \to \GradQCoh(S)$ such that $\phi=\fil\circ\gamma$.
  \item[(ii)]  The functor $\phi$ is \emph{splittable} if there exists a splitting of $\phi$.
  \item[(iii)]  The functor $\phi$ is \emph{fpqc-locally splittable} if there exists a fpqc covering $S'\to S$ such that the pullback of $\phi$ to $S'$ is splittable. (In \cite{Saavedra}, such a functor is called admissible.)
  \item[(iv)]  The functor $\USpl(\phi)\colon (\Sch/S)\to (\text{Sets})$ is the functor which sends a scheme $S'\to S$ to the set of splittings of $\phi_{S'}$ and which acts on morphisms by pullbacks.
  \end{itemize}
\end{definition}
\begin{lemma} \label{SplittingEquiv}
  Giving a splitting of $\phi$ is the same as giving a cocharacter $\chi\colon \Gm[S]\to P(\phi)$ such that the composite of $\chi$ with $\gr\colon P(\phi)\to L(\phi)$ is the cocharacter $\chi(\gr\circ\phi)\colon \Gm[S]\to L(\phi)$.
\end{lemma}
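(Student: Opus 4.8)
The plan is to produce a pair of mutually inverse constructions between splittings of $\phi$ and cocharacters $\chi\colon\Gm[S]\to P(\phi)$ satisfying $\gr\circ\chi=\chi(\gr\circ\phi)$, using Theorem \ref{GFFClassification} to pass back and forth between graded fiber functors and cocharacters of automorphism group schemes.

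\emph{From a splitting to a cocharacter.} Suppose $\gamma$ is a splitting, so $\phi=\fil\circ\gamma$. The $\Gm[S]$-action on $\GradQCoh(S)$ (scaling an $n$-th graded piece by $t\mapsto t^n$) is a cocharacter of $\UAut^\otimes_S(\gamma)$, and since $\fil$ is a tensor functor it induces a homomorphism $\UAut^\otimes_S(\gamma)\to\UAut^\otimes_S(\fil\circ\gamma)=P(\phi)$; composing gives a cocharacter $\chi\colon\Gm[S]\to P(\phi)$, namely the one whose value at $t$ scales the summand $\gamma(X)^n$ of $\phi(X)=\fil(\gamma(X))$ by $t^n$. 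This really is an automorphism in $\FilQCoh(S)$ because it preserves $\F^m(\phi(X))=\bigoplus_{n\ge m}\gamma(X)^n$; applying $\gr$ to it yields the automorphism of $\gr\circ\phi$ that scales $\F^n(\phi(X))/\F^{n+1}(\phi(X))$ by $t^n$, which is exactly $\chi(\gr\circ\phi)(t)$, so $\gr\circ\chi=\chi(\gr\circ\phi)$. One also records that the image of $\chi$ under the forgetful map $P(\phi)\into\UAut^\otimes_S(\forget\circ\phi)$ is the cocharacter $\chi(\gamma)$ of Construction \ref{chiomega}.

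\emph{From a cocharacter to a splitting.} Given $\chi\colon\Gm[S]\to P(\phi)$ with $\gr\circ\chi=\chi(\gr\circ\phi)$, compose with the monomorphism $P(\phi)\into\UAut^\otimes_S(\forget\circ\phi)$ (cf. the proof of Theorem \ref{FFFIsomRepr}) to obtain a cocharacter $\bar\chi$ of $\UAut^\otimes_S(\omega)$, where $\omega\defeq\forget\circ\phi$. By Theorem \ref{GFFClassification} the pair $(\omega,\bar\chi)$ equals $(\forget\circ\gamma,\chi(\gamma))$ for a unique graded fiber functor $\gamma$, whose graded pieces $\gamma(X)^n$ are the weight-$n$ subsheaves of $\phi(X)$ for $\bar\chi$. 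The point that requires an actual argument is then $\fil\circ\gamma=\phi$, equivalently $\F^m(\phi(X))=\bigoplus_{n\ge m}\gamma(X)^n$ for all $X$ and $m$. Since $\chi$ takes values in $P(\phi)=\UAut^\otimes_S(\phi)$, each $\chi(t)$ is an automorphism of $\phi(X)$ in $\FilQCoh(S)$ and hence preserves every $\F^m(\phi(X))$; so $\F^m(\phi(X))$ is $\Gm$-stable and splits as $\bigoplus_n F^m_n$ with $F^m_n\defeq\F^m(\phi(X))\cap\gamma(X)^n$, a sequence which is decreasing in $m$, equals $\gamma(X)^n$ for $m\ll 0$, and vanishes for $m\gg 0$. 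The hypothesis $\gr\circ\chi=\chi(\gr\circ\phi)$ says that $\bar\chi$ acts with pure weight $n$ on $\F^n(\phi(X))/\F^{n+1}(\phi(X))=\bigoplus_i F^n_i/F^{n+1}_i$, which forces $F^n_i=F^{n+1}_i$ whenever $i\ne n$. Thus for fixed $i$ the sequence $m\mapsto F^m_i$ is constant except possibly at the step from $m=i$ to $m=i+1$; combined with its known values for $m\ll 0$ and $m\gg 0$, this gives $F^m_i=\gamma(X)^i$ for $m\le i$ and $F^m_i=0$ for $m>i$. Summing over $i$ yields the desired identity, so $\gamma$ splits $\phi$.

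Finally, the two constructions are mutually inverse and compatible with base change: the graded pieces $\gamma(X)^n$ of a splitting are precisely the weight-$n$ subsheaves for its associated cocharacter, so $\gamma$ is recovered from $\chi$; conversely, the cocharacter reconstructed from $\gamma$ agrees with $\chi$ after composition with the monomorphism $P(\phi)\into\UAut^\otimes_S(\omega)$ and hence equals $\chi$. The main obstacle is the middle step of the second construction — checking that a cocharacter which merely preserves the filtration and induces the prescribed grading actually recovers the filtration as the sum of the weight spaces of sufficiently large weight; everything else is formal bookkeeping with tensor functors and their automorphism group schemes.
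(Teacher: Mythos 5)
Your proposal is correct and takes essentially the same approach as the paper, passing through Theorem \ref{GFFClassification} to build the dictionary between graded fiber functors and cocharacters of $\UAut^\otimes_S(\forget\circ\phi)$. The paper simply asserts that the identity $\gr\circ\chi=\chi(\gr\circ\phi)$ forces $\gamma$ to split $\phi$; you supply the weight-space bookkeeping (showing $F^m_i=F^{m+1}_i$ for $i\ne m$ and then summing over weights) that this assertion implicitly requires.
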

\begin{proof}
  Let $\gamma$ be a splitting of $\phi$. Then the cocharacter $\chi\defeq \chi(\gamma)\colon \Gm[S]\to \UAut^\otimes_S(\forget\circ\phi)$ factors through $P(\phi)$ and satisfies $\gr\circ\chi=\chi(\forget\circ\phi)$.

On the other hand, let $\chi\colon \Gm[S]\to P(\phi)$ be as above. Then by Theorem \ref{GFFClassification}, there exists a unique graded fiber functor $\gamma$ such that $\forget\circ\gamma=\forget\circ\phi$ and $\chi(\gamma)=\chi$. The identity $\gr\circ\chi=\chi(\gr\circ\phi)$ implies that $\gamma$ is a splitting of $\phi$.
\end{proof}

\begin{definition}
  We call a cocharacter $\chi$ as in Lemma \ref{SplittingEquiv} a \emph{cocharacter which splits $\phi$}.
\end{definition}
Lemma \ref{SplittingEquiv} implies that $\USpl(\phi)$ is a sheaf for the fpqc topology. It also follows from Lemma \ref{SplittingEquiv} that $U(\phi)$ acts on $\USpl(\phi)$: Given a cocharacter which splits $\phi$, its conjugate under a point of $U(\phi)$ again splits $\phi$.
\begin{lemma}[{\cite[IV.2.2.1]{Saavedra}}] \label{SplUPseudotorsor}
  This action makes $\USpl(\phi)$ into a left $U(\phi)$-pseudotorsor, i.e., for each scheme $S$ over $k$ the group $U(\phi)(S)$ acts simply transitively on $\USpl(\phi)(S)$.
\end{lemma}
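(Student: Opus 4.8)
The plan is to use Lemma~\ref{SplittingEquiv} to recast the statement as one about cocharacters, and then to reduce it entirely to one elementary fact about filtered modules. Recall that $P(\phi)=\UAut^\otimes_S(\phi)$ may be identified with the subgroup of $G\defeq\UAut^\otimes_S(\forget\circ\phi)$ consisting of those tensor automorphisms $u=(u_X)_{X\in\CT}$ for which every $u_X$ preserves the filtration $\F^\bullet\phi(X)$, that $U(\phi)=\ker(\gr\colon P(\phi)\to L(\phi))$ consists of those $u$ for which in addition every $u_X$ induces the identity on $\gr\phi(X)$, and that by Lemma~\ref{SplittingEquiv} a point of $\USpl(\phi)$ over a scheme $S'\to S$ is the same datum as a cocharacter $\chi\colon\Gm[S']\to P(\phi)_{S'}$ with $\gr\circ\chi=\chi(\gr\circ\phi)_{S'}$. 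Such a cocharacter is nothing but a functorial, tensor-compatible grading $V^\bullet_\chi(X)$ of each $\phi(X)_{S'}$, and the condition on $\gr\circ\chi$ says precisely that $\F^i\phi(X)_{S'}=\bigoplus_{n\geq i}V^n_\chi(X)$ for all $i$ and $X$; that is, this grading splits the filtration on $\phi(X)_{S'}$. Under the action of $U(\phi)$, an element $u$ sends the splitting with cocharacter $\chi$ to the one with cocharacter $\leftexp{u}{\chi}$, whose grading is $u\bigl(V^\bullet_\chi\bigr)$.

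The one ingredient needed is pure linear algebra: if $M$ is a quasi-coherent sheaf of finite type equipped with a finite decreasing filtration $\F^\bullet$ whose graded pieces are locally free, and $V_1^\bullet$ and $V_2^\bullet$ are two gradings of $M$ each splitting $\F^\bullet$, then there is a \emph{unique} automorphism $u$ of $M$ satisfying $u(\F^i)=\F^i$ for all $i$, $u\equiv\id$ on $\gr_\F M$, and $u(V_1^n)=V_2^n$ for all $n$; explicitly, on $V_1^n$ this $u$ is the composite isomorphism $V_1^n\iso\F^nM/\F^{n+1}M\xleftarrow{\sim}V_2^n$. By Lemma~\ref{FilRigid} the filtration on $\phi(X)_{S'}$ is of this kind for every $X$. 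Specialising to $V_1^\bullet=V_2^\bullet$ gives the degenerate case: the only automorphism of $M$ that preserves $\F^\bullet$, preserves the grading $V_1^\bullet$, and induces the identity on $\gr_\F M$ is the identity.

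Freeness of the action then follows at once: if $u\in U(\phi)(S')$ fixes $\gamma\in\USpl(\phi)(S')$, then $u$ centralises $\chi(\gamma)$, so for every $X$ the automorphism $u_X$ of $\phi(X)_{S'}$ preserves each weight space $V^n_{\chi(\gamma)}(X)$; since $u\in U(\phi)$ it also preserves each $\F^i\phi(X)_{S'}$ and induces the identity on $\gr_\F\phi(X)_{S'}$; by the degenerate case, $u_X=\id$ for all $X$, hence $u=1$. For transitivity, take $\gamma_1,\gamma_2\in\USpl(\phi)(S')$ with cocharacters $\chi_1,\chi_2$, and for each $X$ let $u_X$ be the automorphism of $\phi(X)_{S'}$ provided by the fact applied to the two splittings $V^\bullet_{\chi_1}(X)$ and $V^\bullet_{\chi_2}(X)$. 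The uniqueness assertions in the fact force the family $(u_X)_X$ to be functorial in $X$ and compatible with tensor products and the unit object, each such compatibility being checked by verifying that the evident candidate built from the corresponding structure of $\phi$ and of the $\gamma_i$ shares the three characterising properties of the relevant $u_X$. Thus $u\defeq(u_X)_X$ is a tensor automorphism of $\forget\circ\phi$; each $u_X$ preserves $\F^\bullet$, so $u\in P(\phi)(S')$; each $u_X$ is the identity on $\gr_\F$, so $u\in U(\phi)(S')$; and $u_X\bigl(V^n_{\chi_1}(X)\bigr)=V^n_{\chi_2}(X)$ for all $n$ and $X$, so $\leftexp{u}{\chi_1}$ induces the same gradings on all $\phi(X)_{S'}$ as $\chi_2$, whence $\leftexp{u}{\chi_1}=\chi_2$ by Theorem~\ref{GFFClassification}; that is, $u\cdot\gamma_1=\gamma_2$. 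Therefore $U(\phi)(S')$ acts freely and transitively on $\USpl(\phi)(S')$ for every $S'$ over $S$, which is the assertion.

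The argument is not deep. The only point that needs genuine attention — and which I expect to be the main, though wholly routine, step — is the verification that the canonically defined endomorphisms $u_X$ assemble into a tensor automorphism of $\forget\circ\phi$ (naturality in $X$, multiplicativity on tensor products, triviality on the unit), all three following from the uniqueness clause of the linear-algebra fact. Everything else is formal once the translation via Lemma~\ref{SplittingEquiv} and that fact are in place.
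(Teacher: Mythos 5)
Your argument is correct, and it follows the same line as Saavedra's proof (the paper does not reproduce the proof but simply cites \cite[IV.2.2.1]{Saavedra} for it): translate splittings into cocharacters via Lemma~\ref{SplittingEquiv}, then reduce to the linear-algebra fact that a filtered locally free sheaf with two gradings each splitting the filtration admits a unique filtration-preserving, $\gr$-trivial automorphism carrying one grading to the other. One small imprecision: you attribute naturality of $X\mapsto u_X$ to ``the uniqueness assertions,'' but uniqueness as you state it applies only to automorphisms, whereas a general $f\colon X\to Y$ gives a non-invertible $\phi(f)$; naturality in fact follows from the \emph{explicit} formula for $u_X$ (the composite $V_1^n\iso\F^n/\F^{n+1}\xleftarrow{\sim}V_2^n$), since $\phi(f)$ commutes with all three of these maps — this is exactly the kind of check you allude to at the end, and it is routine, but it is worth being precise that it is the formula rather than uniqueness doing the work there.
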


The following is our main result on filtered fiber functors. In case $\CT$ is neutral and $k$ has characteristic zero and in case $\CT$ is neutral and $\UAut^\otimes_S(\forget\circ\phi)$ is reductive it is due to Deligne, see \cite[IV.2.4]{Saavedra}.
\begin{maintheorem}
  \label{MainTheorem}
  Any filtered fiber functor on $\CT$ is fpqc-locally splittable. 
\end{maintheorem}
The proof of Theorem \ref{MainTheorem} will be given in Section \ref{Proof}. For the rest of this section we assume that it holds.

Lemma \ref{SplUPseudotorsor} and Theorem \ref{MainTheorem} immediately imply:
\begin{theorem}\label{SplUTorsor}
  The functor $\USpl(\phi)$ is a left $U(\phi)$-torsor. 
\end{theorem}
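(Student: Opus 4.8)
The plan is short, since all the substantial work has already been done. First I would recall that, as noted right after Lemma~\ref{SplittingEquiv}, the functor $\USpl(\phi)$ is a sheaf for the fpqc topology, and that it carries a left $U(\phi)$-action: a point of $U(\phi)$ acts on a cocharacter which splits $\phi$ by conjugation, and the conjugate again splits $\phi$. By Lemma~\ref{SplUPseudotorsor} this action makes $\USpl(\phi)$ into a left $U(\phi)$-pseudotorsor; equivalently, the difference morphism $U(\phi)\times_S\USpl(\phi)\to \USpl(\phi)\times_S\USpl(\phi)$ is an isomorphism of fpqc sheaves.

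To promote a pseudotorsor to a torsor it remains only to verify that $\USpl(\phi)$ is non-empty fpqc-locally on $S$, i.e. that there is an fpqc covering $S'\to S$ with $\USpl(\phi)(S')\neq\emptyset$. By Definition~\ref{SplittingDef}(iv) the set $\USpl(\phi)(S')$ is precisely the set of splittings of the filtered fiber functor $\phi_{S'}$, so this non-emptiness is exactly the assertion that $\phi$ is fpqc-locally splittable. That assertion is Theorem~\ref{MainTheorem}, which we are assuming here. Choosing such an $S'$, the pseudotorsor $\USpl(\phi)_{S'}$ acquires a section, and a non-empty pseudotorsor under a sheaf of groups is canonically isomorphic to that group acting on itself by translation; hence $\USpl(\phi)$ is fpqc-locally isomorphic to $U(\phi)$, that is, a left $U(\phi)$-torsor.

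There is no genuine obstacle in this argument: the depth lies entirely in Theorem~\ref{MainTheorem} (which supplies local non-emptiness) and in Lemma~\ref{SplUPseudotorsor} (which supplies the pseudotorsor property). The only point requiring mild care is bookkeeping about what ``torsor'' means for fpqc sheaves of groups; since $U(\phi)$ is an affine group scheme over $S$ by Theorems~\ref{GFFIsomRepr} and~\ref{FFFIsomRepr} and $\USpl(\phi)$ is an fpqc sheaf, the standard notion applies, and the two inputs above combine to yield the claim at once.
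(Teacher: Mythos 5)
Your argument is correct and is exactly the one the paper uses: Lemma~\ref{SplUPseudotorsor} gives the pseudotorsor structure, and Theorem~\ref{MainTheorem} supplies the fpqc-local non-emptiness needed to upgrade it to a torsor.
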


In light of Theorem \ref{MainTheorem} it is natural to ask whether one can split $\phi$ already over $S$ or at least Zariski- or \'etale-locally on $S$. In \cite[IV.2.2.3.1]{Saavedra}, Saavedra Rivano constructs for every non-perfect field $k$ a group scheme $G$ over $k$ and a filtered fiber functor $\Rep[G]\to\FilLF(\Spec(k))$ which only becomes splittable over a non-separable field extension of $k$. 

There also exist filtered fiber functors which split Zariski-locally on $S$ but not globally: Take $\CT\defeq \Rep[\GL_n]$ for some $n\geq 0$. As explained in Section \ref{GerbesSection}, giving a fiber functor $\CT$ over $S$ is the same as giving a $\GL_n$-torsor $I$ over $S$. For convenience here we work with right $\GL_n$-torsors. The fiber functor $\omega$ associated to $I$ sends $V\in \Rep[\GL_n]$ to $I\times^G V_S$ which is defined to be the quotient of $I\times V_S$ by the left $G$-action given by $g\cdot(i,v)\defeq (ig^{-1},gv)$. Furthermore, there is a well-known equivalence between $\GL_n$-torsors over $S$ and locally free coherent $\CO_S$-modules of rank $n$. It sends a locally free coherent $\CO_S$-module $\CM$ of rank $n$ to the $\GL_n$-torsor of isomorphisms $\CO_S^{\oplus n}\to \CM$. Altogether we get an equivalence between locally free coherent $\CO_S$-modules of rank $n$ on $S$ and fiber functors on $\CT$ over $S$. Under this equivalence a fiber functor $\omega$ corresponds to the image of the standard representation $\GL_n$ on $k^n$ under $\omega$. 

Analogously it follows that for any object $\CM\in \FilLF(S)$ whose underlying locally free coherent $\CO_S$-module has rank $n$ there exists a filtered fiber functor $\phi$ on $\CT$ over $S$ which sends the standard representation to $\CM$ and which is unique up to isomorphism. This fiber functor is splittable if and only if the filtered module $\CM$ is splittable. Thus such filtered fiber functors are in general only splittable Zariski locally on $S$ but not globally on $S$.


Thus in general Theorem \ref{MainTheorem} is the best possible result. Nevertheless, we have the following:
\begin{theorem} \label{FFFZLocSplittable}
 Let $\phi$ be a filtered fiber functor on $\CT$ over an affine scheme $S$. Assume that the group scheme $\UAut^\otimes_S(\forget\circ\phi)$ is pro-smooth over $S$. Then $\phi$ is splittable.
\end{theorem}

 Theorem \ref{FFFZLocSplittable} is a generalization of \cite[IV.2.2.2]{Saavedra} and \cite[IV.2.2.5 3)]{Saavedra}. By Theorem \ref{SplUTorsor}, it is a special case of the following result:
\begin{theorem}\label{UTorsorTrivial}
  Let $\phi$ be a filtered fiber functor on $\CT$ over an affine scheme $S$. Assume that the group scheme $\UAut^\otimes_S(\forget\circ\phi)$ is pro-smooth over $S$. Then for any $\alpha\geq 1$, every $U_\alpha(\phi)$-torsor for the fpqc-topology is trivial.
\end{theorem}

 For a discussion of the condition that $\UAut^\otimes_S(\forget\circ\phi)$ be pro-smooth see Subsection \ref{GerbesSection}. From the results there it follows in particular this is really a condition on $\CT$ and not on $\phi$ and that this condition is always satisfied if $k$ has characteristic zero, since then any group scheme of finite type over a field is smooth.

The rest of this subsection is devoted to proving Theorem \ref{UTorsorTrivial}. In case $\CT$ has a tensor generator, the proof of Theorem \ref{UTorsorTrivial} requires significantly less work and will be given as Theorem \ref{UTorsorTrivial2} below.

\begin{lemma} \label{FFFBaseChange2}
  Assume that $\CT$ possesses a tensor generator. Let $k'$ be an overfield of $k$ and $\phi\colon \CT\to \FilLF(k')$ a filtered fiber functor which is split by a cocharacter $\chi\colon \Gm[k']\to P(\phi)$. Set $G\defeq \UAut^\otimes_{k'}(\forget\circ\phi)$, and let $\gamma\colon \Rep[G]\to \GradQCoh(k')$ be the unique graded fiber functor such that $\chi(\gamma)=\chi$ and $\forget\circ\gamma=\forget\colon \Rep[G]\to \QCoh(k')$, whose existence is given by Theorem \ref{GFFClassification}. Then $U_\alpha(\phi)=U_\alpha(\fil\circ\gamma)$ for all $\alpha\geq 0$.
\end{lemma}
\begin{proof}
 Let $X$ be a tensor generator of $\CT$. Let $V\defeq \forget(\phi(X))$ and denote the $i$-th step of the filtration on $V$ given by $\phi(X)$ by $\F^i(X)$. The vector space $V$ is equipped with a faithful action of $G$ and $P(\phi)$ is the subgroup scheme of $G$ whose points are the points of $G$ stabilizing the filtration on $V$. Similarly, for each $\alpha\geq 1$, the group scheme $U_\alpha(\phi)$ is the subgroup scheme of $P(\phi)$ whose points are those which act as the identity on $\F^i(X)/\F^{i+\alpha}(X)$ for all $i\in \BZ$. 

If we consider $V$ as an object of $\Rep[G]$, the filtration on $V$ given by $\fil(\gamma(V))$ coincides with the one given by $\phi(V)$. Since the action of $G$ on $V$ is faithful, similarly to the above the group scheme $P(\fil\circ\gamma)$ (resp. $U_\alpha(\fil\circ\gamma)$ for any $\alpha\geq 0$) is the subgroup scheme of $G$ (resp. of $P(\fil\circ\gamma)$) whose points are those which stabilize the filtration on $V$ (resp. which act as the identity on $\F^i(X)/\F^{i+\alpha}(X)$ for all $i\in \BZ$). Thus $U_\alpha(\phi)=U_\alpha(\fil\circ\gamma)$ for all $\alpha\geq 0$.
\end{proof}
\begin{definition}
  For a filtered fiber functor $\phi$ on $\CT$ and $\alpha\geq 0$ we denote $U_\alpha(\phi)/U_{\alpha+1}(\phi)$ by $\gr_\alpha U(\phi)$.
\end{definition}

The next lemma is a generalization of \cite[IV.2.1.4.1]{Saavedra}.
\begin{lemma} \label{UASmooth}
  Let $\phi\colon \CT\to\FilLF(S)$ be a filtered fiber functor and $\alpha\geq 0$.
  \begin{itemize}
  \item[(i)] The sheaf $U_\alpha(\phi)$ is representable by a group scheme which is affine and flat over $S$.
  \item [(ii)]  If $\CT$ has a tensor generator, the sheaf $\gr_\alpha U(\phi)$ is representable by a group scheme which is flat over $S$.
  \item[(iii)] If $\UAut^\otimes_S(\phi)$ is smooth over $S$, then so are $U_\alpha(\phi)$ and $\gr_\alpha U(\phi)$.
  \end{itemize}

\end{lemma}
\begin{proof}
  The category $\CT$ is the filtered colimit of the set $I_\CT$ of Tannakian subcategories possessing a tensor generator from Subsection \ref{TensorGenSection}. Thus $U_\alpha(\phi)$ is in a natural way the filtered limit of the sheaves $U_\alpha(\phi|_{\CT'})$ for $\CT'\in I_\CT$. Hence by \cite[Proposition 8.2.3]{EGA4III} and \cite[Proposition 8.3.8]{EGA4III} it suffices to prove $(i)$ for the functors $\phi|_{\CT'}$, so that we may assume that $\CT$ has a tensor generator.  

Since the claims are local for the fpqc topology, using Theorem \ref{MainTheorem} and Lemma \ref{GFFPPrinciple} we may assume that that $\phi$ is splittable and $S$ is the spectrum of an algebraically closed field. Then using Lemma \ref{FFFBaseChange2} we may assume that $\CT=\Rep[G]$ for $G\defeq \UAut^\otimes_S(\phi)$ and that $\forget\circ\phi=\forget$. Then the representability of $U_\alpha(\phi)$ by an affine scheme is \cite[IV.2.1.4.1 1)]{Saavedra}, and this implies the representability of $\gr_\alpha U(\phi)$. Since we are over a field, both schemes are flat over $S$. 

If $\UAut^\otimes_S(\phi)$ is smooth over $S$, by \cite[IV.2.1.4.1 3)]{Saavedra} so is $U_\alpha(\phi)$. This implies that $\gr_\alpha U(\phi)$ is also smooth and we are done.
\end{proof}

\begin{lemma} \label{UACommutators}
  Let $\phi$ be a filtered fiber functor on $\CT$. For any $\alpha,\beta\geq 0$, the commutator of $U_\alpha(\phi)$ and $U_\beta(\phi)$ is contained in $U_{\alpha+\beta}(\phi)$.
\end{lemma}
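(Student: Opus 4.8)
The plan is to reduce the statement to a purely formal commutator identity inside a filtered endomorphism ring, applied one object of $\CT$ at a time. First I would note that it suffices to prove, for every scheme $S'$ over $S$ and all $g\in U_\alpha(\phi)(S')$ and $h\in U_\beta(\phi)(S')$, that $ghg^{-1}h^{-1}\in U_{\alpha+\beta}(\phi)(S')$; since $U_{\alpha+\beta}(\phi)$ is a subgroup functor of $P(\phi)$, this yields the assertion about the commutator. Because the conditions defining the $U_\gamma(\phi)$ are imposed separately for each object of $\CT$, I may then fix $X\in\CT$ and work only with the locally free sheaf $V\defeq\phi(X)_{S'}$ together with its filtration $(\F^n V)_n$; by the rigidity of $\phi(X)$ the steps $\F^n(\phi(X))$ are locally direct summands, so after pullback along $S'\to S$ they remain nested and the inclusions $\F^{i+\gamma}V\subseteq\F^i V$ make sense.

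Next I would equip $A\defeq\End_{\CO_{S'}}(V)$ with the decreasing filtration $\F^n A\defeq\{f\in A\mid f(\F^i V)\subseteq\F^{i+n}V\ \text{for all}\ i\in\BZ\}$ and record two trivial facts: $\F^m A\cdot\F^n A\subseteq\F^{m+n}A$, and every element of $P(\phi)(S')$ carries each $\F^i V$ onto itself, so it and its inverse lie in $\F^0 A$. By the definition of $U_\gamma(\phi)$, the hypotheses translate into $g_X-\operatorname{id}\in\F^\alpha A$ and $h_X-\operatorname{id}\in\F^\beta A$; note that the inclusions $U_\gamma(\phi)\subseteq P(\phi)$ already force these elements into $\F^0 A$, which is why the cases $\alpha=0$ or $\beta=0$ require no separate treatment.

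The computation is then formal. Writing $u\defeq g_X-\operatorname{id}$ and $v\defeq h_X-\operatorname{id}$, one has $g_X h_X-h_X g_X=uv-vu\in\F^{\alpha+\beta}A$ by the first fact above, while from the elementary identity $[g_X,h_X]-\operatorname{id}=(g_X h_X-h_X g_X)(h_X g_X)^{-1}$ together with $(h_X g_X)^{-1}\in\F^0 A$ one concludes $[g_X,h_X]-\operatorname{id}\in\F^{\alpha+\beta}A$, i.e. $[g_X,h_X]$ acts as the identity on $\F^i V/\F^{i+\alpha+\beta}V$ for every $i$. As $X$ was arbitrary, this shows $ghg^{-1}h^{-1}\in U_{\alpha+\beta}(\phi)(S')$, which is exactly what was needed.

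I do not anticipate a genuine obstacle; the entire content is the filtered-ring bookkeeping. The two points deserving a moment's attention are that elements of $P(\phi)$ — and hence their inverses — really do lie in $\F^0 A$, so that multiplying by $(h_X g_X)^{-1}$ cannot lower the filtration degree, and the minor observation that pullback preserves the nesting of the filtration steps, which holds because they are locally direct summands.
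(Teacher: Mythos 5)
Your proof is correct and is exactly the ``direct verification'' that the paper invokes without writing out: you pass to individual objects $X\in\CT$, work in the filtered ring $\End_{\CO_{S'}}(\phi(X)_{S'})$, use $\F^m A\cdot\F^n A\subseteq\F^{m+n}A$ together with the identity $[g_X,h_X]-\operatorname{id}=(g_Xh_X-h_Xg_X)(h_Xg_X)^{-1}$, and observe that $g_Xh_X-h_Xg_X=uv-vu\in\F^{\alpha+\beta}A$. The two side remarks you flag (that inverses of elements of $P(\phi)$ also lie in $\F^0A$ because $P(\phi)$ is a group, and that pullback preserves the nesting because the steps are local direct summands) are exactly the points worth noting, and both check out.
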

\begin{proof}
  This follows by a direct verification using the definition of the $U_\alpha(\phi)$.
\end{proof}
 Since $P(\phi)=U_0(\phi)$, Lemma \ref{UACommutators} directly implies:
\begin{lemma} \label{UAProperties}
  Let $\phi$ be a filtered fiber functor on $\CT$.
  \begin{enumerate}
  \item [(i)] The group scheme $U_\alpha(\phi)$ is normal in $P(\phi)$ for all $\alpha\geq 0$.
  \item [(ii)] The sheaf of groups $\gr_\alpha U(\phi)$ is abelian for all $\alpha\geq 1$.
  \end{enumerate}
\end{lemma}

\begin{lemma} \label{SomeIso}
  Let $\phi\colon \CT\to\FilLF(S)$ be a filtered fiber functor. The morphism $P(\phi)\to L(\phi)$ induced by $\gr$ induces an isomorphism $P(\phi)/U(\phi)\cong L(\phi)$.
\end{lemma}
\begin{proof}
  By definition, the group scheme $U(\phi)$ is the kernel of $P(\phi)\to L(\phi)$. Thus it remains to prove that $P(\phi)\to L(\phi)$ is an epimorphism. For this we may work fpqc-locally on $S$, so that by Theorem \ref{MainTheorem} we may assume that $\phi=\fil\circ\gamma$ for a graded fiber functor $\gamma\colon \CT\to\GradQCoh(S)$. Then there is a canonical isomorphism $\gr\circ\phi\cong \gamma$ and the morphism $\UAut^\otimes_S(\gamma)\to P(\phi)$ induced by $\fil$ splits $P(\phi)\to L(\phi)$.
\end{proof}
\begin{construction}
  Let $\phi\colon \CT\to\FilLF(S)$ be a filtered fiber functor and $\alpha\geq 1$. We define an action of $\Gm[S]$ on $\gr_\alpha U(\phi)$ as follows:
  
By Lemma \ref{UAProperties}, the group scheme $P(\phi)$ acts on $U_\alpha(\phi)$ by conjugation. Lemma \ref{UACommutators} implies that this action induces an action of $P(\phi)/U(\phi)$ on $\gr_\alpha U(\phi)$.

Via the natural isomorphism $P(\phi)/U(\phi)\cong L(\phi)$ from Lemma \ref{SomeIso} we consider $\chi(\gr\circ\phi)\colon \Gm[S]\to L(\phi)$ as a cocharacter of $P(\phi)/U(\phi)$. By restricting the above action of $P(\phi)/U(\phi)$ on $\gr_\alpha U(\phi)$ along $\chi(\gr\circ\phi)$ we obtain an action of $\Gm[S]$ on $\gr_\alpha U(\phi)$ which we denote by $m$.
\end{construction}

For all $\alpha\geq 1$ we denote by $\mu_{\alpha,S}\subset \Gm[S]$ the kernel of the homomorphism $\Gm[S]\to \Gm[S]$, $t\mapsto t^\alpha$.

\begin{proposition} \label{UAVectorBundle}
  Let $\phi\colon \CT\to\FilLF(S)$ be a filtered fiber functor for which the group scheme $\UAut^\otimes_S(\forget\circ\phi)$ is smooth over $S$. For any $\alpha\geq 1$, under the above action of $\Gm[S]$ on $\gr_\alpha U(\phi)$ the subgroup $\mu_{\alpha,S}$ acts trivially on $\gr_\alpha U(\phi)$. 

Thus there is an action $\tilde m$ of $\Gm[S]$ on $\gr_\alpha U(\phi)$ making the following diagram commutative:
\begin{equation*}
  \xymatrix{
    \Gm[S] \times_S \gr_\alpha U(\phi) \ar[r]^-m \ar[d]_{(t\mapsto t^\alpha)\times_S \Id}  & \gr_\alpha U(\phi) \ar@{=}[d] \\
    \Gm[S] \times_S \gr_\alpha U(\phi) \ar[r]^-{\tilde m} & \gr_\alpha U(\phi) 
}
\end{equation*}
This action $\tilde m$ makes $\gr_\alpha U(\phi)$ into a vector bundle over $S$.
\end{proposition}

\begin{proof}
  By Lemma \ref{BlaLemma} the Tannakian category $\CT$ has a tensor generator. For both claims we may work locally for the fpqc topology. Thus using Theorem \ref{MainTheorem} and Lemma \ref{GFFPPrinciple} we may assume that $S$ is the spectrum of an algebraically closed overfield $k'$ of $k$ and that $\phi$ is splittable.

 Let $X$ be a tensor generator of $\CT$ and $V\defeq \forget(\phi(X))$. We denote by $\F^iV$ the $i$-th step of the filtration on $V$ defined by $\phi(X)$. We define a map 
\begin{equation*}
  h\colon U_\alpha(\phi)(k')\to \oplus_{i\in\BZ}\Hom_{k'}(\F^i V/\F^{i+1}V,\F^{i+\alpha}V/\F^{i+\alpha+1}V)\rdefeq W
\end{equation*}
as follows: Let $u\in U_\alpha(\phi)(k')$ and $i\in \BZ$. If we denote the $k'$-linear map $V\to V$ induced by $u$ again by $u$, then by the definition of $U_\alpha(\phi)$, the $k'$-linear map $u-\Id_V$ maps $\F^iV$ into $\F^{i+\alpha}V$ and $\F^{i+1}V$ into $\F^{i+1+\alpha}V$. Thus $u-\Id_V$ induces a $k'$-linear map $\F^iV/\F^{i+1}V\to \F^{i+\alpha}V/\F^{i+\alpha+1}V$ and we let $h(u)$ be the direct sum of these maps over all $i\in \BZ$. Since $V$ is finite-dimensional over $k'$, so is $W$. The following lemma shows that $h$ gives an isomorphism from $\gr_\alpha U(\phi)(k')$ onto an additive subgroup of $W$:
\begin{lemma}
  \begin{itemize}
  \item [(i)] The map $h$ is a group homomorphism from $U_\alpha(\phi)(k')$ to the additive group of $W$.
  \item [(ii)] The kernel of $h$ is $U_{\alpha+1}(\phi)(k')$.
  \end{itemize}
\end{lemma}
\begin{proof}
  (i) Let $u,u'\in U_\alpha(\phi)(k')$ and $i\in\BZ$. Then for all $x\in \F^iV$ 
  \begin{equation*}
    (u(u'(x))-x)-((u(x)-x)+(u'(x)-x))=u(u'(x)-x)-(u'(x)-x)\in \F^{i+2\alpha}V
  \end{equation*}
since $u'(x)-x\in \F^{i+\alpha}V$. Since $\alpha\geq 1$ we have $\F^{i+2\alpha}V\subset \F^{i+\alpha+1}V$ and thus the preceding equation implies $h(uu')=h(u)+h(u')$. 

(ii) An element $u\in U_\alpha(\phi)(k')$ is in the kernel of $h$ if and only if $u-\Id_V$ maps $\F^iV$ into $\F^{i+\alpha+1}V$ for all $i\in \BZ$. Since $X$ is a tensor generator, this is equivalent to $u\in U_{\alpha+1}(\phi)(k')$.
\end{proof}
We let $\Gm(k')$ act on $\gr_\alpha U(\phi)(k')$ via $m$ and on $W$ via $t\mapsto t^\alpha$. Then $h$ is $\Gm(k')$-equivariant by a direct verification. This implies that $\mu_\alpha(k')$ acts trivially on $\gr_\alpha U(\phi)(k')$. Since $\gr_\alpha U(\phi)$ is smooth by Lemma \ref{UASmooth}, this proves the first part of the claim. 

Furthermore, since $\Gm(k')\to \Gm(k'),\; t \mapsto t^\alpha$ is surjective, the $\Gm(k')$-equivariance of $h$ implies that the image of $h$ is a $k'$-subspace of $W$. If we identify $\gr_\alpha U(\phi)(k')$ with this subspace via $h$, the resulting action of $(k')^*$ on $\gr_\alpha U(\phi)(k')$ is the one given by $\tilde m$. Using the smoothness of $\gr_\alpha U(\phi)$ this implies the second part of the claim.
\end{proof}
Proposition \ref{UAVectorBundle} is enough to prove the following result, which is Theorem \ref{UTorsorTrivial} in the case that $\CT$ has a tensor generator:

\begin{theorem} \label{UTorsorTrivial2}
   Assume that $\CT$ has a tensor generator and let $\phi$ be a filtered fiber functor on $\CT$ over an affine scheme $S$ for which the group scheme $\UAut^\otimes_S(\forget\circ\phi)$ is smooth over $S$. Then for any $\alpha\geq 1$, every $U_\alpha(\phi)$-torsor for the fpqc-topology is trivial.
\end{theorem}
\begin{proof}
  We need to prove that the set $\Hfl(S,U_\alpha(\phi))$ has exactly one element for all $\alpha\geq 1$. Since $S$ is affine and since by \cite[Proposition III.3.7]{MilneEtaleCohomology} for a quasi-coherent sheaf the flat and Zariski cohomology groups are isomorphic, by Proposition \ref{UAVectorBundle} the group $\Hfl(S,\gr_\alpha U(\phi))$ is zero for all $\alpha\geq 1$. Thus for all $\alpha\geq 1$, the natural morphism $\Hfl(S,U_{\alpha+1}(\phi))\to \Hfl(S,U_\alpha(\phi))$ is surjective. Since $\CT$ has a tensor generator, for $\alpha$ large enough the group $U_\alpha(\phi)$ is trivial. This implies the claim.
\end{proof}

\begin{lemma} \label{UAVectorBundleMorphism}
  Let $\phi\colon \CT\to\FilLF(S)$ be a filtered fiber functor for which $\UAut^\otimes_S(\forget\circ\phi)$ is smooth, let $\CT'\subset \CT$ be a Tannakian subcategory and $\alpha\geq 1$. If $\gr_\alpha U(\phi)$ and $\gr_\alpha U(\phi|_{\CT'})$ are endowed with the vector bundle structure given by Proposition \ref{UAVectorBundle}, the morphism of fpqc sheaves
  \begin{equation*}
     \gr_\alpha U(\phi) \to \gr_\alpha U(\phi|_{\CT'})
  \end{equation*}
 induced by restriction is an $\CO_S$-linear epimorphism.
\end{lemma}
\begin{proof}
  Since $G'\defeq \UAut^\otimes_S(\phi|_{\CT'})$ is a quotient of $G\defeq \UAut^\otimes_S(\phi)$ it is smooth and thus the statement of the lemma makes sense. That the morphism in question is $\CO_S$-linear follows directly from the definition of the vector bundle structures.

Fix $\alpha\geq 1$. To prove that the morphism is an epimorphism, we may work fpqc-locally on $S$. Thus by Theorem \ref{MainTheorem} and Lemma \ref{GFFPPrinciple} we may assume that $\phi$ is split by a cocharacter $\chi\colon \Gm[S]\to \UAut^\otimes_S(\phi)$ and that $S$ is the spectrum of an algebraically closed field $k'$. Then $\phi|_{\CT'}$ is split by the cocharacter $\chi'\colon \Gm[S]\ltoover{\chi} P(\phi) \to P(\phi|_{\CT'})$, where the morphism $P(\phi) \to P(\phi|_{\CT'})$ is given by restriction.

By Lemma \ref{BlaLemma} the Tannakian category $\CT$ possesses a tensor generator. Using Lemma \ref{FFFBaseChange2} we may assume that $\CT=\Rep[G]$, that $\CT'=\Rep[G']$ and that $\forget\circ\phi=\forget$. We let $G$ act on its Lie algebra $\Lie(G)$ (resp. on $\Lie(G')$) via the adjoint action (resp. via $G\to G'$ composed with the adjoint action of $G'$). Thus $\phi$ endows $\Lie(G)$ and $\Lie(G')$ with a filtration, whose $i$-th step we denote by $\F^i\Lie(G)$ (resp. $\F^i\Lie(G')$). Then $\Lie(U_\alpha(\phi))=\F^\alpha(\Lie(G))$ and $\Lie(U_\alpha(\phi|_{\CT'}))=\F^\alpha(\Lie(G'))$ by \cite[IV.2.1.4.1]{Saavedra}. 

Since the morphism $G\to G'$ is faithfully flat, the induced morphism $\Lie(G)\to \Lie(G')$ is surjective. Hence the exactness of $\phi$ implies the the induced morphism $\Lie(U_\alpha(\phi)) \to \Lie(U_\alpha(\phi|_{\CT'}))$ is surjective. Since by Lemma \ref{UASmooth} the $U_\alpha$ are smooth, this implies that the image of $U_\alpha(\phi)$ in $U_\alpha(\phi|_{\CT'})$ is of finite index. Thus also the image of the $k'$-vector space $\gr_\alpha U(\phi)$ in the $k'$-vector space $\gr_\alpha U(\phi|_{\CT'})$ is of finite index. Since $k'$ is infinite this implies that this image is all of $\gr_\alpha U(\phi|_{\CT'})$, which finishes the proof.
\end{proof}

\begin{proof}[Proof of Theorem \ref{UTorsorTrivial}]
Let $\alpha\geq 1$ and $X$ be a right $U_\alpha(\phi)$-torsor. For all $j\leq i$ in $I_\CT$ there are natural morphisms $U_\alpha(\phi)\to U_\alpha(\phi|_i)$ and $U_\alpha(\phi|_i)\to U_\alpha(\phi|_j)$ induced by restriction. For $i\in I_\CT$, let $X_i\defeq X\times^{U_\alpha(\phi)} U_\alpha(\phi|_i)$ which is a right $U_\alpha(\phi|_i)$-torsor. The fact that $\CT=\varinjlim_{i\in I_\CT}i$ implies $U_\alpha(\phi)\cong \varprojlim_{i\in I_\CT}U_\alpha(\phi|_i)$. Hence $X\cong \varprojlim_{i\in I_\CT}X_i$. By the discussion at the end of Section \ref{Preliminaries}, the condition that $\UAut^\otimes_S(\forget\circ\phi)$ is pro-smooth over $S$ is equivalent to saying that for each $i\in I_\CT$ the group scheme $\UAut^\otimes_S(\forget\circ\phi|_i)$ is smooth over $S$. Hence by Theorem \ref{UTorsorTrivial2} for each $i$ the set $X_i(S)$ is not empty and thus a torsor under the group $G_i^\alpha \defeq U_\alpha(\phi|_i)(S)$. Hence the $X_i(S)$ form a torsor under the inverse system of groups $(G^\alpha_i)_{i\in I_\CT}$. We want to show that $X(S)=\varprojlim_{i\in I_\CT}X_i(S)$ is not empty. This is equivalent to saying that the $(G_i^\alpha)_{i\in I_\CT}$-torsor $(X_i(S))_{i\in I_\CT}$ is isomorphic to the trivial one. By Lemma \ref{Lim1Torsor} it is thus sufficient to prove that $\varprojlim_{i\in I_\CT}^1G_i^\alpha$ is the trivial pointed set for all $\alpha\geq 1$. First we prove:
\begin{lemma} \label{SomeLemma5}
  For all $\alpha\geq 1$, the pointed set $\varprojlim^1_{i\in I_\CT}G_i^\alpha/G_i^{\alpha+1}$ is trivial.
\end{lemma}
\begin{proof}
  Let $R\defeq \Gamma(S,\CO_S)$ and $M_i\defeq G_i^\alpha/G_i^{\alpha+1}$ for all $i\in I_\CT$. The case of Theorem \ref{UTorsorTrivial} already proved implies $M_i=\gr_\alpha U(\phi|_i)(S)$. Thus by Proposition \ref{UAVectorBundle} the $M_i$ are finitely generated projective $R$-modules. By Lemma \ref{UAVectorBundleMorphism} the transition maps $M_i\to M_j$ are $R$-linear for all $j\leq i$ in $I_\CT$. Thus the $M_i$ form an inverse system of $R$-modules and it suffices to verify the conditions of Theorem \ref{InvLimitProjModules}. We already know that condition (ii) is satisfied.

(i) This is Lemma \ref{ITLattice}.

(iii) This is part of Lemma \ref{UAVectorBundleMorphism}.

(iv) Let $i,j\in I_\CT$. By Lemma \ref{FFCartesian} the square
\begin{equation*}
  \xymatrix{ 
    \UAut^\otimes_S(\forget\circ\phi|_i) \ar[d] & \UAut^\otimes_S(\forget\circ\phi|_{i\vee j}) \ar[l]\ar[d] \\
    \UAut^\otimes_S(\forget\circ\phi|_{i\wedge j}) & \ar[l] \UAut^\otimes_S(\forget\circ\phi|_j) 
}
\end{equation*}
is Cartesian. This implies implies that the square
\begin{equation*}
   \xymatrix{ 
    U_\alpha(\phi|_i)(S) \ar[d] & U_\alpha(\phi|_{i\vee j})(S) \ar[l]\ar[d] \\
    U_\alpha(\phi|_{i\wedge j})(S) & \ar[l] U_\alpha(\phi|_j)(S) 
}
\end{equation*}
is Cartesian for all $\alpha\geq 0$. This in turn implies that condition (iv) is satisfied.
\end{proof}
Now we can prove:
\begin{lemma}
  For all $\alpha\geq 1$, the pointed set $\varprojlim_{i\in I_\CT}^1G_i^\alpha$ is trivial. 
\end{lemma}
\begin{proof}
 Let $\alpha\geq 1$ and $x\in \varprojlim_{i\in I_\CT}^1G_i^\alpha$. Lemmas \ref{SomeLemma5} and \ref{Lim1ExactSeq} imply that for all $\alpha'\geq 1$, the natural map $\varprojlim_{i\in I_\CT}^1G_i^{\alpha'+1}\to \varprojlim_{i\in I_\CT}^1G_i^{\alpha'}$ is surjective. Thus the induced map $\varprojlim_{\alpha'\geq \alpha}\varprojlim_{i\in I_\CT}^1G_i^{\alpha'}\to \varprojlim_{i\in I_\CT}^1G_i^\alpha$ is surjective. Hence there exists $(x_{\alpha'})_{\alpha'\geq \alpha}\in \varprojlim_{\alpha'\geq \alpha}\varprojlim_{i\in I_\CT}^1G_i^{\alpha'}$ such that $x_\alpha=x$. For each $\alpha'\geq\alpha$ we pick a representative $(\leftexp{\alpha'}{g}_j^i)_{j\leq i\in I_\CT}$ of $x_{\alpha'}$. For any $\alpha''\geq \alpha'\geq \alpha$ and $j\leq i$ in $I_\CT$ we denote the transition map $G_i^{\alpha''}\to G_i^{\alpha'}$ by $r^{\alpha''}_{\alpha'}$ and the transition map $G_i^{\alpha'}\to G_j^{\alpha'}$ by $\pi_j^i$. The fact that $(x_{\alpha'})_{\alpha'\geq \alpha}\in \varprojlim_{\alpha'\geq \alpha}\varprojlim_{i\in I_\CT}^1G_i^{\alpha'}$ implies that for each $\alpha'\geq \alpha$, there exists $(\leftexp{\alpha'}{g}_i)_{i\in I_\CT}\in \prod_{i\in I_\CT}G_i^{\alpha'}$ such that 
 \begin{equation}\label{SomeEq233}
   \leftexp{\alpha'}{g}_j^i=\pi_j^i(\leftexp{\alpha'}{g}_i) r^{\alpha'+1}_{\alpha'}(\leftexp{\alpha'+1}{g}_j^i) (\leftexp{\alpha'}{g}_j)^{-1}
 \end{equation}
 for all $j\leq i\in I_\CT$.

Since each $i\in I_\CT$ has a tensor generator, for each $i\in I_\CT$, there exists $\alpha_i\geq 1$ such that $G_i^{\alpha'}=1$ for $\alpha'> \alpha_i$. For $i\in I_\CT$ let $g_i\defeq \leftexp{\alpha}{g}_i\cdot r^{\alpha+1}_\alpha(\leftexp{\alpha+1}{g}_i) \cdots r^{\alpha_i}_\alpha(\leftexp{\alpha_i}{g}_i)$. Then for any $i\leq j$ in $I_\CT$ repeated application of \eqref{SomeEq233} shows that $\leftexp{\alpha}{g}_j^i=\pi_j^i(g_i)g_j^{-1}$. Thus $x$, which is the class of $(\leftexp{\alpha}{g}_j^i)_{i\leq j\in I_\CT}$ in $\varprojlim_{i\in I_\CT}^1 G_i^\alpha$, is equal to the distinguished element of $\varprojlim_{i\in I_\CT}^1 G_i^\alpha$. This proves the claim.
\end{proof}
\end{proof}

\subsection{The Stack of Filtered Fiber Functors} \label{StFFF}
Let $\CT$ be a Tannakian category over $k$. In this subsection, we study the fibered category $\UHom^\otimes(\CT,\FilLF)$.
\begin{lemma} \label{FFFStack}
  The fibered category $\UHom^\otimes(\CT,\FilLF)$ is a stack for the fpqc topology.
\end{lemma}
\begin{proof}
  For any fpqc covering $S'\to S$ of $k$-schemes we denote by $\Desc(S'/S)$ the category of objects of $\FilLF(S')$ equipped with a descent datum relative to $S$. Given $\CM,\CM'\in \FilLF(S')$, descent data on $\CM$ and $\CM'$ relative to $S'$ induce a descent datum on $\CM\otimes\CM'$ relative to $S'$. This makes $\Desc(S'/S)$ into a symmetric monoidal category, and the natural functor $b\colon \FilLF(S)\to \Desc(S'/S)$, which is an equivalence since $\FilLF$ is a stack for the fpqc topology, is a tensor functor. Let $b^{-1}\colon \Desc(S'/S)\to\FilLF(S)$ be a tensor functor which is an inverse of $b$. To give a tensor functor $\CT\to \FilLF(S')$ equipped with a descent datum relative to $S$ is the same as giving a tensor functor $\CT\to \Desc(S'/S)$. Thus such a functor can be descended to a tensor functor $\CT\to\FilLF(S)$ by composing it with $b^{-1}$. Similarly a tensor morphism between two tensor functors $\CT\to\FilLF(S')$ which is equipped with a descent datum can be descended by applying $b^{-1}$. It follows from Lemma \ref{FilShortExact} that for a tensor functor $\phi\colon \CT\to\FilLF(S)$, the property of being exact is local on $S$ for the fpqc topology. Altogether this implies the claim.
\end{proof}

\begin{definition}
  Let 
  \begin{equation*}
    \xymatrix{
      \UHom^\otimes(\CT,\GradQCoh)\ar@<1ex>[r]^\fil & \UHom^\otimes(\CT,\FilLF) \ar@<1ex>[l]^\gr \\
      }
  \end{equation*}
 be the morphisms induced by composition with the morphisms $\fil\colon \GradQCoh\to\FilLF$ (resp. $\gr\colon \FilLF\to\GradQCoh$).
\end{definition}

\begin{proposition} \label{FilRepresentable}
  Assume that $\CT$ has a tensor generator. Then the morphism of stacks 
  \begin{equation*}
    \fil\colon \UHom^\otimes(\CT,\GradQCoh)\to \UHom^\otimes(\CT,\FilLF)
  \end{equation*}
is representable by schemes, affine, faithfully flat and of finite presentation.
\end{proposition}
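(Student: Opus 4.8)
First I would unwind the statement. It is equivalent to the assertion that, for every filtered fibre functor $\phi$ on $\CT$ over a scheme $S$, the fibre product $\CX_\phi$ of $\fil\colon\UHom^\otimes(\CT,\GradQCoh)\to\UHom^\otimes(\CT,\FilQCoh)$ with the morphism $S\to\UHom^\otimes(\CT,\FilQCoh)$ determined by $\phi$ is representable by a scheme which is affine, faithfully flat and of finite presentation over $S$. An object of $\CX_\phi$ over $S'\to S$ is a pair $(\gamma,a)$ with $\gamma$ a graded fibre functor over $S'$ and $a\colon\fil\circ\gamma\isoto\phi_{S'}$ an isomorphism of filtered fibre functors, a morphism $(\gamma,a)\to(\gamma',a')$ being a morphism $b\colon\gamma\to\gamma'$ of graded fibre functors with $a'\circ\fil(b)=a$. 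Since $\fil\colon\GradQCoh(S')\to\FilQCoh(S')$ is faithful, every such $(\gamma,a)$ has trivial automorphism group, so $\CX_\phi$ is fibred in setoids; being a fibre product of fpqc stacks (Lemmas \ref{GFFStack} and \ref{FFFStack}) it is an fpqc sheaf.

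The next step is to identify $\CX_\phi$ with $\USpl(\phi)$. The point is that although $a$ is only a filtered isomorphism it still transports gradings: for $X\in\CT$ the isomorphism $a_X$ sends $\bigoplus_{j\ge i}\gamma(X)^j=\F^i\fil(\gamma(X))$ onto $\F^i\phi(X)_{S'}$ for every $i$, so the subsheaves $a_X(\gamma(X)^i)$ define a grading of $\phi(X)_{S'}$ splitting its filtration, functorially in $X$ and compatibly with tensor products. A short verification shows that $(\gamma,a)\mapsto$ (this grading) and $\delta\mapsto(\delta,\Id)$ are mutually inverse, giving a natural isomorphism $\CX_\phi\cong\USpl(\phi)$.

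Now I would bring in the structure theory already established. By Theorem \ref{SplUTorsor} — this is the one place the Main Theorem \ref{MainTheorem} is used — $\USpl(\phi)$ is a torsor for the fpqc topology under $U(\phi)=\ker\bigl(P(\phi)\xrightarrow{\gr}L(\phi)\bigr)$, where $P(\phi)=\UAut^\otimes_S(\phi)$ and $L(\phi)=\UAut^\otimes_S(\gr\circ\phi)$. By Theorems \ref{FFFIsomRepr} and \ref{GFFIsomRepr} both $P(\phi)$ and $L(\phi)$ are affine and flat over $S$, and, since $\CT$ has a tensor generator, of finite presentation over $S$; by Lemma \ref{SomeIso} the homomorphism $\gr\colon P(\phi)\to L(\phi)$ is an epimorphism of fpqc sheaves. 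The key claim is that $\gr$ is then faithfully flat. Granting this, $U(\phi)=P(\phi)\times_{L(\phi)}S$ is flat over $S$ as a base change of $\gr$, affine over $S$ as a closed subscheme of $P(\phi)$, of finite presentation over $S$, and faithfully flat because it carries the identity section. Finally, a torsor for the fpqc topology under an affine, faithfully flat, finitely presented $S$-group scheme $H$ is representable by a scheme that is affine, faithfully flat and of finite presentation over $S$: over an fpqc covering $S''\to S$ on which the torsor acquires a section it becomes isomorphic to $H_{S''}$, and the properties ``representable by an affine scheme'', ``of finite presentation'', ``flat'' and ``surjective'' all descend along $S''\to S$. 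Applied to $H=U(\phi)$ and the torsor $\USpl(\phi)\cong\CX_\phi$, this finishes the argument.

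The step requiring the most care is the faithful flatness of $\gr\colon P(\phi)\to L(\phi)$ — equivalently, the flatness of $U(\phi)$ over $S$ — since without it one only obtains that $\CX_\phi$ is representable by an affine, finitely presented, surjective, but a priori not flat, $S$-scheme. I would deduce it from the fact that $P(\phi)$ and $L(\phi)$ are flat, affine and of finite presentation over $S$ (so that $\gr$ is itself of finite presentation), using the classical fact that an fppf epimorphism of affine group schemes over a field is faithfully flat together with the fibrewise criterion for flatness \cite[11.3.10]{EGA4III}. Everything else is either formal bookkeeping with the fibre product, the elementary transport-of-grading observation, or a direct appeal to results already proved.
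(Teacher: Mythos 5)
Your proof is correct, and it takes a genuinely different route from the paper's. The paper argues fpqc-locally: using the Main Theorem it reduces to the case where $\phi$ is splittable, fixes a splitting $\gamma_0$, and then tries to exhibit the fibre product directly as a known affine flat group scheme over $S$, so that the properties can just be read off from Theorem~\ref{FFFIsomRepr}. You instead identify the fibre product $\CX_\phi$ with $\USpl(\phi)$ over all of $S$ via the transport-of-grading observation (which is correct: a filtered isomorphism $a\colon\fil\circ\gamma\isoto\phi_{S'}$ carries the grading of $\gamma$ to a grading splitting the filtration of $\phi_{S'}$, and this is inverse to $\delta\mapsto(\delta,\Id)$ up to the unique isomorphism in $\CX_\phi$), then invoke Theorem~\ref{SplUTorsor} to see $\CX_\phi$ as a $U(\phi)$-torsor, and finish by descent of the four desired properties along an fpqc cover trivializing the torsor. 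Two remarks on the comparison. First, your identification $\CX_\phi\cong\USpl(\phi)$ is consistent with Lemma~\ref{SplUPseudotorsor}, which makes $\USpl(\phi)$ a $U(\phi)$-pseudotorsor; by contrast the paper's proof, after fixing $\gamma_0$, asserts an equivalence with $P(\phi)$ rather than $U(\phi)$, which does not look quite right since $\fil$ is faithful but not full (two objects $((\gamma_0)_{S'},\lambda)$ and $((\gamma_0)_{S'},\lambda')$ become isomorphic in $\CX_\phi(S')$ whenever $\lambda'^{-1}\lambda$ lies in the image of $\fil\colon L(\phi)(S')\to P(\phi)(S')$); but this does not affect the conclusion, since $U(\phi)$ also is affine, faithfully flat and of finite presentation. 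Second, the one place your proof is a bit rough is the faithful flatness of $\gr\colon P(\phi)\to L(\phi)$ (equivalently, flatness of $U(\phi)$ over $S$); your fibrewise-criterion sketch works, but this fact is exactly the content of Theorem~\ref{GrFaithfullyFlat}, which, despite appearing in the next subsection, depends only on the Main Theorem and Theorem~\ref{GFFIsomRepr}, so you could cite it directly without circularity. Overall your approach is slightly more structural (global torsor picture plus descent) where the paper's is slightly more hands-on (local splitting plus explicit identification), and both carry the same logical cost: one invocation of Theorem~\ref{MainTheorem}.
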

\begin{proof}
  The claim means the following: For any scheme $S$ over $k$ and any filtered fiber functor $\phi\colon S\to \UHom^\otimes(\CT,\FilLF)$ the stack 
  \begin{equation*}
    X\defeq \UHom^\otimes(\CT,\GradQCoh)\times_{\fil,\UHom^\otimes(\CT,\FilLF)} S
  \end{equation*}
is representable by a scheme which is affine, faithfully flat and of finite presentation over $S$. To prove this, we may work fpqc-locally on $S$. Thus, by Theorem  \ref{MainTheorem} we may assume that $\phi$ is splittable. We fix a graded fiber functor $\gamma_0$ splitting $\phi$.

For any scheme $S'$ over $S$, the objects of $X(S')$ are pairs $(\gamma,\lambda)$ consisting of a graded fiber functor $\gamma\in \UHom^\otimes(\CT,\GradQCoh)(S')$ and an isomorphism $\lambda\colon \fil\circ\gamma\isoto \phi_{S'}$, and a morphism $(\gamma,\lambda)\to (\gamma',\lambda')$ in $X(S')$ is a morphism $\mu\colon \gamma\to \gamma'$ such that $\lambda=\lambda'\circ \fil(\mu)$. Since $\fil$ is faithful, for any such $(\gamma,\lambda)$ and $(\gamma',\lambda')$ there can thus be at most one morphism $(\gamma,\lambda)\to (\gamma',\lambda')$. Since for any $(\gamma,\lambda)\in X(S')$ the tensor morphism $\gr(\lambda)$ gives an isomorphism $\gamma\cong \gr\circ\phi_{S'}\cong (\gamma_0)_{S'}$, the category $X(S')$ is equivalent to its full subcategory whose objects are of the form $((\gamma_0)_{S'},\lambda)$. Since $\fil\circ (\gamma_0)_{S'}=\phi_{S'}$ it follows that $P(\phi)(S')\to X(S'),\; \lambda\mapsto ((\gamma_0)_{S'},\lambda)$ is an equivalence. Since this equivalence is functorial in $S$ it gives an equivalence $P(\phi)\to X$ of fibered categories. Thus $X$ has the required properties by Theorem \ref{FFFIsomRepr} and Lemma \ref{UASmooth}.
\end{proof}
\begin{theorem}
  Assume that $\CT$ has a tensor generator. Then $\UHom^\otimes(\CT,\FilLF)$ is an algebraic stack which is locally of finite type over $k$.
\end{theorem}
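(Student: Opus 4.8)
The plan is to deduce algebraicity of $\UHom^\otimes(\CT,\FilQCoh)$ from that of $\UHom^\otimes(\CT,\GradQCoh)$ (Theorem \ref{GFFStackAlgebraic}) by exhibiting an atlas through the morphism $\fil$. First I would record the two structural facts that come for free: by Lemma \ref{FFFStack} the fibered category $\UHom^\otimes(\CT,\FilQCoh)$ is a stack for the fpqc topology, hence a fortiori for the fppf topology; and since $\CT$ is rigid, every tensor morphism of filtered fiber functors is an isomorphism by \cite[I.5.2.3]{Saavedra}, so the fibered category is fibered in groupoids. Thus the task reduces to verifying the two conditions of the algebraicity criterion \cite[10.1]{ChampsAlgebriques} already used in the proof of Lemma \ref{AlgebraicStackLocal}: representability (together with separatedness and quasi-compactness) of the diagonal, and existence of a representable, faithfully flat, locally finitely presented morphism from an algebraic space.

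For the diagonal, I would observe that its base change along the morphism $S\to\UHom^\otimes(\CT,\FilQCoh)\times_k\UHom^\otimes(\CT,\FilQCoh)$ determined by a pair $(\phi_1,\phi_2)$ of filtered fiber functors over $S$ is exactly $\UIsom^\otimes_S(\phi_1,\phi_2)$, which by Theorem \ref{FFFIsomRepr} (using that $\CT$ has a tensor generator) is representable by a scheme which is affine and of finite presentation over $S$. Hence the diagonal is representable, affine --- in particular separated --- and quasi-compact.

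For the atlas, I would start from an algebraic space $Y$ (indeed a scheme) together with a morphism $Y\to\UHom^\otimes(\CT,\GradQCoh)$ which is representable, faithfully flat and of finite presentation, which exists by Theorem \ref{GFFStackAlgebraic}, and compose it with $\fil\colon\UHom^\otimes(\CT,\GradQCoh)\to\UHom^\otimes(\CT,\FilQCoh)$. By Proposition \ref{FilRepresentable} the morphism $\fil$ is representable by schemes, affine, faithfully flat and of finite presentation; since these properties are stable under composition, the resulting morphism $Y\to\UHom^\otimes(\CT,\FilQCoh)$ is representable by algebraic spaces, faithfully flat and locally of finite presentation, which is precisely what the criterion requires. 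Finally, local finiteness of type over $k$ follows because $\UHom^\otimes(\CT,\GradQCoh)$ is locally of finite type over $k$ and $\fil$ is of finite presentation, so the composite atlas is locally of finite type over $k$.

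These steps are essentially bookkeeping once Proposition \ref{FilRepresentable} and Theorems \ref{GFFStackAlgebraic}, \ref{FFFIsomRepr} are in hand; the only point needing a little care is to phrase everything so that it is literally the criterion of \cite[10.1]{ChampsAlgebriques}, i.e.\ to make sure the composite $Y\to\UHom^\otimes(\CT,\FilQCoh)$ is genuinely representable (by algebraic spaces) rather than merely a morphism of stacks --- which is guaranteed by the representability of $\fil$. I do not expect a serious obstacle here: the real content has been absorbed into the Main Theorem, which enters through Proposition \ref{FilRepresentable}.
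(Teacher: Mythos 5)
Your proof is correct and follows essentially the same route as the paper's own argument: it verifies the two conditions of \cite[10.1]{ChampsAlgebriques}, handling the diagonal via Theorem \ref{FFFIsomRepr} and producing an atlas by composing an atlas of $\UHom^\otimes(\CT,\GradQCoh)$ (from Theorem \ref{GFFStackAlgebraic}) with the representable, affine, faithfully flat, finitely presented morphism $\fil$ (Proposition \ref{FilRepresentable}). The only cosmetic difference is that the paper concludes local finiteness over $k$ by invoking descent along the atlas (citing \cite[17.7.5]{EGA4IV}) rather than phrasing it in terms of the composite atlas, but the content is the same.
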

\begin{proof}
  Since $\CT$ is rigid, by \cite[I.5.2.3]{Saavedra} any morphism in $\UHom^\otimes(\CT,\FilLF)$ is an isomorphism. Thus $\UHom^\otimes(\CT,\FilLF)$ is fibered in groupoids. By \cite[Th\'eor\`eme 10.1]{ChampsAlgebriques}, to prove that $\UHom^\otimes(\CT,\FilLF)$ is an algebraic stack it suffices to prove that the diagonal morphism $\UHom^\otimes(\CT,\FilLF)\to \UHom^\otimes(\CT,\FilLF)\times \UHom^\otimes(\CT,\FilLF)$ is representable, separated and quasi-compact and that there exists an algebraic space $Y$ and a morphism $Y\to \UHom^\otimes(\CT,\FilLF)$ which is representable, faithfully flat and locally of finite presentation.

The condition on the diagonal morphism is equivalent to saying that for any two filtered fiber functors $\phi_1,\phi_2$ over a scheme $S$ the functor $\UIsom^\otimes_S(\phi_1,\phi_2)$ is representable by an algebraic space which is separated and quasi-compact over $S$. This condition is satisfied by Theorem \ref{FFFIsomRepr}.

Since $\UHom^\otimes(\CT,\GradQCoh)$ is an algebraic stack locally of finite type over $k$ by Theorem \ref{GFFStackAlgebraic}, there exists an algebraic space $Y$ which is locally of finite type over $k$ and a morphism $Y\to \UHom^\otimes(\CT,\GradQCoh)$ over $k$ which is representable, faithfully flat and locally of finite presentation. Composing such a morphism with the morphism $\UHom^\otimes(\CT,\GradQCoh)\to\UHom^\otimes(\CT,\FilLF)$ from Proposition \ref{FilRepresentable} yields a morphism $Y\to \UHom^\otimes(\CT,\FilLF)$ which is representable, faithfully flat and locally of finite presentation.

Since $Y$ is locally of finite type over $k$ and the morphism $Y\to\UHom^\otimes(\CT,\FilLF)$ is faithfully flat and locally of finite presentation, the algebraic stack $\UHom^\otimes(\CT,\FilLF)$ is locally of finite presentation over $k$ (c.f. \cite[Lemme 17.7.5]{EGA4IV}).
\end{proof}

Now we turn to the type of a filtered fiber functor.

\begin{definition}
  Assume that $\CT$ has a tensor generator. 
  \begin{itemize}
  \item[(i)]   We denote the composition 
  \begin{equation*}
    \UHom^\otimes(\CT,\FilLF)\ltoover{\gr}\UHom^\otimes(\CT,\GradQCoh)\ltoover{t}\CC_\CT
  \end{equation*}
again by $t$.
\item[(ii)] For a filtered fiber functor $\phi$ on $\CT$ over $S$ its image $t(\phi)$ in $\CC_\CT(S)$, which is the type of the graded fiber functor $\gr\circ\phi$, is called the \emph{type of $\phi$}.
  \end{itemize}
\end{definition}

\begin{theorem}
  Assume that $\CT$ has a tensor generator. Then $t\colon \UHom^\otimes(\CT,\FilLF)\to \CC_\CT$ makes $\CC_\CT$ into the coarse fpqc sheaf associated to $\UHom^\otimes(\CT,\FilLF)$. In particular $\CC_\CT$ is the coarse moduli space of $\UHom^\otimes(\CT,\FilLF)$ and $\UHom^\otimes(\CT,\FilLF)$ is a gerbe over its coarse moduli space.
\end{theorem}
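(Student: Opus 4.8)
The plan is to verify directly the two properties that characterize $\CC_\CT$ as the coarse fpqc sheaf of $\UHom^\otimes(\CT,\FilQCoh)$: (a) every section of $\CC_\CT$ over a $k$-scheme is, fpqc-locally on that scheme, the type of a filtered fiber functor; and (b) two filtered fiber functors on $\CT$ over a scheme $S$ with the same type are isomorphic fpqc-locally on $S$. Together with the definition of $t$ on filtered fiber functors, (a) and (b) say exactly that the natural morphism from the fpqc sheafification of the presheaf of isomorphism classes of $\UHom^\otimes(\CT,\FilQCoh)$ to $\CC_\CT$ is both an epimorphism and a monomorphism of fpqc sheaves, hence an isomorphism. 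The formal observation that makes the argument run is that $\gr\circ\fil$ is the identity on $\UHom^\otimes(\CT,\GradQCoh)$, so that the morphism $t$ on filtered fiber functors restricts along $\fil$ to the morphism $t\colon\UHom^\otimes(\CT,\GradQCoh)\to\CC_\CT$ of Theorem \ref{ClRepr}; in particular $t(\fil\circ\gamma)=t(\gamma)$ for every graded fiber functor $\gamma$.

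For (a), given a section of $\CC_\CT$ over a scheme $S$, I would invoke the fact (Theorem \ref{ClRepr}) that $\UHom^\otimes(\CT,\GradQCoh)$ is a gerbe over $\CC_\CT$ to lift it, fpqc-locally on $S$, to a graded fiber functor $\gamma$; then $\fil\circ\gamma$ is a filtered fiber functor whose type is the given section. Property (b) is the heart of the argument. I would first apply Theorem \ref{MainTheorem} to each of the two filtered fiber functors to reduce, after an fpqc base change, to the case that they are of the form $\fil\circ\gamma_1$ and $\fil\circ\gamma_2$ for graded fiber functors $\gamma_i$ on $\CT$ over $S$. Then $\gamma_i=\gr\circ(\fil\circ\gamma_i)$, so the two graded fiber functors $\gamma_1,\gamma_2$ have the same type; by Theorem \ref{GFFLocIso} they are isomorphic fpqc-locally on $S$, and applying $\fil$ to such an isomorphism gives the required fpqc-local isomorphism $\fil\circ\gamma_1\cong\fil\circ\gamma_2$.

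With (a) and (b) established, the remaining claims are formal. Since $\CC_\CT$ is a scheme (Theorem \ref{ClRepr}) and schemes are fpqc sheaves, every morphism from $\UHom^\otimes(\CT,\FilQCoh)$ to a scheme factors uniquely through $\CC_\CT$, which is the coarse moduli space property; for an algebraically closed overfield $k'$ of $k$, surjectivity of $\UHom^\otimes(\CT,\FilQCoh)(k')\to\CC_\CT(k')$ on isomorphism classes follows from (a) over $k'$ together with Theorem \ref{ClRepr}(ii), and injectivity follows since (b) forces the scheme $\UIsom^\otimes_{k'}(\phi_1,\phi_2)$, which is of finite presentation over $k'$ by Theorem \ref{FFFIsomRepr}, to be non-empty and hence to have a $k'$-point. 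Finally, $\UHom^\otimes(\CT,\FilQCoh)$ is an fpqc stack by Lemma \ref{FFFStack}, fibered in groupoids by rigidity of $\CT$ and \cite[I.5.2.3]{Saavedra}; regarded as a stack over $\CC_\CT$ via $t$, the first gerbe axiom is supplied by (a) (transporting along $\fil$ the objects of the gerbe $\UHom^\otimes(\CT,\GradQCoh)$ over $\CC_\CT$) and the second by (b). The step I expect to demand the most care is (b), but even there the genuine content is already contained in Theorem \ref{MainTheorem} (which reduces to splittable functors) and in Theorem \ref{GFFLocIso} (the graded analogue); the rest is bookkeeping, plus the small point-counting argument over $k'$.
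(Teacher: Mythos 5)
Your proof is correct and follows essentially the same approach as the paper: both verify that the induced map from the coarse fpqc sheaf of $\UHom^\otimes(\CT,\FilQCoh)$ to $\CC_\CT$ is an epimorphism via $\gr\circ\fil\cong\mathrm{id}$ and a monomorphism via Theorem \ref{MainTheorem} reducing to the graded case (Theorem \ref{GFFLocIso}), then deduce the coarse moduli and gerbe statements from the representability of $\CC_\CT$. You supply somewhat more detail on the ``in particular'' clause (the point-counting argument over an algebraically closed field and the explicit gerbe-axiom check), which the paper leaves implicit, but the underlying ideas are identical.
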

\begin{proof}
  Let $X$ be the coarse fpqc sheaf associated to $\UHom^\otimes(\CT,\FilLF)$. The morphism $t$ induces a morphism $\bar t\colon X\to \CC_\CT$ of sheaves. It is an epimorphism since $\gr\circ\fil$ is canonically isomorphic to the identity on $\UHom^\otimes(\CT,\GradQCoh)$. Theorem \ref{MainTheorem} implies that two filtered fiber functors on $\CT$ over a given scheme are fpqc-locally isomorphic if and only if the associated graded fiber functors are fpqc-locally isomorphic. This implies that $\bar t$ is a monomorphism. The last statement follows from the fact that $\CC_\CT$ is representable by Theorem \ref{ClRepr}.
\end{proof}

\subsection{Consequences}\label{FFFConsequences}
Let $\CT$ be a Tannakian category over $k$. Theorem \ref{MainTheorem} and the above results about the stack $\UHom^\otimes(\CT,\FilLF)$ allow us to prove the following facts about filtered fiber functors:
\begin{theorem} \label{GrFaithfullyFlat}
  \begin{itemize}
  \item [(i)]  For any two filtered fiber functors $\phi_1,\phi_2$ on $\CT$ over a $k$-scheme $S$ the morphism
  \begin{equation*}
    \gr\colon \UIsom^\otimes_S(\phi_1,\phi_2)\to \UIsom^\otimes_S(\gr\circ\phi_1,\gr\circ\phi_2)
  \end{equation*}
  is faithfully flat. In particular $\UIsom_S^\otimes(\phi_1,\phi_2)$ is flat over $S$.
\item [(ii)] If $\phi_1,\phi_2$ are filtered fiber functors on $\CT$ over $k$-schemes $S_1,S_2$, then $\UIsom^\otimes_k(\phi_1,\phi_2)$ faithfully flat over $\UIsom^\otimes_k(\gr\circ\phi_1,\gr\circ\phi_2)$ and flat over $S_1\times S_2$.
  \end{itemize}
\end{theorem}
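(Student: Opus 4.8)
The plan is to derive both assertions from the single claim that, for filtered fiber functors $\phi_1,\phi_2$ over an arbitrary base scheme $S$, the morphism $\gr\colon \UIsom^\otimes_S(\phi_1,\phi_2)\to \UIsom^\otimes_S(\gr\circ\phi_1,\gr\circ\phi_2)$ is faithfully flat. Part (ii) will then be formal: by definition $\UIsom^\otimes_k(\phi_1,\phi_2)=\UIsom^\otimes_{S_1\times S_2}(\pr_1^*\phi_1,\pr_2^*\phi_2)$, and since $\gr$ commutes with pullbacks the morphism under consideration is precisely the one above attached to $\pr_1^*\phi_1,\pr_2^*\phi_2$ over $S_1\times S_2$; composing with the flat morphism $\UIsom^\otimes_k(\gr\circ\phi_1,\gr\circ\phi_2)\to S_1\times S_2$ provided by Theorem \ref{GFFIsomRepr} then gives flatness over $S_1\times S_2$, and the same composition argument yields the ``in particular'' clause of (i).

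To prove the core claim I would first note that faithful flatness of a morphism of $S$-schemes may be checked fpqc-locally on $S$ (flatness descends along the faithfully flat base change on $S$, and surjectivity is preserved and reflected). Hence, by the Main Theorem, I may assume that $\phi_1$ and $\phi_2$ are both splittable, say $\phi_i=\fil\circ\gamma_i$ for graded fiber functors $\gamma_i$. Using the canonical isomorphism $\gr\circ\fil\circ\gamma_i\cong\gamma_i$ to identify $\gr\circ\phi_i$ with $\gamma_i$, the faithful exact tensor functor $\fil$ furnishes a section $\fil\colon \UIsom^\otimes_S(\gamma_1,\gamma_2)\to\UIsom^\otimes_S(\phi_1,\phi_2)$ of $\gr$. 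Setting $P\defeq P(\phi_1)$, $L\defeq L(\phi_1)$ and $U\defeq U(\phi_1)=\ker(\gr\colon P\to L)$, I would then check directly that
\[
  U\times_S\UIsom^\otimes_S(\gamma_1,\gamma_2)\longrightarrow \UIsom^\otimes_S(\phi_1,\phi_2),\qquad (u,\mu)\longmapsto \fil(\mu)\circ u,
\]
is an isomorphism of $S$-schemes, with inverse $\lambda\mapsto\bigl(\fil(\gr\lambda)^{-1}\circ\lambda,\ \gr\lambda\bigr)$, and that under this isomorphism $\gr$ becomes the projection onto $\UIsom^\otimes_S(\gamma_1,\gamma_2)$. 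It therefore suffices to show that $U\to S$ is faithfully flat.

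That morphism is surjective, being a group scheme over $S$ with the identity section. For flatness, the section $\fil\colon L\to P$ splits the exact sequence $1\to U\to P\ltoover{\gr}L\to 1$ of fpqc sheaves of groups (compare Lemma \ref{SomeIso}), so that $P\cong U\times_S L$ as $S$-schemes. Now $P\to S$ is flat by Theorem \ref{FFFIsomRepr}, while $L\to S$ is flat by Theorem \ref{GFFIsomRepr} and surjective (identity section), hence faithfully flat; consequently the projection $U\times_S L\cong P\to U$ is faithfully flat, and since the composite $U\times_S L\to U\to S$ is flat, descent of flatness along a faithfully flat morphism forces $U\to S$ to be flat.

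I expect the only point requiring genuine care to be the bookkeeping in the split case: verifying that $\fil$ really is a section of $\gr$ at the level of the isomorphism functors, and that the displayed morphism is an isomorphism intertwining $\gr$ with the projection. Both are mechanical once one keeps track of the directions of the arrows, so I anticipate no substantial obstacle; the remaining inputs — the Main Theorem, Theorems \ref{GFFIsomRepr} and \ref{FFFIsomRepr}, and descent of flatness along a faithfully flat morphism — are all available.
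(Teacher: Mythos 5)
Your argument is correct and follows the same route the paper intends, merely spelling out the details that the paper's one-line proof (``the claim about faithful flatness is a consequence of Theorem~\ref{MainTheorem}'') leaves implicit: reduce fpqc-locally to the split case, identify $\UIsom^\otimes_S(\phi_1,\phi_2)$ with $U(\phi_1)\times_S\UIsom^\otimes_S(\gamma_1,\gamma_2)$ via the section $\fil$, and deduce faithful flatness of $U(\phi_1)\to S$ from flatness of $P(\phi_1)$ and $L(\phi_1)$ via descent. Your treatment of the ``in particular'' clause via Theorem~\ref{GFFIsomRepr} and your reduction of (ii) to (i) also match the paper's.
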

\begin{proof}
  (i) We may work fpqc-locally on $S$. Thus by Theorem \ref{MainTheorem} we may assume that $\phi_i=\gr\circ\gamma_i$ for graded fiber functors $\gamma_i$. Then $\gr\circ\phi_i\cong\gamma_i$ and the functor $\fil$ induces a splitting of the morphism in question, which implies faithful flatness. Then the claim about flatness follows from Theorem \ref{GFFIsomRepr}. (ii) is a special case of (i).
\end{proof}
\begin{theorem} \label{FFFIsomTorsor}
  Let $\phi_1,\phi_2$ be two filtered fiber functors on $\CT$ over a $k$-scheme $S$. The right action of $U(\phi_1)$ on $\UIsom^\otimes_S(\phi_1,\phi_2)$ by composition of functors makes $\UIsom^\otimes_S(\phi_1,\phi_2)$ into a right $U(\phi_1)$-torsor over $\UIsom^\otimes_S(\gr\circ\phi_1,\gr\circ\phi_2)$.
\end{theorem}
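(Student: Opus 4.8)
The plan is to unwind the definitions so that everything reduces to formal properties of composition of tensor isomorphisms, together with the faithful flatness already established in Theorem \ref{GrFaithfullyFlat}~(i). Write $E\defeq\UIsom^\otimes_S(\phi_1,\phi_2)$ and $B\defeq\UIsom^\otimes_S(\gr\circ\phi_1,\gr\circ\phi_2)$, and let $\gr\colon E\to B$ denote the morphism induced by the tensor functor $\gr$. First I would record that $\gr\colon E\to B$ is an fpqc covering: by Theorems \ref{FFFIsomRepr} and \ref{GFFIsomRepr} both $E$ and $B$ are representable by schemes affine over $S$, so $\gr\colon E\to B$ is affine, hence quasi-compact, and by Theorem \ref{GrFaithfullyFlat}~(i) it is faithfully flat. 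In particular $E$ is empty if and only if $B$ is, so the degenerate case is harmless.

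Next I would check that the right action of $P(\phi_1)=\UAut^\otimes_S(\phi_1)$ on $E$ by composition restricts to the subgroup $U(\phi_1)$ and is an action over $B$. For a scheme $T$ over $S$, a $T$-point of $E$ is a tensor isomorphism $\psi\colon(\phi_1)_T\isoto(\phi_2)_T$, a $T$-point of $U(\phi_1)$ is a tensor automorphism $u$ of $(\phi_1)_T$ with $\gr(u)=\Id$, and the action sends $(\psi,u)$ to $\psi\circ u$; the relevant computation is $\gr(\psi\circ u)=\gr(\psi)\circ\gr(u)=\gr(\psi)$, which says exactly that the action preserves the fibres of $\gr\colon E\to B$. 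The heart of the argument is then to show that the resulting morphism $a\colon E\times_S U(\phi_1)\to E\times_B E$, $(\psi,u)\mapsto(\psi,\psi\circ u)$, is an isomorphism, and I would do this by exhibiting the inverse directly on $T$-points: given $(\psi_1,\psi_2)\in(E\times_B E)(T)$, that is, tensor isomorphisms $(\phi_1)_T\isoto(\phi_2)_T$ with $\gr(\psi_1)=\gr(\psi_2)$, put $u\defeq\psi_1^{-1}\circ\psi_2$, a tensor automorphism of $(\phi_1)_T$; then $\gr(u)=\gr(\psi_1)^{-1}\circ\gr(\psi_2)=\Id$ shows that $u$ is a $T$-point of $U(\phi_1)$, while $\psi_1\circ u=\psi_2$. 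The assignment $(\psi_1,\psi_2)\mapsto(\psi_1,u)$ is functorial in $T$ and is a two-sided inverse of $a$. Together with the first step, this is precisely the statement that $\gr\colon E\to B$ is a right $U(\phi_1)$-torsor.

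I do not expect a genuine obstacle: the whole argument is formal once Theorem \ref{GrFaithfullyFlat}~(i) is available (and that in turn rests on the Main Theorem). The only points needing a little care are the observation that the inverse of a tensor isomorphism of filtered fiber functors is again a tensor isomorphism -- so that $\psi_1^{-1}$ really is a point of $\UIsom^\otimes_S(\phi_2,\phi_1)$ and $u=\psi_1^{-1}\circ\psi_2$ a well-defined point of $\UAut^\otimes_S(\phi_1)$ -- and keeping track of the case where $B$, and hence $E$, is empty. Neither is a real difficulty.
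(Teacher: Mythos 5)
Your proof is correct and follows essentially the same approach as the paper: both invoke Theorem \ref{GrFaithfullyFlat}~(i) for faithful flatness of $\gr\colon E\to B$ and then reduce to the pseudotorsor computation that $U(\phi_1)$ acts simply transitively on fibres, which you spell out by explicitly inverting $E\times_S U(\phi_1)\to E\times_B E$ on $T$-points. The paper states this last step more tersely ("this follows directly from the definition of $U(\phi)$"), but the content is identical.
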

\begin{proof}
  Since $\UIsom^\otimes_S(\phi_1,\phi_2)$ is faithfully flat over $\UIsom^\otimes_S(\gr\circ\phi_1,\gr\circ\phi_2)$ by Theorem \ref{GrFaithfullyFlat} it suffices to show that for each tensor isomorphism $\lambda\colon \gr\circ(\phi_1)_{S'}\to \gr\circ(\phi_2)_{S'}$ over some scheme $S'$ over $S$, the group $U(\phi)(S')$ acts simply transitively on the set of tensor isomorphisms $\lambda'\colon (\phi_1)_{S'}\to(\phi_2)_{S'}$ for which $\gr(\lambda')=\lambda$. This follows directly from the definition of $U(\phi)$.
\end{proof}
Part (iii) of the following theorem is due to Saavedra Rivano:
\begin{theorem} \label{PProperties}
  Let $\phi$ be a filtered fiber functor on $\CT$ over $S$.
  \begin{itemize}
  \item [(i)] The group schemes $P(\phi)$ and $U(\phi)$ are flat over $S$.
  \item [(ii)] If $\UAut^\otimes_S(\forget\circ\phi)$ is smooth over $S$, so are $P(\phi)$ and $U(\phi)$.
  \item [(iii)] Assume that $G\defeq \UAut^\otimes_S(\forget\circ\phi)$ is reductive over $S$. Then:
    \begin{itemize}
    \item $P(\phi)$ is a parabolic subgroup of $G$ with unipotent radical $U(\phi)$. If $\chi\colon \Gm[S]\to G$ splits $\phi$ and $\Lie(G)=\oplus_{n\in\BZ}\Lie(G)^n$ is the weight decomposition induced by $\chi$, then $\Lie(P)=\oplus_{n\geq 0}\Lie(G)$.
    \item If $\chi\colon \Gm[S]\to G$ splits $\phi$, then $\Cent_G(\chi)$ is a Levi subgroup of $P(\phi)$.
    \item The preceding construction gives a bijection between splittings of $\phi$ and Levi subgroups of $P(\phi)$.
    \end{itemize}
  \end{itemize}
\end{theorem}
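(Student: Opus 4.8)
Part (i) is immediate from Theorem~\ref{FFFIsomRepr}: the group scheme $P(\phi)=\UAut^\otimes_S(\phi)$ is affine and flat over $S$.

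For (ii) and (iii) the plan is to reduce, working fpqc-locally on $S$, to a \emph{split} filtered fiber functor and then to identify $P(\phi)$, $U(\phi)$ and $L(\phi)$ with the dynamic subgroups attached to a splitting cocharacter. All of the properties to be proved -- smoothness of $P(\phi)$ and $U(\phi)$; being a parabolic subgroup with a prescribed unipotent radical; the weight-space formula for $\Lie(P(\phi))$; the bijection in the last clause -- are local for the fpqc topology, so by the Main Theorem~\ref{MainTheorem} we may assume $\phi=\fil\circ\gamma$ for a graded fiber functor $\gamma$, and we set $\chi\defeq\chi(\gamma)\colon\Gm[S]\to G$, where $G\defeq\UAut^\otimes_S(\forget\circ\phi)$. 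In both (ii) and (iii) the group $G$ is of finite type over $S$ (it is smooth, affine and locally of finite type in (ii), reductive in (iii)), so $\CT$ has a tensor generator $X$; put $V\defeq\forget(\phi(X))$, with the grading $V=\bigoplus_n V^n$ induced by $\chi$ and the filtration $\F^i V=\bigoplus_{n\ge i}V^n$ coming from $\phi(X)$. Exactly as in the proof of Lemma~\ref{FFFBaseChange2} -- using that $X$ generates $\CT$ and that a tensor automorphism of $\forget\circ\phi$ is automatically compatible with tensor products, duals and subquotients -- one sees that $P(\phi)$ is the closed subgroup of $G$ preserving $\F^\bullet V$, that $U(\phi)$ is the subgroup acting trivially on $\gr V$, and that $L(\phi)=\UAut^\otimes_S(\gr\circ\phi)$ is the subgroup preserving the grading $V=\bigoplus_n V^n$. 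Writing a point $g$ of $G$ in block form $g=(g_{mn})$ with respect to this grading, one has $(\chi(t)g\chi(t)^{-1})_{mn}=t^{m-n}g_{mn}$, so $g$ preserves $\F^\bullet V$ iff $g_{mn}=0$ for $m<n$ iff $t\mapsto\chi(t)g\chi(t)^{-1}$ extends over $\mathbb{A}^1$; likewise $g$ acts trivially on $\gr V$ iff this extension sends $0$ to $1$, and $g$ preserves the grading iff $g$ commutes with $\chi$. Hence $P(\phi)=P_G(\chi)$, $U(\phi)=U_G(\chi)$ and $L(\phi)=\Cent_G(\chi)$ in the sense of the dynamic method of \cite[2.1]{PseudoReductiveGroups}, with $\gr\colon P(\phi)\to L(\phi)$ given by $g\mapsto\lim_{t\to0}\chi(t)g\chi(t)^{-1}$.

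Granting this, (ii) follows: for $G$ smooth affine of finite type over $S$ the dynamic subgroups $P_G(\chi)$ and $U_G(\chi)$ are smooth closed $S$-subgroup schemes (\cite[2.1.8]{PseudoReductiveGroups}, \cite[XXII]{SGA3II}; combine with the flatness of $P(\phi)$ from (i) and the fibrewise smoothness criterion if a reference over a general base is desired); alternatively, once $P(\phi)=\UAut^\otimes_S(\phi)$ is known to be smooth, the smoothness of $U(\phi)=U_1(\phi)$ is Lemma~\ref{UASmooth}. For the first three assertions of (iii), where $G$ is reductive over $S$: that $P_G(\chi)$ is parabolic with unipotent radical $U_G(\chi)$, that $\Cent_G(\chi)$ is a Levi subgroup of $P_G(\chi)$, and that $\Lie(P_G(\chi))=\bigoplus_{n\ge0}\Lie(G)^n$ for the weight decomposition of $\Lie(G)$ under $\mathrm{Ad}\circ\chi$, are all part of the SGA3 structure theory of parabolic subgroups of reductive group schemes (\cite[XXVI]{SGA3II}); since each of these properties is fpqc-local on $S$ and $P_G$, $U_G$, $\Cent_G$ and $\Lie$ commute with base change, they descend to the original base $S$.

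Finally, the bijection between splittings of $\phi$ and Levi subgroups of $P(\phi)$. The assignment $\chi'\mapsto\Cent_G(\chi')$ sends a cocharacter splitting $\phi$ to a Levi subgroup of $P(\phi)$: fpqc-locally such a $\chi'$ is $U(\phi)$-conjugate to the cocharacter $\chi$ above by Lemma~\ref{SplUPseudotorsor}, whence $P_G(\chi')=P_G(\chi)=P(\phi)$ and $\Cent_G(\chi')$ is a Levi of it, and being a Levi subgroup is fpqc-local. This assignment is equivariant for the conjugation actions of $U(\phi)$ on $\USpl(\phi)$ and on the functor of Levi subgroups of $P(\phi)$. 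By Theorem~\ref{SplUTorsor} the former is a $U(\phi)$-torsor, and by \cite[XXVI]{SGA3II} so is the latter; an equivariant morphism of torsors under the same group is an isomorphism, which yields the bijection. I expect the one step requiring genuine care to be the identification of $P(\phi)$, $U(\phi)$ and $L(\phi)$ with the dynamic subgroups of $G$ -- in particular verifying that the stabilizer conditions may be tested on the single generator $X$, and that this identification together with the structural statements is correctly propagated through the fpqc descent; granting it, (ii) and (iii) are formal consequences of the structure theory of (reductive) group schemes.
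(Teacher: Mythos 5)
Your proposal is correct, but it takes a considerably more explicit route than the paper, whose proof of this theorem is a three-line delegation: (i) is cited from Theorem~\ref{GrFaithfullyFlat}, (ii) from Lemma~\ref{UASmooth}, and (iii) from Saavedra Rivano \cite[IV.2.2.5]{Saavedra}. One thing to fix in (i): you cite Theorem~\ref{FFFIsomRepr}, whose \emph{statement} does claim $\UAut^\otimes_S(\phi)$ is flat, but whose proof only establishes representability by an affine scheme (a closed subscheme of $\UIsom^\otimes_S(\forget\circ\phi_1,\forget\circ\phi_2)$); the flatness is in fact only proven later, in Theorem~\ref{GrFaithfullyFlat}, as a consequence of the Main Theorem. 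Since you already invoke the Main Theorem for (ii) and (iii), the correct reference for (i) costs nothing and avoids resting on an under-justified sentence. For (ii) and (iii), your reduction -- fpqc-localize to the split case via Theorem~\ref{MainTheorem} and Lemma~\ref{GFFPPrinciple}, pass to the neutral tensor-generated case via Lemma~\ref{FFFBaseChange2}, and identify $P(\phi)$, $U(\phi)$, $L(\phi)$ as the dynamic subgroups $P_G(\chi)$, $U_G(\chi)$, $\Cent_G(\chi)$ -- is essentially the machinery that sits inside Lemma~\ref{UASmooth} and inside Saavedra's IV.2.2.5, so you are not doing something the paper's sources avoid, but you are spelling out what the paper outsources. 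Your torsor argument for the final bijection (equivariant map between the $U(\phi)$-torsors $\USpl(\phi)$ and the sheaf of Levi subgroups, hence an isomorphism) is clean and is indeed the crux; the place requiring care, as you flag, is justifying that it suffices to test preservation of $\F^\bullet$ on a single tensor generator, which is exactly what the argument in Lemma~\ref{FFFBaseChange2} provides. The net effect is that you have, in effect, reproduced the proof of \cite[IV.2.2.5]{Saavedra} rather than cited it -- a valid alternative, longer but self-contained.
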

\begin{proof}
  $(i)$ and $(ii)$ are part of Lemma \ref{UASmooth}. (iii) is \cite[IV.2.2.5]{Saavedra}.
\end{proof}

Theorems \ref{GFFIsom} and \ref{GrFaithfullyFlat} together imply:
\begin{theorem} \label{FFFIsom}
  Assume that $\CT$ has a tensor generator. Let $\phi_1,\phi_2$ be graded fiber functors on $\CT$ over connected schemes $S_1,S_2$. Then exactly one of the following is true:
  \begin{itemize}
  \item[(i)] $\UIsom^\otimes_k(\phi_1,\phi_2)$ is the empty scheme.
  \item[(ii)] The types $t(\phi_1)$ and $t(\phi_2)$ both have the same same set-theoretic image in $\CC_\CT$.
 \end{itemize}
 In the second case, the image of $t(\phi_1)$ and $t(\phi_2)$ consists of one point $x$ whose residue field $k(x)$ is a finite separable field extension of $k$. Then the morphisms $t(\phi_i)\colon S_i\to \CC_\CT$ factor through $\Spec(k(x))\subset \CC_\CT$ and the morphism $\UIsom^\otimes_k(\phi_1,\phi_2) \to S_1\times S_2$ factors through a faithfully flat morphism $\UIsom^\otimes_k(\phi_1,\phi_2)\to S_1\times_{k(x)} S_2$.
\end{theorem}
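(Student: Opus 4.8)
The plan is to reduce the whole statement to the graded analogue, Theorem \ref{GFFIsom}, applied to the graded fiber functors $\gr\circ\phi_1$ and $\gr\circ\phi_2$. There are two ingredients. First, by the definition of the type of a filtered fiber functor one has $t(\phi_i)=t(\gr\circ\phi_i)$ for $i=1,2$, so the assertions in the statement about the set-theoretic images of the $t(\phi_i)$ and about their factorizations through $\Spec(k(x))\subset\CC_\CT$ are verbatim the corresponding assertions for $\gr\circ\phi_1,\gr\circ\phi_2$. Second, by Theorem \ref{GrFaithfullyFlat}(ii) the morphism $\gr\colon\UIsom^\otimes_k(\phi_1,\phi_2)\to\UIsom^\otimes_k(\gr\circ\phi_1,\gr\circ\phi_2)$ is faithfully flat, hence in particular surjective, so $\UIsom^\otimes_k(\phi_1,\phi_2)$ is empty if and only if $\UIsom^\otimes_k(\gr\circ\phi_1,\gr\circ\phi_2)$ is.

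With these in hand I would argue as follows. Alternative (i) of the present theorem holds for $(\phi_1,\phi_2)$ if and only if alternative (i) of Theorem \ref{GFFIsom} holds for $(\gr\circ\phi_1,\gr\circ\phi_2)$, by the surjectivity just noted; and alternative (ii) here is literally alternative (ii) of Theorem \ref{GFFIsom} there, by the identity $t(\phi_i)=t(\gr\circ\phi_i)$. Since Theorem \ref{GFFIsom} asserts that exactly one of its two alternatives holds for the pair of graded fiber functors, the same dichotomy is inherited by $(\phi_1,\phi_2)$. In the second case Theorem \ref{GFFIsom} furnishes the point $x$ with $k(x)/k$ finite separable, the factorizations $t(\phi_i)\colon S_i\to\Spec(k(x))\subset\CC_\CT$, and a faithfully flat morphism $\UIsom^\otimes_k(\gr\circ\phi_1,\gr\circ\phi_2)\to S_1\times_{k(x)}S_2$ through which the map to $S_1\times S_2$ factors; composing it with the faithfully flat morphism $\gr\colon\UIsom^\otimes_k(\phi_1,\phi_2)\to\UIsom^\otimes_k(\gr\circ\phi_1,\gr\circ\phi_2)$ of Theorem \ref{GrFaithfullyFlat}(ii), and using that a composite of faithfully flat morphisms is faithfully flat, gives the required faithfully flat factorization $\UIsom^\otimes_k(\phi_1,\phi_2)\to S_1\times_{k(x)}S_2$ lying over $S_1\times S_2$.

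There is no genuinely difficult step: the substance is entirely contained in Theorems \ref{GFFIsom} and \ref{GrFaithfullyFlat}, and what remains is formal bookkeeping — that surjections preserve (non)emptiness, that faithful flatness is stable under composition, and that $t(\phi_i)=t(\gr\circ\phi_i)$. If anything needs care it is invoking Theorem \ref{GrFaithfullyFlat}(ii) in its sharp form, namely that $\UIsom^\otimes_k(\phi_1,\phi_2)$ is faithfully flat \emph{over the graded Isom-scheme}, not merely flat over $S_1\times S_2$.
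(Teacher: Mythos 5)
Your proposal is correct and takes precisely the approach the paper intends: the paper states before Theorem \ref{FFFIsom} only that ``Theorems \ref{GFFIsom} and \ref{GrFaithfullyFlat} together imply'' it, and your argument is a faithful unwinding of that implication — reducing the dichotomy to the graded case via $t(\phi_i)=t(\gr\circ\phi_i)$ and the surjectivity coming from faithful flatness of $\gr$, then composing the two faithfully flat morphisms to obtain the factorization through $S_1\times_{k(x)}S_2$.
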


The following two theorems are proven in exactly the same way as Theorems \ref{GFFLocIso} and \ref{GFFClassification2}.
\begin{theorem} \label{FFFLocIso}
  Assume that $\CT$ has a tensor generator. Two filtered fiber functors on $\CT$ over some scheme $S$ have the same type if and only if they are fpqc-locally isomorphic.
\end{theorem}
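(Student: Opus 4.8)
The plan is to deduce this directly from the structural results already obtained, exactly as in the proof of Theorem \ref{GFFLocIso}: we know that $t\colon \UHom^\otimes(\CT,\FilQCoh)\to \CC_\CT$ exhibits $\CC_\CT$ as the coarse fpqc sheaf of $\UHom^\otimes(\CT,\FilQCoh)$ and that $\UHom^\otimes(\CT,\FilQCoh)$ is a gerbe over $\CC_\CT$. The whole statement is then essentially a rephrasing of these two facts, and the argument splits into the two implications.

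First I would handle the easy direction. If $\phi_1$ and $\phi_2$ are filtered fiber functors on $\CT$ over $S$ whose pullbacks to some fpqc covering $S'\to S$ are isomorphic, then applying $\gr$ yields an isomorphism $\gr\circ(\phi_1)_{S'}\cong \gr\circ(\phi_2)_{S'}$, so $t(\phi_1)$ and $t(\phi_2)$ have the same image in $\CC_\CT(S')$. Since $\CC_\CT$ is an fpqc sheaf, this forces $t(\phi_1)=t(\phi_2)$ in $\CC_\CT(S)$, i.e. $\phi_1$ and $\phi_2$ have the same type. Conversely, suppose $t(\phi_1)=t(\phi_2)$ as sections of $\CC_\CT(S)$. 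Then $\phi_1$ and $\phi_2$, viewed as objects of $\UHom^\otimes(\CT,\FilQCoh)(S)$, both lie over the common morphism $S\to\CC_\CT$, hence both define objects of the fibered category $\CG\defeq \UHom^\otimes(\CT,\FilQCoh)\times_{\CC_\CT}S$. Because $\UHom^\otimes(\CT,\FilQCoh)$ is a gerbe over $\CC_\CT$, its pullback $\CG$ is a gerbe over $S$, so any two of its objects---in particular $\phi_1$ and $\phi_2$---become isomorphic after pullback along a suitable fpqc covering of $S$, which is exactly the assertion that $\phi_1$ and $\phi_2$ are fpqc-locally isomorphic.

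I do not anticipate a real obstacle: all the substance has already been packaged into the gerbe statement for $\UHom^\otimes(\CT,\FilQCoh)$, whose proof in turn rests on the Main Theorem \ref{MainTheorem}. The only points requiring care are purely bookkeeping: that ``having the same type'' is by definition the condition of inducing the same section of $\CC_\CT$, and that an fpqc-local isomorphism of filtered fiber functors induces one of the associated graded fiber functors (immediate from the functoriality of $\gr$), so that the type is insensitive to fpqc-local isomorphism and, being a section of a sheaf, then insensitive globally.
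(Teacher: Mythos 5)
Your proposal is correct and follows precisely the route the paper intends: the statement is a direct unfolding of the fact (established in Subsection~\ref{StFFF}) that $\UHom^\otimes(\CT,\FilQCoh)$ is a gerbe over the fpqc sheaf $\CC_\CT$, exactly paralleling the proof of Theorem~\ref{GFFLocIso}. The paper's own proof is just the remark that it is "proven in exactly the same way" as that earlier theorem, and your two-implication argument is the expected spelling-out of that remark.
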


\begin{theorem} \label{FFFClassification2}
  Assume that $\CT$ has a tensor generator. Let $\phi$ be a filtered fiber functor on $\CT$ over some scheme $S$ and $S'$ a scheme over $S$. The functor which sends a graded fiber functor $\phi'$ over $S'$ to $\UIsom^\otimes_{S'}(\phi',\phi_{S'})$ and a tensor morphism $\phi'\to\phi''$ to the induced morphism $\UIsom^\otimes_{S'}(\phi',\phi_{S'})\to \UIsom^\otimes_{S'}(\phi'',\phi_{S'})$ gives an equivalence between filtered fiber functors over $S'$ having the same type as $\phi$ and left $\UAut^\otimes_{S}(\phi)_{S'}$-torsors.
\end{theorem}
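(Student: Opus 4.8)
The plan is to imitate the proof of Theorem \ref{GFFClassification2}, using that $\UHom^\otimes(\CT,\FilQCoh)$ is a gerbe over $\CC_\CT$ (established above, now that Theorem \ref{MainTheorem} is available). First I would pull back the gerbe $\UHom^\otimes(\CT,\FilQCoh)\to\CC_\CT$ along the morphism $t(\phi)\colon S\to\CC_\CT$ to obtain a fibered category $\CG$ over $S$. Since gerbes are stable under base change, $\CG$ is a gerbe over $S$, and it is neutral because $\phi$ itself lies in $\CG(S)$. Because $\CC_\CT$ is a scheme by Theorem \ref{ClRepr}, hence an fpqc sheaf of sets whose ``points'' carry no nontrivial automorphisms, for every $S$-scheme $S'$ the groupoid $\CG(S')$ is canonically identified with the full subcategory of $\UHom^\otimes(\CT,\FilQCoh)(S')$ consisting of those filtered fiber functors $\phi'$ with $t(\phi')=t(\phi)_{S'}$, with morphisms the tensor isomorphisms; by the definition of the type (and Theorem \ref{FFFLocIso}) this is exactly the category of filtered fiber functors over $S'$ having the same type as $\phi$.

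Next I would apply Theorem \ref{GerbeClassification} to the gerbe $\CG$ over $S$ with the distinguished object $x=\phi\in\CG(S)$. This yields an equivalence of stacks $\CG\to\Tors_{\UAut^\otimes_S(\phi)}$ over $S$ which, on $S'$-valued points for any $S$-scheme $S'$, sends a filtered fiber functor $\phi'$ over $S'$ of the same type as $\phi$ to the left $\UAut^\otimes_S(\phi)_{S'}$-torsor $\UIsom^\otimes_{S'}(\phi',\phi_{S'})$ — the torsor structure being post-composition of tensor isomorphisms, using the identification $\UAut^\otimes_{S'}(\phi_{S'})=\UAut^\otimes_S(\phi)_{S'}$ — and sends a morphism $\phi'\to\phi''$ to the induced morphism of torsors. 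Evaluating this equivalence of stacks at $S'$ gives precisely the equivalence of categories asserted in the theorem.

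There is no genuine obstacle here: once the gerbe structure of $\UHom^\otimes(\CT,\FilQCoh)$ over $\CC_\CT$ and Theorem \ref{GerbeClassification} are in hand, the argument is formal, exactly as for Theorem \ref{GFFClassification2}. The only points requiring a little care are the identification of $\CG(S')$ with the category of filtered fiber functors over $S'$ of the same type as $\phi$, which rests on $\CC_\CT$ being a sheaf of sets, and the verification that the torsor produced by Theorem \ref{GerbeClassification} is literally the one named in the statement, i.e.\ that $\UAut^\otimes_S(\phi)_{S'}$ acts on $\UIsom^\otimes_{S'}(\phi',\phi_{S'})$ on the left and fpqc-locally simply transitively; both are immediate from the definitions together with the representability and flatness supplied by Theorem \ref{FFFIsomRepr}. (I would also read the ``graded fiber functor $\phi'$'' appearing in the statement as ``filtered fiber functor $\phi'$'', which is clearly the intended meaning.)
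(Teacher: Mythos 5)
Your proof is correct and follows exactly the route the paper intends: the paper states that this result ``is proven in exactly the same way as Theorem~\ref{GFFClassification2},'' namely by observing that the fiber of the gerbe $\UHom^\otimes(\CT,\FilQCoh)\to\CC_\CT$ over the point $t(\phi)$ is a neutral gerbe on $S$ whose $S'$-points are the filtered fiber functors of the same type as $\phi$, and then invoking Theorem~\ref{GerbeClassification} — precisely your argument, including the correct reading of the typo ``graded'' as ``filtered.''
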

\section{Proof of the main theorem} \label{Proof}
In this section we give the proof of Theorem \ref{MainTheorem}. Let $\CT$ be a Tannakian category over $k$ and $\phi\colon \CT\to\FilLF(S)$ a filtered fiber functor. The proof will consist of a series of reduction steps. In the initial cases we will have $\CT=\Rep[G]$ for a group scheme $G$ over $k$. In this situation, we denote by $\omega_0\colon \Rep[G]\to \QCoh(\Spec(k))$ the forgetful functor. Since the fiber functors $\forget\circ\phi$ and $\omega_0$ are fpqc-locally isomorphic, after replacing $S$ by a suitable covering we may assume that $\forget\circ\phi=(\omega_0)_S$.

\subsection{Preparations}
First we give two lemmas.
\begin{lemma} \label{PhiExact}
  Let $\phi\colon \CT\to \FilLF(S)$ be a $k$-linear tensor functor. Assume that the functor $\forget\circ\phi\colon \CT\to\QCoh(S)$ is exact and that for each epimorphism $X\to X'$ in $\CT$ and each $n\in \BZ$ the induced morphism $\F^n\phi(X)\to \F^n\phi(X')$ is an epimorphism. Then $\phi$ is exact.
\end{lemma}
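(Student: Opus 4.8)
The plan is to show that $\phi$ sends every short exact sequence $0\to X'\to X\to X''\to 0$ in $\CT$ (with its canonical exact structure) to a short exact sequence in $\FilQCoh(S)$. By Lemma~\ref{FilShortExact} this reduces to checking, for every $n\in\BZ$, that $0\to\F^n\phi(X')\to\F^n\phi(X)\to\F^n\phi(X'')\to 0$ is exact in $\QCoh(S)$. Since $\CT$ is rigid, $\phi$ factors through the rigid objects of $\FilQCoh(S)$, so by Lemma~\ref{FilRigid} each $\phi(Y)$ is locally free of finite rank and each $\F^n\phi(Y)$ is locally a direct summand of it; hence all the assertions below may be checked Zariski-locally on $S$, where the relevant sheaves are free and the filtration steps split off.

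Two of the three conditions are formal. The map $\F^n\phi(X')\to\F^n\phi(X)$ is a monomorphism, being a restriction of $\forget\phi(X')\to\forget\phi(X)$, which is injective because $\forget\circ\phi$ is exact. The map $\F^n\phi(X)\to\F^n\phi(X'')$ is an epimorphism: this is precisely what the hypothesis yields when applied to the epimorphism $X\to X''$. For exactness in the middle, exactness of $0\to\forget\phi(X')\to\forget\phi(X)\to\forget\phi(X'')\to 0$ identifies the kernel of $\F^n\phi(X)\to\F^n\phi(X'')$ with $\F^n\phi(X)\cap\phi(X')$ inside $\phi(X)$, so that the middle homology vanishes exactly when $\F^n\phi(X)\cap\phi(X')=\F^n\phi(X')$, i.e. exactly when the monomorphism $\phi(X')\to\phi(X)$ is admissible. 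Only the inclusion $\F^n\phi(X)\cap\phi(X')\subseteq\F^n\phi(X')$ needs proof.

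To get this I would dualize. The monomorphism $X'\to X$ gives an epimorphism $X^\vee\to (X')^\vee$ in $\CT$, so the hypothesis says $\F^m\phi(X^\vee)\to\F^m\phi((X')^\vee)$ is an epimorphism for all $m$. As $\phi$ is a tensor functor it sends the evaluation and coevaluation of $X$ to duality data for $\phi(X)$, so $\phi(X^\vee)$ is a dual of $\phi(X)$; by the construction in the proof of Lemma~\ref{FilRigid} this identifies $\phi(X^\vee)$ — compatibly with the morphism induced by $X'\to X$ — with $\phi(X)^\vee$ carrying the filtration $\F^m\phi(X)^\vee=(\F^{1-m}\phi(X))^\perp$, and likewise for $X'$. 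Reindexing, the hypothesis becomes: for every $n$ the restriction $\phi(X)^\vee\to\phi(X')^\vee$ maps $(\F^n\phi(X))^\perp$ onto $(\F^n\phi(X'))^\perp$. Now take $v\in\F^n\phi(X)\cap\phi(X')$; working locally, since $\F^n\phi(X')$ is a direct summand of $\phi(X')$ it equals the common zero locus of the functionals in $(\F^n\phi(X'))^\perp$, so it is enough to show $\lambda(v)=0$ for each such $\lambda$. Lifting $\lambda$ to $\tilde\lambda\in(\F^n\phi(X))^\perp$ gives $\lambda(v)=\tilde\lambda(v)=0$, the last equality because $v\in\F^n\phi(X)$. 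Hence $v\in\F^n\phi(X')$.

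The step that will need the most care is this last one — the admissibility of the monomorphism $\phi(X')\to\phi(X)$ — and within it the passage to duals: one must check that, $\phi$ being a tensor functor, the dual filtration on $\phi(X^\vee)$ is exactly the one of Lemma~\ref{FilRigid}, so that the hypothesis for $X^\vee\to (X')^\vee$ translates into the surjectivity of $(\F^n\phi(X))^\perp\to(\F^n\phi(X'))^\perp$. Everything else is routine once one has localized on $S$ to make the locally free sheaves free and the filtration steps honest direct summands.
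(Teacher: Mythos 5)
Your proof is correct, and it uses the same essential ingredients as the paper's — Lemma~\ref{FilShortExact} to reduce to exactness of the $\F^n$-sequences, the duality identity $\F^m\phi(X^\vee)=(\F^{1-m}\phi(X))^\perp$ supplied by $\phi$ being a tensor functor, the hypothesis applied to the dual epimorphism $X^\vee\to(X')^\vee$, and exactness of $\forget\circ\phi$ — but organizes the middle-exactness argument in the dual way. The paper first reduces, via the anti-automorphism $X\mapsto X^\vee$, to proving that $(\F^{1-n}\phi(X''))^\perp\to(\F^{1-n}\phi(X))^\perp\to(\F^{1-n}\phi(X'))^\perp$ is exact in the middle, and then argues by pushing a functional $f\in(\F^{1-n}\phi(X))^\perp$ killing $\phi(X')$ forward to a functional on $\phi(X'')$ (using surjectivity of $\F^{1-n}\phi(X)\to\F^{1-n}\phi(X'')$, i.e.\ the hypothesis on $X\to X''$). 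You instead stay with the original sequence, identify its middle homology with the containment $\F^n\phi(X)\cap\phi(X')\subseteq\F^n\phi(X')$, and prove it by pulling functionals back: lifting any $\lambda\in(\F^n\phi(X'))^\perp$ to some $\tilde\lambda\in(\F^n\phi(X))^\perp$ via the surjection $(\F^n\phi(X))^\perp\onto(\F^n\phi(X'))^\perp$ (i.e.\ the hypothesis on $X^\vee\to(X')^\vee$), then evaluating on $v$. The two arguments are dual to one another; yours has the small advantage of treating the injectivity of $\F^n\phi(X')\to\F^n\phi(X)$ directly as a restriction of the injective map $\forget\phi(X')\to\forget\phi(X)$, whereas the paper also routes this step through duality. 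The step you flag as delicate — that the dual filtration on $\phi(X^\vee)$ is the one of Lemma~\ref{FilRigid} — is exactly the identification the paper also invokes, so it does not introduce a new gap.
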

\begin{proof}
  Let $0\to X'\to X \to X''\to 0$ be an exact sequence in $\CT$. By Lemma \ref{FilShortExact} it suffices to show that $0\to \F^n\phi(X')\to\F^n \phi(X)\to \F^n\phi(X'')\to 0$ is exact for all $n\in\BZ$.

The fact that $\phi$ is a tensor functor implies $\phi(X^\vee)=\phi(X)^\vee$. Thus $F^n \phi(X^\vee)=(\F^{1-n}\phi(X))^\perp$ and analogously for $X'$ and $X''$. Since $X'\to X$ is a monomorphism, the dual morphism $X^\vee\to (X')^\vee$ is an epimorphism. Hence $(\F^{n}\phi(X))^\perp\to (\F^{n}\phi(X'))^\perp$ is an epimorphism by the assumption on $\phi$ which implies that $\F^n\phi(X')\to \F^n\phi(X)$ is a monomorphism.

The fact that $\phi$ is $k$-linear implies that the composite $\F^n\phi(X')\to \F^n\phi(X)\to \F^n\phi(X'')$ is zero. 

It remains to prove exactness in the middle. Using the anti-automorphism $X \mapsto X^\vee$ of $\CT$ it is enough to show that the sequence
\begin{equation}
  \label{eq:exactseq3}
  0\to(\F^{1-n}\phi(X''))^\perp\to (\F^{1-n}\phi(X))^\perp\to(\F^{1-n}\phi(X'))^\perp\to 0.
\end{equation}
is exact in the middle.

Let $f\in (\F^{1-n}\phi(X))^\perp$ such that $f|_{\forget(\phi(X'))}=0$. Since $\forget\circ\phi$ is exact, the morphism $f$ factors through a unique $\CO_S$-linear morphism $\tilde f\colon \forget(\phi(X'')) \to \CO_S$. Since by assumption the map $\F^{1-n}\phi(X)\to \F^{1-n}\phi(X'')$ is an epimorphism, this $\tilde f$ lies in $(\F^{1-n}\phi(X''))^\perp$. Thus $\tilde f$ is a preimage of $f$ in $(\F^{1-n}\phi(X''))^\perp$. Hence \eqref{eq:exactseq3} is exact in the middle and we are done.
\end{proof}
In the following lemma, the category $\CT_{k'}$ is the base change of $\CT$ to $k'$ given by Construction \ref{TannakaBaseChange}.
\begin{lemma} \label{FFFBaseChange}
  \begin{itemize}
      \item [(i)] Let $\phi\colon \CT\to \FilLF(S)$ be a filtered fiber functor and $k'$ a finite field extension of $k$. Then there exists a filtered fiber functor $\phi'\colon \CT_{k'}\to \FilLF(S_{k'})$ making the following diagram commutative:
  \[
    \xymatrix{
      \CT \ar[r]^-\phi \ar[d] & \FilLF(S) \ar[d]^{\text{pullback}} \\
      \CT_{k'} \ar[r]^-{\phi'} & \FilLF(S_{k'}) \\
    }
  \]
  \item [(ii)] If $\phi'$ is fpqc-locally splittable, so is $\phi$.
  \end{itemize}
\end{lemma}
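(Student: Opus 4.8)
The plan is to write down the functor $\phi'$ of part (i) explicitly and then deduce part (ii) formally. Since $S_{k'}\to S$ is affine with $\pi_*\CO_{S_{k'}}=\CO_S\otimes_k k'$, a quasi-coherent sheaf on $S_{k'}$ is the same datum as a quasi-coherent $\CO_S$-module together with an $\CO_S$-linear action of $k'$. Given $(V,\alpha)\in\CT_{k'}$ I would therefore let $\phi'(V,\alpha)$ be the object $\phi(V)$ of $\FilQCoh(S)$ equipped with the action of $k'$ through the $k$-algebra homomorphism $\phi(\alpha)\colon k'\to\End_{\FilQCoh(S)}(\phi(V))$; each step $\F^n\phi(V)$ is stable under this action because morphisms in $\FilQCoh(S)$ respect the filtration, so $\phi'(V,\alpha)$ is a well-defined object of $\FilQCoh(S_{k'})$, functorial in $(V,\alpha)$ with the obvious extra equivariance on morphisms. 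Since the functor $\CT\to\CT_{k'}$ of Construction \ref{TannakaBaseChange} sends $V$ to $(V\otimes_k k',\alpha_V)$ with $\alpha_V$ the multiplication action, $\phi'$ composed with it sends $V$ to $\phi(V\otimes_k k')$ with its second-factor $k'$-action, which — $\phi$ being additive and $k$-linear — is canonically $\phi(V)\otimes_k k'$, i.e. the pullback $\phi(V)_{S_{k'}}$; this is exactly the commutativity of the square, with the analogous statement on morphisms.

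It then remains to check that $\phi'$ is a filtered fiber functor over $k'$. It is $k'$-linear because the $k'$-module structure on $\Hom_{\CT_{k'}}\bigl((V,\alpha),(V',\alpha')\bigr)$ is by construction post-composition with $\alpha'$, which $\phi$ carries to post-composition with $\phi(\alpha')$, i.e. to the $k'$-module structure on $\Hom$ in $\FilQCoh(S_{k'})$. It is exact because a short exact sequence in $\CT_{k'}$ is one whose underlying sequence in $\CT$ is short exact, $\phi$ is exact, and by Lemma \ref{FilShortExact} exactness in $\FilQCoh(S_{k'})$ is tested on the $\F^n$ in $\QCoh(S_{k'})$, which is detected by the exact faithful pushforward along the affine morphism $S_{k'}\to S$. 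The one point requiring care is that $\phi'$ is a tensor functor: the tensor product $(V,\alpha)\otimes(V',\alpha')$ in $\CT_{k'}$ is the largest quotient of $V\otimes V'$ on which the two $k'$-actions agree, $\phi$ preserves cokernels (being exact) and tensor products, and for $\CO_{S_{k'}}$-modules $M,M'$ the tensor product $M\otimes_{\CO_{S_{k'}}}M'$ is precisely the largest quotient of $M\otimes_{\CO_S}M'$ on which the two $k'$-actions agree; matching these, together with the evident compatibility with the filtrations and with the unit and symmetry constraints, gives the monoidal structure. Finally, as every object of $\CT_{k'}$ is rigid, \cite[2.7]{DeligneGroth} forces $\phi'(V,\alpha)$ to be dualizable in $\FilQCoh(S_{k'})$, hence by Lemma \ref{FilRigid} it is automatically locally free of finite rank with steps locally direct summands; so $\phi'$ is a filtered fiber functor.

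For (ii), note that $\Spec k'\to\Spec k$ is finite and faithfully flat, hence so is its base change $S_{k'}\to S$, which is therefore an fpqc covering. Assume $\phi'$ is fpqc-locally splittable, say $\phi'_T=\fil\circ\gamma'$ for a graded fiber functor $\gamma'\colon\CT_{k'}\to\GradQCoh(T)$ and an fpqc covering $T\to S_{k'}$. Then $T\to S$ is again an fpqc covering, and restricting the square of (i) along $T\to S_{k'}$ gives $\phi_T=\phi'_T\circ(\CT\to\CT_{k'})=\fil\circ\bigl(\gamma'\circ(\CT\to\CT_{k'})\bigr)$. Since $\CT\to\CT_{k'}$ is an exact $k$-linear tensor functor ($V\mapsto V\otimes_k k'$ together with its canonical structure, and $\otimes_k k'$ is exact), $\gamma:=\gamma'\circ(\CT\to\CT_{k'})$ is a graded fiber functor on $\CT$ over $T$ splitting $\phi_T$, so $\phi$ is fpqc-locally splittable.

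I expect essentially all of the substance — though it stays routine — to sit in part (i), specifically in carrying the base-change-of-Tannakian-categories bookkeeping (the ``largest quotient'' description of $\otimes$ on $\CT_{k'}$ versus $\otimes_{\CO_{S_{k'}}}$) through the verification that $\phi'$ is an exact tensor functor; rigidity of the values of $\phi'$ is then free from \cite[2.7]{DeligneGroth} and Lemma \ref{FilRigid}, and part (ii) is a short formal argument using only that $S_{k'}\to S$ is fpqc and that a splitting pulls back and restricts along $\CT\to\CT_{k'}$.
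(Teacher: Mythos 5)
Your construction of $\phi'$ and the paper's coincide: the paper defines $\phi'(V,\alpha)$ as the cokernel in $\FilQCoh(S_{k'})$ of the difference of the "ambient" $k'$-action and $\phi_{S_{k'}}(\alpha)$ on $\phi(V)_{S_{k'}} = \phi(V)\otimes_k k'$, and that cokernel is exactly $\phi(V)$ endowed with the $k'$-action via $\phi(\alpha)$, which is your description. So the two proofs build the same functor and both deduce (ii) by composing a splitting with $\CT\to\CT_{k'}$; the difference is only in presentation, and yours buys a shorter exactness check. Because you never pass through a cokernel, exactness of $\phi'$ is immediate from exactness of $\phi$, the exactness of the forgetful functor $\CT_{k'}\to\CT$, and Lemma~\ref{FilShortExact} together with the fact that the affine pushforward $S_{k'}\to S$ detects exactness in $\QCoh(S_{k'})$; the paper instead has to invoke the auxiliary criterion Lemma~\ref{PhiExact} plus \cite[2.10]{DeligneGroth} to get exactness from right exactness. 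The one spot you label "evident", the compatibility of filtrations under the matching of the "largest quotient" description of $\otimes$ in $\CT_{k'}$ with $\otimes_{\CO_{S_{k'}}}$, does deserve a sentence (one checks that the surjection $\phi(V)\otimes_{\CO_S}\phi(V')\twoheadrightarrow\phi(V)\otimes_{\CO_{S_{k'}}}\phi(V')$ carries the image of $\F^i\otimes_{\CO_S}\F^j$ onto the image of $\F^i\otimes_{\CO_{S_{k'}}}\F^j$, so the quotient filtration equals the tensor filtration), but the paper also leaves this as a "direct verification", so your level of detail is comparable. In short: correct, same strategy, with a cleaner choice of model for $\phi'$ that streamlines the exactness step.
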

\begin{proof}
  (i) Let $(V,\alpha)\in \CT_{k'}$. Then we have two $k$-homomorphisms $k'\to \End(\phi(V)_{S_{k'}})$: One given by the usual action of $k'$ on $\phi(V)_{S_{k'}}$ and the other given by the image of $\alpha$ under $\phi_{S_{k'}}$. These give two morphisms $\phi(V)_{S_{k'}}\otimes_k k'\to \phi(V)_{S_{k'}}$ in $\FilLF(S_{k'})$ and we define $\phi'((V,\alpha))$ to be the cokernel of the difference of these two morphisms. This construction is functorial in $(V,\alpha)$, hence we obtain a functor $\phi'\colon \CT_{k'}\to \FilLF(S_{k'})$. By a direct verification, the functor $\phi'$ is a $k$-linear tensor functor and the diagram above commutes.

  To prove that $\phi'$ is exact, by Lemma \ref{PhiExact} it is enough to prove that $\forget\circ\phi'$ is exact and that for all epimorphisms $X\to X'$ in $\CT$ and all $n\in\BZ$ the induced morphism $\F^n\phi'(X)\to\F^n\phi'(X')$ is an epimorphism. A straighforward diagram chasing argument shows that the second condition is fulfilled and that $\forget\circ\phi$ is right exact. By \cite[2.10]{DeligneGroth} the right exactness of $\forget\circ\phi$ implies the exactness of $\forget\circ\phi$. Thus we are done.

  (ii) If $\gamma\colon \CT_{k'} \to \GradQCoh(S')$ is a splitting of the pullback of $\phi'$ to an fpqc cover $S'$ of $S_{k'}$, then the commutativity of the diagram in (i) shows that composing $\gamma$ with the functor $\CT\to\CT_{k'}$ gives a splitting of $\phi_{S'}$.
\end{proof}

\subsection{The case $\CT=\Rep[G]$ for $G$ smooth and connected and $S$ the spectrum of a field}
First we prove the following result, which is a special case of Theorem \ref{FFFZLocSplittable}:
\begin{theorem} \label{SpecialCase1}
  Let $\CT=\Rep[G]$ for an affine connected smooth group scheme $G$ over $k$. Let $\phi$ be a filtered fiber functor on $\CT$ over $S=\Spec(k')$ for some overfield $k'$ of $k$ satisfying $\forget\circ\phi=(\omega_0)_S$. Then $\phi$ is splittable.
\end{theorem}
We will use the following result from SGA 3:
\begin{theorem}[{\cite[Th\'eor\`eme XVII.5.1.1]{SGA3II}}] \label{ExtSplits}
  Let $U$ and $H$ be two group schemes of finite type over $k$. If $U$ is unipotent and smooth and $H$ is of multiplicative type and connected, then every exact sequence
  \begin{equation*}
    1 \to U \to E \to H \to 1
  \end{equation*}
splits, that is there exists a homomorphism $H\to E$ of group schemes which is a section of the homomorphism $E\to H$. 
\end{theorem}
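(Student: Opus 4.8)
The plan is to establish Theorem~\ref{ExtSplits} by d\'evissage, reducing the splitting of a general extension $1\to U\to E\to H\to 1$ first to the case of a commutative kernel and then to a cohomological vanishing which follows from the linear reductivity of groups of multiplicative type. Two facts will be used throughout: (a) if $H$ is of multiplicative type and $U$ is unipotent, then $\UHom_T(H_T,U_T)=0$ for every $k$-scheme $T$, since a diagonalisable subquotient of a unipotent group is trivial; and (b) a group of multiplicative type is linearly reductive, so its higher (rational) cohomology with coefficients in any linear representation vanishes. I also use that a smooth unipotent group over a field embeds into the strict upper triangular subgroup of some $\GL_n$, and is therefore nilpotent.

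First I would reduce to $U$ commutative. As $U$ is nilpotent, choose a nontrivial central characteristic subgroup $Z\subset U$, for instance the last nonzero term of the lower central series; it is again smooth unipotent and commutative, and, being characteristic in $U$, it is normal in $E$. Pushing the extension out along $U\onto U/Z$ yields $1\to U/Z\to E/Z\to H\to 1$, whose kernel has strictly smaller nilpotency class, so by induction it splits, say by $s\colon H\to E/Z$. Pulling $E\to E/Z$ back along $s$ gives an extension $1\to Z\to\widetilde E\to H\to 1$ with commutative kernel, and a section of it composes with $\widetilde E\to E$ to a section of $E\to H$. Hence it suffices to treat commutative $U$.

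With $U$ commutative, conjugation makes $U$ a module over $H=E/U$, and the extension is pinned down by a degree-$2$ cohomology class for this module structure; I want it to vanish. When $U$ is a vector group it is a linear representation of $H$ and the vanishing is immediate from (b). In general one is in characteristic $p$, the group $U$ is annihilated by a power of $p$, and by the structure theory of unipotent groups it admits an $H$-equivariant embedding into a product $W$ of Witt vector groups $W_n$; since each $W_n$ is an iterated extension of copies of $\mathbb{G}_a$ via $0\to W_{n-1}\to W_n\to\mathbb{G}_a\to 0$, and the coefficient $\mathbb{G}_a$ carries only a character action of $H$, d\'evissage together with $\operatorname{Ext}^1(H,\mathbb{G}_a)=0$ (from (b)) gives $\operatorname{Ext}^1(H,W)=0$, while $\UHom(H,W/U)=0$ by (a); the long exact sequence then forces $\operatorname{Ext}^1(H,U)=0$, so the extension splits. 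Characteristic $0$ is subsumed, since there every smooth unipotent group is a vector group.

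The step I expect to be the main obstacle is making the d\'evissage run over an imperfect base field: one must know that the terms of the central series of a smooth unipotent group can be chosen smooth, and that a $p$-torsion commutative smooth unipotent group embeds $H$-equivariantly into a product of Witt vector groups with the right module structure. Over a perfect field both are routine, but in general they rest on the finer structure theory of unipotent groups (Tits; \cite[XVII]{SGA3II}); once these reductions are granted, the cohomological input is soft, amounting only to the linear reductivity of $H$ and the vanishing of $\UHom$ from multiplicative to unipotent groups.
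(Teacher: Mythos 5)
The paper does not prove this theorem; it is imported verbatim from SGA~3, \cite[XVII.5.1.1]{SGA3II}, and invoked as a black box in the proof of the main splitting theorem, so there is no in-paper argument for your sketch to be measured against. That said, your d\'evissage correctly recovers the shape of the SGA~3 proof: descend the lower central series to a commutative (and eventually vector-group) kernel, with the pushout--pullback bookkeeping turning an inductive splitting of $E/Z\to H$ into a section of the original surjection, and then kill the obstruction by linear reductivity of the torus $H$ together with the vanishing of $\Hom$ from multiplicative to unipotent groups. The long exact sequence $\Hom(H,W/U)\to\operatorname{Ext}^1(H,U)\to\operatorname{Ext}^1(H,W)$ does force $\operatorname{Ext}^1(H,U)=0$ once the outer terms vanish for the reasons you give, and the Witt-group d\'evissage reduces $\operatorname{Ext}^1(H,W)=0$ to $\operatorname{Ext}^1(H,\mathbb{G}_a)=0$, which is linear reductivity. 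What you flag as the likely obstacle is indeed where the substance of the SGA~3 argument lies: over an imperfect base field one must establish that the central series terms of a smooth unipotent group can be chosen smooth and characteristic, and that a smooth commutative $p$-torsion unipotent group admits an $H$-equivariant embedding into a product of Witt vector groups compatible with the standard filtration. Over a perfect field these are routine (every such $U$ is a successive extension of vector groups with linear $H$-action), but in general they rest on the structure theory of wound unipotent groups and occupy most of \cite[XVII]{SGA3II}; a complete proof would have to supply these reductions, not merely the soft cohomological input.
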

For the following two lemmas, let $G$ and $\phi$ be as in Theorem \ref{SpecialCase1}
\begin{lemma} \label{UUnipotent}
  The group scheme $U(\phi)$ is unipotent and smooth.
\end{lemma}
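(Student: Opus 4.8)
The plan is to make $U(\phi)$ completely explicit inside a general linear group, read off unipotence at once, and prove smoothness by a dimension count; this last count is the step I expect to be the real work.

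Since $G$ is affine and smooth over $k$ it is of finite type, so $\CT=\Rep[G]$ has a tensor generator. I would fix a faithful representation $X$, set $V\defeq\omega_0(X)$ so that $G\into\GL(V)$ is a closed immersion, and write $\F^\bullet$ for the ($\BZ$-indexed, finite) filtration $\F^\bullet\phi(X)$ of $V_{k'}\defeq V\otimes_kk'$. Let $P_{\GL}\subset\GL(V_{k'})$ be the parabolic subgroup stabilising $\F^\bullet$ and $U_{\GL}\subset P_{\GL}$ its unipotent radical, i.e. the subgroup scheme of elements inducing the identity on $\gr^\bullet V_{k'}$. Because $X$ is a tensor generator, the filtration $\phi$ puts on any $W\in\CT$ is built functorially and $\otimes$-compatibly from that on $\phi(X)$ (as in the proof of Lemma \ref{FFFBaseChange2}), so an element of $G_{k'}$ preserves all of these filtrations, resp.\ induces the identity on all their associated gradeds, iff it does so on $\phi(X)$; hence, as closed subgroup schemes of $\GL(V_{k'})$,
\[
  P(\phi)=G_{k'}\cap P_{\GL},\qquad U(\phi)=G_{k'}\cap U_{\GL}.
\]
Choosing a basis of $V_{k'}$ adapted to $\F^\bullet$ identifies $U_{\GL}$ with a closed subgroup scheme of the strictly upper triangular unipotent matrices, hence the same is true of $U(\phi)=G_{k'}\cap U_{\GL}$, so $U(\phi)$ is unipotent.

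For smoothness I would first reduce to the case that $k'$ is algebraically closed (smoothness is insensitive to extension of the base field), so that it suffices to prove $\dim U(\phi)=\dim_{k'}\Lie U(\phi)$, the inequality $\le$ being automatic for a group scheme of finite type over $k'$. Since $G$ is smooth, $\Lie G_{k'}=(\Lie G)_{k'}\subseteq\End(V_{k'})$, and $\Lie G$ is the adjoint sub-object of $\mathfrak{gl}(V)$ in $\CT$, so the filtration $\phi$ puts on it is $\F^n\Lie G_{k'}=\Lie G_{k'}\cap\F^n\End(V_{k'})$ with $\F^n\End(V_{k'})=\{Y:Y\F^iV_{k'}\subseteq\F^{i+n}V_{k'}\text{ for all }i\}$; as $\Lie U_{\GL}=\F^1\End(V_{k'})$, this gives $\Lie U(\phi)=\Lie G_{k'}\cap\F^1\End(V_{k'})$.

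Next, after replacing $G$ by its identity component $G^0$ — harmless, since $G^0_{k'}\cap U_{\GL}$ is an open subscheme of $U(\phi)$ containing the identity and smoothness of a finite-type group scheme over a field may be tested at the identity — I would consider the morphism of smooth $k'$-varieties $m\colon U_{\GL}\times G^0\lto\GL(V_{k'})$, $(u,g)\lsends ug$. Its image is the orbit of $1$ under the action $(u,g)\colon x\sends uxg^{-1}$, hence a smooth locally closed subvariety $O$ whose dimension equals that of its tangent space at $1$, namely $\dim(\Lie U_{\GL}+\Lie G)$; and the scheme-theoretic fibre $m^{-1}(1)$ is isomorphic to $U_{\GL}\cap G^0$ via $u\mapsto(u,u^{-1})$. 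Upper semicontinuity of fibre dimension then gives
\[
  \dim(U_{\GL}\cap G^0)\ \ge\ \dim(U_{\GL}\times G^0)-\dim O\ =\ \dim\Lie U_{\GL}+\dim\Lie G-\dim(\Lie U_{\GL}+\Lie G)\ =\ \dim(\Lie U_{\GL}\cap\Lie G),
\]
using that $U_{\GL}$ and $G$ are smooth. Since $\Lie(U_{\GL}\cap G^0)=\Lie U_{\GL}\cap\Lie G=\Lie U(\phi)$, we obtain $\dim U(\phi)\ge\dim_{k'}\Lie U(\phi)$, hence equality, hence $U(\phi)$ is smooth. The points needing care are the scheme-theoretic identification $U(\phi)=G_{k'}\cap U_{\GL}$ via the tensor generator, the standard facts that an orbit of a smooth group acting on a smooth variety is a smooth subvariety and that a finite-type group scheme over an algebraically closed field is smooth precisely when its dimension equals that of its Lie algebra, and the bookkeeping with $G^0$; the genuinely substantive input is that $\dim O$ is exactly $\dim(\Lie U_{\GL}+\Lie G)$, which is what forces the fibre $U(\phi)$ to be as large as its Lie algebra predicts.
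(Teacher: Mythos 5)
Your unipotence argument is essentially the paper's: pick a faithful tensor generator $X$, adapt a basis of $V=\phi(X)$ to the filtration, and observe that under the resulting embedding $G_{k'}\into\GL(V_{k'})$ the group $U(\phi)$ lands in the strictly upper triangular unipotent matrices. For smoothness, however, the paper simply cites \cite[IV.2.1.4.1]{Saavedra} (``Since $G$ is smooth, so is $U(\phi)$''), whereas you give a self-contained dimension count. Your route goes through the scheme-theoretic identification $U(\phi)=G_{k'}\cap U_{\GL}$ (which is the same point the paper records in the proof of Lemma~\ref{FFFBaseChange2}, so the citation is apt), passes to $\bar k'$ and to $G^0$ so that $U_{\GL}\times G^0$ is a smooth irreducible variety, and then applies the fiber-dimension inequality for $m(u,g)=ug$ to compare $\dim(G^0\cap U_{\GL})$ with $\dim(\Lie U_{\GL}\cap\Lie G)$; combined with $\Lie(G^0\cap U_{\GL})=\Lie U_{\GL}\cap\Lie G$ (the Lie functor preserves fiber products) and the elementary inequality $\dim H\le\dim\Lie H$, this forces equality and hence smoothness. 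The one place I'd suggest a slight tightening: you invoke ``orbit of a smooth group acting on a smooth variety is smooth'' — more precisely, $O$ is the fppf quotient $(U_{\GL}\times G^0)/\mathrm{Stab}(1)$, which is geometrically reduced (the quotient map is faithfully flat from a reduced source) and homogeneous, hence smooth, and its tangent space at $1$ is the image of the differential, giving $\dim O=\dim(\Lie U_{\GL}+\Lie G)$; that is the precise input to the dimension count. With that spelled out, your argument is correct and buys you independence from Saavedra's IV.2.1.4.1 at the cost of a somewhat longer proof; the paper's approach is shorter but relies on an external result whose proof the reader must chase down.
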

\begin{proof}
  Since $G$ is smooth, by \cite[IV.2.1.4.1]{Saavedra} so is $U(\phi)$.

  Let $V\in \Rep[G]$ be a faithful representation of $G$. Pick a basis $(v_1,\ldots,v_n)$ of $V$ which is adapted to the filtration of $V$ given by $\phi(V)$, i.e. such that each step of the filtration is the span of $\{v_1,\ldots, v_r\}$ for some $r$. Under the resulting embedding $G\into \GL_{n,k'}$ the subgroup $U(\phi)$ is mapped into the subgroup of upper triangular matrices with entries $1$ on the diagonal. This shows that $U(\phi)$ is unipotent. 
\end{proof}
For a group scheme $H$ of finite type over $k$, we denote by $\rk H$ the dimension of a maximal torus of $H$.
\begin{lemma} \label{rkLemma}
  We have $\rk P(\phi)=\rk L(\phi)$. 
\end{lemma}
\begin{proof}
   After base change to an algebraic closure of $k$ we may assume that $k$ is algebraically closed. Let $G'\defeq \UAut^\otimes(\forget \circ\phi)$. This is a group scheme over $k$ containing $L(\phi)$. By Theorem \ref{FFLocalIso} the group schemes $G$ and $G'$ are isomorphic. Since by Lemma \ref{UUnipotent} the group scheme $U(\phi)$ is unipotent, we obtain
  \begin{equation*}
    \rk P(\phi)=\rk P(\phi)/U(\phi)\leq \rk L(\phi) \leq \rk G'=\rk G.
  \end{equation*}
Hence it suffices to prove $\rk P(\phi)=\rk G$. 

If $G$ is reductive, then $P(\phi)$ is a parabolic subgroup of $G$ by \cite[IV.2.4.3.2]{Saavedra}. Thus in this case we have the desired equality.

For an arbitrary smooth connected group scheme $G$, let $V$ be its unipotent radical. Let $H\defeq G/V$ and let $\phi'\colon \Rep[H]\to \Rep[G]\to \FilLF(S)$ be the induced filtered fiber functor. Since $G$ and $H$ are smooth, by \cite[IV.2.1.4.1]{Saavedra} so are $P(\phi)$ and $P(\phi')$. Using the exactness of $\phi'$, it follows from \cite[IV.2.4.1.1 2)]{Saavedra} that the induced homomorphism $P(\phi)\to P(\phi')$ is surjective on the level of Lie algebras. Hence it is an epimorphism of algebraic groups. Its kernel is contained in $V$ and hence unipotent. Thus we obtain $\rk P(\phi)=\rk P(\phi')$. By the previous paragraph we have $\rk P(\phi')=\rk H$. Since $\rk H=\rk G$ this proves $\rk P(\phi)=\rk G$ and we are done.
\end{proof}
\begin{proof}[Proof of Theorem \ref{SpecialCase1}]
By Lemma \ref{SplittingEquiv}, showing that $\phi$ is splittable is equivalent to showing that the cocharacter $\chi(\gr\circ\phi)\colon \Gm[k']\to L(\phi)$ can be lifted to $P(\phi)$. 

Let $T$ be a maximal torus of $L(\phi)$. By Lemmas \ref{UUnipotent} and \ref{rkLemma} we have $\rk P(\phi)/U(\phi)=\rk P(\phi)=\rk L(\phi)$. Hence $T$ is also a maximal torus of $P(\phi)/U(\phi)$. Let $E$ be the preimage of $T$ under the natural homomorphism $P(\phi)\to P(\phi)/U(\phi)$. It fits into an exact sequence
\begin{equation*}
  1 \to U(\phi) \to E \to T \to 1.
\end{equation*}
Theorem \ref{ExtSplits} and Lemma \ref{UUnipotent} imply that this exact sequence splits. Since $\chi(\gr\circ\phi)$ is a central cocharacter of $L(\phi)$, it factors through $T$. Composing $\chi(\gr\circ\phi)$ with a splitting homomorphism $T\to E$ gives the desired lift of $\chi$ to $P(\phi)$.
\end{proof}

\subsection{The case $\CT=\Rep[G]$ for $G$ of finite type over $k$}
Now we prove the following special case of Theorem \ref{MainTheorem}:
\begin{theorem} \label{SpecialCase2}
  Let $\CT=\Rep[G]$ for any group scheme $G$ of finite type over $k$. Any filtered fiber functor on $\CT$ is fpqc-locally splittable.
\end{theorem}
First we collect some facts about restriction and induction functors. We denote by $\REP[G]$ the category of (not necessarily finite-dimensional) representations of $G$. Let $H$ be a closed subgroup scheme of $G$. We denote by $\res_H^G\colon \REP[G]\to\REP[H]$ the restriction functor. By \cite[I.3.4]{Jantzen}, it has a right adjoint $\ind_H^G\colon \REP[H]\to\REP[G]$. For $W \in \REP[H]$, we denote the adjunction map $\res_H^G\ind_H^G W\to W$ by $\varepsilon_W$. 

By \cite[I.3.3]{Jantzen} the functor $\ind_H^G$ can described explicitly as follows: For $W\in\REP[H]$ let $W_a$ be the functor from $k$-algebras to sets which sends a $k$-algebra $A$ to $W\otimes_k A$ and acts on morphisms in the natural way. Consider $G$ as a functor from $k$-algebras to sets in the usual way and let $\Mor(G,W_a)$ be the set of natural transformations from $G$ to $W_a$. The $k$-vector space structure on $W$ gives a natural $k$-vector space structure on $\Mor(G,W_a)$. Then
\begin{align}
  \label{ExplicitInd}
  \ind_H^G(W)=&\{f\in\Mor(G,W_a)\mid f(gh)=h^{-1}f(g)\\
  &\text{ for all $k$-algebras $A$ and all } g\in G(A), h\in H(A)\} \notag
\end{align}
and the action of $G$ is by left translations. Under this description the map $\varepsilon_W$ becomes
\begin{equation}
  \label{ExplicitVarepsilon}
  \varepsilon_W\colon \res_H^G(\ind_H^G(W))\to W,\; f\mapsto f(1).
\end{equation}
\begin{theorem}[{\cite[I.3.6]{Jantzen}}]
  \label{ProjectionFormula}
  Let $V\in\REP[G]$ and $W\in\REP[H]$. Then there is a canonical isomorphism of $G$-modules
  \begin{equation*}
    (\ind_H^G W)\otimes_k V\isoto \ind_H^G(W\otimes_k\res_H^G V).
  \end{equation*}
  Under the explicit description \eqref{ExplicitInd} this isomorphism sends $f\otimes_k v$ to the natural transformation $G\to (W\otimes_kV)_a$ which sends $g\in G(A)$ to $f(g)\otimes_k g^{-1}v$ for all $k$-algebras $A$.
\end{theorem}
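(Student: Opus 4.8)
The plan is to write down the claimed isomorphism explicitly, check by hand that it is well defined and $G$-equivariant using the description \eqref{ExplicitInd}, and then produce an explicit two-sided inverse by ``untwisting'' with the $G$-action on $V$; this is essentially Jantzen's tensor-identity argument, and I do not expect a deep obstacle here, only some bookkeeping with function spaces. Concretely, I would define $\Phi\colon (\ind_H^G W)\otimes_k V\to\ind_H^G(W\otimes_k\res_H^G V)$ on pure tensors by letting $\Phi(f\otimes v)$ be the natural transformation $G\to(W\otimes_k V)_a$ sending $g\in G(A)$ to $f(g)\otimes g^{-1}v$, and extend $k$-linearly. That $\Phi(f\otimes v)$ actually lands in $\ind_H^G(W\otimes_k\res_H^G V)$ follows from $f(gh)=h^{-1}f(g)$ and the diagonal $H$-action on $W\otimes_k\res_H^G V$, since then $\Phi(f\otimes v)(gh)=h^{-1}f(g)\otimes(gh)^{-1}v=h^{-1}\cdot\bigl(f(g)\otimes g^{-1}v\bigr)$, which is the defining condition in \eqref{ExplicitInd}. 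With $G$ acting by left translation on both induced modules and diagonally on the source, $G$-equivariance is the analogous one-line check $\bigl(g_0\cdot\Phi(f\otimes v)\bigr)(g)=f(g_0^{-1}g)\otimes g^{-1}g_0 v=\Phi(g_0 f\otimes g_0 v)(g)$.

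For the inverse I would first observe that, upon choosing a basis $(v_i)$ of the finite-dimensional space $V$, every natural transformation $F\colon G\to(W\otimes_k V)_a$ decomposes uniquely as $F=\sum_i F_i\otimes v_i$ with $F_i\colon G\to W_a$, and that $F$ satisfies $F(gh)=(h^{-1}\otimes\mathrm{id}_V)F(g)$ (the condition in which $H$ acts through the $W$-factor only) if and only if every $F_i$ lies in $\ind_H^G W$; thus $(\ind_H^G W)\otimes_k V$ is precisely the space of such $F$. I would then set $\Psi(F)(g)=(\mathrm{id}_W\otimes g)F(g)$, where $g\in G(A)$ acts on the $V$-factor through $V\in\Rep[G]$; the identity $(gh)h^{-1}=g$ in that factor shows $\Psi(F)$ satisfies exactly this last condition, hence lies in $(\ind_H^G W)\otimes_k V$, and the $G$-equivariance of $\Psi$ is the same bookkeeping as for $\Phi$. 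Plugging the two formulas into one another and using $(\mathrm{id}_W\otimes g^{-1})(\mathrm{id}_W\otimes g)=\mathrm{id}$ on the $V$-factor then gives $\Phi\circ\Psi=\mathrm{id}$ and $\Psi\circ\Phi=\mathrm{id}$, so $\Phi$ is the desired isomorphism.

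A more structural alternative would avoid the formulas altogether: Frobenius reciprocity for the adjunction $\res_H^G\dashv\ind_H^G$ together with the rigidity of $\Rep[G]$ and $\Rep[H]$ yields, naturally in $U\in\Rep[G]$, isomorphisms $\Hom_G\bigl(U,\ind_H^G(W\otimes_k\res_H^G V)\bigr)\cong\Hom_H\bigl(\res_H^G(U\otimes_k V^\vee),W\bigr)\cong\Hom_G\bigl(U,(\ind_H^G W)\otimes_k V\bigr)$, so that the Yoneda lemma produces the isomorphism abstractly; but since one still has to unwind these identifications to recognize the explicit map in the statement, I expect the direct computation above to be the more economical route. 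In it, the only genuinely non-formal point is the identification of $(\ind_H^G W)\otimes_k V$ with the indicated space of equivariant functions, which is what makes it legitimate to define $\Psi$ by a pointwise formula; everything else is routine diagram-chasing with \eqref{ExplicitInd} and \eqref{ExplicitVarepsilon}.
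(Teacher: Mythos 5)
The paper states this result as a citation to Jantzen \cite[I.3.6]{Jantzen} and gives no proof of its own, so there is no internal argument to compare against; your proof is correct and is essentially the standard tensor-identity argument from Jantzen's book. The map $\Phi(f\otimes v)(g)=f(g)\otimes g^{-1}v$ is exactly the one asserted in the statement, your checks that it lands in $\ind_H^G(W\otimes_k\res_H^G V)$ and is $G$-equivariant are correct, and your construction of the inverse --- first using a basis of the finite-dimensional $V$ to identify $(\ind_H^G W)\otimes_k V$ with the space of natural transformations $F\colon G\to(W\otimes_k V)_a$ satisfying $F(gh)=(h^{-1}\otimes\mathrm{id}_V)F(g)$, then setting $\Psi(F)(g)=(\mathrm{id}_W\otimes g)F(g)$ --- is sound, and $\Phi$ and $\Psi$ are visibly inverse once one notices that on the identified space $\Phi$ is given by $\Phi(F)(g)=(\mathrm{id}_W\otimes g^{-1})F(g)$. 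One point you gloss over with ``the same bookkeeping'': under that basis identification, the $G$-action on $(\ind_H^G W)\otimes_k V$ is \emph{not} plain left translation but the twisted action $(g_0\cdot F)(g)=(\mathrm{id}_W\otimes g_0)F(g_0^{-1}g)$, and the $G$-equivariance of $\Psi$ is the one-line comparison of this with left translation on $\ind_H^G(W\otimes_k\res_H^G V)$; it is worth recording explicitly so the verification is self-contained. Your alternative route via Frobenius reciprocity, rigidity and Yoneda also works and is the cleaner way to see \emph{existence} of a natural isomorphism, but as you say it would require extra unwinding to match the explicit formula demanded by the statement, so the direct computation is the better choice here.
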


\begin{corollary}
  \label{IndRes}
  Let $V\in \REP[G]$. Endow $k[G/H]$ with the left regular representation of $G$. Then there is a natural isomorphism of $G$-modules
  \begin{equation*}
    \ind_H^G(\res_H^G(V))\cong k[G/H]\otimes_k V.
  \end{equation*}
 Under this isomorphism, the map $\varepsilon_{\res_H^G(V)}$ corresponds to the map $e_{G/H}\otimes_k V\colon k[G/H]\otimes_kV\to k\otimes_k V\cong V$ where $e_{G/H}\colon k[G/H]\to k$ is the homomorphism corresponding to the composition of the unit section $\Spec(k)\to G$ of $G$ and the canonical morphism $G\to G/H$.
\end{corollary}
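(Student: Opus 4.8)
The plan is to deduce this from the projection formula, Theorem~\ref{ProjectionFormula}, applied to the trivial $H$-representation $W=k$, together with the identification of $\ind_H^G(k)$ with $k[G/H]$. So first I would check that $\ind_H^G(k)\cong k[G/H]$ as $G$-modules, where $k[G/H]$ carries the left regular representation. Indeed, by the explicit description \eqref{ExplicitInd}, $\ind_H^G(k)$ is the set of natural transformations $f\colon G\to k_a$ with $f(gh)=f(g)$ for all $k$-algebras $A$ and all $g\in G(A)$, $h\in H(A)$; by Yoneda one has $\Mor(G,k_a)\cong k[G]$, and the invariance condition cuts out the subalgebra of functions on $G$ fixed under right translation by $H$, i.e.\ $k[G/H]$. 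Since by \eqref{ExplicitInd} the $G$-action on $\ind_H^G(k)$ is by left translation, it becomes the left regular action of $G$ on $k[G/H]$ under this identification.

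Next I would apply Theorem~\ref{ProjectionFormula} with this $W=k$ and the given $V$. Since $k\otimes_k\res_H^G V\cong\res_H^G V$ canonically as $H$-modules, the theorem furnishes a canonical isomorphism of $G$-modules
\[
  k[G/H]\otimes_k V\;\cong\;(\ind_H^G k)\otimes_k V\;\cong\;\ind_H^G(k\otimes_k\res_H^G V)\;\cong\;\ind_H^G(\res_H^G V),
\]
where the first isomorphism is the one from the previous step, the second is the projection formula, and the third is the canonical identification $k\otimes_k\res_H^G V=\res_H^G V$. This proves the first assertion.

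For the statement about $\varepsilon$, I would simply chase the explicit formulas. By Theorem~\ref{ProjectionFormula} the composite isomorphism above sends a pure tensor $f\otimes_k v\in k[G/H]\otimes_k V$ to the natural transformation $g\mapsto f(g)\cdot g^{-1}v$ in $\ind_H^G(\res_H^G V)$; applying $\varepsilon_{\res_H^G V}$ in the form \eqref{ExplicitVarepsilon} evaluates this at $g=1$, yielding $f(1)\,v$. By definition $e_{G/H}$ is evaluation of a function on $G/H$ at the identity coset, so $f(1)=e_{G/H}(f)$, and hence the composite sends $f\otimes_k v$ to $e_{G/H}(f)\,v=(e_{G/H}\otimes_k V)(f\otimes_k v)$, as claimed. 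The only point requiring care is keeping the conventions consistent — on which side $H$ acts on $k[G]$, the direction of the translation defining the left regular representation, and the $g^{-1}$-twists in \eqref{ExplicitInd} and \eqref{ExplicitVarepsilon} — but once these are fixed as in the cited formulas every step is a routine verification, so I do not expect a genuine obstacle.
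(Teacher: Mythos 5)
Your proof is correct and follows essentially the same route as the paper: setting $W=k$ (the tensor unit) in Theorem~\ref{ProjectionFormula}, identifying $\ind_H^G(k)$ with $k[G/H]$ via \eqref{ExplicitInd}, and chasing the explicit formulas \eqref{ExplicitInd}, \eqref{ExplicitVarepsilon}, and Theorem~\ref{ProjectionFormula} to describe $\varepsilon_{\res_H^G(V)}$. The only difference is that you spell out the identification $\ind_H^G(k)\cong k[G/H]$ in more detail than the paper, which simply takes it for granted.
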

\begin{proof}
  The isomorphism is obtained by setting $W=1$ in Theorem \ref{ProjectionFormula}. The description of $\varepsilon_{\res_H^G(W)}$ can be directly verified using \eqref{ExplicitInd}, \eqref{ExplicitVarepsilon} and Theorem \ref{ProjectionFormula}.
\end{proof}
\begin{theorem}
  \label{IndProperties}
  Assume that $G/H$ is affine.
  \begin{itemize}
  \item [(i)] The functor $\ind_H^G$ is exact.
  \item [(ii)] The map $\varepsilon_W$ is surjective for all $W\in\REP[H]$.
  \end{itemize}
\end{theorem}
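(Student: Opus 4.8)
The plan is to prove both parts by passing to the geometric incarnation of $\ind_H^G$ as a global sections functor on $G/H$; part~(i) in this form coincides with \cite[I.5.13]{Jantzen}, but I would reprove it from scratch since part~(ii) then drops out for free.

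The first step is to set up the dictionary. Since $H$ is a closed subgroup scheme of $G$, the quotient morphism $\pi\colon G\to G/H$ is a faithfully flat $H$-torsor, so by descent a representation $W\in\Rep[H]$ gives rise to a quasi-coherent sheaf $\mathcal L(W)$ on $G/H$ with $\pi^*\mathcal L(W)\cong\CO_G\otimes_kW$ (the associated bundle $(G\times W)/H$). Using the explicit description \eqref{ExplicitInd} of $\ind_H^G W$ as the natural transformations $f\colon G\to W_a$ with $f(gh)=h^{-1}f(g)$, one checks that $\ind_H^G W$ is canonically isomorphic, as a $G$-module, to $\Gamma(G/H,\mathcal L(W))$, and that under this isomorphism the adjunction map $\varepsilon_W$ of \eqref{ExplicitVarepsilon}, $f\mapsto f(1)$, becomes evaluation of a global section at the base point $\bar e:=\pi(1)\in(G/H)(k)$, with values in the fibre $\mathcal L(W)\otimes_{\CO_{G/H}}k(\bar e)$, which is canonically $W$ because the torsor $\pi$ is trivial over $\bar e$ via $1\in G(k)$.

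Granting this, part~(i) is the composite of two exactness statements. First, $W\mapsto\mathcal L(W)$ is an exact functor $\Rep[H]\to\QCoh(G/H)$: pulling a short exact sequence of $H$-modules back along the faithfully flat $\pi$ produces $0\to\CO_G\otimes_kW'\to\CO_G\otimes_kW\to\CO_G\otimes_kW''\to 0$, which is exact since $\CO_G$ is flat over the field $k$, and faithful flatness of $\pi$ then forces the original sequence on $G/H$ to be exact. Second, $\Gamma(G/H,-)$ is exact on $\QCoh(G/H)$ precisely because $G/H$ is affine. Hence $\ind_H^G=\Gamma(G/H,-)\circ\mathcal L$ is exact. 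For part~(ii), write $G/H=\Spec R$ and $M:=\ind_H^GW$, so that $\mathcal L(W)=\widetilde M$; then $\varepsilon_W$ is the reduction $M\to M\otimes_Rk(\bar e)=W$ at the maximal ideal of $\bar e$, which is visibly surjective. (Alternatively one can derive (ii) purely algebraically from (i): by Corollary~\ref{IndRes} the map $\varepsilon_{\res_H^GV}$ is $e_{G/H}\otimes_k\mathrm{id}_V$, hence surjective since $e_{G/H}$ is split by the structure morphism $k\to k[G/H]$; using exactness of $\ind_H^G$ one sees the class of $W$ with $\varepsilon_W$ surjective is closed under quotients, and every $H$-module is a quotient of $\res_H^GV$ for some $V\in\Rep[G]$, for instance by dualizing the fact that every $H$-module embeds into a restricted $G$-module.)

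The only real work is the identification in the second paragraph: translating the elementary formula \eqref{ExplicitInd} into the language of descent along $\pi$, keeping track of the $G$-action and of the inverses, and confirming that $\varepsilon_W$ matches evaluation at $\bar e$. Once that bookkeeping is done, (i) and (ii) are immediate consequences of the two classical facts that a torsor projection is faithfully flat and that an affine scheme has exact global sections; the hypothesis that $G/H$ be affine enters, for part~(i), precisely and only through the second of these.
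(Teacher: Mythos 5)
Your argument is correct and fills in what the paper leaves implicit: the paper's entire proof consists of citing \cite[I.5.13]{Jantzen} for (i) and \cite[4.2]{CPS1} for (ii), together with a remark that the CPS proof extends from affine algebraic groups to arbitrary affine group schemes of finite type.

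Your proof of (i), via the factorisation $\ind_H^G = \Gamma(G/H,-)\circ\mathcal{L}$ with $\mathcal{L}$ exact by faithfully flat descent along the torsor $\pi\colon G\to G/H$ and $\Gamma(G/H,-)$ exact because $G/H$ is affine, is in substance the proof given in Jantzen, so there is no genuine divergence there. Your proof of (ii) is more interesting: rather than following the CPS argument (and then having to observe that it does not actually need smoothness), you obtain surjectivity directly as reduction of an $R$-module modulo the maximal ideal of the $k$-rational base point $\bar e=\pi(1)$, with the fibre identified with $W$ via the canonical trivialisation of the torsor over $\bar e$. This is uniform in $G$ and $H$ and makes the generalisation to non-smooth group schemes automatic rather than a remark to be checked. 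The parenthetical alternative deduction of (ii) from (i) is also sound: by naturality of $\varepsilon$ and right-exactness of $\ind_H^G$, surjectivity of $\varepsilon_W$ passes to quotients of $W$; $\varepsilon_{\res_H^G V}=e_{G/H}\otimes_k\Id_V$ is split by the unit $k\to k[G/H]$; and every object of $\Rep[H]$ is a quotient of some $\res_H^G V$ by dualising the standard embedding result, using rigidity of both categories. The one bookkeeping point you correctly flag as the real work is the dictionary between formula \eqref{ExplicitInd} and global sections of $\mathcal{L}(W)$ with $\varepsilon_W$ matching fibre evaluation at $\bar e$; that identification is standard and holds because $G$ is of finite type over a field, so $G/H$ exists as a scheme and $\pi$ is an fppf $H$-torsor. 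Net effect: your write-up is a complete proof, closer to being self-contained than the paper's citation-only treatment, and for (ii) somewhat cleaner.
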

\begin{proof}
  For (i), see \cite[I.5.13]{Jantzen}. Part (ii) is proven in \cite[Lemma 4.2]{CPS1} in case $G$ and $H$ are affine algebraic groups. But the proof given there works for arbitrary affine group schemes $G$ and $H$ of finite type over $k$.
\end{proof}
\begin{construction} \label{IndConstruction}
  Let $H$ be a closed subgroup scheme of $G$ such that $G/H$ is finite. Let $S$ be a reduced scheme and $\phi\colon\Rep[G]\to \FilLF(S)$ be a filtered fiber functor such that $\forget\circ\phi=(\omega_{0})_S$.

We construct a functor $\phi'\colon \Rep[H]\to\FilLF(S)$ such that $\forget\circ\phi'=(\omega_{0})_S$ as follows: Since each $V\in \REP[G]$ is the union of its finite-dimensional subrepresentations, the functor $\phi$ can be naturally extended to an exact $k$-linear tensor functor $\REP[G]\to \FilLF(S)$. For $n\in\BZ$ and $V\in \REP[G]$ we denote the $n$-th step of the filtration on $V_S$ defined by this functor by $\F^n(V_S)$. For $n\in \BZ$ and $W\in\Rep[H]$ define $\F^n(W_S)\defeq (\varepsilon_W)_S(\F^n(\ind_H^G(W)_S))$. Since by Theorem \ref{IndProperties} the map $(\varepsilon_{W})_S$ is surjective, this gives a descending filtration on $W_S$. Since this construction is functorial in $W$, this defines a functor $\phi'\colon\Rep[H]\to \FilLF(S)$.
\end{construction}
\begin{proposition} \label{IndConstructionProps}  
  Let $H$ be a closed subgroup scheme of $G$ such that $G/H$ is finite. Let $S$ be a reduced scheme and $\phi\colon\Rep[G]\to \FilLF(S)$ be a filtered fiber functor such that $\forget\circ\phi=(\omega_{0})_S$. The functor $\phi'\colon \Rep[H]\to\FilLF(S)$ given by Construction \ref{IndConstruction} is a filtered fiber functor satisfying $\phi=\phi'\circ\res_H^G$.
\end{proposition}
\begin{proof}
First we show:
  \begin{lemma} \label{blub}
    Let $e_{G/H}\colon k[G/H]\to k$ be the morphism from Corollary \ref{IndRes} and equip $k[G/H]$ with the left regular representation. Then $(e_{G/H})_S(\F^i(k[G/H]_S))=0$ for all $i> 0$.
  \end{lemma}
  \begin{proof}
    Consider $x\in\F^i(k[G/H]_S)$. Since the multiplication $k[G/H]\otimes_{k}k[G/H]\to k[G/H]$ is $G$-equivariant, the element $x^j$ lies in $\F^{ij}(k[G/H]_S)$ for all $j\geq 1$. Since $\phi$ is a tensor functor, each $\F^j(k[G/H]_S)$ is locally a direct summand of $k[G/H]_S$ for all $j\in\BZ$. Since by assumption $k[G/H]_S$ is free of finite rank as an $\CO_S$-module, this implies that for $j$ large enough $\F^j(k[G/H]_S)=0$. Thus $x$ is nilpotent. Since $\CO_S$ is reduced, this implies $(e_{G/H})_S(\F^i(k[G/H]_S))=0$.
  \end{proof}

\begin{lemma} \label{blubblub}
  $\phi=\phi'\circ\res_H^G$.
\end{lemma}
\begin{proof}
  Let $V\in \Rep[G]$. By Corollary \ref{IndRes} we can identify $\ind_H^G(\res_H^G(V))_S$ as a $G$-module with $(k[G/H]\otimes_k V)_S$ such that $(\varepsilon_{\res_H^G(V)})_S$ corresponds to $(e_{G/H}\otimes_k V)_S$. Hence 
  \begin{equation}
    \label{eq:Fn}
    \F^n(\res_H^G(V)_S)=\sum_{i+j=n}(e_{G/H})_S(\F^i(k[G/H]_S))\cdot \F^j(V_S).
  \end{equation}

By Lemma \ref{blub}, the terms with $i> 0$ in \eqref{eq:Fn} vanish. If $k$ is given the trivial $G$-action, the homomorphism $k\to k[G/H]$ is $G$-equivariant. Hence $1\in \F^0(k[G/H]_S)$, which implies that the term $(e_{G/H})_S(\F^i(k[G/H]_S))\cdot \F^j(V_S)$ in \eqref{eq:Fn} is equal to $\F^j(V_S)$ for $i\leq 0$. Thus \eqref{eq:Fn} shows $\F^n(\res_H^G(V)_S)=\F^n(V)$, which means that $\phi$ and $\phi'\circ\res_H^G$ agree on objects. Since $\forget\circ\phi=(\omega_{0})_S=\forget\circ\phi'\circ\res_H^G$ these functors also agree on morphisms. 
\end{proof}
  \begin{lemma}
    The functor $\phi'$ is a tensor functor. 
  \end{lemma}
  \begin{proof}
    Lemma \ref{blubblub} implies that $\phi'$ maps the trivial representation of $H$ to the tensor unit of $\FilLF(S)$. It remains to prove that $\phi'$ is compatible with tensor products.

    In the following we will sometimes use the canonical isomorphism $X_S\otimes_{\CO_S}X'_S\cong (X\otimes_kX')_S$ for certain $k$-vector spaces $X$ and $X'$ without mention.

 Let $W,W'\in\Rep[H]$. Then there is a commutative diagram
    \begin{equation*}
      \xymatrixcolsep{10pc}\xymatrix{
        \ind_H^G(W)_S\otimes_{\CO_S}\ind_H^G(W')_S \ar[d]_{\sim} \ar@{->>}[r]^-{(\varepsilon_W)_S\otimes_{\CO_S}(\varepsilon_{W'})_S} &W_S\otimes_{\CO_S} W'_S \ar[dd]^\sim\\ 
        \ind_H^G(W\otimes_k \res_H^G(\ind_H^G(W')))_S \ar@{->>}[d]_{\ind_H^G(W\otimes_k \varepsilon_{W'})_S}  & \\
        \ind_H^G(W\otimes_k W')_S \ar@{->>}[r]^{(\varepsilon_{W\otimes_k W'})_S}&(W\otimes_kW')_S
      }
    \end{equation*}
    where the top left vertical isomorphism is given by Theorem \ref{ProjectionFormula} for $V=\ind_H^G(W')$ and the bottom left vertical and the horizontal maps are surjective by Theorem \ref{IndProperties}. To check the commutativity of this diagram we may assume $S=\Spec(k)$ and then it can be directly verified using the explicit description of the involved vector spaces and maps given by \eqref{ExplicitInd}, \eqref{ExplicitVarepsilon} and Theorem \ref{ProjectionFormula}. 

    Note that the two maps on the left are $G$-equivariant. Using the definition of $\phi'$ and the fact that $\phi$ is a tensor functor the commutativity of the diagram implies the lemma:
    \begin{align*}
      \F^n&((W\otimes_kW')_S)\\
      &= (\varepsilon_W\otimes_k\varepsilon_{W'})_S(\F^n(\ind_H^G(W\otimes_k W')_S)) \\
      &= ((\varepsilon_W)_S\otimes_{\CO_S}(\varepsilon_{W'})_S)(\F^n(\ind_H^G(W)_S\otimes_{\CO_S}\ind_H^G(W')_S)) \\
      &= ((\varepsilon_W)_S\otimes_{\CO_S}(\varepsilon_{W'})_S)(\sum_{i+j=n}\F^i(\ind_H^G(W)_S)\otimes_{\CO_S} \F^j(\ind_H^G(W')_S))\\
      &=\sum_{i+j=n}\F^i(W_S)\otimes_{\CO_S} \F^j(W'_S).
    \end{align*}
  \end{proof} 

It remains to show:
  \begin{lemma}
    The functor $\phi'$ is a filtered fiber functor.
  \end{lemma}
  \begin{proof}
    It follows directly from the construction of $\phi'$ that it is $k$-linear. Furthermore $\forget\circ\phi'=\omega_0$ is a fiber functor. Thus by Lemma \ref{PhiExact} is suffices to show that for any surjection $W\to W'$ in $\Rep[H]$ and any $n\in\BZ$ the induced morphism $\F^n W\to\F^n W'$ is an epimorphism. This follows from the definition of $\phi'$ by a straightforward diagram chasing argument.
  \end{proof}

\end{proof}

 \begin{proof}[Proof of Theorem \ref{SpecialCase2}]
Let $\phi$ be a filtered fiber functor on $\CT$ over some scheme $S$ over $k$. Since the fiber functors $\forget\circ\phi$ and $\omega_0$ are fpqc-locally isomorphic by Theorem \ref{FFLocalIso}, after replacing $S$ by a suitable covering we may assume that $\forget\circ\phi=(\omega_0)_S$. Let $H$ be the maximal reduced subscheme of the connected component of the identity of $G$. By Lemma \ref{FFFBaseChange} we can replace $k$ by a finite field extension such that $H$ is a smooth subgroup scheme of $G$. 

Consider a point $s\in S$. First we show that the fiber $\phi_s$ of $\phi$ in $s$ is splittable. Proposition \ref{IndConstructionProps} applied to $H$ as above allows us to factor $\phi_s$ as $\phi'\circ\res_{H}^G$ for a filtered fiber functor $\phi'\colon \Rep[H] \to \FilLF(k(s))$ satisfying $\forget\circ\phi'=(\omega_0)_{k(s)}$. Since $H$ is smooth and connected, by Theorem \ref{SpecialCase1} there exists a graded fiber functor $\gamma'\colon \Rep[H]\to\GradQCoh(k(s))$ which splits $\phi'$. Then the graded fiber functor $\gamma_0\defeq \gamma'\circ\res^G_{H}$ splits $\phi_s$.

The fact that $\forget\circ\phi'=(\omega_0)_{k(s)}$ implies $\forget\circ\gamma'=(\omega_0)_{k(s)}$. Thus $\gamma'$ is determined by the cocharacter $\chi(\gamma')\colon \Gm[k(s)]\to H_{k(s)}$. Since $H$ is smooth, by \cite[Th\'eor\`eme XI.5.8]{SGA3II} there exists an \'etale morphism $S'\to S$, a point $s'\in S'$ over $s$ with trivial residue field extension $k(s')/k(s)$ and a cocharacter $\chi\colon \Gm[S']\to H_{S'}$ whose fiber in $s'$ is $\chi(\gamma')$. By construction, the fiber of $\chi$ in $s'$ splits the fiber of $\phi_{S'}$ in $s'$.

Next we show that $\phi_{\CO_{S',s'}}$ is splittable. For $V\in\Rep[G]$ and $n\in\BZ$, we denote by $\F^n(V_{\CO_{S',s'}})$ the $n$-th step of the filtration on $V_{\CO_{S',s'}}$ defined by $\phi_{\CO_{S',s'}}$ and by $\gr^n_\chi(V_{\CO_{S',s'}})$ the $n$-th step of the grading of $V_{\CO_{S',s'}}$ defined by $\chi_{\CO_{S',s'}}$. For $n\in \BZ$ let $\G_n(V_{\CO_{S',s'}})\defeq \oplus_{i\leq n}\gr^i_\chi(V_{\CO_{S',s'}})$; this defines a decreasing filtration of $V_{\CO_{S',s'}}$. For $n\in \BZ$ consider the addition morphism $\psi\colon \F^n(V_{\CO_{S',s'}})\oplus \G_n(V_{\CO_{S',s'}})\to V_{\CO_{S',s'}}$. Since $\chi$ splits $\phi$ in the fiber over $s'$, in this fiber the two filtrations defined above are opposite. Therefore $\psi$ is an isomorphism in this fiber by Lemma \ref{OppositeFiltrations}. Since both domain and codomain of $\psi$ are locally free $\CO_{S',s'}$-modules of finite rank, it follows from Nakayama's Lemma that $\psi$ is an isomorphism. Hence the two filtrations are opposite by Lemma \ref{OppositeFiltrations}. Therefore $\gr^n(V_{\CO_{S',s'}})\defeq \F^n(V_{\CO_{S',s'}})\cap \G_n(V_{\CO_{S',s'}})$ gives a grading of $V_{\CO_{S',s'}}$ which splits both filtrations. Since this grading is functorial in $V$ and compatible with tensor products, we have constructed a graded fiber functor $\gamma$ on $\Rep[G]$ over $\CO_{S',s'}$ which splits $\phi_{\CO_{S',s'}}$. 

Finally we show that $\phi_{S'}$ is splittable on some open neighbourhood of $s'$. Since $\forget\circ\gamma=(\omega_0)_{\CO_{S',s'}}$, the graded fiber functor $\gamma$ is determined by $\chi\defeq \chi(\gamma)\colon \Gm[\CO_{S',s'}]\to P(\phi_{\CO_{S',s'}})$. Since $G$ is of finite type over $k$, so is $P(\phi)$. Thus it follows from \cite[Th\'eor\`eme 8.8.2]{EGA4III} that $\chi$ can be extended to a cocharacter $\chi\colon \Gm[U]\to P(\phi)_U$ for some open subset $U$ of $S'$ containing $s'$, which we take to be connected. Then $\gamma$ can also be extended to $U$. For $V\in \Rep[G]$ and $n\in \BZ$ we denote by $\F^n(V_U)$ (resp. $\gr_\gamma^n(V_U)$) the $n$-th step of the filtration (resp. grading) of $V_U$ given by $\phi$ (resp. $\gamma$). Since $\chi$ is a cocharacter of $P(\phi)$, for any $V\in\Rep[G]$, each $\F^n(V_U)$ can be decomposed as $\F^n(V_U)=\oplus_{i\in\BZ}\F^n(V_U)\cap\gr_\gamma^i(V_U)$. Since $\gamma$ splits $\phi$ in the fiber over $s'$, this decomposition takes the form $\F^n(V_{k(s')})=\oplus_{i\geq n}\gr_\gamma^i(V_{k(s')})$ in this fiber. Since the terms $\F^n(V_U)\cap\gr_\gamma^m(V_U)$ are direct summands of $\F^n(V_U)$, they are locally free $\CO_U$-modules of finite rank. Since $U$ is connected, their ranks are constant on $U$. Hence $\F^n(V_U)=\oplus_{i\geq n}\gr_\gamma^i(V_U)$, which shows that $\gamma$ splits $\phi_U$.

The image of $U$ in $S$ is an open subset of $S$ containing $s$ over which $\phi$ is fpqc-locally splittable. By varying $s$ over $S$ we find that $\phi$ is fpqc-locally splittable.
 \end{proof}

\subsection{The case that $\CT$ has a tensor generator}
\begin{theorem} \label{SpecialCase3}
  Let $\CT$ be a Tannakian category possessing a tensor generator. Any filtered fiber functor on $\CT$ is fpqc-locally splittable.
\end{theorem}
\begin{proof}
   By \cite[6.20]{DeligneGroth} there exists a finite field extension $k'$ of $k$ over which $\CT$ has a fiber functor. Thus using Lemma \ref{FFFBaseChange} we may replace $k$ by $k'$ so that we may assume that $\CT$ is neutral. Then we are in the previous case.
\end{proof}
\subsection{The case $\CT$ arbitrary}
\begin{proof}[Proof of Theorem \ref{MainTheorem}]
 The category $\CT$ is the filtered colimit of the set $I_\CT$ of Tannakian subcategories of $\CT$ from Subsection \ref{TensorGenSection} which possess a tensor generator. 
 \begin{lemma} \label{SplFF}
   For each $\CT'\in I_\CT$, the functor $\USpl(\phi|_{\CT'})$ is representable by a scheme which is affine and faithfully flat over $S$.
 \end{lemma}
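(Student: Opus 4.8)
The plan is to exhibit $\USpl(\phi|_{\CT_i})$ as a torsor under an affine, faithfully flat group scheme over $S$ and then descend along an fpqc covering. Write $\psi \defeq \phi|_{\CT_i}$. By the remark following Lemma \ref{SplittingEquiv}, $\USpl(\psi)$ is a sheaf for the fpqc topology, and by Lemma \ref{SplUPseudotorsor} it is a left $U(\psi)$-pseudotorsor. Since $\CT_i$ has a tensor generator, Theorem \ref{MainTheorem} — which has already been established for Tannakian categories with a tensor generator in the preceding subsection — applies to $\psi$, so $\psi$ is fpqc-locally splittable; that is, there is an fpqc covering $S' \to S$ with $\USpl(\psi)(S') \neq \emptyset$. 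A choice of splitting over $S'$ then identifies $\USpl(\psi)_{S'}$ with $U(\psi)_{S'}$, so $\USpl(\psi)$ is a left $U(\psi)$-torsor, trivialized by $S' \to S$.

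Next I would check that $U(\psi)$ is affine and faithfully flat over $S$. By definition $U(\psi) = \ker(\gr\colon P(\psi) \to L(\psi))$, where $P(\psi) = \UAut^\otimes_S(\psi)$ is affine over $S$ by Theorem \ref{FFFIsomRepr} and $L(\psi) = \UAut^\otimes_S(\gr\circ\psi)$ is affine, hence separated, over $S$ by Theorem \ref{GFFIsomRepr}; so $U(\psi)$ is closed in $P(\psi)$ and therefore affine over $S$. Moreover $U(\psi)$ is the fibre of $\gr\colon P(\psi) \to L(\psi)$ over the identity section $S \to L(\psi)$, and $\gr$ is faithfully flat by Theorem \ref{GrFaithfullyFlat}(i) applied with $\phi_1 = \phi_2 = \psi$. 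Hence $U(\psi) \to S$, being a base change of a faithfully flat morphism, is faithfully flat.

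Finally I would conclude by fpqc descent: $\USpl(\psi)$ is an fpqc sheaf whose pullback to $S'$ is the affine $S'$-scheme $U(\psi)_{S'}$, and since affine morphisms satisfy effective descent for the fpqc topology (descent of quasi-coherent algebras), $\USpl(\psi)$ is representable by a scheme affine over $S$. Its structure morphism pulls back along $S' \to S$ to the faithfully flat morphism $U(\psi)_{S'} \to S'$, and as flatness and surjectivity both descend along the faithfully flat covering $S' \to S$, the morphism $\USpl(\psi) \to S$ is itself faithfully flat, as required.

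The only point requiring real care is the faithful flatness of $U(\psi)$ over $S$: representability and affineness are immediate, but the kernel of a homomorphism of flat affine group schemes need not be flat in general. This is handled by viewing $U(\psi)$ as the fibre over the identity of the faithfully flat map $\gr\colon P(\psi) \to L(\psi)$. Invoking Theorem \ref{GrFaithfullyFlat}(i) at this point is not circular: it concerns a filtered fiber functor on $\CT_i$, which has a tensor generator, and its proof uses only Theorem \ref{MainTheorem} for $\CT_i$, which is available at this stage. (One could instead observe that after the base change $S' \to S$ that splits $\psi$ one has $P(\psi)_{S'} \cong U(\psi)_{S'} \rtimes L(\psi)_{S'}$, but this by itself does not give flatness of $U(\psi)$, so the external input of Theorem \ref{GrFaithfullyFlat}(i) appears genuinely necessary.)
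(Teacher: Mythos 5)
Your argument is correct and reaches the same conclusion, but it follows a genuinely different route from the paper's. The paper's proof reduces the \emph{entire} statement to the case $S = \Spec(k')$ for $k'$ algebraically closed: after an fpqc base change one may assume $\phi|_{\CT_i}$ is split by some $\gamma$, the claim about $\USpl(\fil\circ\gamma)$ is invariant under pullback, fpqc-local, and invariant under isomorphism of $\gamma$, so Lemma~\ref{GFFPPrinciple} applies; over a field, $\USpl$ is a nonempty pseudotorsor under the affine group scheme $U(\phi|_{\CT_i})$, and representability, affineness, and (trivial) flatness are immediate. You instead establish the $U(\psi)$-torsor structure over general $S$, prove $U(\psi)$ affine and faithfully flat over $S$, and then descend. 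The descent step and the identification $U(\psi) = P(\psi)\times_{L(\psi),\,e} S$ are fine. The one delicate point — faithful flatness of $U(\psi)$ over $S$ — you correctly single out and obtain from Theorem~\ref{GrFaithfullyFlat}(i). Your remark that this is not circular is right in spirit (Theorem~\ref{MainTheorem} for $\CT_i$ is indeed available at this stage), but it deserves a stronger caveat than you give: the paper proves Theorem~\ref{GrFaithfullyFlat}(i) in a single sentence by appeal to Theorem~\ref{MainTheorem}, and when that sentence is unfolded, the essential content is precisely the flatness of $U$ over a general base, which does not drop out of the Main Theorem directly but is most naturally obtained by the \emph{same} reduction to a field via Lemma~\ref{GFFPPrinciple}. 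So your route does not avoid the $P$-principle; it relocates it into a black box that the paper never explicitly opens. The paper's direct application of Lemma~\ref{GFFPPrinciple} to the full statement is shorter and makes the logical dependency on earlier results transparent, which matters here given that the lemma sits inside the proof of the Main Theorem itself; but your version is sound once one is satisfied that the proof of Theorem~\ref{GrFaithfullyFlat}(i), restricted to a category with tensor generator, can be carried out without reference to Lemma~\ref{SplFF}.
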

 \begin{proof}
    By Theorem \ref{SpecialCase3}, the filtered fiber functor $\phi|_{\CT'}$ is fqpc-locally splittable. Thus, since it is sufficient to prove the claim fpqc-locally on $S$, we may assume that $\phi|_{\CT'}$ is splittable. Using Lemma \ref{GFFPPrinciple} we reduce to the case that $S$ is the spectrum of an algebraically closed field. Since $\phi|_{\CT'}$ is splittable, by Lemma \ref{SplUPseudotorsor} the functor $\USpl(\phi|_{\CT'})$ is a $U(\phi|_{\CT'})$-torsor and thus representable by an affine scheme which is trivially flat over $S$.
 \end{proof}

The functor $\USpl(\phi)$ is in a natural way the filtered limit of the functors $\USpl(\phi|_{\CT'})$ over $\CT'\in I_\CT$. Since by Lemma \ref{SplFF} the functors $\USpl(\phi|_{\CT'})$ are representable by schemes which are affine and faithfully flat over $S$, by \cite[Proposition 8.2.3]{EGA4III} and \cite[Proposition 8.3.8]{EGA4III} the functor $\USpl(\phi)$ is representable by a scheme which is affine and faithfully flat over $S$. Thus there exists a splitting of $\phi$ fpqc-locally on $S$.
\end{proof}


\bibliography{references}
\bibliographystyle{plain}
\end{document}